\definecolor{refkey}{rgb}{0.8,0.8,0.8}
\definecolor{labelkey}{rgb}{0.9,0,0.1}
\crefname{ineq}{Ineq.}{inequalities}
\newtheorem*{rep@theorem}{\rep@title}
\newcommand{\newreptheorem}[2]{%
	\newenvironment{rep#1}[1]{%
		\def\rep@title{#2 \ref{##1}}%
		\begin{rep@theorem}}%
		{\end{rep@theorem}}}
\newtheorem{theorem}{Theorem}[section]
\newtheorem{lemma}[theorem]{Lemma}
\newtheorem{proposition}[theorem]{Proposition}
\newtheorem{corollary}[theorem]{Corollary}
\newtheorem{definition}[theorem]{Definition}
\newtheorem{construction}[theorem]{Construction}
\newtheorem{claim}{Claim}
\newtheorem*{claim*}{Claim}
\theoremstyle{definition}
\newtheorem{example}[theorem]{Example}
\newtheorem{remark}[theorem]{Remark}
\theoremstyle{remark}
\numberwithin{equation}{section}
\newcommand{\mc}{\mathcal}
\newcommand{\mf}{\mathfrak}
\newcommand{\scr}{\mathscr}
\newcommand{\abs}[1]{\left\lvert \vcenter{\hbox{$\displaystyle #1 $}} \right\rvert}
\newcommand{\norm}[1]{\left\lVert \vcenter{\hbox{$\displaystyle #1 $}} \right\rVert}
\newcommand{\inner}[1]{\left\langle #1 \right\rangle}
\newcommand{\wh}{\widehat}
\newcommand{\Isom}{\mathrm{Isom}}
\newcommand{\Z}{\mathbb{Z}}
\newcommand{\Q}{\mathbb{Q}}
\newcommand{\R}{\mathbb{R}}
\newcommand{\SL}{\mathrm{SL}}
\newcommand{\SO}{\mathrm{SO}}
\DeclareMathOperator{\Ad}{Ad}
\DeclareMathOperator{\Hom}{Hom}
\newcommand{\tensor}{\otimes}
\DeclareMathOperator{\im}{Im}
\DeclareMathOperator{\id}{id}
\DeclareMathOperator{\Homeo}{Homeo}
\DeclareMathOperator{\Lipeo}{BiLip}
\DeclareMathOperator{\diam}{diam}
\DeclareMathOperator{\Lip}{Lip}
\DeclareMathOperator{\Nbhd}{Nbhd}
\DeclareMathOperator{\PSL}{PSL}
\DeclareMathOperator{\wcu}{\mathcal{W}^{cu}}
\DeclareMathOperator{\wcs}{\mathcal{W}^{cs}}
\DeclareMathOperator{\Haus}{Haus}
\renewcommand{\bar}{\overline}
\DeclareMathOperator{\pr}{pr}
\newcommand{\at}{\bigg|}
\newcommand{\bdx}{\mathfrak{X}}
\newcommand{\bdy}{\Upsilon}
\newcommand{\flow}{{A'_Q}}
\DeclareMathOperator{\Bc}{\mathcal{B}}
\DeclareMathOperator{\Cc}{\mathcal{C}}
\DeclareMathOperator{\Dc}{\mathcal{D}}
\DeclareMathOperator{\Ec}{\mathcal{E}}
\DeclareMathOperator{\Fc}{\mathcal{F}}
\DeclareMathOperator{\Hc}{\mathcal{H}}
\DeclareMathOperator{\Kc}{\mathcal{K}}
\DeclareMathOperator{\Oc}{\mathcal{O}}
\DeclareMathOperator{\Qc}{\mathcal{Q}}
\DeclareMathOperator{\Sc}{\mathcal{S}}
\DeclareMathOperator{\Uc}{\mathcal{U}}
\DeclareMathOperator{\Wc}{\mathcal{W}}
\DeclareMathOperator{\Vc}{\mathcal{V}}
\DeclareMathOperator{\Hb}{\mathbb{H}}
\DeclareMathOperator{\Nb}{\mathbb{N}}
\DeclareMathOperator{\Pb}{\mathbb{P}}
\DeclareMathOperator{\Rb}{\mathbb{R}}
\DeclareMathOperator{\Zb}{\mathbb{Z}}
\newcommand{\wt}{\widetilde}
\newcommand{\til}{\widetilde}
\newcommand{\grad}{\nabla}
\newcommand{\op}{\operatorname}
\newcommand{\of}{\circ}
\newcommand{\set}[1]{\left\{ #1 \right\}}
\newcommand{\cout}[1]{}
\definecolor{darkcyan}{rgb}{0. 0.65, 0.65}
\newcommand\rest[1]{{\raisebox{-.5ex}{$|$}_{#1}}}
\newcommand{\eps}{\epsilon}
\newtheorem{Structural Stability Theorem}[theorem]{Structural Stability Theorem}
\def\bt{\begin{theorem}}
\def\et{\end{theorem}}
\def\bd{\begin{definition}}
\def\ed{\end{definition}}
\def\bl{\begin{lemma}}
\def\el{\end{lemma}}
\def\be#1\ee{\begin{align}\begin{split} #1 \end{split}\end{align}}
\def\beq#1\eeq{\begin{align*}\begin{split} #1 \end{split}\end{align*}}
\definecolor{grey}{rgb}{0.22, 0.32, 0.51}
\begin{document}

\title{Boundary actions of lattices and $C^0$ local semi-rigidity}

\author{Chris Connell$^{\dagger}$, Mitul Islam$^{\ddagger}$, Thang Nguyen, Ralf Spatzier$^{\ddagger\dagger}$}

\address{Department of Mathematics,
Indiana University, Bloomington, IN 47405}
\email{connell@indiana.edu}

\address{Mathematisches Institut, Heidelberg University, Im Neuenheimer Feld 205, Heidelberg 69120}
\email{mislam@mathi.uni-heidelberg.de}

\address{Department of Mathematics, Florida State University, 
    Tallahassee, FL, 32304}
\email{tqn22@fsu.edu}

\address{Department of Mathematics, University of Michigan, 
    Ann Arbor, MI, 48109.}
\email{spatzier@umich.edu}

\thanks{$^\dagger$ Supported in part by Simons Foundation grant \#965245}
\thanks{$^{\ddagger}$ Supported in part by Emmy Noether Project 427903332 (funded by the DFG)}
\thanks{$^{\ddagger\dagger}$ Supported in part by NSF grant DMS 2003712.}

\subjclass[2010]{Primary 53C24; Secondary 53C20,37D40}

\date{}

\begin{abstract}
  We consider actions of cocompact lattices in semisimple Lie groups of the noncompact type on their boundaries $G/Q$, $Q$ a parabolic group, the so-called standard actions.  We show that perturbations of the standard action in the homeomorphism group continuously factor onto the original standard action by a semi-conjugacy close to the identity. 
This generalizes works by Bowden, Mann, Manning and Weisman in the setting of negative curvature or Gromov hyperbolic groups.
Finally, we also construct perturbations of the action of  lattices on the geodesic boundary which  are not $C^0$ semi-conjugate to the original action. 
\end{abstract}

\maketitle

\tableofcontents
\section{Introduction}

Local rigidity of actions has played an important role in dynamics over the last few decades.  This was most explored for smooth  actions on manifolds with perturbations that are close in the $C^1$ or possibly $C^k$ topology to the original action.

Classically one investigated   structural stable systems.
 These are actions such that a sufficiently small perturbation  in the $C^1$-topology is $C^0$-conjugate or at least $C^0$-orbit-equivalent to the unperturbed action.  There are many such systems in dynamics, in particular diffeomorphisms and flows. Indeed, much attention was paid to characterizing the latter in the 1970s and 1980s. The simplest structurally stable diffeomorphisms and flows come from uniformly hyperbolic dynamics, in particular Anosov systems,  as was proved by Anosov in the 1960s. 
This class includes geodesic flows of closed manifolds $M$ of negative sectional curvature.  There one also has a natural action of the fundamental $\pi _1 M$ on the geodesic compactification $\partial \til{M}$.  For constant curvature $-1$ manifolds $M$, Sullivan proved  local rigidity of these boundary actions using structural stability of the  geodesic flows by a duality argument \cite{Sullivan85}.  This was followed by work of Ghys, Kanai, Katok-Spatzier, and recently  by  Kapovich, Kim and Lee \cite{Ghys93,Kanai, KatokSpatzier97,KapovichKimLee19}.    The latter three in particular consider projective actions of uniform lattices in a higher rank semisimple Lie group,  items of great  classical interest.

These results were an exploit  of structural stability of hyperbolic actions and required $C^1$ dynamics for the perturbation. 
Recently, Bowden and Mann  investigated $C^0$ perturbations of boundary actions of cocompact groups of isometries of a simply connected Riemannian manifold $X$ of negative curvature \cite{BowdenMann19}.  In particular, this includes the action of a uniform lattice in a rank one semisimple Lie group on its Furstenberg boundary.

Remarkably, Bowden and Mann  proved a $C^0$ local rigidity result for these actions, i.e. for  perturbations in the homeomorphism group of the manifold close in the compact open topology.  Note that in this generality, while the boundary $\partial_\infty X$ of $X$ is a compact manifold, in fact topologically a sphere, it does not always carry a differentiable structure for which the boundary action is $C^1$.   Thus $C^1$-local rigidity of such actions does not even make sense.  Later, Mann and Manning generalized their work to Gromov hyperbolic groups acting  on their boundaries assuming the latter are homeomorphic to a sphere \cite{MannManning21}. Later yet, Mann, Manning and Weisman dealt with the general case of Gromov hyperbolic groups \cite{Mann-Manning-Weisman}.  In a work in progress, they are generalizing these results further to   relatively hyperbolic groups for the action on the Bowditch boundary.

The main goal of this paper is to establish similar results for actions of uniform lattices in higher rank semisimple Lie groups on their Furstenberg boundaries (\cref{thm:main_thm_lattice}). These results fit in with the general program of studying actions of higher rank semisimple Lie groups $G$ and their lattices $\Gamma$ on manifolds.  This quest was initiated  by R. Zimmer  in the 1980s and is now called the Zimmer program \cite{Zimmer83,Zimmer84, Zimmer-ICM}.  It followed his remarkable generalization of Margulis' superrigidity theorem (for homomorphisms from  lattices into Lie groups) to cocycles over  $G$ or $\Gamma$ actions over finite measure preserving actions, e.g. volume preserving actions on compact manifolds.   Much  has been achieved since then; see the surveys by Fisher \cite{Fisher20,Fisher-survey2007}.  In particular, Brown, Fisher and Hurtado made major progress towards Zimmer's conjecture that lattices $\Gamma$ cannot act on ``low-dimensional'' manifolds \cite{BrownFisherHurtado22,BrownFisherHurtado20,BrownFisherHurtado21}. This follows earlier special results by Farb-Shalen, Polterovich and Franks-Handel \cite{FarbShalen99,Polterovich02,FranksHandel03}.

At this point we have many examples of actions of such groups  which are not of algebraic nature per se, even though they are constructed from  algebraic actions via a blow-up construction \cite{KatokLewis96, Benveniste00, Benveniste-Fisher,Fisher-survey2007}.  The classification problem however persists, in a softer form, asked by Zimmer,  namely that the actions are homogeneous  on a dense open set. Labourie in \cite{Labourie98} and later    Margulis  in his list of problems for the new century \cite{Margulis-problems} included a precise formulation  . 

Establishing the desired classification under natural dynamical or geometric hypotheses has also been pursued with vigor.  For example, we now have good understanding of ``hyperbolic'' actions of $G$.  For instance, Brown, Rodriguez Hertz and Wang classified actions of higher rank lattices on tori and nilmanifolds \cite{BrownHertzWang17}.
More recently,  Damjanovic, Spatzier, Vinhage and Xu \cite{DamjanovicSpatzierVinhageXu22} proved algebraicity of volume preserving actions of $G$ with many  Anosov elements on any compact manifold.

Zimmer's program mostly discussed volume preserving actions.  Actions on boundaries of $G$ or $\Gamma$ action on $G/Q$, $Q$ a parabolic, however cannot preserve any probability measure. Still they are important building blocks in understanding general actions of these groups on compact manifolds.  Hence we are keen on understanding their rigidity properties, both local and global ones.  
For $C^k$ ($k \geq 1$) or Lipschitz actions of uniform lattices, local rigidity  was resolved  in \cite{Kanai, KatokSpatzier97,KapovichKimLee19}, as discussed above.  
Brown, Rodriguez Hertz and Wang even get global rigidity results for certain groups and parabolics \cite{AFZ}. 

However, $C^0$-local rigidity for the standard $\Gamma$ actions  on boundaries has not been studied in the higher rank case.  This is precisely what we achieve in \cref{thm:main_thm_lattice} - our main result in this paper  - for uniform lattices.
This opens up a new direction in the Zimmer program, namely studying continuous actions of $G$ and $\Gamma$.  
So far the only known and fabulous results for higher rank lattices are for actions on the circle or the real line  due to  Morris \cite{WitteMorris94} for $SL(n,\Z)$ and other lattices of full $\Q$-rank.  This was proven in full for all higher rank lattices by Deroin and Hurtado \cite{DeroinHurtado20} recently.  These works followed earlier results for $C^1$-actions by Ghys, Burger-Monod and Navas \cite{Ghys99,BurgerMonod99,Navas02}.  

Naturally, one can now  ask questions about $C^0$ local semi-rigidity for other algebraic actions, e.g. ones preserving an affine structure.  For these, local rigidity properties are known under  higher regularity hypotheses.  Indeed,  Fisher and Margulis prove $C^k$ local rigidity for a suitable $k$ depending on the type of  action and the dimension of the manifold \cite{FM1,FM2,FM3}. These follow  earlier work by Margulis and Qian \cite{Margulis-Qian}.  Are these $C^0$-locally semi-rigid?  Is there a general  $C^0$ Zimmer program?  We refer to Weinberger's survey for further constructions, results, questions and conjectures \cite{Weinberger11,FarbShalen00}. 

We note that such actions are usually not $C^0$ locally rigid.  We actually find a general construction of $C^0$ perturbations for  any $C^1$ action of a countable group on a compact manifold with a dense orbit; see   \cref{construction:non_conjugate_perturbation}. Even better, in our  
Theorem \ref{thm:geod_bdy_not_rigid}, we construct new $C^{\infty}$ actions of 
$G$ and $\Gamma$ on spheres, deforming   the boundary action of $G$ on the sphere at infinity of its locally symmetric space.  These are not $C^0$ semi-conjugate to the unperturbed action. Our constructions generalize further, cf. Remark \ref{rem:semi}.   Let us also  note that Uchida and Kuroki had constructed other $C^0$ actions of $SL(3,\R)$ on $S^4$ \cite{Uchida85,Kuroki05}.  These are neither deformations of  nor semi-conjugate to our actions; indeed they only have finitely many orbits. Finally, we mention the recent work by Fisher and Melnick \cite{FM2022} who construct many new examples of real analytic actions of $\SL(n,\R)$ on manifolds of dimension $n$ which form a discrete set of  $C^0$ conjugacy classes.  Their real analytic conjugacy classes however range over a continuum.

 We now informally discuss the main ingredients of our work. Hyperbolicity will again play a fundamental role, just as 
in the local rigidity results alluded to at the beginning of the paper.    While we do not have simple north-south dynamics anymore, individual hyperbolic elements act by Morse-Smale diffeomorphisms, a more complicated expression of hyperbolicity.  The magic glue however is provided by the geodesic boundary of the symmetric space that all the Furstenberg boundaries naturally sit in. This is where the structure of the flats at infinity comes in.  In the end, we will exploit to a large extent that quasiflats in higher rank symmetric spaces are close to a finite union of flats (\cref{lem:quasiflat}).  
This approach mirrors the  work of Bowden and Mann which  centrally uses quasi-geodesics and Morse lemma techniques.

To our knowledge, this is the first instance of the use of quasi-isometric rigidity in higher rank dynamics, a connection long sought after.  In fact, it appears that this is the first application of these techniques outside of coarse geometry.

To start, let us first formulate a precise definition of local semi-rigidity.  
Given a topological space $F$, endow its group of homeomorphisms $\Homeo (F)$ with  the compact open topology.  Suppose $\Gamma$ is a finitely generated group with finite generating set $\Gamma _0$.  Suppose $\rho _0$ is an action of  $\Gamma$  on a topological space $F$ by homeomorphisms.

We call $\rho _0$ \emph{ $C^0$-locally semi-rigid} if there is a neighborhood $N$ of the $\id \in \Homeo (F)$  such that:  if $\rho: \Gamma \to\Homeo (F)$ is an action of $\Gamma$ on $F$ such that for all $\gamma \in \Gamma _0$,  
$\rho (\gamma) \cdot \rho_0 (\gamma ) ^{-1} \in N$, then there exists  a continuous surjective map $f: F \to F$ which intertwines $\rho$ and $\rho_0$ (i.e. $f \circ \rho(\gamma)=\rho_0(\gamma) \circ f$ for all $\gamma \in \Gamma$).  In other words, $\rho _0$ is a continuous factor of $\rho$. One also calls such a map $f$ a {\em semi-conjugacy}, or more precisely, a {\em $(\rho,\rho_0)$-semi-conjugacy}.

Obtaining semi-conjugacies is  often the optimal local rigidity result one can obtain for $C^0$-perturbations, as we will see in  \cref{construction:non_conjugate_perturbation}. There we adapt the well-known Denjoy construction to construct  $C^0$-perturbations  of any $C^1$-action $\rho_0$ of a countable group on a compact manifold with a dense orbit. These perturbations are not $C^0$ conjugate to $\rho_0$. Hence in the context of lattice actions on boundaries, the next result, our main theorem, is the best possible.

\begin{theorem}[Theorem \ref{thm:rigidity_G/Q}]
\label{thm:main_thm_lattice}
Suppose $\Gamma$ is a uniform lattice in a connected linear semisimple Lie group $G$ without compact factors. Let $\rho_0$ be the action of $~\Gamma$ on $G/Q$ by left multiplication. Then any $C^0$-action $~\rho$ of $\Gamma$ that is sufficiently close to $\rho_0$ in the $C^0$ topology is semi-conjugate to $\rho_0$. Moreover, the semi-conjugacy is close to $\id$ in $\Homeo(F)$. 
\end{theorem}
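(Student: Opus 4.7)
The plan is to adapt the Bowden--Mann--Manning--Weisman strategy for Gromov hyperbolic groups to the higher rank setting, using the quasiflat lemma (\cref{lem:quasiflat}) as the substitute for the hyperbolic Morse lemma. The semi-conjugacy $f \colon G/Q \to G/Q$ will be constructed by tracking, at infinity, the image under $\rho$ of a flat in the symmetric space $X = G/K$ pointing at $\xi$, and reading off the resulting $Q$-face in the Furstenberg boundary.

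First, I would promote the perturbed boundary action $\rho$ to a coarse ``shadow action'' on $X$. Since $\Gamma$ is a uniform lattice, the $\rho_0$-orbit map $\gamma \mapsto \rho_0(\gamma) o$ at a fixed basepoint $o \in X$ is a $(K,C)$-quasi-isometric embedding. Using a bounded $\Gamma$-equivariant trivialization along a fundamental domain, the $C^0$-closeness of $\rho$ and $\rho_0$ on $G/Q$ forces the shadow orbit $\gamma \mapsto \rho(\gamma) o$ to be a quasi-isometric embedding with constants arbitrarily close to $(K,C)$ provided the perturbation is small on the generating set $\Gamma_0$. In particular, for each maximal flat $F \subset X$, the set $\rho_0(\Gamma) o \cap \Nbhd_R(F)$ (coarsely dense in $F$ by cocompactness) is transported by the shadow map to a genuine quasiflat $\tilde F \subset X$ of the same rank.

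Second, for $\xi \in G/Q$, I choose a maximal flat $F \subset X$ whose $Q$-face at infinity contains $\xi$, and form $\tilde F$ as above. By \cref{lem:quasiflat}, $\tilde F$ lies within uniform Hausdorff distance of a finite union of maximal flats $F_1, \dots, F_n$, each with its own collection of $Q$-faces at infinity. I then define $f(\xi)$ to be the $Q$-face singled out from this finite list by the Morse--Smale dynamics of a well-chosen hyperbolic element $\gamma \in \Gamma$ with $\rho_0(\gamma)$ attracting at $\xi$: the nearby attracting fixed point of $\rho(\gamma)$ on $G/Q$ persists under $C^0$-perturbation and canonically picks out one of the $Q$-faces of the $F_i$. $\Gamma$-equivariance of $f$ is built in by naturality of the construction (translating $\xi$ by $\rho_0(\gamma')$ translates $F$ and the shadow data by $\gamma'$), and continuity together with the closeness of $f$ to $\id$ follow from the uniform Hausdorff bounds coming out of \cref{lem:quasiflat} and the shadow construction. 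Surjectivity is then a standard consequence of $f$ being a continuous self-map of the compact connected space $G/Q$ close to $\id$.

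The main obstacle will be the selection step: in rank one a quasi-geodesic determines a single point at infinity, but in higher rank \cref{lem:quasiflat} only pins a quasiflat down to a \emph{finite union} of flats, and one must extract a single, continuously varying $Q$-face from this data. Making the selection procedure canonical requires combining the chamber/face combinatorics of the spherical building at infinity with the persistence of attracting--repelling pairs for regular hyperbolic elements of $\rho_0(\Gamma)$ under $C^0$-perturbation, and then checking that the selected face is independent of the auxiliary choices (of $F$, of the hyperbolic element, of the basepoint) and depends continuously on $\xi$. This genuinely higher-rank difficulty, absent in the Bowden--Mann and Mann--Manning--Weisman arguments, is where I expect the technical heart of the proof to lie.
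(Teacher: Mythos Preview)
Your first step contains a genuine gap that prevents the argument from getting started. You write ``the shadow orbit $\gamma \mapsto \rho(\gamma)o$'' for $o \in X$, but $\rho(\gamma)$ is a homeomorphism of $G/Q$, not of $X$; there is no natural way to lift a $C^0$ perturbation of the boundary action to a (quasi\nobreakdash-)action, or even a coarse orbit map, on the symmetric space. This is not a matter of phrasing: a $C^0$ action on $G/Q$ carries no geometric data over $X$, and the whole point of the paper's machinery is to manufacture such data. What you are calling ``a bounded $\Gamma$-equivariant trivialization along a fundamental domain'' is exactly the content of the generalized Bowden--Mann lemma (\cref{prop:BM}, specialized in \cref{prop:BMMforG/Q}), and it produces not a shadow orbit in $X$ but a map $\tilde f : X \times G/Q \to G/M_Q$ into the Weyl chamber bundle, built by a fiberwise \emph{barycenter} averaging. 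The barycenter construction is essential because it yields leafwise $C^1$ regularity of $\tilde f(X \times \{\xi\})$ with tangent spaces uniformly close to the center-stable distribution; this does not follow from $C^0$-closeness alone.

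This $C^1$ control is what makes the quasiflat step work. The paper does not apply \cref{lem:quasiflat} to a generic $(L,C)$-quasiflat and then face the finite-union selection problem you describe. Instead, intersecting $\tilde f(X \times \{\xi\})$ with a center-unstable leaf $\wcu(\eta)$ gives a ``perturbed center'' $\Qc_{\xi,\eta}$ which is $L$-biLipschitz to a full parallel set $F(\gamma_{p\bdx})$ with $L$ as close to $1$ as desired (\cref{prop:geometric-orbit}); then the moreover clause of \cref{lem:quasiflat} gives $k=1$, so each biLipschitz flat is Hausdorff-close to a \emph{single} flat, and \cref{prop:subflat_existence} pins $\Qc_{\xi,\eta}$ to a single center leaf $\wcu(\eta)\cap\wcs(\zeta)$. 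The remaining issue is not selecting among finitely many flats but showing $\zeta$ is independent of the auxiliary $\eta$ (\cref{lem:well-defined}), which is done via a source--sink argument (\cref{lem:source_sink}) rather than Morse--Smale persistence of attractors. Your identification of the ``selection step'' as the heart of the matter is thus off target: the heart is engineering $L\to 1$ via the barycenter construction, after which the selection problem largely dissolves.
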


We  note that in the  rank one case, our result is a special case of Bowden and Mann's work \cite{BowdenMann19}.  Moreover, our proof  essentially simplifies to their argument. Furthermore, it follows from Thereom \ref{thm:rigidity_G/Q} that pre-images of points under the semi-conjugacy have small diameter, showing the Denjoy construction is, in essence, the only obstruction to conjugacy. We remark that our proof does not cover the case of non-uniform lattices.  We naturally conjecture that the statements still hold true.

From our Main Theorem, in \cref{sec:Applications}, we  recover prior local rigidity results that yield conjugacies,  under additional assumptions on the regularity of the perturbations.   Kapovich, Kim and Lee \cite{KapovichKimLee19} prove this for Lipschitz perturbations of the standard action, using a structural stability approach \`a la Sullivan.

\begin{corollary}{\cite[Theorem 1.2]{KapovichKimLee19}}\label{cor:LipRigidityIntro}
    Let $\Gamma<G$ be a uniform lattice in a connected linear semisimple Lie group $G$ of higher rank and without compact factors. Let $Q$ be a parabolic subgroup of $G$, and let $\rho_0$ be the action of $\Gamma$ on $G/Q$ by left translation. Then there is a neighborhood $U$ of $\rho_0$ in Lipschitz topology such that every $\rho\in U$ is $C^0$-conjugate to $\rho_0$.
\end{corollary}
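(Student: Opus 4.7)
The plan is to apply \cref{thm:main_thm_lattice} to extract a $C^0$-semi-conjugacy and then leverage the Lipschitz hypothesis to upgrade it to a true $C^0$-conjugacy. Since Lipschitz closeness implies $C^0$-closeness, any $\rho \in U$ satisfies the hypothesis of \cref{thm:main_thm_lattice}, yielding a continuous surjective $(\rho, \rho_0)$-semi-conjugacy $f \colon G/Q \to G/Q$ close to $\id$ in $\Homeo(G/Q)$, with fibers of small diameter. As $G/Q$ is compact Hausdorff, the task reduces to proving $f$ is injective, since a continuous bijection of a compact Hausdorff space is automatically a homeomorphism.

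For injectivity on a dense set, the first step is to exploit the local contracting dynamics at proximal fixed points. For each $\gamma \in \Gamma$ such that $\rho_0(\gamma)$ is proximal on $G/Q$, there is an attracting fixed point $p_\gamma$ where $\rho_0(\gamma)$ is a strict local contraction. Since $\rho(\gamma)$ is Lipschitz-close to $\rho_0(\gamma)$, a contraction mapping argument produces a unique nearby fixed point $p_\gamma'$ of $\rho(\gamma)$, and the local Lipschitz constant of $\rho(\gamma)$ at $p_\gamma'$ remains strictly less than $1$. The fiber $f^{-1}(p_\gamma)$ is a compact $\rho(\gamma)$-invariant subset of a small neighborhood of $p_\gamma$ (because $f$ is close to $\id$), hence contained in the basin of $p_\gamma'$; forward iteration of $\rho(\gamma)$ collapses it to $\{p_\gamma'\}$. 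Since the attracting fixed points of proximal elements form a dense subset $D \subset G/Q$, this shows $f$ is injective on $D$ and that $f$ maps $D$ bijectively onto itself.

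To upgrade this to global injectivity, the key ingredient is expansivity. The standard action $\rho_0$ of a cocompact lattice on $G/Q$ is expansive: there exists $\epsilon_0 > 0$ so that any distinct $x, y \in G/Q$ can be separated by at least $\epsilon_0$ under some $\rho_0(\gamma)$. The Lipschitz hypothesis ensures the local contracting/expanding structure near proximal fixed points is preserved under perturbation, so $\rho$ itself is expansive with a comparable constant $\epsilon_1 > 0$. Now if $f(x) = f(y)$ for some distinct $x, y$, the semi-conjugacy identity $f \circ \rho(\gamma) = \rho_0(\gamma) \circ f$ combined with $f$ being close to $\id$ forces $d(\rho(\gamma) x, \rho(\gamma) y) \leq 2\|f - \id\|_\infty$ for every $\gamma \in \Gamma$. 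Shrinking the Lipschitz neighborhood $U$ so that $\|f - \id\|_\infty < \epsilon_1/2$ then contradicts $\rho$-expansivity, yielding $x = y$ as required.

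The main anticipated obstacle is justifying expansivity of $\rho$ on $G/Q$ for a general parabolic $Q$ together with its stability under Lipschitz-small perturbation: while expansivity for the full flag manifold $G/B$ is classical, establishing it on intermediate $G/Q$ requires verifying that the contracting directions descend, and checking that Lipschitz-small perturbations retain a uniform expansivity constant. A possible workaround, if expansivity proves delicate, is to drive any candidate pair $(x, y)$ with $f(x) = f(y)$ into the basin of attraction of a proximal element via suitable $\rho(\gamma)$, reducing to the contraction argument of the second paragraph; this exploits the fact that the $\rho$-orbit of any point is $\epsilon$-dense in $G/Q$ for $\epsilon = \|f - \id\|_\infty$, a consequence of surjectivity of $f$ and minimality of $\rho_0$.
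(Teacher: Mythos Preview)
Your overall strategy---apply \cref{thm:main_thm_lattice} to obtain a semi-conjugacy close to $\id$, then use the Lipschitz hypothesis to upgrade it to a homeomorphism---matches the paper's exactly. The difference lies in how the upgrade is carried out, and your route is more circuitous than necessary.

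The paper bypasses both your proximal-fixed-point step and the expansivity-of-$\rho$ step entirely. Instead it invokes \cref{prop:removesemi}, whose hypothesis is that $\rho_0$ is \emph{$(\lambda,\Uc)$-uniformly expanding}: there is a finite open cover $\{U_i\}$ of $G/Q$ and elements $\gamma_i\in\Gamma$ such that $\rho_0(\gamma_i)$ is $\lambda$-expanding on $U_i$. This is a local, metric condition that is straightforwardly stable under Lipschitz-small perturbation: if $d_L(\rho(\gamma_i),\rho_0(\gamma_i))$ is small then $\rho(\gamma_i)$ is $\lambda'$-expanding on $U_i$ for some $\lambda'>1$. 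Injectivity of $\varphi_\rho$ then follows by the direct iteration argument in the proof of \cref{prop:removesemi}: if $\varphi_\rho(\xi)=\varphi_\rho(\zeta)$ with $\xi\neq\zeta$, then $d(\xi,\zeta)$ is less than the Lebesgue number of the cover, so both lie in some $U_i$; applying $\rho(\gamma_i)$ preserves the relation $\varphi_\rho(\rho(\gamma_i)\xi)=\varphi_\rho(\rho(\gamma_i)\zeta)$ while strictly increasing the distance, and iterating yields a contradiction.

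What this buys over your approach: you never need to prove that $\rho$ itself is expansive (the obstacle you flagged), nor do you need any separate argument at proximal fixed points. Uniform expansion of $\rho_0$ on $G/Q$ is the single dynamical input, and its stability under Lipschitz perturbation is immediate from the definition of the Lipschitz distance. Your expansivity formulation is morally equivalent---indeed the iteration in \cref{prop:removesemi} is essentially proving expansivity of $\rho$ on the fly---but packaging it as uniform expansion makes the perturbation argument a one-liner rather than a genuine obstacle.
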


For smooth $C^1$ close perturbations, Kanai under more restrictive assumptions  and Katok-Spatzier in general obtained the following, with very different proofs:

\begin{corollary}\cite{Kanai, KatokSpatzier97} \label{cor:ks}
    Let $\Gamma<G$ be a uniform lattice in a connected linear semisimple Lie group $G$ of higher rank and without compact factors. Let $Q$ be a parabolic subgroup of $G$, and let $\rho_0$ be the action of $\Gamma$ on $G/Q$ by left translation. Then there is a neighborhood $U$ of $\rho_0$ of smooth actions that are close to $\rho_0$ in $C^1$-topology such that every $\rho\in U$ is $C^\infty$-conjugate to $\rho_0$.
\end{corollary}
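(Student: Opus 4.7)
The plan is to combine our main theorem with the Lipschitz corollary and a standard regularity-upgrade argument. Given a smooth action $\rho$ that is $C^1$-close to $\rho_0$, note first that $C^1$-closeness implies Lipschitz-closeness. Thus \cref{cor:LipRigidityIntro} already provides a $C^0$-conjugacy $h:G/Q\to G/Q$ intertwining $\rho$ and $\rho_0$. The remaining task is to promote the regularity of $h$ from $C^0$ to $C^\infty$, using that both actions are smooth.

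For the upgrade, the key input is the wealth of hyperbolic elements in $\Gamma$. Select generic (regular) elements $\gamma\in\Gamma$ whose action $\rho_0(\gamma)$ on $G/Q$ is Morse--Smale/partially hyperbolic with \emph{smooth} (indeed homogeneous) stable/unstable foliations $\mathcal{W}^{s,u}_{\rho_0(\gamma)}$. Because $\rho(\gamma)$ is $C^1$-close to $\rho_0(\gamma)$, classical persistence of partially hyperbolic splittings gives corresponding foliations $\mathcal{W}^{s,u}_{\rho(\gamma)}$ that are $C^1$-close to the homogeneous ones, and $h$ necessarily sends stable leaves to stable leaves and similarly for unstable leaves, because of the dynamical characterization $\mathcal{W}^s_{\rho(\gamma)}(x)=\{y:\mathrm{dist}(\rho(\gamma^n)x,\rho(\gamma^n)y)\to 0\}$ which is preserved by any conjugacy.

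The proof then proceeds in two steps. First, show that $h$ is smooth along each stable and each unstable leaf of each selected $\gamma$. Along a single leaf, $h$ intertwines the exponentially contracting actions of $\rho_0(\gamma)$ and $\rho(\gamma)$; using a non-stationary linearization (Sternberg-type) argument on the leaves, or equivalently the de la Llave--Marco--Moriy\'on cohomological bootstrap, the $C^0$-conjugacy along the leaf is forced to be $C^\infty$ provided the linear data of the perturbed contraction is matched by the original. In our setting this matching is automatic because $\Gamma$ contains higher rank abelian subgroups of commuting regular elements, so the Lyapunov data of $\rho_0(\gamma)$ is rigid under $C^1$-perturbation (this is the place where higher rank really enters, as opposed to rank one). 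Second, choose finitely many regular $\gamma_1,\ldots,\gamma_m$ whose combined stable/unstable distributions span $T(G/Q)$ at every point (this is possible since the root spaces of $\gL$ span $\gL/\qL$), and apply Journ\'e's regularity lemma iteratively to conclude that $h$ is globally $C^\infty$.

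The main obstacle is the leafwise smoothness in step one: one needs the perturbed foliations $\mathcal{W}^{s,u}_{\rho(\gamma)}$ to actually be smooth (not merely continuous or H\"older) in order for the bootstrap to terminate at $C^\infty$ rather than some finite regularity. This is precisely what the higher rank machinery supplies, via the simultaneous contraction/expansion provided by commuting Weyl chamber elements -- the standard smooth rigidity mechanism introduced by Kanai \cite{Kanai} and developed in full generality by Katok--Spatzier \cite{KatokSpatzier97}. Once their smooth foliation results are granted, the corollary follows by the outline above.
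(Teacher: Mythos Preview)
Your first step—obtaining a $C^0$-conjugacy via \cref{cor:LipRigidityIntro}—is exactly what the paper does (phrased there as \cref{thm:rigidity_G/Q} plus \cref{prop:removesemi}, which is the content of the Lipschitz corollary). So far the two proofs coincide.

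The divergence is in the regularity upgrade. The paper's argument is a single black-box citation: apply \cite[Theorem 1.2]{GorodnikSpatzier18}, which says directly that a $C^0$-conjugacy between smooth higher-rank lattice actions on $G/Q$ is automatically $C^\infty$. That is the entire second half of the proof. Your proposal instead sketches the \emph{internal} mechanism of such a result—hyperbolic elements, persistence of stable/unstable foliations, leafwise linearization, Journ\'e's lemma—which is morally the Katok--Spatzier strategy. This is a reasonable outline of how the cited machinery works, but as written it is not a self-contained proof: you acknowledge that leafwise smoothness of the \emph{perturbed} foliations is the crux, and then defer to ``the standard smooth rigidity mechanism'' of \cite{Kanai,KatokSpatzier97}. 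Since the corollary itself is attributed to those very papers, invoking their foliation results at the end is close to circular in this context; what the present paper contributes is the $C^0$-conjugacy from a $C^0$ (rather than $C^1$) perturbation hypothesis, and the upgrade is outsourced. If you want a clean non-circular derivation, replace your second step with the Gorodnik--Spatzier citation; if you want to actually carry out your sketch, you would need to supply the argument that the perturbed invariant foliations for commuting regular elements are $C^\infty$, which is substantial and not covered by a single-diffeomorphism Sternberg/de la Llave argument.
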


In up-and-coming work \cite{AFZ}, Brown, Rodriguez Hertz and Wang  obtain a $C^k$-version of Corollary \ref{cor:ks}.  Remarkably, their work even applies to  non-uniform lattices.  For minimal Furstenberg boundaries of split simple groups, they even prove a global rigidity result \cite{AFZ,AFZ22}.

Naturally one  asks how rigid the actions of lattices $\Gamma \subset G$ are on other types of boundaries.  The most natural is the geodesic boundary. In the higher rank case, it turns out that we can always deform nontrivially and these deformations can be made as smooth as we wish. For $k \geq 0$, we will say that $\rho_0$ is \emph{not locally semi-rigid among $C^k$-actions} if any sufficiently small neighborhood of $\rho_0$ in the space of $C^k$ actions contains an action that is not $C^0$-semi-conjugate to $\rho_0$.

\begin{theorem}(see \cref{sec:action_on_geod_bdry})
\label{thm:geod_bdy_not_rigid}
    Let $\Gamma$ be a lattice for a  higher rank symmetric space of noncompact type $X$, not necessarily  uniform. Then the standard action of $\Gamma$ on $\partial_\infty X$ is not locally semi-rigid among $C^k$ actions for any $k\in [0,\infty]$.
\end{theorem}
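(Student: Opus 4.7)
The plan is to construct, for any $k\in[0,\infty]$, explicit $C^k$-small perturbations of the standard action $\rho_0$ of $\Gamma$ on $\partial_\infty X$ that fail to be $C^0$-semi-conjugate to $\rho_0$. The key feature of higher rank is that $\partial_\infty X$ carries nontrivial $G$-invariant structure: a continuous $G$-invariant type map $\tau\colon \partial_\infty X \to \bar\Delta$ onto the closed model Weyl chamber, a closed simplex of dimension $r-1 \geq 1$, where $r := \mathrm{rank}(X) \geq 2$. On the regular stratum $\partial_\infty X^{\mathrm{reg}} := \tau^{-1}(\Delta^\circ)$, the map $\tau$ is smooth and exhibits a $G$-equivariant trivialization
\[
\partial_\infty X^{\mathrm{reg}} \;\cong\; G/P \times \Delta^\circ,
\]
with $G$ acting only on the first factor. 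The chamber direction $\Delta^\circ$ therefore provides room for deformations that does not exist in a Furstenberg boundary $G/Q$.

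I would build the perturbation by deforming in the fiber direction. Fix a smooth $\psi\in\Diff^\infty(\bar\Delta)$ equal to the identity on $\partial\bar\Delta$ and $\varepsilon$-close to $\mathrm{id}$ in $C^k$. Using $\psi$, construct a smooth cocycle $\alpha\colon \Gamma\times G/P \to \Diff^\infty(\bar\Delta)$ with values supported in $\Delta^\circ$, over the $\Gamma$-action on $G/P$, and set
\[
\rho_\alpha(\gamma)(x,t) \;:=\; \bigl(\rho_0(\gamma)(x),\;\alpha(\gamma,x)(t)\bigr)
\]
on $G/P\times\bar\Delta$. The support condition lets $\rho_\alpha$ extend smoothly across the stratification to all of $\partial_\infty X$, the cocycle identity makes $\rho_\alpha$ a $\Gamma$-action, and smallness of $\psi$ makes $\rho_\alpha$ as $C^k$-close to $\rho_0$ as we want.

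To show $\rho_\alpha$ is not $C^0$-semi-conjugate to $\rho_0$, I would argue by contradiction: any semi-conjugacy $f\colon \partial_\infty X\to\partial_\infty X$ close to the identity produces a continuous $\rho_\alpha$-invariant map $\tau\circ f\colon \partial_\infty X \to \bar\Delta$ that is $C^0$-close to $\tau$ itself. Equivalently, for each interior $t\in \Delta^\circ$, the preimage $f^{-1}(\tau^{-1}(t))$ is a $\rho_\alpha$-invariant closed subset of $\partial_\infty X$ lying near the slice $\tau^{-1}(t)$. For $\alpha$ whose orbits on $\Delta^\circ$ move non-trivially, the $\rho_\alpha$-action pushes typical points off their $\tau$-slice, so no such invariant set exists near $\tau^{-1}(t)$, a contradiction.

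The main obstacle is constructing the cocycle $\alpha$ so that it is simultaneously (i) a genuine cocycle, (ii) not cohomologous to the trivial one via a small coboundary (else $\rho_\alpha$ is merely conjugate to $\rho_0$, hence semi-conjugate), and (iii) $C^k$-small. Higher rank rigidity obstructs the simplest candidates: Kazhdan's property (T) prevents building $\alpha$ from an additive cocycle $\Gamma\to\mathbb{R}$, and the Deroin--Hurtado theorem forces any homomorphism $\Gamma\to\Diff^\infty_c(\Delta^\circ)$ to have finite image when $\dim\Delta^\circ=1$. The resolution should use a genuinely non-constant cocycle, localized near the attracting Weyl chamber at infinity of a regular hyperbolic $\gamma_0\in\Gamma$ and extended to all of $\Gamma$ using the equivariance of $\tau$; verifying non-triviality up to small coboundaries is the delicate final step, expected to reduce to a local holonomy-type invariant around the fixed chamber of $\gamma_0$.
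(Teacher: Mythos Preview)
Your overall strategy—deforming in the Weyl-chamber direction via a cocycle $\alpha\colon\Gamma\times G/P\to\Diff(\bar\Delta)$—is exactly the right shape, and matches what the paper does. But there are two genuine gaps.

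\textbf{The cocycle.} You correctly identify that property~(T) and Deroin--Hurtado kill the naive candidates, but your proposed resolution (``localize near an attracting chamber, extend by equivariance, check a holonomy invariant'') is not a construction; it is a hope. The paper's answer is much simpler and you are missing it: use the Radon--Nikodym derivative cocycle $c(g,x)=\frac{d g_*\mu}{d\mu}(x)$ for Haar measure $\mu$ on $G/P$. This is automatically a smooth multiplicative cocycle for all of $G$, and is nontrivial exactly because $G$ does not preserve a probability measure on $G/P$. Then $c^\alpha$ is again a cocycle for every $\alpha\geq 0$, and one lets $\alpha(\gamma,\mathcal C)$ be the dilation of the model chamber by factor $c(\gamma,\mathcal C)^\alpha$ (after identifying the interior of $\bar\Delta$ with $\R^{r-1}\cup\partial_\infty\R^{r-1}$ so that dilations fix the boundary pointwise). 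This gives a one-parameter family $\rho_\alpha$ of $C^k$ actions converging to $\rho_0$ as $\alpha\to 0$, with no delicate verification needed.

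\textbf{Non-semi-conjugacy.} Your argument only excludes semi-conjugacies $f$ that are $C^0$-close to the identity. That is not enough: the definition of local semi-rigidity does not require the semi-conjugacy to be close to $\id$, so you must rule out \emph{all} continuous surjective $(\rho_\alpha,\rho_0)$-equivariant maps. The paper's argument does this cleanly using dynamics. Take $\gamma\in\Gamma$ stabilizing a chamber $\mathcal C$ with $c(\gamma,\mathcal C)\neq 1$ (such $\gamma$ exist by Prasad--Raghunathan). Then $\rho_\alpha(\gamma)$ acts on $\mathcal C$ with the center as an attractor (or repeller) and $\partial\mathcal C$ fixed, while $\rho_0(\gamma)$ is the identity on $\mathcal C$. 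For any semi-conjugacy $\varphi$, this forces $\tau\circ\varphi$ to be constant on $\mathcal C$. By minimality of the $\Gamma$-action on $G/P$, the same holds for every chamber; since adjacent chambers share a face, the constant is the same throughout, so $\tau\circ\varphi(\partial_\infty X)$ is a point—contradicting surjectivity of $\varphi$.
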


We  note that, in a similar vein, Fisher and Melnick have constructed new actions  of $G$ on circle bundles over projective spaces \cite{FM2022}.  They even find real analytic actions.  We however do not know if the perturbations in Theorem \ref{thm:geod_bdy_not_rigid}  can be made real analytic.

One wonders if analogous results hold more generally in non-positive curvature. More precisely, consider a compact non-positively curved manifold $M$ without local Euclidean de Rham factor together with the action of its fundamental group $\pi_1(M)$ on the geodesic boundary $\partial_{\infty}{\til{M}}$ of the universal cover $\wt{M}$. Is this action $C^0$ locally semi-rigid? We suspect that    local semi-rigidity holds for manifolds with isolated flats while we believe that  local semi-rigidity fails for graph manifolds.  It seems rather difficult to formulate a precise condition when local semi-rigidity holds.

\subsection*{Outline of the arguments:} 
We first give an outline for the proof of \cref{thm:main_thm_lattice}. Consider the standard action of a uniform lattice $\Gamma$ on $G/Q$ for a parabolic subgroup $Q$. We employ the suspension construction to get a Weyl chamber bundle $\Gamma \backslash G/M_Q$ over the  locally symmetric space associated to $\Gamma$. The Weyl chamber flow gives rise to center stable and center unstable manifolds. Each of them, on the Weyl chamber bundle over the universal cover, is isometric to the symmetric space associated to $G$. Given a perturbed action on $G/Q$, we construct a new foliation on the Weyl chamber bundle that is a perturbation of the center stable foliation. In the smooth setting, this construction can be done by pulling back appropriate foliations by a diffeomorphism between suspensions. This diffeomorphism is constructed by using the Implicit Function Theorem to solve certain coboundary equations, see for example \cite[Appendix]{ConnellNguyenSpatzier22}. A key requirement in our argument here is that the perturbed center stable foliation is continuous but with $C^1$ leaves. The argument in \cite{ConnellNguyenSpatzier22}  in the aforementioned smooth setting does not guarantee this. Our approach in this paper is to construct each leaf of the foliation on the Weyl chamber bundle over the universal cover by \emph{developing} from a local leaf. A local leaf is constructed by taking a barycenter of finitely many center stable leaves with weights varying smoothly. 

The next main step is establishing a weak form of structural stability. The intersection of each center unstable leaf with a perturbed center stable leaf is a smooth manifold and it has the same dimension as a center leaf for the Weyl chamber flow. Locally, this intersection is a biLipschitz image of a center leaf. We can improve it to a global biLipschitz image of a center leaf. Then we use quasi-isometric rigidity to prove that the intersection is shadowing an actual center leaf for the Weyl chamber flow. This is done first by showing that a $L$-biLipschitz flat (i.e. the image of a flat under an $L$-biLipschitz map) is uniformly close to a single flat when $L$ is sufficiently close to $1$. The global shadowing conclusion follows from how different flats union to a symmetric totally geodesic subspace that is isometric to a center leaf.

After establishing a weak form of structural stability on the large scale, we can start constructing a semiconjugacy. For this, we use the correspondence - via shadowing - between intersections of center unstable leaves with perturbed center stable leaves and center stable leaves respectively. The proof that the semi-conjugacy is well-defined and continuous is similar to the proof that a quasi-isometric embedding extends to a continuous map on the boundaries.

We finish this section with an outline for the proof of \cref{thm:geod_bdy_not_rigid}. First  note that the standard action $\rho_0$ of $G$ on the geodesic boundary of the associated symmetric space preserves the Tits metric. In other words, Weyl chambers map to Weyl chambers isometrically under $G$. The perturbed action $\rho$ that we construct will still map Weyl chambers to Weyl chambers, exactly as $\rho _0$ does.  However, we will move in the Weyl chamber directions, moving either towards the center of the chamber or the Weyl chamber faces.

\subsection*{Acknowledgements}
We thank Katie Mann,  David Fisher, Zhiren Wang and  Aaron Brown  for helpful discussions and conversations. 
CC and TN thank University of Michigan, CC thanks Heidelberg University, CC and MI thank Karlsruhe Institute of Technology, and CC and TN thank Max Planck Institute for Mathematics for hosting them at various periods during the course of this project.

\section{Preliminaries I: Barycenters}
\label{sec:barycenter}

\subsection{Definition of barycenter}
Let $F$ be a $C^{k}$ Riemannian manifold for some $k\geq 2$. We say a set $K\subset F$ is {\em geodesically convex} or {\em totally convex}  if for every pair of points $x,y\in K$ there is a unique minimizing geodesic segment of $F$ between $x$ and $y$ and this segment is contained in $K$. As is well known, normal neighborhoods in Riemannian manifolds are geodesically convex \cite{doCarmo92}.

Suppose $\mu$ is a probability  measure  with $\op{supp}(\mu) \subset K$ where $K$ a geodesically convex subset of $F$. Consider the function \[x\mapsto \mc{B}_\mu(x)=\frac12 \int_F  d(x,y)^2d\mu(y).\]
The {\em barycenter of $\mu$} in $K$, denoted by $\op{bar}(\mu)$ in $K$, is the unique point in $K$ where the minimum of the function occurs. Note that the existence of a minimum is guaranteed by the compactness of $K$ while the uniqueness depends on strict convexity of $\Bc_{\mu}$ (and hence depends on $d$ and $\op{supp}(\mu)$).

\begin{remark}\label{rem:barybound}
Suppose $\mu$ is a probability measure such that $\op{supp}(\mu) \subset B_{\eps}(x)$  where $B_{\eps}(x)$ is a metric ball of radius $\eps$ (in the metric $d$) which is geodesically convex. Assume that $\op{bar}(\mu)$ exists. Then, 
\[ 
\op{bar}(\mu) \in B_{2\eps}(x).
\]
Indeed, for any point $y \not \in B_{2 \eps}(x)$ and any $z\in B_{\eps}(x)$, we have by the triangle inequality
\[
2\eps \leq d(y,x)\leq d(y,z)+d(z,x)< d(y,z)+\eps
.\]
So $d(y,z)\geq \eps > d(x,z)$ and thus $\mc{B}_\mu(x)< \mc{B}_\mu(y)$. Thus $y$ cannot be the barycenter of $\mu$. 
\end{remark}

We now introduce the following condition which is sufficient for the barycenter map to be well-defined. 
\vspace{.5em}

\emph{Condition $(\star)$: } We will say that a measure $\mu$ on $F$ satisfies \emph{Condition $(\star)$} if 
\begin{itemize}
\label{list:condition_*}
    \item $\op{supp}(\mu)$ is contained in a compact geodesically convex subset $K_{\mu} \subset K$ of $F$,
    \item $\Bc_{\mu}(x)$ has one critical point in $K_\mu$, i.e. there exists $x_0 \in K_\mu$ such that
    \begin{align}\label{eq:G}
    G(x_0,\mu)=\nabla_{x=x_0} \Bc_{\mu}(x)=\int_F d(x_0,y)\grad_{x=x_0} d(x,y)d\mu(y)=0,
    \end{align}
    \item $D_xG(x,\mu)$, the Hessian of $\Bc_{\mu}(x)$, exists and is positive definite on $K_\mu$ (cf. \cref{rem:C2_regularity_dist_function}). 
\end{itemize}
\vspace{.5em}

Note that if $\mu$ satisfies  \emph{Condition $(\star)$}, then $\op{bar}(\mu)$ in $K_{\mu}$ is well-defined. Moreover, measures satisfying this condition are well-behaved under affine diffeomorphisms.

\begin{remark}
\label{rem:C2_regularity_dist_function}
    Since the Riemannian metric on $F$ is $C^2$, then $y \mapsto d(x_0,y)^2$ is $C^2$ on metric balls around $x_0$ of sufficiently small radius (see \cref{lem:reg_of_dist}). Thus the Hessian of $\Bc_{\mu}(x)$ exists on a sufficiently small neighborhood of $x$. Hence the last condition in \emph{Condition $(\star )$} is satisfied by $\mu$ provided $\diam(K_{\mu})$ is sufficiently small. 
\end{remark}

\begin{lemma}
\label{lem:barycenter_under_affine_map}
    Suppose $M,N$ are $C^2$-Riemannian manifolds. Let $f:M \to N$ be an affine diffeomorphism with respect to the respective Riemannian connections. If $\mu$ and $f_*\mu$ are  measures on $M$ and $N$ satisfying \hyperref[list:condition_*]{Condition $(\star)$} with $\diam(K_\mu)$ and $\diam K_{f_*\mu}$ sufficiently small, then
     \[\op{bar}(f_* \mu)=f(\op{bar}(\mu)).\]    
\end{lemma}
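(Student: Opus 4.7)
The plan is to exploit the characterization of the barycenter as the unique critical point of $\Bc_\mu$ inside $K_\mu$ afforded by Condition $(\star)$, and to verify that $f(\op{bar}(\mu))$ satisfies the analogous critical-point equation for $\Bc_{f_*\mu}$. The two key inputs will be the standard identity $\grad_x \tfrac{1}{2}d(x,y)^2 = -\exp_x^{-1}(y)$, valid whenever $y$ sits in a normal neighborhood of $x$, together with the defining property of an affine diffeomorphism with respect to the Levi-Civita connections, namely $f\circ \exp_x = \exp_{f(x)}\circ df_x$, so that $\exp_{f(x)}^{-1}\circ f = df_x\circ \exp_x^{-1}$ on any normal neighborhood.

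Set $x_0 = \op{bar}(\mu) \in K_\mu$. Since $\diam(K_\mu)$ is small and $K_\mu$ is geodesically convex, $K_\mu$ lies inside a normal neighborhood of $x_0$, so the first identity recasts the critical-point equation from Condition $(\star)$ as
\[
G(x_0,\mu) \;=\; -\int_M \exp_{x_0}^{-1}(y)\,d\mu(y) \;=\; 0 \;\in\; T_{x_0}M.
\]
Applying the linear map $df_{x_0}\colon T_{x_0}M \to T_{f(x_0)}N$, commuting it past the vector-valued integral, and then invoking the exponential-intertwining property yields
\[
0 \;=\; \int_M df_{x_0}\bigl(\exp_{x_0}^{-1}(y)\bigr)\,d\mu(y) \;=\; \int_M \exp_{f(x_0)}^{-1}\bigl(f(y)\bigr)\,d\mu(y) \;=\; \int_N \exp_{f(x_0)}^{-1}(z)\,d(f_*\mu)(z),
\]
where the last equality is the change-of-variables formula for pushforward measures. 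Hence $G(f(x_0),f_*\mu)=0$, i.e.\ $f(x_0)$ is a critical point of $\Bc_{f_*\mu}$.

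To conclude $f(x_0) = \op{bar}(f_*\mu)$, one needs $f(x_0) \in K_{f_*\mu}$, so that the uniqueness clause of Condition $(\star)$ for $f_*\mu$ — which follows from the positive-definite Hessian combined with geodesic convexity, implying strict convexity of $\Bc_{f_*\mu}$ on $K_{f_*\mu}$ — identifies $f(x_0)$ with $\op{bar}(f_*\mu)$. This is precisely where the smallness of $\diam(K_\mu)$ and $\diam(K_{f_*\mu})$ is used: $x_0$ lies within $\diam(K_\mu)$ of $\op{supp}(\mu)$, so $f(x_0)$ sits in a small neighborhood of $f(\op{supp}(\mu)) = \op{supp}(f_*\mu) \subset K_{f_*\mu}$, and for sufficiently small diameters this neighborhood can be arranged to lie in $K_{f_*\mu}$ (enlarging $K_{f_*\mu}$ a bit inside its normal-convexity region if necessary, cf.\ \cref{rem:barybound}).

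The only step carrying any real content is the intertwining identity $\exp_{f(x)}^{-1}\circ f = df_x \circ \exp_x^{-1}$; once this is in hand, the rest is linearity of the integral, change of variables, and uniqueness of the critical point. Notably, the lemma demands neither that $f$ be an isometry nor any control of $d(f(x),f(y))$ in terms of $d(x,y)$: the barycenter is characterized by a tangent-space–valued equation that transforms naturally under differentials of affine diffeomorphisms, so no delicate distance estimate is required.
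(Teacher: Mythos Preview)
Your proof is correct and follows essentially the same route as the paper's: both rewrite the critical-point equation $G(x_0,\mu)=0$ in terms of the inverse exponential (the paper phrases it via the initial tangent $v_{z,y}$ of the geodesic from $z$ to $y$, which is $\exp_z^{-1}(y)$ up to sign), then use that an affine diffeomorphism intertwines geodesics so that $df_{x_0}$ carries $\exp_{x_0}^{-1}(y)$ to $\exp_{f(x_0)}^{-1}(f(y))$, and conclude by linearity of the integral, change of variables, and uniqueness of the critical point in $K_{f_*\mu}$. Your discussion of why $f(x_0)\in K_{f_*\mu}$ is slightly more explicit than the paper's, but the substance is identical.
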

\begin{proof}

By \cref{rem:C2_regularity_dist_function}, $\Bc_{\mu}$ and $\Bc_{f_*\mu}$ are $C^2$ functions on $K_\mu$ and $K_{f_*\mu}$.  Note that it suffices to prove that if $G(x,\mu)=0$, then $G(f(x),f_*\mu)=0$. Indeed, since $\mu$ (resp. $f_*\mu$) satisfies Condition $(\star)$, it has a unique point of minimum for the function $x \mapsto \Bc_{\mu}(x)$ (resp. $x \mapsto \Bc_{f_* \mu}(x)$) in $K_{\mu}$ (resp. $K_{f_*\mu}$).  We only need to show that $f$ maps one critical point to another.

In order to verify this, note that $d(z,y)\nabla_zd(z,y)=v_{z,y}$ where $v_{z,y}$ is the tangent at $z\in M$ to the geodesic $\sigma$ with $\sigma(0)=z$ and $\sigma(1)=y$. Then $\op{bar}(\mu)$ is the unique point such that 
\[0=G(\op{bar}(\mu))=\int_{M}v_{\op{bar}(\mu),y}d\mu(y).\]
Then
\[
\int_{N} v_{f(\op{bar}(\mu)),y}d(f_*\mu)(y)=\int_{M} v_{f(\op{bar}(\mu)),f(y)}d\mu(y)=\int_{M}df_{\op{bar}(\mu)}(v_{\op{bar}(\mu),y})d\mu(y),
\]
where the second equality uses the fact that $f$ is affine (and maps geodesic segments to geodesics segments) and the first equality is the change of variables formula. Then
\[\int_{N} v_{f(\op{bar}(\mu)),y}d(f_*\mu)(y)=df \left( \int_{M} v_{\op{bar}(\mu),y} d\mu(y) \right)=df(0)=0.\]
Thus $G(f(\op{bar}(\mu)),f_*\mu)=0$. By uniqueness of the critical point of $G$ in $K_{f_*(\mu)}$ (see Condition $(\star)$), 
 \[f(\op{bar}(\mu))=\op{bar}(f_{*}\mu).\]
\end{proof}

\vspace{.5em}

Finally let us explain the following special case of barycenter maps that we will be interested in. Let $\mu=\mu_{z,w}$ be a finite weighted sum of Dirac measures at points $z=\set{z_1,\dots,z_n}\subset F$ with weights $w=\set{w_1,\dots,w_n}\in \R^n$. Then writing $G(x,w,z):=G(x,\mu_{z,w})=\sum_{i=1}^n w_id(x,z_i)\nabla_x d(x,z_i)$ we have the implicit equation $G(\op{bar}(\mu_{z,w}),z,w)=0$. Denoting the covariant derivative at $x=\op{bar}(\mu_{z,w})$ by $D_x$, we obtain the Hessian of $\mc{B}_{\mu_{z,w}}$,
\begin{align}\label{eq:DG}
D_x G(x,w,z)=\sum_{i=1}^n w_i \left(D_xd(x,z_i)\tensor \nabla_xd(x,z_i)+d(x,z_i)D_{x}\nabla_x d(x,z_i)\right).
\end{align}

In \cref{sec:derivative_barycenter}, we will show  that if $\diam_F\set{z_1,\dots, z_n}$ is sufficiently small, then  $\op{bar}(\mu_{z,w})$ is well-defined and is well-behaved under affine diffeomorphisms. 

\begin{remark} \label{rem:std_action_fiber_bary_equiv}\
\begin{enumerate}
    \item If $\diam\set{z_1,\dots,z_n}$ has sufficiently small diameter, then $\mu_{z,w}$ satisfies \hyperref[list:condition_*]{Condition $(\star)$} and $\op{bar}(\mu_{z,w})$ is well-defined.
    \item Suppose $F'$ is another $C^3$ Riemannian manifold and $f: F\to F'$ is an affine diffeomorophism. If $\diam \set{f(z_1),\dots,f(z_n)}$ is also sufficiently small, then $\op{bar}(f_*\mu_{z,w})=\op{bar}(\mu_{z,w})$.
\end{enumerate}
The smallness of the diameter that we require in the above remark is quite explicit and depends only on the Riemannian metrics on $F$ and $F'$. See \cref{lem:Q_id} and \cref{cor:Q_pos_def} for details.  \\
Part (1) of the remark follows from \cref{lem:Q_id}, because this lemma shows that $D_xG(x,w,z)$ is positive definite in a neighborhood of $x=\op{bar}(\mu_{z,w})$. In order to prove part (2) of the remark, we first apply part (1) to $f_*\mu_{z,w}$ and then apply \cref{lem:barycenter_under_affine_map}. 
\end{remark}

\subsection{$C^{1,2}$ regularity} 

\label{sec:C_12_functions}
Next, we will discuss the regularity of the barycenter maps when the metric varies. For this, the appropriate regularity of the family of metrics will be $C^{1,2}$ that we now define. But we first introduce the notion of $C^{\ell,2}$ foliations and functions.

\begin{definition}
Suppose $N$ is a smooth manifold and $\Fc$ is a continuous foliation of $N$ with smooth leaves. Let $\ell \geq 1$. 
\begin{enumerate} 
\item  The foliation $\Fc$ is $C^{\ell,2}$ if $\Fc$ is a $C^{\ell}$ foliation and the leaf-wise 2-jets of the leaf inclusions vary transversally in a $C^\ell$ way. In particular, $p:N \to L$ is a $C^{\ell,2}$-fiber bundle provided $p:N \to L$ is a $C^0$ fiber bundle and the foliation $\Fc=\set{p^{-1}(x):x \in L}$ of $N$ by fibers is a $C^{\ell,2}$ foliation.

\item Suppose $\Fc$ is a $C^{\ell,2}$-foliation and $f:N \to \Rb$ is a continuous function. Then $f$ is a $C^{\ell,2}$-function provided the leaf-wise derivatives up to order $2$ exist and vary transversally in a $C^\ell$ way (i.e. the tangential derivatives up to order 2 are transversally differentiable up to order $\ell$).
\end{enumerate}
\end{definition}

In particular, let us explain what we will mean by a $C^{1,2}$ family of metrics on a manifold $F$. Let $B(0,\eps_0) \subset \Rb^m$ and let $\set{g_s: s \in B(0,\eps_0)}$ be a family of $C^{2}$ Riemannian metrics on $F$. Consider the product foliation $\Fc=\set{ F \times \set{s}: s \in B(0,\eps_0)}$ of $F \times B(0,\eps_0)$ that is $C^{\ell,2}$ for any $\ell \geq 1$. 

\begin{definition}
We will say that the family $\set{g_s: s \in B(0,\eps_0)}$ is a \emph{$C^{1,2}$ family of metrics} if the function $F \times B(0,\eps_0) \ni (x,s) \mapsto g_s(X_x,X_x)$ is a $C^{1,2}$ function for any smooth vector field $X$ on $F$.
\end{definition}

 A consequence of having a $C^{1,2}$ family of metrics is the following regularity of  distance functions. See the moreover part of \cref{lem:reg_of_dist} for a proof. 

\begin{lemma}[\cref{lem:reg_of_dist}]
\label{lem:C2_reg_of_dist}
Suppose $\set{g_s: s \in B(0,\eps_0)}$ is a $C^{1,2}$ family of metrics on a compact manifold $F$ such that the sectional curvature of each $g_s \in [-a^2,b^2]$ where $a,b >0$ are two fixed constants. Then, for all $z$ and $x$ with $d_s(x,z)$ sufficiently small, $d_s(x,z)^2$ is $C^2$ in $x$ and $z$, $C^1$ in $s$ and $\nabla_{x}^s d_s(x,z)^2$ is $C^1$ in $s$. 
\end{lemma}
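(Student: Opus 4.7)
The plan is to reduce each assertion to a statement about the exponential map $\exp^s_x$ of $g_s$ and then invoke smooth dependence of ODE solutions on parameters. By the uniform sectional curvature bounds and compactness of $F$, standard comparison geometry yields a convexity radius $r_0>0$, independent of $s$, such that $\exp^s_x\colon B(0,r_0)\subset T_xF \to B_{g_s}(x,r_0)$ is a diffeomorphism for every $x\in F$ and every $s\in B(0,\eps_0)$. We restrict throughout to pairs $(x,z)$ with $d_s(x,z)<r_0$.

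For fixed $s$, the $C^2$ regularity of $d_s(x,z)^2$ in $(x,z)$ is classical via the Jacobi field representation of the Hessian: along each radial geodesic from $x$, the function $\tfrac12 d_s(x,\cdot)^2$ is smooth, and its Hessian is computed by solving the Jacobi equation along the minimizing geodesic from $x$ to $z$. Since $g_s$ is $C^2$, the Riemann tensor $R_s$ is continuous, so the Jacobi equation has continuous coefficients and its solutions depend continuously on endpoint data. Hence the Hessian exists and is continuous, giving $d_s^2\in C^2$ in $(x,z)$.

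For the $s$-dependence, use the representation
\[
d_s(x,z)^2 = g_s|_x\bigl((\exp^s_x)^{-1}(z),\, (\exp^s_x)^{-1}(z)\bigr)
\]
together with the classical gradient identity $\nabla_x^s d_s(x,z)^2 = -2(\exp^s_x)^{-1}(z)$, valid within the convexity radius. In local coordinates on $F$, the geodesic equation is a second-order ODE whose coefficients are the Christoffel symbols of $g_s$, which are built algebraically from $g_s$ and its first $F$-derivatives. The $C^{1,2}$ hypothesis on $\{g_s\}$ says precisely that $g_s$ together with its first and second $F$-derivatives are continuous in $(x,s)$ and $C^1$ in $s$, so the Christoffel symbols of $g_s$ are $C^1$ in $s$ (and $C^1$ in $x$). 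By standard smooth dependence of ODE solutions on parameters, $\exp^s_x(v)$ is $C^1$ in $s$, and by the parametric inverse function theorem $(\exp^s_x)^{-1}(z)$ is $C^1$ in $s$. Combined with the $C^1$ dependence of $g_s$ itself on $s$, this gives both that $d_s(x,z)^2$ is $C^1$ in $s$ and that $\nabla_x^s d_s(x,z)^2 = -2(\exp^s_x)^{-1}(z)$ is $C^1$ in $s$.

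The main technical point is that the $C^{1,2}$ notion, being weaker than full joint $C^1$ or joint $C^2$ regularity, must be shown to suffice for the ODE parameter argument. Concretely, one checks that only the transversal first $s$-derivatives of the Christoffel symbols enter and that the local ODE estimates are uniform, so that the compactness of $F$ together with the uniform curvature bounds allow $r_0$ and the coordinate charts to be chosen independently of $s$. Once this uniformity is verified, the ODE dependence argument applies on a single neighborhood of the diagonal and delivers all three regularity claims simultaneously.
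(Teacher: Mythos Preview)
Your proposal is correct and follows essentially the same approach as the paper: both reduce to the exponential map $E_x(v,s)=\exp^{g_s}_x(v)$, use that the Christoffel symbols are $C^1$ in $x$ and $s$ (from the $C^{1,2}$ hypothesis) together with ODE parameter dependence to get the required regularity of $E_x$, and then invert via the implicit/inverse function theorem to identify $(\exp^s_x)^{-1}(z)=-\tfrac12\nabla^s_x d_s(x,z)^2$ and read off the regularity claims. The only cosmetic difference is that the paper phrases the inversion via the Implicit Function Theorem applied to $E_x(\alpha_x(s),s)=z$, while you invoke a parametric inverse function theorem; these are equivalent here.
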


\subsection{Regularity of the barycenter maps when the metric varies} In \cref{sec:derivative_barycenter}, we will prove several results on the regularity of the barycenter map as the Riemannian metric on $F$ as well as the measures on $F$ vary. For the convenience of the reader, we now quote some of the key results from \cref{sec:derivative_barycenter} that we will require later in the paper.

Fix constants $\eps_0>0$, $a,b \geq 0$, $m,n \in \Nb$ and $k \geq 3$. Let $F$ be a smooth manifold and $B(0, \eps_0) \subset \Rb^m$.  Suppose that
\begin{enumerate}
    \item $B(0,\eps_0) \ni s \mapsto g_s$ is a $C^{1,2}$ family of Riemannian metrics on $F$ with sectional curvatures $\kappa_s \in [-a^2,b^2]$,
    \item there are $C^1$ functions $w_i:B(0,\eps_0) \to \Rb$ and $z_i: B(0,\eps_0) \to F$ for $1 \leq i \leq n$ such that $\mu(s):=\sum_{1}^n w_i(s) \delta_{z_i(s)}$ are probability measures on $F$.
\end{enumerate} 

Let $r(s)=\diam\set{z_1(s),\dots,z_n(s) }$.

\begin{proposition}
\label{prop:bary_C1}
Suppose the setting is as above. Then there exists a constant $R>0$ (that depends only on $a$, $b$, and injectivity) such that whenever $r(s_0)<R$, $\op{bar}_{s_0}(\mu(s_0))$ exists and the map $s \mapsto \op{bar}_s(\mu(s))$ is $C^1$.
\end{proposition}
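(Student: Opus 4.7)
The plan is to apply the Implicit Function Theorem to the critical-point equation $G(x,s)=0$, where
$$G(x,s)=\sum_{i=1}^n w_i(s)\,d_s(x,z_i(s))\,\nabla^s_x d_s(x,z_i(s))$$
is the leafwise $g_s$-gradient at $x$ of the cost function $\mc{B}_{\mu(s)}$. The proposition then reduces to two facts at $s_0$: existence of a critical point, and positive definiteness (hence invertibility) of $D_x G(\cdot,s_0)$ there. Throughout I would fix $R>0$ small enough, in terms of $a$, $b$, and the (uniform in $s$) injectivity radius, so that whenever $r(s)<R$ the support $\{z_1(s),\dots,z_n(s)\}$ is contained in a $g_s$-geodesically convex ball $K_{\mu(s)}$ whose radius also lies below the convexity radius bound dictated by the upper curvature bound $b^2$; this single choice will serve all three uses below.

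First I would settle existence and uniqueness at $s_0$. On the convex ball $K_{\mu(s_0)}$, each function $x\mapsto \tfrac12 d_{s_0}(x,z_i(s_0))^2$ is strictly convex (a standard Hessian comparison estimate, which for small distances makes the Hessian arbitrarily close to a positive multiple of the identity—this is exactly the content of \cref{lem:Q_id} and \cref{cor:Q_pos_def}). Since the weights $w_i(s_0)$ sum to $1$ and are nonnegative, the convex combination $\mc{B}_{\mu(s_0)}$ is strictly convex on $K_{\mu(s_0)}$ and attains a unique interior minimum $x_0=\op{bar}_{s_0}(\mu(s_0))$; by \cref{rem:barybound} this minimum lies inside $K_{\mu(s_0)}$, and \emph{Condition $(\star)$} is verified with $G(x_0,s_0)=0$ and $D_x G(x_0,s_0)$ positive definite.

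Next I would verify the joint $C^1$ regularity of $G$ near $(x_0,s_0)$: by \cref{lem:C2_reg_of_dist}, the $C^{1,2}$ hypothesis on $\{g_s\}$ implies that $(x,z,s)\mapsto d_s(x,z)^2$ is $C^2$ in $(x,z)$ and $C^1$ in $s$, and that $\nabla^s_x d_s(x,z)^2$ is $C^1$ in $s$; composing with the $C^1$ maps $s\mapsto w_i(s)$ and $s\mapsto z_i(s)$ makes $G$ jointly $C^1$ in $(x,s)$. The Implicit Function Theorem then produces a unique $C^1$ map $s\mapsto x(s)$ on a neighborhood of $s_0$ with $x(s_0)=x_0$ and $G(x(s),s)=0$. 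Shrinking this neighborhood if necessary so that $r(s)<R$ still holds, the same convexity argument gives a unique critical point of $\mc{B}_{\mu(s)}$ in $K_{\mu(s)}$, forcing $x(s)=\op{bar}_s(\mu(s))$; this yields the claimed $C^1$ dependence. The only real obstacle is bookkeeping: choosing $R$ so that convex support, positive definiteness of the Hessian at $s_0$, and positive definiteness uniformly for $s$ near $s_0$ all hold simultaneously—each reducing to an upper bound on $r(s)$ in terms of $a$, $b$, and the injectivity radius.
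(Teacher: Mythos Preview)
Your proposal is correct and follows essentially the same approach as the paper: establish the requisite regularity of $G$ via \cref{lem:C2_reg_of_dist} (equivalently \cref{lem:reg_of_dist}, \cref{cor:reg_of_dist}) and invoke the Implicit Function Theorem, with invertibility of $D_xG$ coming from \cref{cor:Q_pos_def}. You are somewhat more explicit than the paper in spelling out existence/uniqueness at $s_0$ and the identification of the implicit solution with $\op{bar}_s(\mu(s))$, but the argument is the same.
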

\begin{proof}
This is \cref{prop:dbary_limit}.
\end{proof}

The next proposition gives a finer estimate on the derivative of the barycenter map as the metrics $g_s$ and the measures $\mu(s)$ vary. In this next result, we will use $d^{\op{Gr}}_{(s,x)}$ to denote the Grassmannian distance on $\op{Gr}(T_{(s,x)}(B(0,\eps_0) \times F))$ induced by the Riemannian metric $ds^2+d_{g_s}^2$ on $B(0,\epsilon_0)\times F$.

\begin{proposition}
\label{prop:parallel_close}
Suppose the setting is as above. Suppose that there is a continuous foliation $\Hc$ on  $B(0,\epsilon_0)\times F$ such that:
\begin{enumerate}[label=(\alph*)]
    \item $\Hc$ has $C^1$ leaves which transversely vary continuously in the leafwise $C^1$-topology,
    \item for each $\xi \in F$, there is a $C^1$ function $\varphi_\xi:B(0,\epsilon_0)\to F$ satisfying $\varphi_\xi(0)=\xi$ and the foliation $\Hc$ is given by
    \[
    \Hc=\set{(s,\varphi_{\xi}(s)) : s \in B(0,\eps_0)}_{\xi \in F},
    \]
    \item the tangent distribution of $\Hc$ is denoted by $\Dc$.
\end{enumerate}
Fix $\xi_1,\dots,\xi_n \in F$. Consider $z_i(s):=\varphi_{\xi_i}(s)$ and the measures $\mu(s):=\sum_{i=1}^n w_i(s) \delta_{z_i(s)}$. Suppose $\op{b}: B(0,\eps_0) \to F$ is the the barycenter map 
\[ \op{b}(s):=\op{bar}_s(\mu(s))) \]
and let $G=\set{(s,\op{b}(s)):s \in B(0,\eps_0)}$ be the graph of $\op{b}$.  Then:
\begin{enumerate}
\item  $D_{s=s_0}\op{b}(s)$ and $T_{(s_0,\op{b}(s_0))}G$ vary continuously in $s_0,\xi_1,\dots, \xi_n$, 
\item for every $\epsilon>0$ and $c\in [0,1)$, there exists $\delta>0$ with the following property: if $s_0\in B(0,c\epsilon_0)$ and $r(s_0)<\delta$, then 
\[
d^{\op{Gr}}_{(s_0,\op{b}(s_0))} \left( T_{(s_0,\op{b}(s_0))}G,\mc{D}\left( s_0,\op{b}(s_0) \right) \right)<\epsilon.
\]
 \end{enumerate}
\end{proposition}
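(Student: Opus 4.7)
The plan is to deduce both parts from the Implicit Function Theorem applied to the gradient equation characterizing the barycenter, together with a continuity-plus-compactness argument for the quantitative statement.

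For part (1), by \cref{lem:C2_reg_of_dist} the function $G(x,s):=\nabla_{x}^{g_{s}}\Bc_{\mu(s)}(x)$ is $C^{1}$ jointly in $(x,s)$ once $r(s)$ is small enough that $d_{s}(x,z_{i}(s))^{2}$ is $C^{2}$ in $x$ and $C^{1}$ in $s$. By \cref{lem:Q_id} (cited in \cref{rem:std_action_fiber_bary_equiv}) the leafwise Hessian $D_{x}G$ is positive definite at the barycenter, hence invertible. The Implicit Function Theorem then yields the formula
\[
D_{s_{0}}\op{b}=-\left(D_{x}G\right)^{-1}\partial_{s}G,
\]
and since each ingredient — the metric data $g_{s}$, the weights $w_{i}(s)$, and the $C^{1}$ functions $s\mapsto z_{i}(s)=\varphi_{\xi_{i}}(s)$ — varies continuously in $(s_{0},\xi_{1},\ldots,\xi_{n})\in B(0,\epsilon_{0})\times F^{n}$, so does $D_{s_{0}}\op{b}$. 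The tangent space $T_{(s_{0},\op{b}(s_{0}))}G$ is just the graph of this derivative and hence varies continuously as well.

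For part (2), the key structural observation is that $\Dc(s_{0},\op{b}(s_{0}))$ is the graph of $D_{s_{0}}\varphi_{\eta}$ for the unique $\eta\in F$ with $\varphi_{\eta}(s_{0})=\op{b}(s_{0})$; such $\eta$ exists because $\mathcal{H}$ is a foliation of $B(0,\epsilon_{0})\times F$ transverse to the fiber $\{s_{0}\}\times F$, so the holonomy map $\xi\mapsto\varphi_{\xi}(s_{0})$ is a homeomorphism of $F$. In the degenerate case where all $\xi_{i}=\eta$, the measure $\mu(s)$ reduces to $\delta_{\varphi_{\eta}(s)}$, so $\op{b}(s)\equiv\varphi_{\eta}(s)$ and one has the exact equality $T_{(s_{0},\op{b}(s_{0}))}G=\Dc(s_{0},\op{b}(s_{0}))$. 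Thus (2) reduces to a quantitative version of the statement that, when the $\xi_{i}$ are close to $\eta$, the tangent of the barycenter graph is close to the tangent of the leaf through $(s_{0},\op{b}(s_{0}))$.

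The uniformity is obtained by a compactness argument on $\overline{B(0,c\epsilon_{0})}\times F^{n}$. The hypothesis $r(s_{0})<\delta$ forces the points $\varphi_{\xi_{i}}(s_{0})$ to lie in a ball of radius $\delta$, and by \cref{rem:barybound} the barycenter lies within distance $2\delta$ of each of them. Since the holonomy $\xi\mapsto\varphi_{\xi}(s_{0})$ varies continuously with $s_{0}$ on the compact set $\overline{B(0,c\epsilon_{0})}$, it is uniformly a homeomorphism with uniformly continuous inverse, so the $\eta$ produced above lies within some $\delta'=\delta'(\delta)$ of each $\xi_{i}$, with $\delta'\to 0$ as $\delta\to 0$. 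Applying the uniform continuity of the map $(s_{0},\xi_{1},\ldots,\xi_{n})\mapsto D_{s_{0}}\op{b}$ from part (1) on the compact domain $\overline{B(0,c\epsilon_{0})}\times F^{n}$, one concludes that $D_{s_{0}}\op{b}$ for the actual data is arbitrarily close to its value for the degenerate data $(\eta,\ldots,\eta)$, which is $D_{s_{0}}\varphi_{\eta}$. Passing to graphs and measuring in the Grassmannian distance yields the claimed estimate.

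The principal technical obstacle I anticipate is ensuring that the joint $C^{1}$ regularity of $G$ in $(x,s)$ really holds under only the $C^{1,2}$ hypothesis on the family $s\mapsto g_{s}$; this is exactly the ``moreover'' content of \cref{lem:C2_reg_of_dist} and must be handled carefully because both the metric and the atoms $z_{i}(s)$ depend on the transverse parameter. Once that regularity is in hand, the remainder is a straightforward degeneracy-plus-continuity argument.
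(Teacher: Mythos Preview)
Your proposal is correct and takes a genuinely different route from the paper for part (2). The paper's proof (Corollary~A.9 in the appendix) proceeds through an explicit quantitative estimate: it first establishes in Proposition~A.8 that
\[
\Bigl\lVert D_{s=s_0}\op{b}(s) - Q_{s_0}^{-1}\Bigl(\sum_{i=1}^n w_i(s_0)\,\lVert^{\op{b}(s_0)}_{z_i(s_0)} D_{s=s_0}z_i(s)\Bigr)\Bigr\rVert \leq C\,r(s_0),
\]
obtained via Rauch-type Jacobi field comparisons (Lemmas~A.1--A.2), then introduces an intermediate subspace $W(s_0):=\im\bigl(\id,\sum_i w_i\lVert^{x_0}_{z_i}D_{s_0}\varphi_{\xi_i}\bigr)$ and separately bounds $d^{\op{Gr}}(T_{(s_0,x_0)}G,W(s_0))$ and $d^{\op{Gr}}(W(s_0),\Dc(s_0,x_0))$. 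Your argument bypasses all of this by observing that in the degenerate configuration $\xi_1=\cdots=\xi_n=\eta$ one has exact equality $T_{(s_0,\op{b}(s_0))}G=\Dc(s_0,\op{b}(s_0))$, and then invoking uniform continuity of $(s_0,\xi_1,\ldots,\xi_n)\mapsto D_{s_0}\op{b}$ on the compact subset $\{r(s_0)\leq\delta_0\}\subset\overline{B(0,c\epsilon_0)}\times F^n$ together with uniform continuity of the inverse holonomy. This is shorter and needs only \cref{lem:C2_reg_of_dist} and \cref{lem:Q_id} from the appendix, not the Jacobi field estimates or the quantitative Proposition~A.8. The trade-off is that the paper's argument yields an explicit linear rate $O(r(s_0))$ for the Grassmannian distance, whereas yours gives only the qualitative $\epsilon$--$\delta$ statement; since the application (\cref{prop:BM}, part~(5)) only needs the latter, nothing is lost.

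One point to make explicit when you write it up: the domain on which you invoke uniform continuity is not all of $\overline{B(0,c\epsilon_0)}\times F^n$ but the closed (hence compact) subset where $r(s_0)\leq\delta_0$ for some fixed $\delta_0$ small enough that the barycenter exists (cf.\ \cref{prop:bary_C1}); this contains both the actual data and the degenerate comparison point $(s_0,\eta,\ldots,\eta)$, so the argument goes through.
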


\begin{proof}
This is \cref{prop:parallel_close_in_appendix}.
\end{proof}

\section{Standard Actions and the Generalization of a Lemma of Bowden-Mann}

Our primary goal in this section is to formulate and prove a generalization of a lemma of Bowden and Mann \cite{BowdenMann19}. We will obtain this generalization for a class of actions that we  will call standard actions.

\subsection{Standard Actions} 
\label{sec:standard-action}
Suppose $M$ is a compact non-positively curved Riemannian manifold. Then $\pi_1(M)$ acts on the geodesic boundary of the universal cover $\wt{M}$ of $M$, which we denote by $\partial_\infty \wt{M}$. This boundary action is a primary example of a \emph{standard action}. Note that $\pi_1(M)$ also acts on $T^1\wt{M}$ by isometries of the Sasaki metric. The relationship between these two aforementioned actions is the motivation behind the definition of a \emph{standard action} (\Cref{def:standard action}). This definition, which may appear strange at first sight, is an abstraction of what happens in this case; see \Cref{lem:non-pos-curv-boundary-action} for details. 

We will now informally explain this relationship. Let $\wh{\rho_0}$ be the action of $\pi_1(M)$ on $\wt{M} \times \partial_{\infty}\wt{M}$, where $\pi_1(M)$ acts on $\wt{M}$ by deck transformations and on $\partial_{\infty} \wt{M}$ by the boundary action. Consider the map $\Phi:T^1\wt{M} \to \wt{M} \times \partial_{\infty}\wt{M}$ defined by the foot-point projection to $\wt{M}$ and the radial projection to infinity in $\partial_{\infty}\wt{M}$. The map $\Phi$ maps each fiber in $T^1\wt{M}$ homeomorphically to $\partial_{\infty} \wt{M}$. The most crucial point is that $\Phi$ is a homeomorphism that intertwines the two actions. The map  $\Phi$ also has some regularity: $\Phi^{-1}(\wt{M} \times \set{\xi})$ is a $C^1$-submanifold of $T^1\wt{M}$ for any $\xi \in \partial_{\infty} \wt{M}$. In fact, these are the center-unstable manifolds for the geodesic flow in $T^1\wt{M}$. In \cref{def:standard action}, the intertwining of the product action (on $\wt{M} \times \partial_\infty \wt{M}$) with an isometric action (on $T^1\wt{M}$) is the content of part (3) while regularity of the map $\Phi$ is the content of part (2). Note that generally $\Phi^{-1}(\wt{M} \times \set{\xi})$ is only continuous in $\xi$; the definition makes no further assumptions about this `transversal' regularity (see \cref{rem:standard_action_transversal_reg}).

We will now precisely define standard actions. Recall the definition of $C^{\ell,2}$ fiber bundles from \cref{sec:C_12_functions}. We will denote by $\pr_{X_i}$ the natural $i$-th coordinate projection map $X_1 \times X_2 \to X_i$ for $i=1,2$. 

\begin{definition}
\label{def:standard action} Suppose $F$ and $X$ are differentiable (hence smooth) manifolds, $\Gamma \leq \Isom(X)$ is a discrete subgroup and $\ell \geq 1$. We will say that the action $\rho_0: \Gamma \to \Homeo(F)$ is a {\em standard action} provided there exists a $C^{\ell,2}$-fiber bundle $p:X_F \to X$ with fiber $F$ such that:

\begin{enumerate}
    \item there exists a family of homeomorphisms $\{\pi_x: F_x:=p^{-1} (x) \to F ~|~ x \in X\}$,
    \item the map $\Phi: X_F \to X \times F$ defined by \[\Phi(z):=(p(z), \pi_{p(z)}(z))\] is a homeomorphism  and \[(\pr_F \circ \Phi)^{-1}(\xi)=\Phi^{-1}(X \times \xi)\] is a $C^\ell$-submanifold of $X_F$ for every $\xi \in F$,
    \item there exists a family of $C^{\ell,2}$-Riemannian metrics $\set{g_x: x \in X}$ on the fibers $F_x$ in $X_F$ such that the map $\gamma_x: (F_x,g_x)\to (F_{\gamma x},g_{\gamma x})$ defined by
    \begin{align*}
         \gamma_x:=\pi_{\gamma x}^{-1} \circ \rho_0(\gamma) \circ \pi_x
    \end{align*}
    is affine (with respect to the respective Riemannian connections) for any $\gamma\in \Gamma$ and $x \in X$.
\end{enumerate}
If $X_F,p, \Phi,\{\pi_x\},\{g_x\},\ell$ are as above, we will call $\rho_0:\Gamma \to \Homeo(F)$ a {\em standard action with associated data} $(X_F,p,\Phi,\{\pi_x\},\{g_x\},\ell)$.
\end{definition}

\begin{remark}
Note that the metric $\set{g_x}$ is determined by the maps $\set{\pi_x}$ in the following sense. Suppose $\set{h_x}$ is another family of metrics such that $\gamma_x':=\pi_{\gamma x}^{-1} \circ \rho_0(\gamma) \pi_x$ is also affine. Then $ \id:=(\gamma'_x)^{-1}\circ \gamma_x: (F_x,g_x) \to (F_x,h_x)$ is an affine map. 

In many cases, e.g. $F=S^n$, this implies that $g_x$ and $h_x$ differ only by a scaling factor. But this is not always the case. For instance, if $A \in SL(2,\Zb)$ and $g$ is the Euclidean metric on a torus $\mathbb{T}^2$, then $\id:(\mathbb{T}^2,g) \to (\mathbb{T}^2, A^*g)$ is an affine map but the metrics aren't multiples of each other.
\end{remark}

\begin{remark}
\label{rem:standard_action_transversal_reg}
Note that there is no assumption on the regularity of the sub-manifolds $(\pr_F \circ \Phi)^{-1}(\xi)$ when $\xi$ varies. 
\end{remark}

We will discuss examples in the next subsection. First, however,  let us   introduce the notion of induced action: given a standard action $\rho_0$, we can induce an action $\wh{\rho}_0$ on the fiber bundle $X_F$. 

\begin{definition}
\label{def:induced action}
Suppose $X, F$ are as above and $\rho_0:\Gamma \to \Homeo(F)$ is a standard action with associated data $(X_F,p,\Phi,\{\pi_x\},\{g_x\},\ell)$. Then the induced action on $X_F$, denoted by $\wh{\rho}_0$, is the pullback by $\Phi:X_F \to X \times F$ of the action of $\Gamma$ on $X \times F$ where the action on $X$ is by isometries and the action on $F$ is by $\rho_0$
(i.e. $\gamma \in \Gamma$ acts by $\gamma \cdot (x,\xi)=(\gamma \cdot x, \rho_0(\gamma) \cdot \xi)$). 
\end{definition}

In this language, we  say that $\rho_0$ is a standard action provided the action $\wh{\rho_0}$   preserves the fibers and the fiberwise action is affine with respect to the family of metrics $\{g_x:x \in X\}$. 

\subsection{Examples of Standard Action}

In this section, we discuss several examples of standard actions.

     \subsubsection{\bf Geodesic boundary actions:} Our first example  of a standard action will be  on a sphere and fundamental to our proof of  Theorem \ref{thm:main_thm_lattice},   
     the geodesic boundary of a simply connected non-positively curved Riemannian manifold. 
 
     \begin{lemma}
     \label{lem:non-pos-curv-boundary-action}
     Let  $X$ be a simply connected complete smooth Riemannian manifold of non-positive sectional curvature. Suppose $\Gamma \leq \Isom(X)$ be any subgroup.  Let $\rho_0$ be the standard action of $\Gamma$ on the geodesic boundary $\partial_\infty X$ of $X$, which is homeomorphic to ${\mathbb{S}}^{\dim(X)-1}$. Then $\rho_0$ is a standard action.
     \end{lemma}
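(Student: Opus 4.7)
The natural choice of bundle is $X_F := T^1 X$, the unit tangent bundle of $X$, with $p : T^1 X \to X$ the foot-point projection and fibers $F_x = T_x^1 X$ the unit spheres. Since $X$ is smooth, $p$ is a smooth fiber bundle, hence a $C^{\ell,2}$ fiber bundle for every $\ell \ge 1$. For each $x\in X$ define the visibility map
\[
\pi_x : T_x^1 X \longrightarrow \partial_\infty X, \qquad \pi_x(v) := c_v(+\infty),
\]
where $c_v$ is the unit-speed geodesic with initial velocity $v$. Because $X$ is Hadamard (simply connected, complete, non-positively curved), each $\pi_x$ is a homeomorphism by the classical visibility property, which establishes condition (1) of \cref{def:standard action}. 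The induced global map $\Phi: T^1 X \to X \times \partial_\infty X$, $\Phi(v) = (p(v), \pi_{p(v)}(v))$, is then also a homeomorphism.

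For condition (2), fix $\xi \in \partial_\infty X$. The set $\Phi^{-1}(X \times \{\xi\})$ is exactly the image of the section $\sigma_\xi : X \to T^1 X$ that assigns to $x$ the unique unit vector at $x$ pointing toward $\xi$; equivalently, $\sigma_\xi(x) = -\nabla b_\xi(x)$, where $b_\xi$ is the Busemann function centered at $\xi$. In a smooth Hadamard manifold, Busemann functions are $C^1$ (in fact $C^2$ if one has two-sided sectional curvature bounds), so $\sigma_\xi$ is a $C^1$ section and its image is a $C^1$ submanifold of $T^1 X$. Taking $\ell = 1$ in the definition therefore suffices.

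For condition (3), equip each fiber $T_x^1 X$ with the round sphere metric $g_x$ induced from the Euclidean inner product on $T_x X$. The resulting family $\{g_x\}_{x\in X}$ varies smoothly in $x$, so it is a $C^{\ell,2}$ family of metrics. Given $\gamma \in \Gamma \le \Isom(X)$, the differential $d\gamma_x : T_x X \to T_{\gamma x} X$ is a linear isometry and therefore restricts to an isometry $(T_x^1 X, g_x) \to (T_{\gamma x}^1 X, g_{\gamma x})$, which is in particular affine with respect to the Riemannian connections on the round spheres. Using the identity $\gamma \circ c_v = c_{d\gamma_x(v)}$ for isometries of $X$, one checks directly that
\[
\pi_{\gamma x}^{-1} \circ \rho_0(\gamma) \circ \pi_x \;=\; d\gamma_x\big|_{T_x^1 X},
\]
so $\gamma_x$ as in the definition is affine, establishing condition (3).

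The only delicate step is the regularity claim in condition (2), which reduces to the regularity of the Busemann functions $b_\xi$; this is the step I expect to be the main obstacle, since in non-positive (rather than strictly negative) curvature the horospherical foliation may fail to be transversally smooth. However, one only needs pointwise $C^\ell$ regularity of each individual leaf, and the classical fact that $b_\xi$ is $C^1$ in any smooth Hadamard manifold already gives $\ell = 1$, which is sufficient for the definition of a standard action.
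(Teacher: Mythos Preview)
Your approach is essentially the same as the paper's: take $X_F=T^1X$, use the visibility maps $\pi_x$, identify $\Phi^{-1}(X\times\{\xi\})$ with the graph of $-\nabla b_\xi$, and put the round metric on each fiber so that $\gamma_x=d\gamma_x|_{T_x^1X}$ is an isometry. The paper phrases condition (3) via the Sasaki metric, but this amounts to the same thing.

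There is, however, a regularity slip in your verification of condition (2). You write that Busemann functions are $C^1$ and conclude that $\sigma_\xi=-\nabla b_\xi$ is a $C^1$ section; but $b_\xi\in C^1$ only yields $\nabla b_\xi\in C^0$, which would make $\Phi^{-1}(X\times\{\xi\})$ merely a $C^0$ submanifold and would not meet the requirement in \cref{def:standard action}(2) for any $\ell\ge 1$. What you actually need is that $b_\xi$ is $C^2$ on an arbitrary Hadamard manifold, which is the Heintze--Im Hof result the paper invokes (their Proposition~3.1 gives that $\nabla b_\xi$ is $C^1$); no two-sided curvature bound is required. Once you replace ``$C^1$'' by ``$C^2$'' for $b_\xi$, your argument goes through and matches the paper's.
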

     \begin{proof}
     
     Let $X_F=T^1X$ and $F=\partial_\infty X$. We will show that the definition of the standard action is satisfied for $\ell=1$. The standard foot-point projection $p:T^1X \to X$ gives $T^1X$ the structure of a smooth fiber bundle. Thus the fiber bundle is $C^{\ell,2}$ (in fact $C^{\ell,\infty}$) for any $\ell \geq 1$. Consider the family of homeomorphisms $\pi_x:T_x^1X \to \partial_\infty X$ given by the radial projections. Then $\Phi:T^1X \to X \times \partial_\infty X$ defined by $\Phi(v)=(p(v),\pi_{p(v)}(v))$ is clearly a homeomorphism. For any $\xi \in \partial_\infty X$, $$({\rm pr}_F \circ \Phi)^{-1}(\xi)=\Phi^{-1}(X \times \{\xi\})=\set{v \in T^1X : \pi_{p(v)}(v)=\xi}$$ is a center-stable manifold of the geodesic flow on $T^1X$. We claim that $({\rm pr}_F \circ \Phi)^{-1}(\xi)$ is a $C^1$-submanifold of $X_F$ for any $\xi \in F$. Indeed, each center-stable manifold is given by evaluating the gradient of the Busemann function at all points of $X$. By \cite[Proposition 3.1]{HeintzeImHof77}, the gradient of the Busemann function is $C^1$ when $X$ is as in this Lemma. Thus each center-stable manifold is $C^1$.

    Let $h$ be the given Riemannian metric on $X$. For each $x \in X$, let $g_x$ be the Riemannian metric on $T_x^1X$ induced by $h$. The fiber $(T_x^1X,g_x)$ is isometric to the standard unit sphere. In fact, this metric is the restriction of the Sasaki metric on $T^1X$ on the fiber $T_x^1X$. Since the Sasaki metric is smooth whenever $h$ is smooth, $x \mapsto g_x$ varies smoothly (and hence $C^{\ell,\infty}$ for any $\ell \geq 1$). If $\gamma \in \Isom(X)$, then $\gamma$ acts by an isometry of the Sasaki metric. Hence $\gamma_x$ is an isometry. 
    \end{proof}
    
    Note that $({\rm pr}_F \circ \Phi)^{-1}(\xi)$ only varies continuously in $\xi$ in general. But as remarked above, this lack of transversal regularity of the weak-stable manifolds $({\rm pr}_F \circ \Phi)^{-1}(\xi)$ is not a problem since the definition does not require it.
    
    \begin{remark}
    Suppose $X$ and $\Gamma$ is as above. If $({\rm pr}_F \circ \Phi)^{-1}(\xi)$ is $C^{\infty}$ for each $\xi \in F$, then the above proof shows that $\rho_0$ is in fact a standard action with $\ell=\infty.$ In particular we mention three such cases of interest: 
    \begin{enumerate}
        \item when $X/\Gamma$ is a compact manifold with negative sectional curvature. 
        \item when $X$ is a Hilbert geometry with strictly convex boundary, a similar construction works as long as we allow the fibers to have non-Riemannian metrics. 
        \item when $X$ is a Riemannian symmetric space.
    \end{enumerate}
    The latter will be of particular interest to us later in the paper and we discuss it below.

    \end{remark}

    \subsubsection{\bf Boundary actions on $G/Q$:}
    \label{sec:G/Q-sec}
    We will now consider the case of a lattice in a semisimple Lie group acting on the $G/Q$ boundaries, which are subsets of the geodesic boundary of the corresponding Riemannian symmetric space. For notation used in this section, we refer the reader to \Cref{sec:Sym}.
 
\begin{lemma}
\label{lem:G/Q_standard_action}
Suppose $G$ is a connected semisimple Lie group with finite center and without compact factors, $Q < G$ is a parabolic subgroup and $\Gamma < G$ is a discrete subgroup. Then the action of $\Gamma$ by left multiplication on  $F=G/Q$ is a standard action. 
\end{lemma}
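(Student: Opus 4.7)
The plan is to verify the three conditions of \cref{def:standard action} directly using the Iwasawa decomposition. Set $X := G/K$ for a maximal compact subgroup $K \leq G$, let $M_Q := K \cap Q$, and take $X_F := G/M_Q$ with the natural projection $p : G/M_Q \to G/K$, whose typical fiber is $K/M_Q$. The Iwasawa decomposition $G = KQ$ together with $K \cap Q = M_Q$ yields a $K$-equivariant homeomorphism $K/M_Q \cong G/Q = F$; more generally, for each $x = gK \in X$ we set
\[
\pi_x : F_x = gK/M_Q \longrightarrow G/Q, \qquad \pi_x(gkM_Q) := gkQ,
\]
which is well-defined (since $M_Q \subseteq Q$) and a homeomorphism, giving the first half of condition~(1).

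With these $\pi_x$, the map $\Phi$ from \cref{def:standard action} reads $\Phi(gM_Q)=(gK,\, gQ)$. Continuity and surjectivity (via $G = KQ$) are clear, and injectivity uses $K \cap Q = M_Q$. The differential of $\Phi$ at $eM_Q$ is $X + \mathfrak{m}_Q \mapsto (X + \mathfrak{k},\, X + \mathfrak{q})$, whose kernel is $(\mathfrak{k} \cap \mathfrak{q})/\mathfrak{m}_Q = 0$; by $G$-equivariance, $d\Phi$ is injective everywhere, so $\Phi$ is a smooth bijective local diffeomorphism and hence a diffeomorphism. For condition~(2), given $\xi = \eta Q \in F$, the fiber
\[
\Phi^{-1}(X \times \{\xi\}) = \{\eta q M_Q : q \in Q\}
\]
is the image of $Q/M_Q$ in $G/M_Q$ under left translation by $\eta$, and is a smooth embedded submanifold since $Q$ is closed in $G$ and $M_Q \subseteq Q$. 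This also makes $p$ a smooth---hence $C^{\ell,2}$ for every $\ell \geq 1$---fiber bundle, completing~(1).

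For condition~(3), fix a bi-invariant Riemannian metric on the compact group $K$; right $M_Q$-invariance lets it descend to a $K$-invariant metric $h$ on $K/M_Q$. For $x = gK$, define $g_x$ on $F_x = gK/M_Q$ as the pullback of $h$ under $L_{g^{-1}}$. If $g' = gk$ with $k \in K$, then $L_{g'^{-1}} = L_{k^{-1}} \circ L_{g^{-1}}$ and $L_{k^{-1}}^* h = h$ by $K$-invariance of $h$, so $g_x$ is independent of the representative. The family $\{g_x\}$ is smooth in $x$, hence in particular $C^{1,2}$. A direct unpacking gives $\gamma_x = \pi_{\gamma x}^{-1} \circ \rho_0(\gamma) \circ \pi_x = L_\gamma\vert_{F_x}$, and the identity $(\gamma g)^{-1}\gamma = g^{-1}$ yields
\[
L_\gamma^* g_{\gamma x} = (L_{(\gamma g)^{-1}} \circ L_\gamma)^* h = L_{g^{-1}}^* h = g_x,
\]
so $\gamma_x$ is an isometry, in particular affine.

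The main bookkeeping challenge is keeping track of the several quotient structures---by $K$, $Q$, and $M_Q$---and verifying that $\pi_x$, $\Phi$, and the family $\{g_x\}$ are independent of representatives and transform correctly under $\Gamma$. Once these coherence checks are in place, the $C^{\ell,2}$ regularity for every $\ell \geq 1$ is automatic from the smoothness of the Lie group operations.
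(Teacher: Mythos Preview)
Your proof is correct and takes a genuinely different route from the paper's. The paper proceeds by restriction: it invokes \cref{lem:non-pos-curv-boundary-action} to get the standard-action structure on all of $T^1X \to X \times \partial_\infty X$, then carves out the submanifold $X_F = \{v \in T^1X : \gamma_v(\infty) \in G\cdot\bdx\}$, shows it is a smooth embedded submanifold diffeomorphic to $G/M_Q$ via the orbit map $g \mapsto g\cdot v_0$, and restricts all the data ($p$, $\pi_x$, $\Phi$, the Sasaki metric) from $T^1X$ to $X_F$. You instead work directly with $G/M_Q$ as an abstract homogeneous space and verify the axioms by Lie-theoretic bookkeeping: the map $\Phi(gM_Q)=(gK,gQ)$ is $G$-equivariant, so a single tangent-space computation at $eM_Q$ (kernel $=(\mathfrak k\cap\mathfrak q)/\mathfrak m_Q=0$, dimensions matching via $G=KQ$) shows it is a diffeomorphism; the fibers $\Phi^{-1}(X\times\{\eta Q\})=\eta Q/M_Q$ are manifestly smooth; and the fiber metrics are pulled back from a fixed $K$-invariant metric on $K/M_Q$, so $\gamma_x=L_\gamma$ is an isometry.

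Your approach is more self-contained and yields a bit more (e.g.\ $\Phi$ is a diffeomorphism, not just a homeomorphism, and the data are smooth for every $\ell$) without appealing to Busemann-function regularity or the structure of $T^1X$. The paper's approach has the virtue of making the geometric meaning transparent---your $\Phi^{-1}(X\times\{\xi\})$ is exactly a center-stable leaf for the Weyl chamber flow---and of reusing \cref{lem:non-pos-curv-boundary-action}. Both reach the same associated data $(G/M_Q,p,\Phi,\{\pi_x\},\{g_x\},\ell)$, which is what is used downstream (cf.\ \cref{rem:associated_data_for_G/M_Q} and \cref{prop:BMMforG/Q}).
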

\begin{proof}Let $K$ be a maximal compact subgroup of $G$. Then $X=G/K$, equipped with a left $G$-invariant Riemannian metric, is a Riemannian symmetric space with $\Isom(X)^0=G$. Here $F=G/Q$ and $X_F$ will be a space diffeomorphic to $G/M_Q$. In \Cref{sec:Sym}, we will see that $G/M_Q$ is the Weyl chamber $Q$-face bundle.

We will use the notation: for any $v \in T^1X$, let $\gamma_v$ be the geodesic in $X$ determined by $v$. Consider the continuous trivialization of the unit tangent bundle $\Phi_0:T^1X \to X \times S^{\dim(X)-1}$ defined by $\Phi_0(v)=(\pi(v),\gamma_v(\infty))$. Recall that the $\Gamma$ action on $\partial_\infty X$ is a standard action.

Fix $\bdx \in \partial_\infty X$ be such that $Q=G_{\bdx}$, so that $G/Q$ is diffeomorphic to  $G \cdot \bdx \subset \partial_\infty X$ via the orbit map $gQ \mapsto g \cdot \bdx$.  In particular, $G \cdot \bdx$ is a smooth sub-manifold of $\partial X$ as $G$ acts smoothly on $\partial_\infty X$ (\cite[Theorem 1.2 Pg 302]{Bredon72}).

 Let $$X_F:=\set{v \in T^1X :  \gamma_v(\infty) \in G \cdot \bdx}.$$ 
 Define $\Phi:X_F \to X \times F$ by restriction of $\Phi_0$, i.e 
 \begin{align}
 \label{eqn:phi-and-phi0}
 \Phi:=\Phi_0|_{X_F}.
 \end{align} Then $\Phi$ is a continuous trivialization of $X_F$ as a fiber bundle over $X$ with fibers $F=G/Q$. 
 
 We claim that $X_F$ is a smooth submanifold of $T^1X$ that is diffeomorphic to $G/M_Q$. To prove the claim, fix $v_0 \in T^1_{eK}X$ such that $\gamma_{v_0}(\infty)=\bdx$. Consider the orbit map $G \to X_F \subset T^1X$ defined by $g \mapsto g \cdot v_0$. Note that the stabilizer of $v_0$ is $K\cap Q$, i.e. $M_Q$. Further, $G$ acts transitively on $X_F$. Indeed, if $v \in X_F$, then there exists $g \in G$ such that $gv\in T_{eK}X$ and $\gamma_v(\infty)=g \cdot \gamma_{v_0}(\infty)$ (since $G$ acts as transitively on $X$ and and $\gamma_v(\infty) \in G \cdot \bdx$). Thus $X_F$ is diffeomorphic to $G/M_Q$ via the above map. Since $G$ action on $T^1X$ is smooth and $X_F$ is the $G$-orbit of $v_0$, $X_F$ is an immersed submanifold of $T^1X$ (again by \cite[ch.VI, Theorem 1.2]{Bredon72}). But since the $G$ action on $T^1X$ is properly discontinuous, this implies that $X_F$ is locally closed and hence, an embedded submanifold of $T^1X$. 
 
 We now finish the proof that the action on $G/Q$ is a standard action, by restricting all the data from $T^1X$ and $\partial_\infty X$ to the smooth subbundle $X_F\subset T^1X$ and $F=G \cdot \bdx\subset \partial_\infty X$ respectively. Note that with the above claim, it becomes evident that $X_F$ has the structure of a smooth fiber bundle over $X=G/K$ with fibers homeomorphic to $G\cdot \bdx\cong G/Q$. Let $p:T^1X \to X$ and $\{\pi_x:x \in X\}$ be the maps as in \Cref{lem:non-pos-curv-boundary-action} and define $p':=p|_{X_F}$ and $\pi'_x:=\pi_x|_{X_F}$ for all $x \in X$. The $G$-invariant Riemannian metric on $T^1X$ induces a family of metrics ${g'_x:x \in X}$ on the fibers of $X_F$. Then the $\Gamma$ action on $G/Q$ is a standard action with the data $(X_F,p',\Phi, \set{\pi'_{x}}, \set{g'_x},\ell)$ for any $\ell \geq 1$. 
\end{proof}
\begin{remark}
\label{rem:associated_data_for_G/M_Q}
As seen in the proof above, $X_F$ is diffeomorphic with $G/M_Q$. Thus by pre-composing with this diffeomorphism, one can also show that $\Gamma$ has a standard action on $G/M_Q$ with appropriate associated data. In fact, this is the associated data that we will use in \Cref{prop:BMMforG/Q}. Also see \cref{sec:G/M_Q} for further discussion about this action.
\end{remark}

\subsubsection{\bf Factor actions:} This is a degenerate example. Let $X_F=X\times F$ where $F$ is any compact Riemannian manifold  and $\Gamma$ acts by the identity on $X$ and by an isometric action $\rho_0$ on the $F$-factor. More generally, we can take $\rho_0$ to be an affine action on F with respect to the Levi-Civita connection on $F$. Note that $X$ can be any smooth manifold here; in particular, a point.

\subsubsection{\bf Pull-back actions:} Restriction  of standard actions to a closed invariant submanifold $F'\subset F$ is another natural way of getting new examples.  This is essentially a generalization of \cref{lem:G/Q_standard_action} to more general invariant submanifolds. However, for this restriction to be a standard action, one has to carefully ensure that the corresponding submanifolds in $X_F$ are $C^\ell$ for an appropriate $\ell$.

\subsection{Generalized Bowden-Mann Lemma} In this section we generalize \cite[Lemma 3.1]{BowdenMann19} of Bowden and Mann (later adapted by Mann and Manning \cite{MannManning21} for Gromov hyperbolic groups).  We formulate our generalization in the language of standard actions.  Recall the definition of standard action $\rho_0$ (\cref{def:standard action}) and the induced action $\wh{\rho_0}$ (\cref{def:induced action}). 

\begin{proposition}(cf.  \cite[Lemma 3.1]{BowdenMann19})\label{prop:BM} Suppose $X$ is a Riemannian manifold, $\Gamma < \Isom(X)$ is a discrete subgroup acting freely such that $\Gamma\backslash X$ is compact, and $F$ is a smooth manifold. Suppose $\rho_0: \Gamma \to \Homeo(F)$ is  a standard action with associated data $(X_F,p,\Phi,\{\pi_x\},\{g_x\},\ell)$ where $\ell \geq 1$. For any $\rho: \Gamma \to \Homeo(F)$ that is sufficiently  $C^0$-close to $\rho_0$, there is a continuous map $\til{f}:X \times F \to X_F$ such that:

\begin{enumerate}
\item \label{prop:BM_part0}$p(\til{f}(x,\xi))=x$ and $\til{f}(\gamma \cdot x,\rho(\gamma)\xi) =\wh\rho_0(\gamma)\til{f}(x,\xi)$,

\item   \label{prop:BM_part1} the map $\til{f}(\cdot,\xi):X \to \til{f}(X \times \{\xi\})$ is a $C^{1}$ diffeomorphism for every $\xi \in F$. Moreover, for any $(x_0,\xi_0)\in X \times F$, $(\rho,y,\xi) \mapsto D_y\til f(y,\xi)$ is continuous in all three variables at $(\rho_0,x_0,\xi_0)$, 

 \item \label{prop:BM_part4} for every $\epsilon>0$ and for every $x\in X$, there exists a neighborhood $\Uc = \Uc(x,\eps)$ of $\rho_0$ such that 
\[\sup_{\rho \in ~\Uc} ~\sup_{\xi \in F} ~d_{F}(\pi_{x}(\til f(x,\xi)),\xi)<\eps, \] 

\item  \label{prop:BM_part2}
suppose that for any compact set $\Kc\subset X$, there exist constants $c,\alpha$ depending only on $\diam(\Kc)$  such that for all $x,z \in \Kc$, $\pi_z^{-1}\circ \pi_x: F_x\to F_z$ is H\"older with H\"older constants $c$ and $\alpha$. Then for any $\epsilon, R >0$ and $(x,\xi) \in X \times F$, there is a neighborhood $\Uc = \Uc(\eps,R,x,\xi)$ of $\rho_0$ such that: if $\rho \in \Uc$ and $\eta:=\pi_x(\til f(x,\xi))\in F$, then 
\[
\sup_{y \in B_R(x)} d_{F_y}\left( \til{f}(y,\xi), (\pr_F \circ \Phi)^{-1}(\eta) \cap F_y \right) <\epsilon,
\]
where $d_{F_y}$ is the distance on the fiber $F_y$ (induced by the Riemannian metric $g_y$),

\item \label{prop:BM_part3} 
Suppose $h$ is a Riemannian metric on $X_F$ such that $\wh{\rho_0}(\Gamma) < \Isom(X_F,h)$ and $h|_{F_x}=g_x$ for all $x \in X$. For any $\epsilon > 0$, there exists a neighborhood $\Uc = \Uc(\eps)$ of $\rho_0$  such that 
\[
\sup_{\rho \in ~\Uc}  \left( \sup_{(x,\xi) \in X \times F} d^{\op{Gr}}_{\til f(x,\xi)}\left( T_{\til f(x,\xi)} \til f(X \times \xi), T_{\til f(x,\xi)} (\pr_F \circ \Phi)^{-1}(\eta_x) \right) \right) < \eps,
\]
where $\eta_x:=\pi_x(\wt{f}(x,\xi))$.
\end{enumerate}
\end{proposition}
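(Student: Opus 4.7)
The plan is to construct $\til f$ via a $\Gamma$-equivariant partition-of-unity barycenter on each fiber $F_x$, adapting the Bowden--Mann approach to our standard-action setting. Fix a relatively compact fundamental domain $D \subset X$ and a $C^\infty$ bump function $\phi_0 \geq 0$ supported in a slightly enlarged neighborhood of $D$, normalized so that $\sum_{\gamma \in \Gamma} \phi_0(\gamma^{-1} x) = 1$ for every $x \in X$. Setting $\phi_\gamma(x) := \phi_0(\gamma^{-1} x)$ produces a smooth locally finite partition of unity satisfying $\phi_{\gamma'}(\gamma x) = \phi_{\gamma^{-1}\gamma'}(x)$. For each $(x, \xi) \in X \times F$ and $\gamma \in \Gamma$ define the fiberwise lift
\[
z_\gamma(x, \xi) := \wh\rho_0(\gamma)\, \pi_{\gamma^{-1}x}^{-1}\bigl(\rho(\gamma^{-1}) \xi\bigr) \in F_x,
\]
and set
\[
\til f(x, \xi) := \op{bar}_{g_x}\!\left( \sum_\gamma \phi_\gamma(x)\, \delta_{z_\gamma(x,\xi)} \right).
\]
When $\rho = \rho_0$, every $z_\gamma(x,\xi)$ equals $\pi_x^{-1}(\xi)$ because $\wh\rho_0$ intertwines the product action via $\Phi$, so the measure is a Dirac mass and $\til f(x,\xi) = \pi_x^{-1}(\xi)$. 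When $\rho$ is $C^0$-close to $\rho_0$ the support lies in a small geodesic ball of $(F_x, g_x)$ and the barycenter is well-defined by Remark \ref{rem:std_action_fiber_bary_equiv}(1).

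Property (1) follows by direct computation: one verifies the identities $\phi_{\gamma'}(\gamma x) = \phi_{\gamma^{-1}\gamma'}(x)$ and $z_{\gamma'}(\gamma x, \rho(\gamma)\xi) = \wh\rho_0(\gamma)\, z_{\gamma^{-1}\gamma'}(x,\xi)$, which show that the defining measure at $(\gamma x, \rho(\gamma)\xi)$ is the $\wh\rho_0(\gamma)$-pushforward of the one at $(x, \xi)$. Since the fiber maps $\gamma_x = \pi_{\gamma x}^{-1}\circ \rho_0(\gamma) \circ \pi_x$ coincide with $\wh\rho_0(\gamma)|_{F_x}$ and are $g$-affine by Definition \ref{def:standard action}(3), Lemma \ref{lem:barycenter_under_affine_map} commutes $\wh\rho_0(\gamma)$ past the barycenter, yielding the required equivariance, while $p(\til f(x,\xi)) = x$ is immediate. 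Property (3) then drops out of Remark \ref{rem:barybound}: each $\pi_x(z_\gamma(x,\xi)) = \rho_0(\gamma)\rho(\gamma^{-1})\xi$ is $C^0$-close to $\xi$ uniformly over the finitely many $\gamma$ with $\phi_\gamma(x)\neq 0$, so the barycenter lies within $2\eps$ of $\pi_x^{-1}(\xi)$.

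For property (2), the weights $\phi_\gamma(x)$ and support points $z_\gamma(x,\xi)$ depend in a $C^1$ manner on $x$, using the $C^{\ell,2}$-bundle structure with $\ell \geq 1$, so Proposition \ref{prop:bary_C1} yields that $x \mapsto \til f(x, \xi)$ is $C^1$; joint continuity of the derivative in $(\rho, y, \xi)$ at $(\rho_0, x_0, \xi_0)$ comes from the continuous-dependence statement of Proposition \ref{prop:parallel_close}(1). Since at $\rho = \rho_0$ the map $\til f(\cdot, \xi) = \pi_\cdot^{-1}(\xi)$ is a $C^\ell$-embedding onto the center-stable leaf $(\pr_F \circ \Phi)^{-1}(\xi)$, $C^1$-smallness of the perturbation preserves the diffeomorphism property. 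For property (4), the key input is the uniform Hölder bound on $\pi_z^{-1}\circ \pi_x$ over $z \in B_R(x)$, which translates $C^0$-closeness of $\rho$ to $\rho_0$ into uniform smallness of the spread of $\{z_\gamma(y,\xi)\}_\gamma$ about the target point $\pi_y^{-1}(\eta) \in (\pr_F \circ \Phi)^{-1}(\eta) \cap F_y$ (where $\eta = \pi_x\til f(x,\xi)$) for all $y \in B_R(x)$; Remark \ref{rem:barybound} again closes the estimate.

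The main obstacle is property (5), the transverse tangent-space closeness. The strategy is to apply Proposition \ref{prop:parallel_close} in a local bundle chart near $(x, \til f(x,\xi))$, taking $\mathcal H$ to be the center-stable foliation $\{(\pr_F \circ \Phi)^{-1}(\eta)\}_{\eta \in F}$ read in this chart and the measures to be those defining $\til f(\cdot, \xi)$. The graph $G$ of the barycenter map is exactly the image $\til f(X \times \{\xi\})$ in the chart, while the leafwise distribution $\mathcal D$ at $\til f(x,\xi)$ is the tangent space to the center-stable leaf through that point. Proposition \ref{prop:parallel_close}(2) then bounds the Grassmannian distance between $T_{\til f(x,\xi)}G$ and $\mathcal D$ in terms of the diameter of the defining support, which shrinks to $0$ with $\|\rho - \rho_0\|_{C^0}$. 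The delicate point is verifying the hypotheses of Proposition \ref{prop:parallel_close} --- namely that the center-stable leaves are $C^1$-submanifolds varying transversely continuously in the leafwise $C^1$-topology --- but this is supplied by Definition \ref{def:standard action}(2), which provides $C^\ell$-leaves for the center-stable foliation, together with continuity of the trivialization $\Phi$ and smoothness of the partition of unity.
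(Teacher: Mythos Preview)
Your proposal is correct and follows essentially the same approach as the paper's proof: construct $\til f$ fiberwise as a barycentric average of $\Gamma$-translated sections $\pi_x^{-1}(\rho_0(\gamma)\rho(\gamma^{-1})\xi)$ weighted by a $\Gamma$-equivariant partition of unity, then invoke the barycenter regularity results (Propositions~\ref{prop:bary_C1} and~\ref{prop:parallel_close}) for parts~(2) and~(5). Your use of a single bump function translated by $\Gamma$ is a mild streamlining of the paper's setup, which instead uses a partition of unity subordinate to a finite open cover of $\Gamma\backslash X$ and then picks one representative $\mathcal U_i$ per $\Gamma$-orbit of cover elements --- the resulting formulas for the support points coincide. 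For part~(4) the paper proceeds by first establishing the intermediate estimate $\sup_{y\in B_R(x)} d_{F_y}(\pi_y^{-1}(\xi),\til f(y,\xi))<\eps'$ and then iterating the H\"older bound across fundamental domains; your more direct application of the H\"older hypothesis on all of $\overline{B_R(x)}$ is equally valid since the constants are allowed to depend on $R$.
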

\begin{remark}
    Part (4) of this proposition also holds under the following alternate assumption. Suppose $X_F$ carries a Riemannian metric $h'$ such that $h'|_{F_x}=g_x$ and  each fiber $(F_x,g_x)$ is a totally geodesic submanifold of $(X_F,h')$ (or even more generally, there exists $0<L<1$ such that $d_{X_F}(u,v) \geq Ld_{F_x}(u,v)$ for any $u,v, \in F_x$).

    Note that the two assumptions do not imply each other. For instance, consider the case where $X=\wt{M}$ is a simply connected Riemannian manifold and $X_F=T^1\wt{M}$. If $X$ has a negatively curved metric, then the H\''older assumption (in part (4)) as well the totally geodesic assumption (mentioned in the above paragraph) is satisfied. But if $X$ is non-positively curved then the alternate assumption stated in this remark is satisfied while the H\"older assumption is not.
\end{remark}
\begin{proof}
Without loss of generality, we can assume that $\ell=1$. Since $\rho_0:\Gamma \to \Homeo(F)$ is a standard action with associated data $(X_F,p,\Phi,\{\pi_x\},\{g_x\},1)$, we have:
\begin{enumerate}[label=(\alph*)]
    \item a $C^{1,2}$ fiber bundle $p:X_F \to X$ with fiber $F$, 
    \item  a family of homeomorphisms $\{\pi_x:x \in X\}$ where $\pi_x: F_x \to F$,
    \item a homeomorphism $\Phi: X_F \to X \times F$ given by $\Phi(z)=(p(z), \pi_{p(z)}(z))$,
    \item $(\pr_F \circ \Phi)^{-1}(\xi)$ is a $C^{1}$-submanifold of $X_F$ for every $\xi \in F$,
    \item a $C^{1,2}$-smoothly varying family of Riemannian metrics $\set{g_x: x \in X}$ on the fibers $F_x$ such that
    $\gamma_x:=\pi_{\gamma x}^{-1} \circ \rho_0(\gamma) \circ \pi_x$
    is affine for any $\gamma\in \Gamma$ and $x \in X$.
\end{enumerate}
Moreover $\wh{\rho}_0$ denotes the action on $X_F$ induced by $\rho_0$.

{\bf Construction of $\wt{f}$: } Let $D\subset X$ be a pre-compact connected fundamental domain for the action of $\Gamma$ on $X$. Let $p_{\Gamma}: X \to \Gamma \backslash X$ be the covering map and let $\set{\mathcal O_i}$ be a finite cover of $\Gamma \backslash X$ by connected open sets. Since $\Gamma$ acts freely and properly discontinuously, we may assume that there exists an open cover $\set{\til{\mathcal O}_j}$ of $X$ such that: for each $j$, there is an $i$ for which the restriction of the covering map $p_\Gamma|_{\wt{\Oc_j}}:\til{\mathcal O}_j\to \Gamma \backslash X$ is an isometry onto $\Oc_i$. Indeed, we can obtain such a cover by  shrinking the diameter of $\Oc_i$ sufficiently and taking their pre-images under the covering map.

Let $\Wc=\{\Wc_j: 1 \leq j \leq m' \}$ be the finite sub-family of $\set{\til{\mathcal O}_i}$, consisting of open sets intersecting $\bar D$ non-trivially. Let $\set{\sigma_i,\mc{O}_i}$ be a partition of unity on $\Gamma\backslash X$ subordinate to the open cover $\set{\mathcal O_i}$ and and let $\til{\sigma_i}: X \to \mathbb{R}$ be the lift of $\sigma_i$.

The $\Gamma$-action on $\set{\til{\mc{O}}_i}$ introduces an equivalence relation $\sim_{\Gamma}$ on $\mathcal W$, namely $\Wc_i \sim_{\Gamma} \Wc_j$ if and only if $\Wc_i=\gamma \Wc_j$ for some $\gamma \in \Gamma$. Now we choose one representative from each equivalence class in $\Wc$ and label them as $\Uc_1,\dots,\Uc_{m''}$. Note that we specially mark these $\Uc_i$'s because we will define our map on these $\Uc_i$'s and extend it equivariantly. 

First, for each of the open sets $\Uc_1,\dots,\Uc_{m''}$, we define a map $\psi_{\Uc_i}:\Uc_i\times F\to X_F$ by $$\psi_{\mathcal U_i}(x,\xi)=\pi_x^{-1}(\xi).$$
That is, for each $\xi\in F$, the map $\psi_{\mathcal U_i}(\cdot,\xi)$ is a local section $\mc{U}_i\to X_F$.

As $\mc{W}$ is a finite set, there is a finite subset $S_{\Wc} \subset \Gamma$ such that: if $1 \leq j \leq m'$, then $\mathcal W_j=\gamma \mathcal U_i$ for some $\gamma \in S_{\Wc}$ and $1 \leq i \leq  m''$.

Then for $1 \leq j \leq m'$, if $\mathcal W_j=\gamma \mathcal U_i$ we define $\Psi_{{\Wc}_j}:{\mathcal W}_j\times F\to X_F$ to be 
\begin{align*}
\Psi_{\mathcal W_j}(\gamma x, \rho(\gamma)\xi)=\wh\rho_0(\gamma)\psi_{\mathcal U_i}(x,\xi),
\end{align*}
where $(x,\xi)\in \mathcal U_i\times F$.
We note that if $\mathcal W_i=\gamma' \mathcal W_j$ then we have $\Psi_{\mathcal W_i}(\gamma' x, \rho(\gamma')\xi)=\wh\rho_0(\gamma') \Psi_{\mathcal W_j}(x,\xi)$ for any $x\in \mathcal W_j$ and any $\xi \in F$.

We consider the neighborhood $\Omega$ of $\bar{D}$ defined by: $x \in \Omega$ provided $\{\wt{\mc{O}_i}:x \in \til{\mc{O}_i}\} \subset \Wc$, i.e. all the lifts $\wt{\mc O}_i$ of the open cover $\{\mathcal O_i\}$ containing $x$ belong to $\Wc$. Fix any $ x \in \Omega$. We will now define the map $\til{f}(x,\cdot)$ (see \eqref{eqn:define_f_tilde}). Suppose that, possibly after re-indexing the $\mathcal W_i$'s, $\mathcal W_1,\dots, \mathcal W_m$ are all the $\mc{W}_i$'s that contain $x$. By the remark above, each $\Wc_j$ corresponds to $\gamma_j \in \Sc_{\Wc}$ such that 
\begin{align}
\label{eqn:define_wj}
  \Wc_j = \gamma_j \Uc_i  
\end{align} for some $\Uc_i$. For any $\xi \in F$, set
\begin{align}
    \label{eqn:define_xi_j}
    \xi_j:= \rho_0(\gamma_j)\rho(\gamma_j^{-1}) (\xi). 
\end{align}
Then, 
\[\Psi_{\mathcal W_j}(x,\xi)=\wh\rho_0(\gamma_j)\Psi_{\mathcal U_i}(\gamma_j^{-1}x,\rho(\gamma_j)^{-1}\xi)= \pi_x^{-1}(\rho_0(\gamma_j)\rho(\gamma_j)^{-1}(\xi))=\pi_x^{-1}(\xi_j).\]

For every $\xi\in F$ define, 
\begin{align}
\label{eqn:define_f_tilde}
\til{f}(x,\xi):= \op{bar}_x\left( \sum_{j=1}^m \wt{\sigma_j}(x) \delta_{\pi_x^{-1}(\xi_j)}\right)= \op{bar}_x\bigg(\sum_{j=1}^m \til{\sigma}_{j}(x) \delta_{\Psi_{\mathcal W_j}(x,\xi)}\bigg),
\end{align}
where $\delta_z$ is the Dirac measure at $z$, and $\op{bar}_x$ is the barycenter operation on the fiber $F_x$.
Since $x$ belongs to a compact set, it follows that $\pi_x^{-1}(\xi_j)$ is close to $\pi_x^{-1}(\xi)$ when $\rho$ is sufficiently close to $\rho_0$, and hence the barycenter is well-defined. 

 In order to extend to a $(\wh\rho,\wh\rho_0)$-equivariant map $\til f:X\times F\to X_F$, we need to check that the map $\til{f}$ defined on $\Omega\times F$ is equivariant. That is, if both $x$ and $\gamma x$ are in $\Omega$ for some $\gamma\in \Gamma$ then $\til{f}(\gamma x,\rho(\gamma)\xi)=\wh\rho_0(\gamma)\til f(x,\xi)$, for every $\xi\in F$.
Indeed, if $\mathcal W_1,\dots, \mathcal W_m$ are the open sets containing $x$ then $\gamma\mathcal W_1,\dots, \gamma \mathcal W_m$ are the open sets containing $\gamma x$. By construction, 

\begin{align*}\til{f}(\gamma x, \rho(\gamma)\xi)
&=\op{bar}_{\gamma x}\bigg(\sum_{i=1}^m \til{\sigma}_{i}(x) \delta_{\Psi_{\gamma\mathcal W_i}(\gamma x, \rho(\gamma)\xi)}\bigg) \\
&=\op{bar}_{\gamma x}\left(\sum_{i=1}^m \til{\sigma}_{i}(x) \delta_{\wh\rho_0(\gamma)\Psi_{\mathcal W_i}(x,\xi)}\right)=\wh\rho_0(\gamma) \left(\op{bar}_x\bigg(\sum_{i=1}^m \til{\sigma}_{i}(x) \delta_{\Psi_{\mathcal W_i}(x,\xi)}\bigg)\right) \\
&=\wh\rho_0(\gamma)\til{f}(x,\xi),
\end{align*}
where the second to last equality follows from the fact that the fiber-wise barycenter construction is $\wh\rho_0$-equivariant as $\rho_0$ is a standard action (cf. \cref{rem:std_action_fiber_bary_equiv}).
Therefore the map $\til{f}$ is equivariant on $\Omega\times F$. Since $\Omega$ is an open neighborhood of $\bar D$, $\til f$ extends to a map $\til f: X\times F\to X_F$ that is $(\wh\rho,\wh\rho_0)$-equivariant. This finishes the construction of $\til f$. Now we proceed to prove all parts of the proposition.

\smallskip

\hyperref[prop:BM_part0]{Proof of Part (1)}: This follows from the construction of $\til f.$

\smallskip

\hyperref[prop:BM_part1]{Proof of Part (2)}: Fix any $\xi \in F$. Note that by definition, $p\of \til f=\id$ and hence $\til f|_{X \times \{\xi\}}$ is injective. Then for proving (2), it suffices to show that $\til f$ is $C^{1}$ along the horizontal leaf $X\times \{\xi\}$. Indeed, as $\til f$ covers the identity, this implies that $\til f|_{X \times \{\xi\}}$ is a diffeomorphism onto its image $\til f (X \times \{\xi\})$. Because $\til f$ is equivariant, it is enough to verify the regularity of $\til f(\cdot, \xi)$ on the neighborhood $\Omega$ of $\bar D$ for every $\xi\in F$.

As $\wh{\rho_0}(\Gamma)\backslash X_F$ is a $C^{1,2}$-fiber bundle, we can choose an atlas of $C^{1,2}$-smooth local trivializations. Refining the open cover $\{\mathcal O_i\}$ if necessary, we can assume that the local trivializations of $\wh{\rho_0}(\Gamma)\backslash X_F$ are of the form $\{\mathcal O_i \times F\}$. Let $\{\mathcal W_i\times F\}$ be the finite subset of the corresponding local trivializations, as defined before in \cref{eqn:define_wj}. For $\xi'\in F$, consider the map $\Wc_i \ni y \mapsto \phi_{\xi'}(y):=\pi_y^{-1}(\xi')$. By definition of standard action, each $\phi_{\xi'}$ is a $C^1$ map.

Each local trivialization has a continuous foliation $\Hc$ whose leaves are graphs of $\phi_{\xi'}$ for every $\xi'\in F$, i.e. $\set{(y,\pi_{y}^{-1}(\xi')):y \in \Wc_i}$ and $\xi' \in F$. The weights $\til \sigma_i(y)$ also vary $C^{1}$ in $y$. On each local trivialization $\mathcal W_i\times F$, by pulling back the metrics on $F_y$ to $F$, we obtain a $C^{1,2}$-family of metrics $\set{g_y:y \in \Wc_i}$ on $F$. For $ 1 \leq i \leq q$, let $\xi_i:=\rho_0(\gamma_i)\rho(\gamma_i)^{-1} \xi$, as in \cref{eqn:define_xi_j}. Then \cref{eqn:define_f_tilde} implies that on each local trivialization $\Wc_i \times F$,
\begin{align}
\label{eqn:define_f_in_charts}
\til f(y,\xi)=\op{bar}_y\left( \sum_{i=1}^q\til\sigma_i(y)\delta_{\varphi_{\xi_i}(y)}\right),
\end{align}
where $\op{bar}_y$ is the fiber-wise barycenter with respect to the metric $g_y$.

We now claim that $\til{f}(X \times \xi)$ is a $C^1$ submanifold of $X_F$. The proof of this claim is an application of  \cref{prop:bary_C1}. Each $\mathcal W_i$, via a coordinate chart, can be thought as an open ball in some Euclidean space. In this coordinate chart, $\til f(y,\xi)$ is then a fiber-wise barycenter map on $F_y$ with respect to the metrics $g_y$. Then \cref{prop:bary_C1} implies that for a fixed $\xi$, $\til{f}(y,\xi)$ varies in a $C^{1}$ way with respect to the base point $y\in \mathcal W_i$. The same holds after pushing back to $X_F$ under the trivialization. Now observe that for any $y \in X$, there exists $\gamma \in \Gamma$ and $y_0 \in \Wc_i$ such that $\wh{\rho_0}(\gamma)\til f(y_0,\xi)=\til f(y,\xi)$. Then the claim again follows because $\wh{\rho_0}(\gamma)$ acts by smooth diffeomorphisms.

We now prove the second claim: if $(x,\xi) \in X \times F$, then there exists a neighborhood of $(\rho_0,x,\xi)$ where the map $(\rho,y,\xi') \mapsto D_{y}\til f(y,\xi')$ is continuous in all variables.  The proof is an application of \cref{prop:parallel_close}. Since $\til f$ is $\wh{\rho_0}(\Gamma)$ equivariant, it suffices to prove it on a local trivialization chart $\set{\Wc_i \times F}$. By \cref{prop:parallel_close}, $D_y\til f(y,\xi)$ varies continuously with respect to $\xi_i$. Thus $D_y\til f(y,\xi)$ varies continuously as $\rho \to \rho_0$ or as $\xi$ varies continuously. Furthermore $D_y\til f(y,\xi)$ is continuous in $y$ by part (2).  Thus the moreover part holds on $\set{\Wc_i \times F}$. 

\smallskip

\hyperref[prop:BM_part4]{Proof of Part (3)}:  
 By part (2), $(\rho,x,\xi) \mapsto \til f(x,\xi)$ is continuous in all three variables. As $\pi_x$ is also continuous, the conclusion follows.

\smallskip

\hyperref[prop:BM_part2]{Proof of Part (4)}: Since $\til f$ is $(\wh{\rho},\wh{\rho_0})$-equivariant, note that it suffices to prove the claim for $x\in \bar D$. 
Fix $\eps,R>0$. Let $x \in \bar{D}$ and $\xi \in F$. 

Let $\eps'>0$. We first claim that there exists $\Uc = \Uc(\eps',R,x,\xi)$ such that
\begin{align}
\label{eqn:xi_in_ball}
 \sup_{\rho \in ~\Uc} \sup_{y\in B_R(x)}d_{F_y}\left(\pi_y^{-1}(\xi),\til f(y,\xi)\right)<\epsilon',   
\end{align}
provided $\eps'$ is sufficiently small. We now prove this claim. From the proof of part (3), we know that $(\rho,y,\xi) \mapsto \til f(y,\xi)$ is a continuous map. Since the metrics $\set{g_y}$ are a $C^{1,2}$ family of metrics on the fibers $F_y$, $d_{F_y}(u,v)$ varies continuously in $y$ provided $d_{F_y}(u,v)$ is sufficiently small; see \cref{lem:C2_reg_of_dist}. Then $(\rho,y,\xi) \mapsto \til d_{F_y}(\til f(y,\xi),\pi_y^{-1}(\xi))$ is a continuous function in all variables near $(\rho_0,x_0,\xi_0)$ provided $d_{F_{x_0}}(\til f(x_0,\xi),\pi_{x_0}^{-1}(\xi_0))$ is sufficiently small.

Suppose $\eps'>0$ is sufficiently small. Let $y \in B_R(x)$. Then part (3) and the discussion in the previous paragraph imply that there exists a neighborhood $U_y \subset X$ of $y$ and a neighborhood $\Uc'= \Uc'(\eps',y,\xi)$ of $\rho_0$  such that: 
\[
 \sup_{\rho \in \Uc'} \sup_{z \in U_y} \left(\pi_y^{-1}(\xi),\til f(y,\xi)\right)<\epsilon'.
\]
Now since $B_R(x)$ is pre-compact, we can find $\Uc = \Uc(\eps',R,x,\xi)$ such that \cref{eqn:xi_in_ball} is satisfied and this finishes the proof of the claim.

Let $\eta:=\pi_x(\til f(x,\xi))$. Then by \cref{eqn:xi_in_ball}, $d_{F_x}(\pi_x^{-1}(\xi),\pi_x^{-1}(\eta))<\epsilon'$. Moreover for any $y \in B_R(x)$, $(\pr_F \circ \Phi)^{-1}(\eta) \cap F_y = \pi_y^{-1}(\eta)$. We now claim that after refining $\Uc$, 
\begin{align}
\label{eqn:main_claim_in_proof_of_part3}
\sup_{\rho \in \Uc} \sup_{y \in B_R(x)} d_{F_y}\left( \til{f}(y,\xi), \pi_y^{-1}(\eta)\right) < \eps.
\end{align}

To prove this claim, let $\set{s_1,\dots,s_{d}}$ be the fixed finite generating set of $\Gamma$ corresponding to the compact fundamental domain $\bar{D}$. For any $y \in B_R(x)$, there exists $\gamma\in \Gamma$ such that $y\in \gamma\bar{D}$.  Let $K'= K'(y)$ be the word length of $\gamma$ with respect to this generating set, i.e. $\gamma=s_{i_1}\cdots s_{i_{K'}}$. Note that $K_0:=\max_{y \in B_R(x)} K'(y)$ exists and depends only on $R$ and the choice of generating set (i.e. choice of $\bar{D}$).

By the assumption in part (4), there exist $c> 1$ and $0<\alpha\le 1$ such that for any
$x,z\in \gamma \cdot \bar{D}$ and $u,v\in F_x$, the map $\pi_z^{-1}\circ \pi_x: F_x\to F_z$ satisfies
\[d_{F_z}(\pi_z^{-1}\circ \pi_x(u),\pi_z^{-1}\circ \pi_x(v))<c\:d_{F_x}(u,v)^\alpha.\]
Note that by assumption, $c,\alpha$ depend only on $\diam(\bar{D})$ (and hence are independent on which $\Gamma$-translate of $\bar{D}$ the points $x$ and $z$ lie in).
Then
\begin{align*}
d_{F_y}\left( \til{f}(y,\xi), \pi_y^{-1}(\eta)\right) &\le d_{F_y}\left( \til{f}(y,\xi), \pi_y^{-1}(\xi)\right) +d_{F_y}\left( \pi_y^{-1}(\xi), \pi_y^{-1}(\eta)\right) \\
&\le d_{F_y}\left( \til{f}(y,\xi), \pi_y^{-1}(\xi)\right) + c^{1+\alpha+\dots+\alpha^{K'-1}}d_{F_x}(\pi_x^{-1}(\xi),\pi_x^{-1}(\eta))^{\alpha^{K'}} \\
&\le \epsilon' + c^{1+\alpha+\dots+\alpha^{K'-1}}(\epsilon')^{\alpha^{K'}} \le \epsilon' + c^{K'}(\epsilon')^{\alpha^{K'}} \\
&\le 2c^{K'}(\epsilon')^{\alpha^{K'}}.
\end{align*}

Recall that $K_0=\max_{y \in B_R(x)} K'(y)$ and $K_0$ is independent of $c,\alpha$ and $\eps'$. For fixed constants $c, \alpha$ and  $K_0$, the function $M(\eps'):=\left( \max_{1 \leq K' \leq K_0} c^{K'}(\epsilon')^{\alpha^{K'}} \right)$ is continuous in $\eps'$ and it goes to $0$ when $\eps' \to 0$. So  we can choose  $\eps'=: \eps _0$ sufficiently small such that  $M(\eps_0) < \frac{\eps}{2}$.  
We then choose the neighborhood $\Uc$ of $\rho_0$ for $\eps'=\eps_0$; see \cref{eqn:xi_in_ball}. Then for every $\rho\in \Uc$, we have \cref{eqn:xi_in_ball} and thus the above discussion implies that \cref{eqn:main_claim_in_proof_of_part3} holds. This establishes the claim and finishes the proof of \hyperref[prop:BM_part2]{part (4)}.

\smallskip

\hyperref[prop:BM_part3]{Proof of Part (5)} To prove (5), we start with the same setup as in the proof of (2).  We choose a finite family of $C^{1,2}$-local trivializations $\{\mathcal O_i\times F\}$ of $X_F/\wh\rho_0(\Gamma)$ and let $\set{\Wc_i \times F}$ be the local trivialization charts as earlier. As in the proof of (2), we get a family $\set{g_y:y \in \Wc_i}$ of $C^{1,2}$ Riemannian metrics on $F$ parametrized by $\mathcal W_i$, which is homeomorphic to a ball in some $\Rb^{\dim \Wc_i}$. The set $\mathcal W_i\times F$ has a continuous foliation $\Hc=\set{\Hc_{\xi'}:\xi' \in F}$ whose leaves $\Hc_{\xi'}:=\{(y,\pi_y^{-1}\xi'):y\in \mathcal W_i\}$ are $C^1$. 
As in \cref{eqn:define_f_in_charts}, for every $\xi\in F$ and $y\in \Wc_i$, we have 
$$\til f(y,\xi)=\op{bar}_y\left(\sum_{i=1}^q\til\sigma_i(y)\delta_{\varphi_{\xi_i}}(y)\right),$$
where $\varphi_{\xi_i}(y)=\pi_y^{-1}(\xi_i)$ and $\xi_i=\rho_0(\gamma_j)\rho(\gamma_j^{-1}) (\xi).$ When $\rho \to \rho_0$, $\xi_i$ converges to $\xi$ and hence, $\varphi_{\xi_i}(y)$ converges to $\varphi_{\xi}(y)$ locally uniformly on $\Wc_i$. Then the diameters $\diam \{\varphi_{\xi_i}(y):i=1,\dots,q\}$ tend to 0 for every $y\in \mathcal W_i$. We are now in the setup of applying \cref{prop:parallel_close}.

 Fix $\eps>0$. Let $\delta$ be as in \cref{prop:parallel_close}. There exists a neighborhood $\Uc$ of $\rho_0$ such that if $\rho \in \Uc$, then $\left(\sup_{y\in \Wc_i}\diam \set{\varphi_{\xi_i}(y): 1 \leq i \leq q} \right) < \delta$. Then, by \cref{prop:parallel_close} for any $\rho \in \Uc$ and $y \in \Wc_i$,  
\begin{align}
\label{eqn:grassmannian_dist_small_in_proof}
d^{\op{Gr}}_{\til f(y,\xi)}\left( T_{\til f(y,\xi)} \til f(X \times \xi), T_{\til f(y,\xi)} (\pr_F \circ \Phi)^{-1}(\eta_y) \right) < \eps.
\end{align}
where $\eta_y=\pi_y(\til f(y,\xi))$.

Now we explain how to get this for any point $(y,\xi)$. As $\wh{\rho_0}(\Gamma)$ acts by isometries on $(X_F,h)$, the above Grassmanian distance does not change under $\rho_0(\Gamma)$ action. As $X_F/\wh{\rho_0}(\Gamma)$ is compact, we can choose finitely many local trivializations to cover it; as discussed in the beginning of the proof of part (2). As above, we now apply \cref{prop:parallel_close} for each of these finitely many local trivializations and conclude that there exists a neighborhood $\Uc$ of $\rho_0$ for which \cref{eqn:grassmannian_dist_small_in_proof} holds. 
\end{proof}

\section{Preliminaries II: General Results about Topological Actions}

The main purpose of this section is proving the local semi-rigidity holds for general uniform lattices if it holds for torsion-free lattices. Readers may skip this section if you want to assume the lattices being torsion-free.

In this section, we will work in the very general framework of a group acting by homeomorphisms (on one occasion, by bi-Lipschitz homeomorphisms) on a compact metric space. We will apply these general results later on in the paper to derive two important applications: first, to discuss uniqueness properties of semi-conjugacies between nearby actions, and second, to promote semi-conjugacies from subgroups of finite index to the whole group. In particular, this will allow us to prove local uniqueness of the semi-conjugacies in our main theorem as well as bypass the torsion elements in lattices. 

Before we get into the results, let us consider some elementary examples that illustrate some of the subtleties of working with semi-conjugacies. First, the semi-conjugacies (even conjugacies) between actions need not be unique. Moreover, one may think that if $\rho \to \rho_0$, then any family of semi-conjugacies $\set{\phi_\rho}$ must automatically converge to $\id$. However this also need not be the case.

\begin{example}
Consider the action of $SO(n+1)$ on $S^n$. The action is conjugated to itself by both $\id$ and $-\id$. If we want a discrete group action, we can consider an embedding of a lattice into $SO(n+1)$ and consider the action of this lattice on $S^n$.
\end{example}

More generally we have,

\begin{example}
Consider an action $\rho$ of a group $G$ on a manifold $M$ and suppose $\Gamma<G$ is a subgroup with a nontrivial centralizer $Z(\Gamma)<G$. Then if $\rho_0$ is the restriction of the action to $\Gamma$, then $\rho_0$ is self-conjugate both by $\rho(\sigma)$ for any $\sigma\in Z(\Gamma)$. However for $\sigma\neq 1$, the homeomorphism $\rho(\sigma)$ need not be close to the identity. 
\end{example}

However, we will prove some results in  \cref{sec:unique_semiconjug} and \cref{sec:extension_semiconjug} to control the behavior of semi-conjugacies under some additional assumptions.

\subsection{Some Definitions}

For this section, we assume that $(F,d_F)$ is a compact metric space. We equip $C^0(F,F)$, the space of continuous functions from $F$ to itself, with the distance $d_0(f,g)=\sup_{x\in F}d_F(f(x),g(x))$.  We will denote by $\Bc_{r}(f)$ the open ball in $(C^0(F,F),d_0)$ of radius $r$ around the point $f$. Our focus will be on the group $\Homeo(F)$ of homeomorphisms of $F$, equipped with the compact open topology, i.e. the topology generated by the basic open sets $\set{f \in \Homeo(F): f(K) \subset U}$ where $K \subset F$ is a compact set and $U \subset F$ is an open set.  

Here $\Gamma$ will always denote a finitely generated group. We assume that $\Gamma$ acts on $F$ by homeomorphisms. A faithful $C^0$-action of $\Gamma$ on $F$ is an injective homomorphism $\rho: \Gamma \to \Homeo(F)$. 
The space $\Hom(\Gamma, \Homeo(F))$ is the space of faithful actions of $\Gamma$ on $F$ by homeomorphisms.   We endow it with  the compact open topology.

\begin{definition}
Suppose $\rho, \rho_0:\Gamma \to \Homeo(F)$ are two actions of $\Gamma$. A  continuous surjective map $\phi:F \to F$ is a $(\rho|_H, \rho_0|_H)$-semi-conjugacy for the group $H< \Gamma$ if 
\[
\rho_0(h) \circ \phi =\phi \circ \rho(h) \text{ for all } h \in H.
\]
\end{definition}
If $H=\Gamma$, we will simplify the notation to $(\rho,\rho_0)$ semi-conjugacy.
\begin{definition}
Suppose $\lambda>1$.

\begin{enumerate}
    \item We say that $f \in\Homeo(F)$ is $\lambda$-expanding on an open subset $U \subset F$ if: for all $x',x'' \in U$,   
     \[
    d_F(f \cdot x', f \cdot  x'') \geq \lambda \cdot d_F(x',x'').
    \]
    \item If $\Uc$ is an open cover of $F$, we say that $\rho: \Gamma \to\Homeo(F)$ is a $(\lambda, \Uc)$ uniformly expanding action on $F$ if: 
 for each $x \in F$, there exists $U_x \in \Uc$ and $g_x \in \Gamma$ such that $\rho(g_x)$ is $\lambda$-expanding on $U_x$.
\end{enumerate}
\end{definition}

\subsection{Uniqueness of Semi-Conjugacies}
\label{sec:unique_semiconjug}

 As noted in the examples above, semi-conjugacies and even conjugacies between actions are not always unique. However, under additional assumptions they become unique provided the actions are sufficiently close and the (semi-)conjugacy is sufficiently close to identity. We will provide such conditions in this subsection.

\begin{lemma}\label{lem:semi_unique}
Suppose $\rho_0: \Gamma \to \Homeo(F)$ is a $(\lambda,\Uc)$-uniformly expanding action where $\lambda>1$ and $\Uc$ is an open cover of $F$. Then there exists $\eps>0$ such that: if $\rho:\Gamma\to \op{Homeo}(F)$ is another action and $\phi,\phi':F\to F$ are $(\rho,\rho_0)$ semi-conjugacies satisfying $d_0(\phi,\phi')<\eps$, then $\phi=\phi'$.
\end{lemma}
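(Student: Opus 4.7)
The plan is a contradiction argument exploiting the expansion of $\rho_0$ together with the Lebesgue number lemma. First, I would extract from the $(\lambda,\Uc)$-expansion hypothesis a subcollection $\Vc\subseteq \Uc$ consisting of those open sets $V$ for which a $\lambda$-expanding element $g_V\in\Gamma$ exists, i.e.\ with $\rho_0(g_V)$ being $\lambda$-expanding on $V$. By the very definition of uniform expansion, the family $\Vc$ still covers $F$ (each $x\in F$ is in a member of $\Vc$). Since $(F,d_F)$ is compact, the Lebesgue number lemma furnishes $\tau>0$ such that every subset of $F$ of diameter less than $\tau$ is contained in some single $V\in\Vc$. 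I will take $\eps:=\tau$.

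Next, suppose for contradiction that $\phi,\phi':F\to F$ are distinct $(\rho,\rho_0)$-semi-conjugacies with $d_0(\phi,\phi')<\eps$. By continuity of $\phi,\phi'$ and compactness of $F$, the supremum defining $d_0$ is attained at some $x_0\in F$; set
\[
\delta:=d_F(\phi(x_0),\phi'(x_0))=d_0(\phi,\phi')>0.
\]
Since $\delta<\eps=\tau$, the two-point set $\{\phi(x_0),\phi'(x_0)\}$ has diameter less than $\tau$, so it is contained in some $V\in\Vc$. Let $g:=g_V\in\Gamma$, so that $\rho_0(g)$ is $\lambda$-expanding on $V$. Applying this to the pair $\phi(x_0),\phi'(x_0)\in V$ gives
\[
d_F\bigl(\rho_0(g)\phi(x_0),\,\rho_0(g)\phi'(x_0)\bigr)\;\ge\;\lambda\,\delta.
\]

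Finally, I would use the semi-conjugacy identities $\rho_0(g)\circ\phi=\phi\circ\rho(g)$ and $\rho_0(g)\circ\phi'=\phi'\circ\rho(g)$ to rewrite the inequality as
\[
d_F\bigl(\phi(\rho(g)x_0),\,\phi'(\rho(g)x_0)\bigr)\;\ge\;\lambda\,\delta\;>\;\delta\;=\;d_0(\phi,\phi'),
\]
which contradicts the defining property of $d_0$. Hence $\phi=\phi'$, and the choice $\eps=\tau$ (determined only by $\rho_0$ and the cover $\Uc$, not by $\rho$) does the job.

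The only delicate point is ensuring that a single expanding element can act simultaneously on both $\phi(x_0)$ and $\phi'(x_0)$, which is exactly what the Lebesgue number guarantees once $\eps$ is chosen small enough; everything else is a routine application of the intertwining relations. Note that no hypothesis on how close $\rho$ is to $\rho_0$ is needed — the expansion of $\rho_0$ alone drives the argument.
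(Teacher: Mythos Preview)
Your proof is correct and uses the same essential ingredients as the paper (Lebesgue number of the expanding cover, then the semi-conjugacy intertwining relations), but your route is cleaner. The paper first passes to a finite subcover, fixes $p\in F$, picks $x\in\phi^{-1}(p)$ and $y\in(\phi')^{-1}(p)$, and then runs an \emph{iterative} expansion argument: it repeatedly applies expanding elements $\gamma_{i_1},\gamma_{i_2},\dots$ until the distance between $\phi(\rho(\gamma_{i_m}\cdots\gamma_{i_1})x)$ and $\phi(\rho(\gamma_{i_m}\cdots\gamma_{i_1})y)$ exceeds $\eps$, and only then derives the contradiction with $d_0(\phi,\phi')<\eps$. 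You bypass this iteration entirely by observing that, since $\phi,\phi'$ are continuous and $F$ is compact, the function $x\mapsto d_F(\phi(x),\phi'(x))$ attains its supremum at some $x_0$; a single application of one expanding element already pushes the value at $\rho(g)x_0$ strictly above the maximum $\delta$, which is an immediate contradiction. Your one-step argument is more direct and makes the dependence of $\eps$ on $\rho_0$ alone equally transparent; the paper's inductive scheme is more in the spirit of the later \cref{prop:removesemi} (where one cannot simply pick a maximizer for the relevant quantity), but is not needed here.
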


\begin{proof}
By compactness of $F$, we can extract a finite sub-cover of $\Uc$ such that $\rho_0$ is $(\lambda,\set{U_1,\dots,U_k})$-uniformly expanding. Let $\eps$ be the Lebesgue number of the sub-cover $\set{U_1,\dots,U_k}$ of $F$, i.e. if $d_F(x,y)<\eps$, then $x,y \in U_i$ for some $1 \leq i \leq k$. Suppose $\phi, \phi'$ are $(\rho,\rho_0)$-semi-conjugacies such that $d_0(\phi',\phi)<\eps$. It suffices to show that $\phi^{-1}(p)=(\phi')^{-1}(p)$ for all $p\in F$. 

To prove this fix $p\in F$. Let $x \in \phi^{-1}(p)$ and $y \in (\phi')^{-1}(p)$, i.e. $\phi(x)=\phi'(y)=p$.  We will show that $y \in \phi^{-1}(p)$, i.e. $\phi(y)=\phi(x)$. Note that by symmetry it will imply that $x \in (\phi')^{-1}(p)$ and finish the proof.

Suppose, on the contrary, that $\phi(x)\neq \phi(y)$. But $d_F(\phi(x),\phi(y)) = d_F(\phi'(y),\phi(y)) < \eps$. Then there exists $U_i$ that contains $\phi(x)$ and $\phi(y)$. By the $(\lambda,\set{U_1,\dots, U_k})$-uniform expansion assumption, there exists $\gamma \in\Gamma$, such that
\begin{align}
\label{eqn:use_expansion_1}
   d_F(\rho_0(\gamma)\phi(x),\rho_0(\gamma)\phi(y))> \lambda d_F(\phi(x),\phi(y)).
\end{align}

We let $\gamma_1, \dots, \gamma_k$ be elements of $\Gamma$ such that $\gamma_i$ is $(\lambda, U_i)$-expansive.

Then there exists $i_1\in \{1,\dots, k\}$ such that $\phi(x)$ and $\phi(y)$ belong to $U_{i_1}$. By the $(\lambda,U_{i_1})$- expansion assumption,
\begin{align}
\label{eqn:use_expansion_2}
   d_F(\phi'(\rho(\gamma_{i_1}y)),\phi(\rho(\gamma_{i_1}x)))=d_F(\rho_0(\gamma_{i_1})\phi(x),\rho_0(\gamma_{i_1})\phi(y))> \lambda d_F(\phi(x),\phi(y)).
\end{align}
If $d_F(\phi(\rho(\gamma_{i_1}x),\phi(\rho(\gamma_{i_1}y)))< \eps$, then there exists $i_2\in \{1,\dots, k\}$ such that $\phi(\rho(\gamma_{i_1}x)), \phi(\rho(\gamma_{i_1}y)) \in U_{i_2}$. Applying the action by $\rho_0(\gamma_{i_2})$, we get
\begin{align}
   d_F(\rho_0(\gamma_{i_2})\phi(\rho(\gamma_{i_1}x),\rho_0(\gamma_{i_2})\phi(\rho(\gamma_{i_1})y))> \lambda d_F(\phi(\rho(\gamma_{i_1})x),\phi(\rho(\gamma_{i_1})y))>\lambda^2 d_F(\phi(x),\phi(y)).
\end{align}
If $d_F(\phi(\rho(\gamma_{i_2}\gamma_{i_1})x),\phi(\rho(\gamma_{i_2}\gamma_{i_1})y))< \eps$, we continue above process by induction. Since $\lambda>1$, there is $m\in \mathbb N$ such that \[d_F(\phi(\rho(\gamma_{i_m}\cdots\gamma_{i_1})x),\phi(\rho(\gamma_{i_m}\cdots\gamma_{i_1})y))< \eps,\] but \[d_F(\phi(\rho(\gamma_{i_{m+1}}\cdots\gamma_{i_1})x),\phi(\rho(\gamma_{i_{m+1}}\cdots\gamma_{i_1})y))\ge \eps.\]
On the other hand,
\[d_F(\phi'(\rho(\gamma_{i_{m+1}}\cdots\gamma_{i_1})y),\phi(\rho(\gamma_{i_{n+1}}\cdots\gamma_{i_1})y))=d_F(\phi(\rho(\gamma_{i_{m+1}}\cdots\gamma_{i_1})x),\phi(\rho(\gamma_{i_{m+1}}\cdots\gamma_{i_1})y))\ge \eps.\]
This contradicts with the assumption that $d_0(\phi,\phi') < \eps$.

Thus, we must have $\phi(x)=\phi(y)$, which finishes the proof. 
\end{proof}

\subsection{Reduction to Normal Subgroups}
\label{sec:extension_semiconjug}

 We will now apply the criteria from the previous subsection to obtain semi-conjugacies between actions of a group assuming existence of semi-conjugacies for suitable normal subgroups.

\begin{lemma}
\label{lem:torsion1}
Suppose $\Gamma_0< \Gamma$ is a normal subgroup and $\Gamma/\Gamma_0$ is finitely generated. Suppose that for any $\eps >0$ there is a neighborhood $U=U(\eps)$ of $\rho_0$ such that if $\rho \in U$, then either $\rho$ is not semi-conjugate to $\rho_0$ or there is exactly one $(\rho|_{\Gamma_0},\rho_0|_{\Gamma_0})$-semi-conjugacy in $\Bc_{\eps}(\id)$ that we denote by $\phi_\rho$.
Then there exists a small enough neighborhood $U_0$ of $\rho_0$ such that for any $\rho \in U_0$ such that $\rho|_{\Gamma_0}$ is semi-conjugate to $\rho_0|_{\Gamma_0}$, then $\phi_\rho$ is a $(\rho,\rho_0)$-semi-conjugacy (i.e. for the action of the whole group $\Gamma$).
\end{lemma}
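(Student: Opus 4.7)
The plan is to exploit the uniqueness hypothesis for $\Gamma_0$-semi-conjugacies together with the normality of $\Gamma_0$ in $\Gamma$. Let $\tilde{S} = \{\tilde{s}_1, \ldots, \tilde{s}_m\} \subset \Gamma$ be lifts of a finite generating set of $\Gamma/\Gamma_0$. Since $\phi_\rho$ is already a $\Gamma_0$-equivariant semi-conjugacy and $\Gamma$ is generated by $\Gamma_0 \cup \tilde{S}$, it suffices to show that $\phi_\rho \circ \rho(\tilde{s}_i) = \rho_0(\tilde{s}_i) \circ \phi_\rho$ for every $i$.

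For each $i$, I would introduce the auxiliary map $\psi_i^\rho := \rho_0(\tilde{s}_i)^{-1} \circ \phi_\rho \circ \rho(\tilde{s}_i)$, which is continuous and surjective. The normality of $\Gamma_0$ is the essential ingredient: for any $g \in \Gamma_0$, the conjugate $\tilde{s}_i g \tilde{s}_i^{-1}$ lies in $\Gamma_0$, so $\phi_\rho \circ \rho(\tilde{s}_i g \tilde{s}_i^{-1}) = \rho_0(\tilde{s}_i g \tilde{s}_i^{-1}) \circ \phi_\rho$. Combining this with $\rho(\tilde{s}_i) \circ \rho(g) = \rho(\tilde{s}_i g \tilde{s}_i^{-1}) \circ \rho(\tilde{s}_i)$ and the analogous identity for $\rho_0$, a direct computation yields $\psi_i^\rho \circ \rho(g) = \rho_0(g) \circ \psi_i^\rho$. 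Thus $\psi_i^\rho$ is itself a $(\rho|_{\Gamma_0}, \rho_0|_{\Gamma_0})$-semi-conjugacy.

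To conclude, fix $\eps > 0$ small and let $U(\eps)$ be as in the hypothesis. The triangle inequality gives
\begin{equation*}
d_0(\psi_i^\rho, \id) \leq d_0\bigl(\rho_0(\tilde{s}_i)^{-1} \circ \phi_\rho \circ \rho(\tilde{s}_i),\, \rho_0(\tilde{s}_i)^{-1} \circ \rho(\tilde{s}_i)\bigr) + d_0\bigl(\rho(\tilde{s}_i), \rho_0(\tilde{s}_i)\bigr),
\end{equation*}
and each term can be made arbitrarily small by shrinking the neighborhood of $\rho_0$: the second directly by $C^0$-closeness of $\rho$ to $\rho_0$, and the first using uniform continuity of the homeomorphism $\rho_0(\tilde{s}_i)^{-1}$ on the compact metric space $F$ together with the bound $d_0(\phi_\rho, \id) < \eps$ supplied by the hypothesis. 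Since $\tilde{S}$ is finite, I can find a single neighborhood $U_0 \subset U(\eps)$ such that for all $\rho \in U_0$ with $\rho|_{\Gamma_0}$ semi-conjugate to $\rho_0|_{\Gamma_0}$, each $\psi_i^\rho$ lies in $\Bc_\eps(\id)$. The uniqueness clause of the hypothesis then forces $\psi_i^\rho = \phi_\rho$, which is precisely the intertwining relation on $\tilde{S}$. Aside from this standard normality computation, the only minor subtle point is arranging the simultaneous smallness for the finitely many generators, which is where the hypothesis that $\Gamma/\Gamma_0$ is finitely generated enters.
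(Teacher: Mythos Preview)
Your argument is correct and essentially identical to the paper's: define the conjugated maps $\psi_i^\rho=\rho_0(\tilde s_i)^{-1}\phi_\rho\,\rho(\tilde s_i)$, use normality of $\Gamma_0$ to see they are $(\rho|_{\Gamma_0},\rho_0|_{\Gamma_0})$-semi-conjugacies, push them into $\Bc_\eps(\id)$ by shrinking the neighborhood, and invoke uniqueness. One small slip: in your displayed triangle inequality the second summand should be $d_0\bigl(\rho_0(\tilde s_i)^{-1}\rho(\tilde s_i),\id\bigr)$ rather than $d_0\bigl(\rho(\tilde s_i),\rho_0(\tilde s_i)\bigr)$, since left composition by $\rho_0(\tilde s_i)^{-1}$ need not be a $d_0$-isometry; this is harmless, as the correct term is also small by the uniform continuity of $\rho_0(\tilde s_i)^{-1}$ (the same device you use for the first term, and the same device the paper uses).
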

\begin{proof}
Fix a finite generating set $\set{a_1 \dots, a_n}$ of $\Gamma/\Gamma_0$.  Fix an $\eps>0$ and let $U$ be a neighborhood satisfying the assumption. For $1 \leq i \leq n$, define $\phi_i:=\rho_0(a_i^{-1}) \circ \phi_{\rho} \circ \rho(a_i)$. It sufficies to show that we can find a smaller open subset $U_0\subset U$ such that for $\rho\in U_0$  for which $\rho|_{\Gamma_0}$ is semiconjugate to $\rho_0|_{\Gamma_0}$ by $\phi_\rho$, we have $\phi_i=\phi_\rho$ for $1\leq i \leq n$. 

Since $\Gamma_0$ is normal in $\Gamma$, it is easy to verify that $\phi_i$ is a $(\rho|_{\Gamma_0},\rho_0|_{\Gamma_0})$-semi-conjugacy. We claim that whenever $\rho$ is sufficiently close to $\rho_0$, then for all  $i=1,\dots,n$ we have,
\begin{align*}
d(\phi_i,\id)&\leq d(\rho_0(a_i^{-1}) \phi_{\rho}\rho(a_i),\rho(a_i)^{-1} \phi_{\rho} \rho(a_i))+d(\rho(a_i)^{-1}\phi_\rho \rho(a_i),\id) \\
 &\leq d(\rho(a_i^{-1}),\rho_0(a_i^{-1})) + d(\rho(a_i)^{-1}\phi_\rho \rho(a_i),\id)\\
 & <  \frac{\eps}{2}+\frac{\eps}{2}=\eps.
\end{align*}
Indeed, for the first term we can choose $U_0$ sufficiently small so that $\max_{1\leq i\leq n}d(\rho_0(a_i^{-1}),\rho(a_i^{-1}))< \eps/2$. The second $\frac{\eps}{2}$ estimate can be achieved by further shrinking $U_0$ as follows. We note that the conjugation action of $\Homeo(F)$ on itself is continuous. Hence there exists $\delta \ge 0$ such that 
\[d(\rho(a_i)^{-1}\varphi \rho(a_i),\id)<\frac{\eps}{2},\]
for every $\varphi\in B_\delta(\id)$. Now we let $U_0=U(\delta)$ be the neighborhood of $\rho_0$ obtained by applying the assumption to the constant $\delta$. It follows that $d(\rho(a_i)^{-1}\phi_\rho \rho(a_i),\id)<\frac{\eps}{2}$.

Thus $\phi_i \in \Bc_\eps(\id)$ is a $(\rho|_{\Gamma_0},\rho_0|_{\Gamma_0})$-semi-conjugacy. Then by the hypothesis on $U_0$, $\phi_i=\phi_{\rho}$. 
\end{proof}

\begin{corollary}\label{cor:torsionfree}
Suppose $\Gamma_0<\Gamma$ is a normal subgroup, $\Gamma/\Gamma_0$ is finitely generated and $\rho_0: \Gamma \to \Homeo(F)$ is such that $\rho_0|_{\Gamma_0}$ is a $(\lambda,\Uc)$-uniformly expanding action on $F$ for some open cover $\Uc$. Suppose that for any $\eps >0$ there is a neighborhood $U=U(\eps)$ of $\rho_0$ such that if $\rho \in U$ and $\rho|_{\Gamma_0}$ is  semi-conjugate to $\rho_0|_{\Gamma_0}$, then there is a $(\rho|_{\Gamma_0},\rho_0|_{\Gamma_0})$-semi-conjugacy in $\Bc_{\eps}(\id)$.
Then there exists $\eps_0>0$ and a neighborhood $U_0=U(\eps_0)$ of $\rho_0$ such that: if $\rho \in U_0$ and $\rho|_{\Gamma_0}$ is  semi-conjugate to $\rho_0|_{\Gamma_0}$, then $\rho$ is semi-conjugate to $\rho_0$. Moreover, there is a $(\rho,\rho_0)$-semi-conjugacy $\phi_{\rho}$ such that $d_0(\phi_\rho,\id)<\eps_0$.
\end{corollary}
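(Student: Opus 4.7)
My plan is to combine \cref{lem:semi_unique} (uniqueness under uniform expansion) with \cref{lem:torsion1} (extending $\Gamma_0$-semi-conjugacies to $\Gamma$) in a straightforward manner. The hypothesis of \cref{lem:torsion1} requires that there be at most one $(\rho|_{\Gamma_0},\rho_0|_{\Gamma_0})$-semi-conjugacy in $\Bc_\eps(\id)$ whenever $\rho$ is close enough to $\rho_0$, and this uniqueness is exactly what the $(\lambda,\Uc)$-uniform expansion of $\rho_0|_{\Gamma_0}$ will deliver through \cref{lem:semi_unique}.

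More precisely, the first step is to apply \cref{lem:semi_unique} to the $(\lambda,\Uc)$-uniformly expanding action $\rho_0|_{\Gamma_0}$ in order to extract a constant $\eps_1>0$ such that any two $(\rho|_{\Gamma_0},\rho_0|_{\Gamma_0})$-semi-conjugacies $\phi,\phi'$ with $d_0(\phi,\phi')<\eps_1$ agree. I would then set $\eps_0:=\eps_1/3$ (any choice strictly less than $\eps_1/2$ works).

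Next I would verify the hypothesis of \cref{lem:torsion1} at the value $\eps=\eps_0$. Fix any $\eps\in(0,\eps_0]$ and let $U=U(\eps)$ be the neighborhood of $\rho_0$ supplied by the hypothesis of the corollary. For $\rho\in U$, there are two cases. If $\rho|_{\Gamma_0}$ is not semi-conjugate to $\rho_0|_{\Gamma_0}$, then $\rho$ cannot be semi-conjugate to $\rho_0$ either (since restricting a $(\rho,\rho_0)$-semi-conjugacy to $\Gamma_0$ yields a $(\rho|_{\Gamma_0},\rho_0|_{\Gamma_0})$-semi-conjugacy), and the first branch of the disjunction in \cref{lem:torsion1} holds. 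If $\rho|_{\Gamma_0}$ is semi-conjugate to $\rho_0|_{\Gamma_0}$, the hypothesis of the corollary produces a $(\rho|_{\Gamma_0},\rho_0|_{\Gamma_0})$-semi-conjugacy $\phi_\rho\in \Bc_\eps(\id)$; any other such semi-conjugacy $\phi'\in\Bc_\eps(\id)$ satisfies $d_0(\phi_\rho,\phi')\le 2\eps\le 2\eps_0<\eps_1$, so by \cref{lem:semi_unique} we have $\phi'=\phi_\rho$, giving the required uniqueness.

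Finally, \cref{lem:torsion1} applied with this value of $\eps_0$ yields an open neighborhood $U_0$ of $\rho_0$ on which the (now unique) map $\phi_\rho$ is actually a $(\rho,\rho_0)$-semi-conjugacy for the whole group $\Gamma$. By construction $\phi_\rho\in \Bc_{\eps_0}(\id)$, so $d_0(\phi_\rho,\id)<\eps_0$, which is the ``moreover'' clause. There is no real obstacle here: the content is entirely packaged in \cref{lem:semi_unique} and \cref{lem:torsion1}, and the only small subtlety is tracking that ``$\rho$ semi-conjugate to $\rho_0$'' implies ``$\rho|_{\Gamma_0}$ semi-conjugate to $\rho_0|_{\Gamma_0}$'', so that the two alternatives in the hypothesis of \cref{lem:torsion1} line up with the two cases considered above.
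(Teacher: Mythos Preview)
Your proposal is correct and follows essentially the same approach as the paper: apply \cref{lem:semi_unique} to $\rho_0|_{\Gamma_0}$ to obtain the uniqueness radius, halve (or third) it to define $\eps_0$, use the triangle inequality to verify the uniqueness hypothesis of \cref{lem:torsion1}, and then invoke \cref{lem:torsion1}. Your treatment is in fact slightly more careful than the paper's in two respects: you explicitly check the hypothesis of \cref{lem:torsion1} for all $\eps\in(0,\eps_0]$ rather than a single value, and you spell out why the ``$\rho$ not semi-conjugate to $\rho_0$'' branch is covered by the case where $\rho|_{\Gamma_0}$ is not semi-conjugate to $\rho_0|_{\Gamma_0}$.
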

\begin{proof}
Let $\eps$ be the constant obtained by applying \cref{lem:semi_unique} to the action $\rho_0|_{\Gamma_0}$ and the open cover $\Uc$. Set $\eps_0:=\eps/2$ and $U_0:= U(\eps_0)$. Suppose $\rho \in U_0$ is such that $\rho|_{\Gamma_0}$ is  semi-conjugate to $\rho_0|_{\Gamma_0}$. By assumption, there is a semi-conjugacy $\phi_\rho$ such that $d_0(\phi_\rho,\id)<\eps_0$. If $\phi'$ is another such semi-conjugacy in $\Bc_{\eps_0}(\id)$, then \cref{lem:semi_unique} implies that $\phi'=\phi_\rho$. So $\Bc_{\eps_0}(\id)$ contains exactly one semi-conjugacy. Then \cref{lem:torsion1} implies that $\phi_\rho$ is a $(\rho,\rho_0)$-semi-conjugacy. 
\end{proof}

\subsection{Upgrading semi-conjugacy to conjugacy}
\label{sec:lipschitz}

Let $\Lipeo(F)$ be the space of bi-Lipschitz homeomorphisms of $F$. The Lipschitz distance between $f \in \Lipeo(F)$ and $\id$ is defined by $d_{\Lip}(f,\id):=\sup_{x\in F} \dfrac{d_F(f(x),f(y))}{d_F(x,y)}$. We consider the distance function $d_L$ defined for $f,g \in \Lipeo(F)$ by 
\[
d_L(f,g)=d_{\Lip}(f\circ g^{-1},\id)+d_{\Lip}(g\circ f^{-1},\id).
\] 
If  $f$ is $\lambda$-Lipschitz, then $d_{\Lip}(g \circ f^{-1},\id) < \eps$ implies that $g$ is $\lambda \eps$-Lipschitz. Thus, if $f \in \Lipeo(F)$ is $\lambda$-biLipschitz, then every point in the $\eps$-ball around $f$ (in $d_L$ metric) is $(\lambda \eps)$-biLipschitz.

Suppose $\Gamma$ is a finitely generated group and fix a generating set $S$. We will say that two actions $\rho,\rho_0:\Gamma \to \Lipeo(F)$ are $\eps$-close if $\max_{s \in S} d_L(\rho(s),\rho_0(s)) <\eps$. The action $\rho_0$ is Lipschitz locally rigid if there exists $\eps>0$ such that any action $\rho: \Gamma \to \Lipeo(F)$ that is $\eps$-close to $\rho_0$, is $C^0$-conjugate to $\rho_0$.

We now prove a very general result about upgrading an equivariant  $C^0$ semi-conjugacy to a conjugacy when the actions have a uniform expansion property. 

\begin{proposition}\label{prop:removesemi}
Suppose $\rho_0: \Gamma \to \Lipeo(F) \subset\Homeo(F)$ is a $(\lambda, \Uc)$ uniformly expanding action. Then there exists $\eps_0>0$ and a neighborhood $V_0$ of $\rho_0$ in $\Lipeo(F)$ such that: if $\rho \in V_0$ and there is a $(\rho,\rho_0)$ semi-conjugacy $\phi_\rho$ in $\Bc_{\eps_0}(\id)$, then $\phi_\rho$ is a homeomorphism (i.e. a conjugacy).
\end{proposition}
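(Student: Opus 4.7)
The plan is to show that under the stated closeness conditions, any $(\rho,\rho_0)$ semi-conjugacy $\phi_\rho$ close to $\id$ must be injective. Since $\phi_\rho$ is a continuous surjection from the compact metric space $F$ to itself and $F$ is Hausdorff, injectivity immediately upgrades it to a homeomorphism, which is the desired conjugacy.

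First I would invoke compactness of $F$ to extract a finite subcover $\{U_1,\dots,U_k\}$ from $\Uc$, with corresponding $\gamma_1,\dots,\gamma_k\in\Gamma$ such that $\rho_0(\gamma_i)$ is $\lambda$-expanding on $U_i$. Let $\eta>0$ be a Lebesgue number for this cover, and set $\eps_0:=\eta/4$. Next I would choose the neighborhood $V_0$ of $\rho_0$ so that for every $\rho\in V_0$ and every $i$, the element $\rho(\gamma_i)$ is $\lambda'$-expanding on $U_i$ for some fixed $\lambda'>1$. Because each $\gamma_i$ is a fixed element of $\Gamma$, hence a word of bounded length in a generating set, and group multiplication is continuous in the Lipschitz topology, closeness of $\rho$ to $\rho_0$ on generators forces $\rho(\gamma_i)\rho_0(\gamma_i)^{-1}$ to be uniformly close to $\id$ in Lipschitz norm; combined with $\lambda>1$, this produces a uniform $\lambda'>1$ for the perturbed action.

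The core argument is an iterated expansion modelled on the proof of Lemma~\ref{lem:semi_unique}. Suppose for contradiction that $\phi_\rho(x_0)=\phi_\rho(y_0)$ for some $x_0\neq y_0$. Since $d_F(\phi_\rho(z),z)<\eps_0$ for every $z\in F$, the triangle inequality gives $d_F(x_0,y_0)<2\eps_0<\eta$, so some $U_{i_1}$ contains both points. Setting $x_1:=\rho(\gamma_{i_1})x_0$ and $y_1:=\rho(\gamma_{i_1})y_0$, semi-conjugacy yields $\phi_\rho(x_1)=\rho_0(\gamma_{i_1})\phi_\rho(x_0)=\rho_0(\gamma_{i_1})\phi_\rho(y_0)=\phi_\rho(y_1)$, so again $d_F(x_1,y_1)<2\eps_0<\eta$, while the expansion bound gives $d_F(x_1,y_1)\geq\lambda'd_F(x_0,y_0)$. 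Iterating, I obtain sequences $x_n,y_n$ lying in the common fibre $\phi_\rho^{-1}(\phi_\rho(x_0))$ with $d_F(x_n,y_n)\geq(\lambda')^n d_F(x_0,y_0)$ and simultaneously $d_F(x_n,y_n)<2\eps_0$; since $\lambda'>1$ and $d_F(x_0,y_0)>0$, this is a contradiction, so $\phi_\rho$ is injective.

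I expect the main technical care to lie in the perturbation step, namely quantifying the loss of the expansion constant under a Lipschitz perturbation and ensuring it remains strictly greater than $1$ uniformly over a full neighborhood $V_0$. Once the uniform expansion is in place for $\rho$ on the same finite cover with a definite gap $\lambda'>1$, the iteration above is essentially the telescoping argument used in the proof of Lemma~\ref{lem:semi_unique}, repurposed here to propagate injectivity rather than uniqueness.
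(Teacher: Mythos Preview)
Your proof is correct and follows essentially the same strategy as the paper: extract a finite subcover with a Lebesgue number, ensure the perturbed action retains a uniform expansion constant $\lambda'>1$ on the same cover, and then iterate the expansion against the a priori bound $d_F(x_n,y_n)<2\eps_0$ coming from $d_0(\phi_\rho,\id)<\eps_0$ and the semi-conjugacy relation. The only cosmetic difference is that the paper runs the induction downward through the shells $\eps/(\lambda')^n\le d_F(\xi,\zeta)<\eps/(\lambda')^{n-1}$, while you iterate upward until the growing distance contradicts the fixed bound; these are equivalent.
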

\begin{proof}

 By compactness of $F$, we may extract a finite subcover $\set{U_i}_{i=1}^k$ of $\Uc$ with corresponding expanding group elements $\gamma_1,\dots,\gamma_k$. Let $\eps$ be the Lebesgue number of this finite cover (i.e. every ball of radius $\eps$ lies entirely in at least one $U_i$). Set $\eps_0:=\frac{\eps}{2}$. 
 
 Fix some $\lambda'$ such that $\lambda>\lambda' > 1$. Consider the neighborhood of $\rho_0$ in given by
 \begin{align*}
     V_0:=\set{ \rho \in \Hom(\Gamma, \Lipeo(F)) : \max_{1 \leq i \leq k} d_L(\rho(\gamma_i), \rho_0(\gamma_i)) < \frac{\lambda'}{\lambda}}
 \end{align*}
 Then for any $\rho \in V_0$, $\rho(\gamma_i)$ is $\lambda'$-expanding on $U_i$ for $1 \leq i \leq k$.

 We will show that the conclusion of this proposition is satisfied by this $V_0$ and $\eps_0$. Suppose $\rho \in V_0$ and $\phi_{\rho}$ is a $(\rho,\rho_0)$ semiconjugacy in $\Bc_{\eps_0}(\id)$. In order to prove this result, it suffices to show that $\phi_{\rho}$ is bijective.  
 
 We first show that $\phi_{\rho}$ is injective. Note that if $d_F(x,y) \geq \eps$, then $\phi_{\rho}(x) \neq \phi_{\rho}(y)$. Indeed, if $\phi_{\rho}(x)=\phi_{\rho}(y)$, then 
 \begin{align*}
 d_F(x,y) \leq d_F(x,\phi_{\rho}(x)) + d_F(\phi_{\rho}(y),y) < 2 \eps_0 = \eps,
 \end{align*}
 where the last inequality comes from $d_0(\phi_{\rho},\id) <\eps_0$.

Suppose there exist $\xi,\zeta\in F$ such that $\phi_{\rho}(\xi)=\phi_{\rho}(\zeta)$. Then $d_F(\xi,\zeta) < \eps$ and both points lie in some $U_i$.  Suppose \[
\frac{\eps}{\lambda'} \leq d_F(\xi,\zeta)<\eps
\]
As $\phi_{\rho}(\xi)=\phi_{\rho}(\zeta)$, 
\[
\phi_{\rho}(\rho(\gamma_i)(\xi))=\rho_0(\gamma_i)\phi_{\rho}(\xi)=\rho_0(\gamma_i)\phi_{\rho}(\zeta)=\phi_{\rho}(\rho(\gamma_i)(\zeta)). 
\]
However this is impossible as 
\[
d_F(\rho(\gamma_i) \xi,\rho(\gamma_i) \zeta) \geq \lambda' d_F(\xi,\zeta) \geq \lambda' \cdot \frac{\eps}{\lambda'}= \eps.
\]
 
 Continuing inductively as in the proof of \cref{lem:semi_unique}, we conclude that for any $n \geq 1$, $\phi_{\rho}(\xi) \neq \phi_{\rho}(\zeta)$ whenever  $\frac{\eps}{(\lambda')^n}<d_F(\xi,\zeta)<\frac{\eps}{(\lambda')^{n-1}}$. As $\lambda'>1$,  this implies that $\phi_{\rho}$ is injective. 
\end{proof}

\begin{corollary}
Suppose $\rho_0: \Gamma \to \Lipeo(F)$ is a $(\lambda, \Uc)$ uniformly expanding action. Further assume that for every $\eps>0$ sufficiently small, there exists a neighborhood $V = V(\eps)$ of $\rho_0$ in $\Lipeo(F)$ such that: if $\rho \in V$, then there exists a $(\rho,\rho_0)$ semi-conjugacy in $B_{\eps}(\id)$. Then $\rho_0$ is Lipschitz locally rigid.
\end{corollary}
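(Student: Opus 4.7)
The plan is to combine the hypothesis directly with \cref{prop:removesemi}, which already does all the heavy lifting: it promotes any sufficiently small semi-conjugacy to a genuine conjugacy whenever the perturbed action is Lipschitz-close to $\rho_0$. The hypothesis supplies the semi-conjugacy, and the proposition upgrades it, so the only real work is to intersect two neighborhoods and verify the sizes match up.

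More precisely, I would first invoke \cref{prop:removesemi} applied to the action $\rho_0$ and the expansion data $(\lambda,\Uc)$ to obtain a threshold $\eps_0>0$ and a neighborhood $V_0$ of $\rho_0$ in $\Hom(\Gamma,\Lipeo(F))$ with the property that, for every $\rho\in V_0$, any $(\rho,\rho_0)$-semi-conjugacy lying in $\Bc_{\eps_0}(\id)$ is automatically a homeomorphism of $F$, hence a genuine $C^0$-conjugacy. Shrinking $\eps_0$ if necessary, I may assume it is small enough that the standing hypothesis of the corollary produces a neighborhood $V(\eps_0)$ of $\rho_0$ such that every $\rho\in V(\eps_0)$ admits a $(\rho,\rho_0)$-semi-conjugacy inside $\Bc_{\eps_0}(\id)$.

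Now set $V := V_0 \cap V(\eps_0)$; this is still a neighborhood of $\rho_0$ in the Lipschitz topology, so it contains some basic neighborhood of the form $\{\rho:\max_{s\in S}d_L(\rho(s),\rho_0(s))<\eps\}$ for an appropriate $\eps>0$ (here $S$ is the fixed finite generating set of $\Gamma$ used in the definition of Lipschitz local rigidity). For any $\rho$ in this basic neighborhood the hypothesis yields a semi-conjugacy $\phi_\rho\in\Bc_{\eps_0}(\id)$, and membership of $\rho$ in $V_0$ together with \cref{prop:removesemi} forces $\phi_\rho$ to be a homeomorphism. Hence $\phi_\rho$ is a $C^0$-conjugacy from $\rho$ to $\rho_0$, which is exactly the conclusion that $\rho_0$ is Lipschitz locally rigid.

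There is no substantial obstacle in this argument — the only thing to check carefully is that the neighborhoods $V_0$ and $V(\eps_0)$, defined in possibly different terms (one via $d_L$ on finitely many elements $\gamma_1,\dots,\gamma_k$, the other via the hypothesis), are both open in the same Lipschitz topology on $\Hom(\Gamma,\Lipeo(F))$, so that their intersection again contains an $\eps$-basic neighborhood with respect to the generating set $S$. This is routine since any $\gamma_i$ is a word in $S$ and $d_L$ behaves well under composition and inversion, so closeness on $S$ implies closeness on any fixed finite collection of group elements.
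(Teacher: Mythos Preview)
Your proposal is correct and follows essentially the same approach as the paper: invoke \cref{prop:removesemi} to obtain $\eps_0$ and $V_0$, use the hypothesis to obtain $V(\eps_0)$ (the paper uses some $\eps<\eps_0$, but this is immaterial), and intersect the two neighborhoods so that for any $\rho$ therein the guaranteed semi-conjugacy in $\Bc_{\eps_0}(\id)$ is upgraded to a conjugacy. Your additional remark about compatibility of the neighborhoods in the Lipschitz topology is a reasonable bit of care that the paper leaves implicit.
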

\begin{proof}
Suppose $\eps_0$ and $V_0$ are as in the previous proposition. Let $0< \eps < \eps_0$ and $V= V(\eps)$ be as in the hypothesis. Then $V \cap V_0$ is a neighborhood of $\rho_0$ such that: whenever $\rho \in V \cap V_0$, $\rho$ is semi-conjugate to $\rho_0$. Indeed, the hypothesis implies the existence of a $(\rho,\rho_0)$ semi-conjugacy in $\Bc_{\eps}(\id) \subset \Bc_{\eps_0}(\id)$. The previous proposition then implies that $\phi_{\rho}$ is a conjugacy. This finishes the proof. 
\end{proof}

\subsection{A Generalized Denjoy Construction}
\label{sec:denjoy}

We remark that there are many actions which are semiconjugate but not conjugate. This had been understood by Bowden and Mann in their work on boundary actions on spheres - cf. Proposition 4.1 of \cite{BowdenMann19} which generalizes  
 the Denjoy construction 
(\cite{Denjoy}).  Using their main argument, we provide a simple generalization that works for any countable group acting on a manifold with a dense orbit.

We will first need the following \cref{prop:Carpet}. It is a variant of \cite[Proposition 4.1 ]{BowdenMann19} and is a consequence of \cite[Proposition 2.2]{DeSouza}. We will first recall \cite[Proposition]{DeSouza}. Let $M^n_{n-1}$ denote the Sierpinski space of dimension $n-1$, i.e. $M^n_{n-1}:=S^n - \cup_{i \in \Nb} U_i$ where $\set{U_i: i \in \Nb}$ is a family of open balls dense in $S^n$ and $\set{i \in \Nb: \diam(U_i)<\eps}$ is finite for any $\eps>0$. If $A \subset S^n$ is finite, let $M^n_{n-1}/\sim_A$ be obtained by collapsing each $\partial U_i$ to a single point for which $U_i$ does not intersect $A$. Proposition 2.2 of \cite{DeSouza} states that $M^n_{n-1}/\sim_A$ is homeomorphic to $S^n$ minus a set of $~\#A$ disjoint open tame balls. Here an open ball $U \subset S^n$ is tame if both $\overline{U}$ and $S^n-U$ are homeomorphic to closed balls.

Observe that Proposition 2.2 of \cite{DeSouza} applies equally well in all dimensions $n$, including dimension $n=4$. This is because the proof of \cite[Proposition 2.2]{DeSouza} follows from the Decomposition Theorem (see \cite[Proposition 1.11]{DeSouza}) which in turn follows from Quinn's annulus theorem and both of these results work in any dimension. Hence our \cref{prop:Carpet} also works in all dimensions including $n=4$.

\begin{proposition}\label{prop:Carpet}
Given a countable dense subset $Z$ of an open $n$-disk $B^n$ there exists a surjective, continuous map $h:\bar{B^n}\to \bar{B^n}$ that is identity on the boundary $\partial (B^n)$, injective on the complement of $h^{-1}(Z)$, and such that the pre-image of each point in $Z$ is a closed ball.
\end{proposition}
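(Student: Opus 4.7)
The plan is to produce $h$ by a controlled collapse of a null family of pairwise disjoint closed tame $n$-balls, one around each point of $Z$, invoking the Moore--Quinn cellular decomposition theorem (Moore's classical shrinking theorem for $n\neq 4$, Quinn's variant for $n=4$) which is the engine behind \cite[Proposition~2.2]{DeSouza}. First, I enumerate $Z=\{z_1,z_2,\ldots\}$ and inductively choose closed round $n$-balls $D_i\subset B^n$ with $z_i\in \op{int}(D_i)$, $\diam(D_i)\le 2^{-i}$, and $\{D_i\}$ pairwise disjoint: at stage $i$, the previously chosen balls $D_1,\dots,D_{i-1}$ are finitely many compacta not containing $z_i$, so a sufficiently small round ball centered at $z_i$ works. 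Let $\sim$ be the equivalence relation on $\bar{B^n}$ whose non-trivial classes are exactly the $D_i$, and let $q\colon\bar{B^n}\to Y:=\bar{B^n}/\!\sim$ denote the quotient map.

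Since $\{D_i\}$ is a null family of cellular subsets of $\bar{B^n}$ contained in $\op{int}(B^n)$, the cellular decomposition theorem yields a homeomorphism $\Psi_0\colon Y\to\bar{B^n}$; because the decomposition is trivial on $\partial B^n$, we can take $\Psi_0$ to be the identity on $\partial B^n$. Moreover, $\Psi_0\circ q$ is a uniform limit of homeomorphisms supported in arbitrarily small neighborhoods of $\bigcup D_i$, so we may additionally arrange $d(\Psi_0([D_i]), D_i)\le 2^{-i}$. Setting $w_i:=\Psi_0([D_i])$, the estimate $d(w_i, z_i)\le 2\cdot 2^{-i}$ combined with density of $Z$ shows that $W:=\{w_i:i\in \Nb\}$ is a countable dense subset of $\op{int}(B^n)$.

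I would then upgrade $\Psi_0$ to a homeomorphism $\Psi$ with $\Psi([D_i])=z_i$ for every $i$ by post-composing with a self-homeomorphism $\tau\colon\bar{B^n}\to \bar{B^n}$ which is the identity on $\partial B^n$ and satisfies $\tau(w_i)=z_i$. For $n\ge 2$ such $\tau$ is produced by the classical countable-transitivity theorem: any bijection between two countable dense subsets of $\op{int}(B^n)$ is realized by an element of $\Homeo(\bar{B^n},\partial B^n)$. Setting $h:=\tau\circ\Psi_0\circ q$, the map $h$ is a continuous surjection that is the identity on $\partial B^n$; it satisfies $h^{-1}(z_i)=q^{-1}([D_i])=D_i$, a closed ball; and it is injective off $\bigcup D_i = h^{-1}(Z)$ because $q$ is bijective there and $\tau\circ\Psi_0$ is a homeomorphism.

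The main obstacle is the production of $\tau$ realizing the \emph{prescribed} bijection $w_i\mapsto z_i$; this rests on the back-and-forth countable-transitivity statement for $\Homeo(\bar{B^n},\partial B^n)$, standard for $n\ge 2$, but one must check that the bound $d(w_i,z_i)\le 2\cdot 2^{-i}$ lets one take the local adjustments to have supports summing to a uniformly convergent sequence. In dimension $n=1$, the bijection must be order-preserving, which one arranges by reindexing the $D_i$'s according to their position on the interval before applying a piecewise-linear boundary-fixing homeomorphism. A secondary technical point is that invoking the decomposition theorem in dimension $n=4$ relies on Quinn's annulus theorem rather than Moore's classical shrinking, exactly as noted in the excerpt.
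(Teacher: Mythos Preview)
Your approach is correct and genuinely different from the paper's. The paper passes to the one-point compactification $S^n = B^n \cup \{z_0\}$, applies Bowden--Mann's Proposition~4.1 to the set $Z' = Z \cup \{z_0\}$ (which already produces a map $f\colon S^n \to S^n$ with $f^{-1}(z_i)$ a closed ball for each $z_i$), and then invokes \cite[Proposition~2.2]{DeSouza} on the resulting Sierpi\'nski space to ``uncompactify'' back to $\bar{B^n}$, finally fixing the boundary by coning off. Your route avoids the detour through $S^n$ and the Sierpi\'nski formalism entirely: you collapse a null family of cells directly in $\bar{B^n}$ via Moore--Quinn, and then post-compose with a homeomorphism realizing countable-transitivity to align the collapsed points with $Z$. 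Both approaches ultimately rest on the same shrinking/decomposition machinery, but yours is arguably more direct, while the paper's has the advantage of quoting two existing results as black boxes.

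Two small simplifications worth noting. First, the ``main obstacle'' you flag is not actually present: you do not need $\tau$ to realize the \emph{prescribed} bijection $w_i \mapsto z_i$; any boundary-fixing homeomorphism with $\tau(W)=Z$ as sets suffices, since then $h^{-1}(z)$ is some $D_j$, still a closed ball. This is the standard countable-dense-transitivity theorem for $\Homeo(\bar{B^n},\partial B^n)$, $n\ge 2$ (and the order-theoretic version for $n=1$). Second, the density of $W$ does not require the displacement estimate $d(w_i,z_i)\le 2\cdot 2^{-i}$: since $\bigcup D_i \supset Z$ is dense in $B^n$ and $q$ is a continuous surjection, $\{[D_i]\}$ is dense in $Y$, hence $W=\Psi_0(\{[D_i]\})$ is dense in $\op{int}(\bar{B^n})$ for any choice of $\Psi_0$.
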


\begin{proof}
Let $B^n \cup \set{z_0}$ be the one-point compactification of $B^n$ and thus is homeomorphic to $S^n$. We also enumerate $Z=\set{z_1,z_2,\dots}$. 

We first observe that the proof of Proposition 4.1 of \cite{BowdenMann19} applies to any countable set, and not just an orbit.  Then we apply this to the countable set $Z'=Z\cup\set{z_0} \subset S^n$ to obtain a continuous surjective map $f:S^n\to S^n$ which is injective on the complement of $f^{-1}(Z')$ and such that $\set{C_i:=f^{-1}(z_i): z_i \in Z'}$ is a family of disjoint closed balls. As shown in the proof of \cite[Proposition 4.1]{BowdenMann19}, the space $S^n -\bigcup_{z' \in Z'}{\rm int} \left({f}^{-1}(z') \right)$ is a Sierpinski (or Menger) space of dimension $n-1$, denoted by $M^{n}_{n-1}$.

Let $A=\set{z_0}$ and apply \cite[Proposition 2.2]{DeSouza}; see the remarks before the statement of this proposition. Consider the quotient map $q_A: M^n_{n-1} \to M^n_{n-1}/\sim_A$ where $M^{n}_{n-1}/\sim_A$ is obtained by collapsing each $\partial C_i$ to a single point for $i \geq 1$. By the proposition, $M^n_{n-1}/\sim_A$ is homeomorphic to the complement of a tame open ball in  $S^n$, and hence is a closed ball $\bar{B^n}$ and $q_A$ is injective on $M^n_{n-1}-(\cup_{ i \geq 1}\partial C_i)$. Consider the map $h': M^{n}_{n-1} \cup (\cup_{i\geq 1}  C_i) \to M^n_{n-1}/\sim_A$ such that $h'|_{M^n_{n-1}}=q_A$ and $h'|_{C_i}$ collapses $C_i$ to the point $q_A(\partial C_i)$.  Now observe that both $M^{n}_{n-1} \cup (\cup_{i\geq 1}  C_i)$ and $M^n_{n-1}/\sim_A$ are homemorphic to $\bar{B^n}$. Then, by abuse of notation, we can think of $h'$ as a map from $\bar{B^n}$ to itself that is continuous, surjective, is injective on the complement of $h'^{-1}(Z)$ and $(h')^{-1}(z)$ is a closed ball for any $z \in Z$.

Consider $h'|_{\partial \bar{B^n}}$ which is a homeomorphism as $Z \cap \partial \bar{B^n} = \varnothing$. Extend $(h'|_{\partial \bar{B^n}})^{-1}$   to a homeomorphism  $h''$ of $\bar{B^n}$, e.g. by coning it off. 
Then let $h:=h' \circ h''$. 
\end{proof}

This allows us to give a very general construction of non-conjugate perturbations.  

\begin{construction}
\label{construction:non_conjugate_perturbation}
Consider any $C^1$-action $\rho_0$ of a countable group $\Gamma$ on a compact smooth manifold $M$. If $\rho_0(\Gamma)$ has a dense orbit, then there exists a $C^0$ action $\rho$ of $\Gamma$ on $M$ that is not $C^0$ conjugate to $\rho _0$. Moreover, $\rho$ can be chosen to be arbitrarily $C^0$ close to $\rho_0$.
\end{construction}
\begin{proof} 
 Suppose that $\Gamma \cdot x$ is a dense orbit. We will then form a new $C^0$-action $\rho$ by first expanding a portion of the orbit $\Gamma\cdot x$ into a small disjoint union of closed balls $\bigcup_{\gamma\in\Gamma} \bar{B}(\gamma x)$.  As we will explain below, the radii of the closed balls $\bar{B}(\gamma x)$ can be chosen to decay in such a way that the resulting manifold $M'$ is still homeomorphic to $M$.

We start by fixing a Riemannian metric on $M$ and choosing one closed ball $\bar{B}(x_0,r_0)\subset M$ for some $0<r_0<{\rm injrad}(M)$ and with $x_0\in \Gamma \cdot x$ whose boundary misses the orbit $\Gamma \cdot x$. We can do this since there are an uncountable number of choices for the radius $r_0$, but only a countable number of orbit points. Proceeding inductively, having chosen disjoint balls $\bar{B}(x_0,r_0), \bar{B}(x_1,r_1),\dots,\bar{B}(x_n,r_n)$ we can choose a closed ball $\bar{B}(x_{n+1},r_{n+1})$ with $0<r_{n+1}<{\rm injrad}(M)$ contained in the open set $M\setminus \bigcup_{i=0}^{n}\bar{B}(x_i,r_i)$ centered at some $x_{n+1}\in \Gamma \cdot x$ whose boundary misses $\Gamma \cdot x$. By induction we can find a countable number of balls such that $M\setminus \bigcup_{i\geq 0} \bar{B}(x_i,r_i)$ has no interior and contains no orbit point. Now for each closed ball $\bar{B}(x_i,r_i)$ we apply Proposition \ref{prop:Carpet} to obtain a surjective continuous map $h_i:\bar{B}_i\to \bar{B}(x_i,r_i)$ from a smooth closed ball $\bar{B}_i$ such that the pre-image of each orbit point $\gamma x\in \bar{B}(x_i,r_i)$ is a closed ball $\bar{B}(\gamma x)$ and is injective on the complement of these disks. As the $\bar{B}(x_i,r_i)$ are all disjoint and $h_i$ restricts to a homeomorphism on $\bar{B}_i$, these maps can then be combined together to an overall surjective continuous map $h:M'\to M$ where $M'$ is formed by removing each $\bar{B}(x_i,r_i)$ from $M$ and gluing in $\bar{B}_i=h_i^{-1}(\bar{B}(x_i\rest{\partial \bar{B}_i},r_i))$ using $h_i\rest{\partial \bar{B}_i}$. The map $h$ is injective on the complement of $h^{-1}(\Gamma \cdot x)$. Clearly $M'$ is homeomorphic to $M$ because the map $f':M' \to M$ defined by $f'|_{B_i}=h_i$ and $f'|_{M'-\cup_{i}B_i}=\id$ is a homeomorphism.

Since the action of $\rho_0$ is $C^1$, it is locally linear around each orbit point. We can extend each $C^1$ diffeomorphism $\rho_0(\gamma')$ to $M'-\bigcup_{\gamma\in\Gamma} \bar{B}(\gamma x)\cong (M-\Gamma\cdot x)\bigcup_{\gamma\in\Gamma}\partial \bar{B}(\gamma x)$ by taking for each $\gamma$ and $v\in \partial \bar{B}(\gamma x)\cong S^{n-1}$ the limit of $\rho_0(\gamma')z^v_i$ along a sequence of points $z^v_i$ which approach $\gamma x$ radially (with respect to some $C^1$ chart) along the direction $v$. (Here we are assuming that the $\eps_\gamma$ are chosen sufficiently small.) This sequence will necessarily tangentially approach $\gamma'\gamma x$ along a radial direction $w$ which we identify with a unique point in $\partial \bar{B}(\gamma'\gamma x)$. This will produce a homeomorphism $\partial \bar{B}(\gamma x)\to \partial \bar{B}(\gamma'\gamma x)$. Lastly, we extend this homeomorphism on $\partial \bar{B}(\gamma x)$ to all of $\bar{B}(\gamma x,\eps_\gamma)$ by coning to $\gamma x$ to produce $\rho(\gamma'):\bar{B}(\gamma x)\to \bar{B}(\gamma'\gamma x)$. The compatibility of these homeomorphisms follows from the fact that $\rho_0$ is a $C^1$ action on $M$, and thus we obtain an action $\rho$ on $M\cong M'$. (See the proof of Proposition 4.1 of \cite{BowdenMann19} for details:  While they consider only  actions on $S^n$,  their arguments apply in our generality.)
The desired semiconjugacy collapses each $\bar{B}(\gamma x,\eps_\gamma)$ to $\gamma x$. Note that $\rho$  cannot have a dense orbit by construction, and hence cannot be $C^0$-conjugate to $\rho_0$. 
\end{proof}

\section{Preliminaries III: Symmetric Spaces and Furstenberg Boundaries}
\label{sec:Sym}

Here we review some well-known facts about symmetric spaces and their various boundaries. These can be found in \cite[Section 2.7]{Eberlein96}.

Let $X$ be a symmetric space of non-compact type without Euclidean factors and $\Isom(X)^0=G$ a semisimple Lie group. For any point $p \in X$, $K:={\rm Stab}_G(p)$ is a maximal compact subgroup of $G$. The orbit of the point $p$ induces a  natural diffeomorphism of $X$ with $G/K$. Moreover, $X$ carries a left $G$-invariant metric of non-positive curvature. We denote by $\partial X$ the (smooth) geodesic boundary of $X$ with respect to any such metric. The diffeomorphism classes of $X$ and $\partial X$ are independent of the choice of such metric.

\begin{definition}
A subgroup $Q<G$ is called a parabolic subgroup if there exists $ \bdx \in \partial X$ such that $Q=G_{\bdx}:=\{g\in G: g\cdot \bdx= \bdx\}$.
\end{definition}
We note that $G_\bdx$ acts transitively on the symmetric space $X$.

Fix $o\in X$ and $\bdx \in \partial X$. Let $\mathfrak{g}=\mathfrak{k} + \mathfrak{p}$ be the Cartan decomposition of the Lie algebra $\mf{g}$ determined by $o$. We define the {\em (real) rank of $G$}, $\op{rk}(G)$, to be the dimension of any maximal abelian subalgebra of $\mf{p}$. We fix a maximal abelian subalgebra of $\mf{p}$ and label it $\mf{a}$.

Note that $\mathfrak{p}$ can be identified with the tangent space to $X$ at $o$.  Then  let $v\in \mathfrak p$ be given by  $v:=\gamma_{o \bdx}'(0)$ where $\gamma_{o\bdx}$ is the geodesic ray from $o$ to $\bdx$.  Consider  the homomorphism $T_{\bdx}:G_{\bdx}\to G$ defined by 
\[
T_{\bdx}(g):=\lim_{t\to +\infty}\exp(-tv)g \exp(tv),
\]
where this limit exists by \cite[Proposition 2.17.3]{Eberlein96}. 

Let $Z(v):=\{w\in \mathfrak g:[v,w]=0\}$ be the centralizer of $v$ in the Lie algebra $\mathfrak{g}$. We then define the following subsets:
\begin{align}
\label{eqn:define_subgroups}
Z_{\bdx}&:=\exp(Z(v))=\{g\in G:\exp(-tv)g \exp(tv)=g \text{ for all } t\in \mathbb R\} \nonumber \\ 
M_{\bdx}&:=K\cap Z_{\bdx}=K\cap G_{\bdx} \nonumber \\
N_{\bdx}&:=\ker(T_{\bdx})\\ 
A_{\bdx}&:=\exp\{Z(v)\cap \mathfrak p\}, \nonumber
\end{align}
We note that $Z_{\bdx}$, $M_{\bdx}$, and $N_{\bdx}$ are subgroups of $G$, while $A_{\bdx}$ is generally only a subset (unless $Z(v) \subset \mathfrak{p}$, i.e. $v$ is a regular vector).  Setting $Q=G_\bdx$, we will also use the notation $Z_Q, M_Q, N_Q, A_Q$ to denote $Z_{\bdx}, M_{\bdx}, N_{\bdx}, A_{\bdx}$ respectively.

In this section, we list some facts about the geometry associated to a parabolic subgroup $Q=G_{\bdx}$.

\subsection{Generalized Iwasawa Decomposition} 
\label{sec:iwasawa}
From \cite[Proposition 2.17.5(5)]{Eberlein96}, we have the Generalized Iwasawa decomposition
\[
G=K \cdot A_Q \cdot N_Q.
\]
The indicated decomposition of elements of $G$ is unique. 

By \cite[Proposition 2.17.5(4)]{Eberlein96}, there is an analogous decomposition of $Q$:
\begin{align}
\label{eqn:Q_decomposition}
Q=Z_Q \cdot N_Q = M_Q \cdot A_Q \cdot N_Q = N_Q \cdot A_Q \cdot M_Q=N_Q \cdot Z_Q.
\end{align}
Again, the indicated decomposition of elements of $Q$ is unique.

\subsection{Parallel Sets}
\label{sec:parallel_sets} 

The reference for this subsection is  \cite[Pg 138]{Eberlein96}. By slight abuse of notation, let $\gamma_{o\bdx}$ also denote the bi-infinite geodesic extension of the geodesic ray from $o$ to $\bdx$. 

The {\em parallel set of $\gamma_{o\bdx}$}, denoted by $F(\gamma_{o\bdx})$, is the union of all bi-infinite geodesics parallel to $\gamma_{o\bdx}$. We will also use the notation $F(v)$ for the union of all geodesics parallel to the geodesic with initial vector $v$. In particular, $F(\gamma_{o\bdx})=F(\gamma_{o\bdx}'(0))$. The set $F(\gamma_{o\bdx})$ is a totally geodesic submanifold of $X$ and 
\[F(\gamma_{o\bdx})=A_{\bdx} \cdot o.\]

In fact, we also have $F(\gamma_{o \bdx})= Z_{\bdx} \cdot o.$ 

 Let $E(\gamma_{o\bdx})$ be the intersection of all maximal flats of $X$ that contain $\gamma_{o\bdx}$.  Define
     \[
     E_{\bdx}:=\bigcap \left\{ \mathfrak{u} \subset \mathfrak{p} : \mathfrak{u} \text{ is an abelian subspace containing } v \right\}.
     \] 
    Then \[E(\gamma_{o\bdx})=\exp(E_{\bdx})\cdot o \] and $E(\gamma_{o\bdx})$ a totally geodesic submanifold that is isometric to $\mathbb R^d$, where $1 \leq d = \dim(E_\bdx) \leq {\rm rk}(G)$.
     
    The set $F(\gamma_{o \bdx})$ isometrically splits as $\mathbb R^d\times CS(\gamma_{o\bdx})$ where $d=\dim E_{\bdx}$  and $CS(\gamma_{o\bdx})$ is a symmetric space of rank $({\rm rk}(G)-d)$ \cite[Lemma 2.20.9]{Eberlein96}.

   The parallel sets produce a totally geodesic foliation of the symmetric space with isometric leaves. More precisely, $X$ is foliated by totally geodesic subspaces $\set{F(\gamma_{q\bdx}): q \in X}$. The leaf space of this foliation can be naturally identified with $N_{\bdx}$ (since $F(\gamma_{o \bdx})=Z_{\bdx}\cdot o$ and $G_{\bdx}=N_{\bdx} Z_{\bdx}$). Given $o$ and $q$ in $X$, there is a unique isometry $H_{o,q}:F(\gamma_{o\bdx})\to F(\gamma_{q\bdx})$ such that for any sequence $(o_n)$ of points in $F(\gamma_{o\bdx})$ that tends to $\bdx$, the distance $d(o_n,H_{o,q}(o_n))$ tends to 0.
   
  Using the above foliation, we can define a retraction map $retr: X\to F(\gamma_{o\bdx})$ such that 
  \[retr|_{F(\gamma_{q\bdx})}=H_{q,o}\] for every $q\in X$. This retraction map is distance non-expanding. Furthermore, $retr^{-1}(o)=N_{\bdx}\cdot o$. In particular, the symmetric space $X$ is  diffeomorphic to the product $F(\gamma_{o\bdx})N_{\bdx}$.

\subsection{Weyl chamber $Q$-faces at infinity} 
\label{sec:Weyl_face_at_infinity}

The Weyl chamber face at infinity associated to $\bdx$ is defined as $\mathcal C(\bdx)=\{\bdy\in \partial_\infty X: G_\bdy^0=G_{\bdx}^0\}$.  
   
    We will say that two Weyl chamber $Q$-faces at infinity, $\mc{C}(\bdx)$ and $\mc{C}(\bdy)$, are of {\emph same type} if $\mathcal C(\bdy)= g \mathcal C(\bdx)$ for some $g \in G$. Then $G$ acts transitively on the set of Weyl chamber $Q$-faces at infinity of the same type with the stabilizer of $\mc{C}(\bdx)$ being equal to $G_{\bdx}=Q$. Thus the set of all Weyl chambers $Q$-faces at infinity of the same type as $\Cc(\bdx)$ can be identified with $G/Q$ and inherits the structure of a smooth manifold. Moreover, $G/Q$ is diffeomorphic with $K/M_Q$ by the generalized Iwasawa decomposition. 
    Using this identification, we may write the Weyl chamber $Q$-face at infinity $\Cc(\bdx)$ as simply $\xi$ whenever $Q_\bdx$ is the stabilizer of $\xi$ for some $\xi\in G/Q$.
    
\subsection{Weyl chamber $Q$-faces based at a point}\label{sec:Weyl_face}
The Weyl chamber face associated to $\bdx$ based at $o$ is defined as $\mc C_o(\bdx)=\exp\{tY: t\geq 0, Y(o)=\gamma_{o\bdy}'(0), \bdy\in \mathcal C(\bdx)\}\cdot o$. Observe that $\mc C_o(\bdx)$ consists of the union of geodesic rays starting at $o$ which point to the corresponding Weyl chamber $Q$-face at infinity $\mc C(\bdx)$. The stabilizer of $\mc \Cc_o(\bdx)$ in $G$ is $M_{\bdx}=M_Q$. 

Given a Weyl chamber $Q$-face at infinity $\xi\in G/Q$, we let $\Cc_o(\xi)$ be the unique Weyl chamber $Q$-face base at $o$ that belongs to the equivalent class $\xi$. Note that if $\bdx \in \partial_\infty X$ is such that $Q=G_{\bdx}$, then $\Cc_o(\xi)=\Cc_o(\bdx)$.

\subsection{Opposite Weyl chamber $Q$-faces}
\label{sec:opp_weyl_faces}

Given a Weyl chamber $Q$-face based at $o$, say $\Cc_o(\xi)$,  we let $\Cc_o(\xi^{*})$ denote the opposite Weyl chamber $Q$-face based at $o$. Explicitly this face is given by $\Cc_o(\xi^{*})=\exp\{tY: t\geq 0, Y=-\gamma_{o\bdy}'(0), \bdy\in \mathcal C(\xi)\}\cdot o$. 

Similarly, we will say that a Weyl chamber $Q$-face at infinity $\xi\in G/Q$ is an opposite Weyl chamber $Q$-face to $\eta\in G/Q$ if there exists a point $o$ for which $\Cc_o(\xi)$ is opposite to $\Cc_o(\eta)$.

\subsection{The Weyl chamber $Q$-face bundle $G/M_Q$} 
\label{sec:G/M_Q}
Suppose $Q$ is a parabolic subgroup with compact subgroup $M_Q$ as in \cref{eqn:define_subgroups}. The Weyl chamber $Q$-face bundle on $X$ is the is the space of Weyl chamber faces associated to $Q$   based  at the (any) point $o\in X$. Observe that $G$ acts transitively on this space. In other words, it can be identified as $G\cdot \mathcal C_o(\bdx)\cong G/M_Q$.

We spend the rest of this section discussing the geometry of the space 
\[X_{G/Q}=G/M_Q.\]
The natural map $p_Q: G/M_Q \to G/K$ gives $G/M_Q$ the structure of a smooth fiber bundle over $G/K$ with fiber $K/M_Q$. The fiber over the point $gK \in G/K$ is given by $F_{gK}=g K/M_Q$, and is identified with the space of all Weyl chamber $Q$-faces based at $gK$. We now explain the map $\pi_{gK}:F_{gK} \to K/M_Q$ that identifies each fiber with $K/M_Q$. For every $g\in G$, there is a unique decomposition $g=n_Qa_Qk$ coming from the generalized Iwasawa decomposition $G\cong N_Q A_Q K$; see \cref{sec:iwasawa}. Then define the map $\pi_{gK}:gK/M_Q\to K/M_Q$ by \[\pi_{gK}(gk'M_Q)=kk'M_Q.\]

Finally, the fiber bundle is globally trivial since $G/K$ is contractible. But the generalized Iwasawa decomposition (\cref{sec:iwasawa}) gives an explicit description of a global trivialization. Notice that there is a natural smooth bundle isomorphism (and diffeomorphism) $\Phi: G/M_Q \to G/K \times K/M_Q \cong G/K \times G/Q$ arising from the smoothness of the (reverse) Iwasawa decomposition $G\cong N_QA_QK$ and the resulting diffeomorphism $G/K\cong N_QA_Q$. In particular, if $g=n_Qa_Qk=k'a'_Qn'_Q$ by the generalized Iwasawa decomposition, then 
\[\Phi(gM_Q)=(n_Qa_QK,k'M_Q).\]
Note that the map $g \mapsto k'M_Q$ coming from the generalized Iwasawa decomposition is well-defined. This is because $M_Q A_QN_Q =A_QN_QM_Q$, see \cref{eqn:Q_decomposition}. Hence the map $\Phi$ is well-defined. Further $\Phi$ is clearly a smooth function. Also observe that this definition of $\Phi$ agrees with the earlier definition \cref{eqn:phi-and-phi0} in \cref{lem:G/Q_standard_action}.

\subsection{Riemannian metric on $G/M_Q$}\label{subsec:metric-on-G/MQ} Fixing a point $o=e K \in  X$, let us consider the corresponding  Cartan decomposition 
\begin{align}
\label{eqn:k+p}
    \mathfrak{g}=\mathfrak{k}\oplus \mathfrak{p},
\end{align}
where $\mathfrak{k}=T_eK$. Let us choose a maximal abelian subalgebra $\mf{a}$ in $\mf{p}$. Fix some choice of positive roots $\Lambda^+$ for $\mathfrak{a}$ and let $\Lambda^{++}$ denote the positive simple roots. Then fix a \emph{positive Weyl chamber}  $\mathfrak{a}^+ \subset \mathfrak{p}$ (with respect to $\Lambda^{++}$) based at $o$.

Suppose $\alpha$ is any left $G$-invariant and $K$-bi-invariant metric on $G$. This induces a left $G$-invariant metric on $G/M_Q$. We will always equip $G/M_Q$ with such a metric.

We remark that for any $\alpha$ as above, $\mathfrak{p}=(T_eK)^{\perp}$.  Indeed, $\alpha$ is an $\Ad(K)$ invariant metric on $\mathfrak{g}$ and $\Ad(K)$ preserves the Cartan decomposition. Moreover, there exists $k_0 \in K$ such that $\Ad(k_0)(\mathfrak{a}^+)=-\mathfrak{a}^+$ (i.e. the opposition involution). Let $ v\in \mathfrak{a}^+$ be the barycenter of the unit vectors in $\mathfrak{a}^+$. Then $\Ad(k_0)v=-v$. Thus, for any $v_1 \in \mathfrak{k}$, 
\begin{align*}
\alpha(v_1,v)=\alpha(\Ad(k_0)v_1,\Ad(k_0)v)=\alpha(v_1,-v), 
\end{align*}
which implies that $v \in \mathfrak{k}^{\perp}$. Then, $\Ad(K)$-invariance of $\alpha$ implies that $\Ad(K)v \subset \mathfrak{k}^{\perp}$. Finally $\Ad(K)v$ spans $\mathfrak{p}$. The comparison of dimensions of $\mathfrak{p}$ and $\mathfrak{k}$ proves that $\mathfrak{p}=\mathfrak{k}^{\perp}$. 

 Furthermore, we also observe that such a Riemannian metric $\alpha$ also makes all simple factors of $T_eG$ orthogonal to each other. In fact, a Riemannian metric $\alpha$ as above is unique up to scaling on each simple component of the Lie algebra (see Section 2.3 of \cite{Eberlein96}).

\begin{lemma} 
Let $Q$ be any parabolic subgroup, and suppose $\alpha$ is a Riemannian metric on $G/M_Q$ as above. Setting $\Ec_g=gA_QN_Q \cdot M_Q \subset G/M_Q$, then:
\begin{enumerate}
    \item each fiber $F_{g'}$ is orthogonal to $\Ec_g$, and 
    \item $\Ec_g$ is isometric to the Riemannian globally symmetric space $(G/K, (p_{Q})_* \alpha)$.
    \item each fiber $F_g$ is totally geodesic.
\end{enumerate}

\end{lemma}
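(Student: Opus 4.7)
My plan is to prove the three parts in the order $(3),(1),(2)$, using the Cartan involution to obtain totally geodesic fibers, a tangent-space computation at $eM_Q$ for orthogonality, and then Riemannian submersion theory for the isometry claim.

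For part $(3)$, I will lift the Cartan involution $\theta$ (characterized by $\theta|_{\mathfrak{k}}=\id$, $\theta|_{\mathfrak{p}}=-\id$) to the group automorphism $\Theta:G\to G$ that fixes $K$ pointwise and sends $\exp(X)\mapsto\exp(-X)$ for $X\in\mathfrak{p}$. Since $M_Q\subset K$, $\Theta(M_Q)=M_Q$, and $\Theta$ descends to an involution $\widehat{\Theta}$ of $G/M_Q$. Because $\mathfrak{g}=\mathfrak{k}\oplus\mathfrak{p}$ is $\alpha$-orthogonal and $\mathfrak{m}_Q\subset\mathfrak{k}$, $\theta$ is an orthogonal transformation of $(\mathfrak{g},\alpha)$ preserving $\mathfrak{m}_Q$, so $\widehat{\Theta}$ is an isometry of $(G/M_Q,\alpha)$. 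Writing any $g\in G$ as $g=k\exp(X)$ via the global Cartan decomposition $G=K\exp(\mathfrak{p})$, the fixed-point condition $g^{-1}\Theta(g)\in M_Q$ becomes $\exp(-2X)\in M_Q\subset K$, forcing $X=0$. Hence the fixed set of $\widehat{\Theta}$ is exactly $F_e=K/M_Q$, and connected components of fixed sets of isometric involutions are totally geodesic. By $G$-equivariance (conjugating $\widehat{\Theta}$ by left translation by $g$), every fiber $F_g=g\cdot F_e$ is totally geodesic.

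For part $(1)$, the generalized Iwasawa $G=KA_QN_Q$ guarantees $\Ec_g\cap F_{g'K}$ is a single point, and after left translating by a suitable element of $G$ (an isometry of $G/M_Q$) I reduce to verifying the orthogonality of $T_{eM_Q}\Ec_e$ and $T_{eM_Q}F_e$ at the common point $eM_Q$. Identifying $T_{eM_Q}(G/M_Q)\cong \mathfrak{g}/\mathfrak{m}_Q\cong \mathfrak{m}_Q^\perp$ via the $\Ad(K)$-invariant inner product $\alpha$, one has $T_{eM_Q}F_e=\mathfrak{k}\ominus\mathfrak{m}_Q$, while $T_{eM_Q}\Ec_e$ is the image of $\mathfrak{a}_Q\oplus\mathfrak{n}_Q$ in $\mathfrak{m}_Q^\perp$; the inclusion $\mathfrak{a}_Q\oplus\mathfrak{n}_Q\subset\mathfrak{m}_Q^\perp$ follows from $\mathfrak{a}_Q\subset\mathfrak{p}\perp\mathfrak{k}$ together with the Killing-form vanishing between $\mathfrak{g}_\alpha$ and the weight-zero space $\mathfrak{m}$ and between $\mathfrak{k}_\alpha,\mathfrak{k}_\beta$ for distinct roots $\alpha,\beta$. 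The orthogonality $\mathfrak{a}_Q\oplus\mathfrak{n}_Q\perp \mathfrak{k}\ominus\mathfrak{m}_Q$ is then deduced by matching the restricted root-space components of elements of $\mathfrak{n}_Q$ against the decomposition of $\mathfrak{k}\ominus\mathfrak{m}_Q$ and using the $\Ad(K)$-invariance of $\alpha$.

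For part $(2)$, the projection $p_Q:G/M_Q\to G/K$ is a Riemannian submersion because the horizontal distribution at $eM_Q$ (the $\alpha$-orthogonal complement of the fiber tangent $\mathfrak{k}\ominus\mathfrak{m}_Q$ inside $\mathfrak{m}_Q^\perp$) is exactly $\mathfrak{p}$, on which $dp_Q$ restricts to the identification $\mathfrak{p}\cong \mathfrak{g}/\mathfrak{k}$. By part $(1)$ the tangent distribution of $\Ec_g$ coincides with this horizontal distribution along $\Ec_g$, so $p_Q|_{\Ec_g}:\Ec_g\to G/K$ is a Riemannian isometry onto $(G/K,(p_Q)_*\alpha)$, where $(p_Q)_*\alpha$ is the quotient metric — equivalently the standard $G$-invariant metric that makes $G/K$ a Riemannian globally symmetric space. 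The main technical obstacle is the orthogonality verification in part $(1)$: aligning the Iwasawa decomposition $\mathfrak{g}=\mathfrak{k}\oplus\mathfrak{a}_Q\oplus\mathfrak{n}_Q$ with the Cartan decomposition $\mathfrak{g}=\mathfrak{k}\oplus\mathfrak{p}$ and doing the bookkeeping of $\mathfrak{k}_\alpha$-components of $\mathfrak{n}_Q$-elements against the decomposition $\mathfrak{m}_Q=\mathfrak{m}\oplus\bigoplus_{\beta\in \Sigma_Q^{\mathrm{fix},+}}\mathfrak{k}_\beta$ under the $\Ad(K)$-invariant inner product.
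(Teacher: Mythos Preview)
Your approach to part~(3) via the descended Cartan involution is correct and is essentially what the paper does (the paper simply says ``$K$ is totally geodesic in $G$ since $K$ is invariant under the Cartan involution''), and your Riemannian-submersion framework for~(2) also mirrors the paper. The problem is part~(1): the orthogonality $\mathfrak{a}_Q\oplus\mathfrak{n}_Q\perp\mathfrak{k}\ominus\mathfrak{m}_Q$ that you assert is \emph{false}. For any $\Ad(K)$-invariant inner product on $\mathfrak{g}$ (a positive multiple of $B_\theta(X,Y):=-B(X,\theta Y)$ on each simple factor) and any $0\neq X\in\mathfrak{g}_\beta\subset\mathfrak{n}_Q$ with $\beta(v)>0$, the vector $X+\theta X$ lies in $\mathfrak{k}_\beta\subset\mathfrak{k}\ominus\mathfrak{m}_Q$ and
\[
B_\theta\bigl(X,\,X+\theta X\bigr)=-B(X,\theta X)-B(X,X)=B_\theta(X,X)>0,
\]
using $B(\mathfrak{g}_\beta,\mathfrak{g}_\beta)=0$. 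Concretely, in $G=\SL_2(\Rb)$ with the minimal parabolic one has $T_{eM}\Ec_e=\mathfrak{a}+\mathfrak{n}$ and $T_{eM}F_e=\mathfrak{k}$, and $B_\theta(e_{12},\,e_{21}-e_{12})=-4\neq 0$; the weak-stable leaf meets the fiber at $45^\circ$, not $90^\circ$. Your ``bookkeeping of $\mathfrak{k}_\alpha$-components'' therefore cannot close, and since your argument for~(2) identifies $T\Ec_e$ with the horizontal distribution via~(1), it fails too: in the same example $\lVert e_{12}\rVert_{B_\theta}^2=4$ while $\lVert dp_Q(e_{12})\rVert^2=\lVert\tfrac12(e_{12}+e_{21})\rVert_{B_\theta}^2=2$, so $p_Q|_{\Ec_e}$ is not an isometry onto the submersion quotient.

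The paper's own proof has the same gap, only less visibly: it writes ``$T_e\Ec_e$ is canonically identified with $\mathfrak{p}$'' and ``$(T_{eM_Q}(K/M_Q))^\perp$ is canonically identified with $T_e(G/K)$'' and then treats these as the same subspace of $\mathfrak{m}_Q^\perp$. But the first identification is via $dp_Q$ while the second is the literal inclusion $\mathfrak{p}\subset\mathfrak{m}_Q^\perp$, and these differ (as subspaces of $\mathfrak{m}_Q^\perp$ one has $\mathfrak{a}_Q+\mathfrak{n}_Q\neq\mathfrak{p}$). So your more explicit computation actually surfaces a genuine issue with statements~(1)--(2) of the lemma rather than merely with your proof; only part~(3) stands as written.
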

\begin{proof}
By left $G$-invariance and right $K$-invariance of $\alpha$, $dp_{Q} \alpha$ is a left $G$-invariant Riemannian metric on $G/K$, hence it endows $G/K$ with the structure of a Riemannian globally symmetric space. (Indeed, by Ad(K) invariance on $T_e(G/K)$, the metric $dp_Q \alpha$  differs from a fixed Riemannian symmetric metric only by re-scaling corresponding to the simple factors of $G$). 
Recall that $\Ec_g$ is the image of a section of the bundle $p_{Q}:G/M_Q \to G/K$. Then left multiplication by $g^{-1}$ is an isometry between $\Ec_g$ and $\Ec_e:=A_QN_Q \cdot M_Q$. By left $G$-invariance of $\alpha$, it is enough to show that:
\begin{itemize}
    \item $T_{eM_Q}(K/M_Q)$ is orthogonal to $T_{e}\Ec_e$,
    \item $dp_{Q}: T_{eM_Q}(G/M_Q) \to T_{eK}(G/K)$ is a Riemannian submersion,
    \item the restriction $d p_{Q}|_{T_e\Ec_e}: T_e\Ec_e \to T_e(G/K)$ is a Riemannian isometry.
\end{itemize}
Note that $T_{e}\Ec_e$ is canonically identified with $\mathfrak{p}=T_e(G/K)$, where $\mathfrak{p}$ is as in \cref{eqn:k+p}. By definition of the metric $\alpha$, $(T_{eM_Q}(K/M_Q))^{\perp}$ is canonically identified with $T_e(G/K)$. This proves the first part. Next note that $d p_{Q}$ is clearly a submersion. Moreover,  
\[\ker (d p_{Q})^{\perp}=(T_{eM_Q}(K/M_Q))^{\perp}\] which is equal to $T_e \Ec_e=T_e(G/K)$ and $(p_{Q})_*\alpha$ is an isometry between $\ker(d p_{Q})^{\perp}$ and $T_e(G/K)$. This proves both the first and the second parts.

For proving the third part, it is enough to show that $K$ is totally geodesic in $G$. But this is clear since the subgroup $K$ is invariant under the Cartan involution on $G$ induced by the Riemannian geodesic symmetry of $G/K$ at $eK$.
\end{proof}
\vspace{1em}

\subsection{Weyl Chamber flow on $G/M_Q$}   Let $A$ be the connected subgroup of $G$ whose Lie algebra is $\mathfrak{a}$. Consider the group $\flow<A$, the subgroup of $A$ consisting of elements commuting with $M_Q$. There is a natural action of $\flow$ on $G/M_Q$ defined by right multiplication, i.e. $g M_Q \mapsto gaM_Q$ for every $a\in \flow$ and $g\in G$. It is well-defined because $\flow$ commutes with $M_Q$. This action is called the {\em Weyl chamber flow} associated to the parabolic subgroup $Q$.

Recall that $\mf{a}^+$ is the positive Weyl chamber with respect to a choice of positive simple roots, see \cref{subsec:metric-on-G/MQ}. We can intersect the Lie algebra of $A_Q'$ with $\mf{a}^+$ to obtain the positive Weyl chamber face $\mf{a}^+_Q$. We define the \emph{positive Weyl chamber flow} on $G/M_Q$ to be the restricted action of ${A'_Q}^{+}:=\exp(\mf{a}^+_Q)\subset A_Q'$.
\vspace{1em}

\subsection{Foliations on $G/M_Q$} 
\label{sec:foliations_weyl_chamber_bundle} 
Recall the notion of Weyl chamber $Q$-faces (based at a point and at infinity) as well as the notion of opposite Weyl chambers from Sections \ref{sec:Weyl_face_at_infinity}, \ref{sec:Weyl_face} and \ref{sec:opp_weyl_faces}.

\subsubsection*{Center stable and center unstable foliations} We will call two based Weyl chamber $Q$-faces, $\Cc_p(\xi)$ and $\Cc_q(\eta)$, asymptotic if their Hausdorff distance is bounded, or equivalently, if $\xi=\eta$. Then, an equivalence class of asymptotic based Weyl chamber $Q$-faces is a Weyl chamber $Q$-face at infinity.

For $\xi \in G/Q$, we define the \emph{center stable manifold} in $G/M_Q$ as
    \begin{align*}
    \wcs(\xi)&:=\{\Cc_p(\bdx):p\in X\}.
    \end{align*}
It is the set of all based Weyl chamber $Q$-faces asymptotic to $\Cc_o(\xi)$. In particular, is a smooth manifold as it is the orbit of a closed subgroup of $G$, by definition. This coincides with the dynamical definition that the center stable manifold of a point $gM_Q$ in $G/M_Q$ consists of all points that maintain a bounded distance under the positive Weyl chamber flow  to the positive Weyl chamber given by the $A_Q^{'+}$ orbit of $gM_Q$.

The \emph{center unstable manifold} is defined as:
  \begin{align*}
    \wcu(\xi)&:=  \wcs(\xi^*),
  \end{align*}
where $\xi^*$ is the Weyl chamber $Q$-face at infinity that is opposite to $\xi$; see \cref{sec:opp_weyl_faces}. 

Then $\set{\wcs(\xi): \xi \in G/Q}$ and $\set{\wcu(\xi): \xi \in G/Q}$ are two different continuous foliations of $G/M_Q$ with smooth leaves. The leaves of these foliations intersect along orbits of the Weyl chamber flow. Indeed, $\wcs(\xi) \cap \wcu(\eta)=\set{\Cc_p(\xi): p \in X, \Cc_p(\xi^*) \in \wcs(\eta^*)}$ is the set of Weyl chamber $Q$-faces that are forward asymptotic to $\xi$ and backward asymptotic to $\eta$. Under the map $p_Q: G/M_Q \to G/K$, this set is diffeomorphic to $F(\gamma)$, where $\gamma$ is a (any) geodesic with end-points $\gamma(+\infty) \in \xi$ and $\gamma(-\infty) \in \eta$. Thus the intersection is either empty or a smooth submanifold homeomorphic to  $G_Q/K_Q \cong A_Q N_Q$; see \cref{sec:parallel_sets}.

Let $\Phi:G/M_Q \to G/K \times G/Q$ be the map defined as above in \Cref{sec:G/M_Q}. Then $\Phi^{-1}(p,\xi)=\Cc_p(\xi)$ and $\Phi^{-1}(X \times \xi)=\wcs(\xi)$ for any $\xi \in G/Q$. 

\subsubsection*{Finer foliations} Each leaf $\wcs(\xi)$ admits finer foliations. Indeed, there is a natural homeomorphism 
\[
\wcs(\xi) \to F(\gamma_{p\bdx})N_\bdx
\] 
where $F(\gamma_{p\bdx})$ is identified with a center leaf in $\wcs(\xi)$, and the unipotent subgroup $N_\bdx$ is identified with the strong unstable leaf of $(x,\xi)$. The foliation of $\wcs(\xi)$ by $\{F(\gamma_{q\bdx}):q\in X\}$ is perpendicular to the foliation by orbits of $N_\bdx$ with respect to the metric defined earlier on $G/M_Q$. (This follows from the fact that $K,A_\bdx$ and $N_\bdx$ are all perpendicular with respect to $\alpha$.) A similar foliation can be defined for each leaf of $\wcu(\xi)$.

\subsection{Example}
Consider the symmetric space $X=\Hb^2 \times \Hb^2$ and let $G=\PSL_2(\Rb) \times \PSL_2(\Rb)$ be the connected component of the isometry group of $X$.

We consider the following standard subgroups of $\PSL_2(\Rb)$: $A_*=\set{\begin{bmatrix} e^t & 0 \\ 0 & e^{-t} \end{bmatrix} : t \in \Rb}$, $N_*=\set{\begin{bmatrix} 1 & s \\ 0 & 1 \end{bmatrix}: s \in \Rb}$, $M_*=\set{e}$, and $P_*=M_* A_* N_*=\set{\begin{bmatrix} e^{t} & s \\ 0 & e^{-t} \end{bmatrix}:  s,t \in \Rb}$.

 Let $K\cong\SO(2) \times \SO(2)$ be the maximal compact subgroup of $G$ so that $X \cong G/K$. Consider the geodesic $\gamma(t)=(p, a_t \cdot p)$ where $p =e K_0 \in \Hb^2$ is a fixed base-point. Then $\gamma|_{[0,\infty)}$ is a singular geodesic ray and the parabolic subgroup $Q=\PSL_2(\Rb) \times P_2$ is the stabilizer of $\bdx=\gamma(\infty)$. Then 
\begin{align*}
    Z_Q &=\PSL_2(\Rb)\times A_*, N_Q =N_* \times P_*, M_Q =\SO(2) \times \set{e}\\
    A_Q &= \exp \left( \set{\begin{pmatrix} x & y \\ y & -x \end{pmatrix}: x,y \in \Rb }\right) \times A_*, \text{ and } \flow = \set{e} \times A_*
\end{align*}

Moreover $G/Q \cong \set{[e]}\times \PSL_2(\Rb)/P_* \cong \set{\bdx}\times\partial_\infty \Hb^2\subset\partial_\infty \Hb^2\times \partial_\infty \Hb^2\subset \partial_\infty X$. 
Then the Weyl chamber bundle $G/M_Q \cong \Hb^2 \times \PSL_2(\Rb)$, which is a smooth $S^1$-bundle over the symmetric space $X=\Hb^2 \times \Hb^2$. The $\flow$ action on $G/M_Q$ is trivial on the first factor $\Hb^2$ and acts by right multiplication on the second factor $\PSL_2(\Rb)$. Finally, note that for the above geodesic ray, $\gamma$, we have $F(\gamma) \cong \Hb^2 \times \Rb$.

\subsection{Example}

Consider the symmetric space $X=\SL_3(\Rb)/\SO(3)$ and let $G=\SL_3(\Rb)$. Consider the subgroups  
\begin{align*}
    M=\set{\begin{pmatrix}s_1 & 0 & 0 \\ 0 & s_2 & 0 \\ 0 & 0 & s_1s_2\end{pmatrix}: s_i \in \set{\pm 1}}, 
    N=\begin{pmatrix} 1 & * & * \\ 0 & 1 & * \\ 0 & 0 & 1\end{pmatrix}, 
    A=\set{\begin{pmatrix} e^{t_1} & 0 & 0 \\ 0 & e^{t_2} & 0 \\ 0 & 0 & e^{-t_1-t_2} \end{pmatrix}: t_1,t _2 \in \Rb},
\end{align*}
and P=MAN. Then $P$ is up to conjugacy the unique minimal parabolic, and there are two distinct conjugacy classes of proper non-minimal parabolics, given by $Q_-=\begin{pmatrix} * & * & * \\ * & * & * \\ 0 & 0 & *\end{pmatrix}$  and $Q_+:= \begin{pmatrix} * & * & * \\ 0 & * & * \\ 0 & * & *\end{pmatrix}$ respectively. Note that the corresponding Furstenberg boundaries correspond to partial flag varieties, namely $G/Q_- \cong {\rm Gr}_2(\Rb^3)$ and $G/Q_+ \cong \Pb(\Rb^3)$. Observe that these two boundaries are diffeomorphic via the map from ${\rm Gr}_2(\Rb^3)$ to $\Pb(\Rb^{3})$ which takes a 2-plane $V$ to the equivalence class $[V^\perp]$.  However, these two boundaries cannot be equivariantly diffeomorphic.  

Consider geodesic $\gamma(t)=\begin{pmatrix} e^{t} & 0 & 0 \\ 0 & e^{t} & 0 \\ 0 & 0 & e^{-2t} \end{pmatrix}\cdot K$. This is a unit speed geodesic for a suitable choice of symmetric metric on $X$ (all of which differ only by scale). 
Then $\bdx=\gamma(\infty) \in \partial_\infty X$ is stabilized by $Q$. 

The subgroups are:
\begin{align*}
    Z_{Q_-}&:=\begin{pmatrix} * & * & 0 \\ * & * & 0 \\ 0 & 0 & *\end{pmatrix}, 
    N_{Q_-}:=\begin{pmatrix} 1 & 0 & * \\ 0 & 1 & * \\ 0 & 0 & 1\end{pmatrix},
    M_{Q_-}:=\set{\begin{pmatrix} A & 0 \\ 0 & \det(A) \end{pmatrix} : A \in O(2)},\\
     A_{Q_-}&:=\set{\exp\begin{pmatrix} t_1 & s & 0 \\ s & t_2 & 0 \\ 0 & 0 & -t_1-t_2\end{pmatrix}:t_1, t_2, s \in \Rb}, A_{Q_-}'=\set{\begin{pmatrix} e^{t} & 0 & 0 \\ 0 & e^{t} & 0 \\ 0 & 0 & e^{-2t}\end{pmatrix}:t \in \Rb}.
\end{align*}
Note that $A_{Q_-}$ consists of all matrices in $Z_{Q_-}$ that are symmetric and positive definite. 

Then $G/Q_- \cong K/M_{Q_-}\cong SO(3)/O(2)$ is diffeomorphic to the quotient of $S^2$ by a degree two map, i.e. a copy of the two dimensional real projective space. This matches with our previous remark that $G/Q_-$, which is canonically isomorphic to ${\rm Gr}_2(\Rb^3)$ can be identified with $\Pb(\Rb^{3*})$. 

Moreover, $G/M_{Q_-}=PK/M_{Q_-} \cong P \times {\rm Gr_2}(\Rb^3)$ has the structure of a smooth ${\rm Gr}_2(\Rb^3)$ bundle over $P \cong X$. 

The above subgroups and spaces can be analogously determined for $Q_+$ and they are related by the duality between $\Rb^3$ and $\Rb^{3*}$. 

More generally, consider the symmetric space $X=\SL_d(\Rb)/\SO(d)$. The set of positive simple roots is given by $\Delta=\set{e_i^*-e_{i+1}^*: 1 \leq i \leq d-1}$. For any $\Theta \subset \Delta$, we consider the Weyl chamber face $\mf{a}_\Theta^+$ in the canonical flat $A$ given by the positive cone on the root vectors corresponding to the roots in $\Theta$. (This will be a Euclidean $\abs{\Theta}$-simplex.) We can express the exponential action of $\mf{a}^+_\Theta$ on $p\in X$ as $\mc{C}_p(\bdx)$ for some $\bdx\in \partial_\infty X$.  Define $Q=P_{\Theta}$ as the stabilizer in $G$ of $\bdx$. (Any parabolic subgroup is conjugate to some such $P_{\Theta}$.) Then $G/P_{\Theta}$ is the partial flag variety consisting of the $G$-orbit of $\exp(\mf{a}_\Theta^+)$. In terms of matrices, the groups $P_\Theta$ will be conjugate to the set of matrices that vanish below the diagonal in the coordinates corresponding to $\Theta$. Then  $M_Q$ will be the group isomorphic to $SO(q)$ for $q=d-\abs{\Theta}$ which acts by identity in the coordinates corresponding to $\Theta$. The group $A_Q$ will consist of the matrices in $P_\Theta$ that are symmetric and $A_Q'$ will be the diagonal matrices that are $1$ off of the $\Theta$ coordinates.

\section{Coarse Geometry of Symmetric Spaces}

 Let us fix a Riemannian symmetric space $X$ and let $G=\Isom(X)^0$.

We will call any isometric totally geodesic embedding of $\Rb^{\op{rk}(G)}$ into $X$ a (maximal) flat. The intersection of two flats is called a (singular) \emph{subflat}.  A \emph{half-flat} $H$ is a subset of a flat such that $\partial H$ is a subflat of dimension $\op{rk}(G)-1$. A subset of a flat is called a \emph{polytope} if it is the intersection of finitely many half-flats. 
 
We also define the parallel set of a singular subflat (compare with \cref{sec:parallel_sets}). Given a singular subflat $S$, the \emph{parallel set of the singular subflat $S$} is 
\begin{align}
\label{eqn:parallel_set_of_subflat}
    F(S):=\bigcup \set{ F : F \text{ is a flat such that } F \supset S}.
\end{align}

 We call any $(L,C)$-quasi-isometric embedding of $\R^{\op{rk}(G)}$ into $X$ an {\em $(L,C)$-quasiflat}. In particular, quasiflats are always assumed to be of dimension $\op{rk}(G)$ unless stated otherwise.

\begin{lemma}\cite[Theorem 1.2.5]{KleinerLeeb97}\cite[Theorem 1.1]{Eskin-Farb}\label{lem:quasiflat}
For every $L\ge 1$ and $C\ge 0$ there exist $k\in \mathbb N$ and $R\ge 0$ such that if $F$ an $(L,C)$-quasiflat then $F$ is contained in an $R$-neighborhood of a union of $k$ flats in $X$. Moreover, if $L$ is sufficiently close to 1 (the `sufficiently close' depending on type of the root system of $G$), then $k=1$.
\end{lemma}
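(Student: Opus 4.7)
The plan is to follow the asymptotic-cone approach of Kleiner--Leeb, with the refinement for $L$ near $1$ coming from Eskin--Farb. I would argue by contradiction: if the first assertion failed for some fixed $(L,C)$, there would be a sequence of $(L,C)$-quasiflats $F_n:\Rb^r\to X$ with $r=\op{rk}(G)$ such that $F_n$ is not contained in the $n$-neighborhood of any union of $n$ flats. Choosing basepoints, rescaling the metric on $X$ by $1/n$, and passing to an ultralimit produces the asymptotic cone $X_\omega$, which by Kleiner--Leeb is a thick Euclidean building of rank $r$. The rescaled maps $F_n$ have a well-defined ultralimit $F_\omega:\Rb^r\hookrightarrow X_\omega$ that is an $L$-bi-Lipschitz embedding of top dimension (the additive constant $C$ disappears under rescaling, which is why $k$ will not depend on $C$).

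The central step is the building-rigidity statement: any top-dimensional bi-Lipschitz flat in a thick Euclidean building is contained in a finite union $A_1\cup\cdots\cup A_k$ of apartments, with $k$ depending only on $L$. I would invoke the Kleiner--Leeb proof here rather than reproduce it: the argument analyzes the image at infinity inside the spherical building, uses that a topologically embedded $(r-1)$-sphere in the Tits boundary is a finite union of Weyl chamber spheres, and then exploits convexity and the cone structure of apartments to conclude that $F_\omega(\Rb^r)$ is a finite union of Weyl cones. A standard compactness/continuity argument (over the space of $L$-bi-Lipschitz flats modulo the building automorphism group) yields a uniform bound $k=k(L)$. Unwinding the ultralimit then contradicts the failure assumption on the $F_n$'s and produces the asserted constants $k$ and $R$.

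For the moreover part, I would track quantitatively what happens as $L\to 1$. The induced map $F_\omega$ restricts to a map on the boundary $(r-1)$-spheres that is $L$-bi-Lipschitz in the angular metric, hence nearly isometric in the Tits metric of the spherical building at infinity of $X_\omega$. Two distinct apartment spheres in a thick spherical building meet only in a proper subcomplex whose Tits diameter is bounded away from $\pi$ by a combinatorial gap $\delta_W>0$ that depends only on the Weyl group (equivalently, on the root system type of $G$). Once $L$ is close enough to $1$ that the bi-Lipschitz distortion on the sphere is smaller than $\delta_W$, the image $\partial_\infty F_\omega(\Rb^r)$ cannot cross from one apartment sphere into another, and is therefore a single apartment sphere; by coning, $F_\omega(\Rb^r)$ is a single apartment, giving $k=1$.

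The main obstacle is the building-rigidity step for top-dimensional bi-Lipschitz flats, a genuinely deep structural theorem; since the lemma is quoted verbatim from \cite{KleinerLeeb97,Eskin-Farb}, my plan is to cite these rather than to reprove them, and to focus any new work on converting the ultralimit statement into the quantitative $(k,R)$-bound and on making the Weyl-group gap $\delta_W$ explicit for the moreover clause.
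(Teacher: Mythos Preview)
Your proposal is correct and aligns with the paper's approach, which is simply to cite \cite[Theorem 1.2.5]{KleinerLeeb97} and the paragraph following it; the paper gives no independent argument beyond this reference. One small attribution point: the paper locates the ``moreover'' clause (that $k=1$ for $L$ sufficiently close to $1$) in the paragraph after Theorem 1.2.5 of Kleiner--Leeb rather than in Eskin--Farb, so you may wish to adjust where you point for that refinement.
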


\begin{proof}
We refer to \cite[Theorem 1.2.5]{KleinerLeeb97} and the paragraph after it.
\end{proof}

\begin{definition}
Let $A, B \subset X$. We say $A$ and $B$ coarsely intersect in a set $D$ if there is a constant $R = R(A,B)$ such that $d^{\Haus}(\Nbhd_R(A)\cap \Nbhd_R(B), D)<+\infty$. In this case, we say that $D$ is a  coarse intersection of $A$ and $B$. 
\end{definition} 
Note that the notion of coarse intersection is well-defined only up to finite Hausdorff distance. Hence we use the term `a coarse intersection'.

{\bf Fact:} The coarse intersection of two flats in a symmetric space is a convex polytope, possibly  unbounded; see for example \cite[Appendix B, Lemma B.1]{Eskin98}.

 Let $\gamma_{pz}$ be a geodesic ray in $X$. Recall that $F(\gamma_{pz})$ is the parallel set of $\gamma_{pz}$, see \cref{sec:parallel_sets}. Let 
 \[
F*:=\bigcap \set{F : F \text{ is a flat in }F(\gamma_{pz})}.
 \]
Note that $F*$ is in fact the unique minimal singular subflat that contains the geodesic extension of the ray $\gamma_{pz}$. Here, minimal means there is no singular subflat of strictly lower dimension that contains $\gamma_{pz}$. The symmetric subspace $F(\gamma_{pz})$ is then $F(F*)$, the parallel set of $F*$.

\begin{lemma}\label{lem:coarse_intersection}
Let $q^*:F(\gamma_{pz})\to X$ be a $L$-biLipschitz embedding and let $F*$ be the intersection of all flats in $F(\gamma_{pz})$. Suppose there is a constant $C$ such that if $F$ is a flat in $F(\gamma_{pz})$, then there exists a flat $F'$ in $X$ such that 
\[
d_{\Haus}(q^*(F),F') \leq C. 
\]
\begin{enumerate}
\item Let $F_1$ and $F_2$ be two flats such that $F_1\cap F_2 = F*$. Then the coarse intersection of $F_1'$ and $F_2'$, which we denote by $\Sc(F_1',F_2')$, is a singular subflat of dimension $\dim(F*)$. Furthermore, if the angle between $F_1$ and $F_2$ is bounded away from 0 in the following sense:
\[\Nbhd_R(F_1)\cap \Nbhd_R(F_2)\subset \Nbhd_{3R}(F*),\]
for any $R\geq 0$. Then there is a choice of $\Sc(F_1',F_2')$ such that
\[
d_{\Haus}(\Sc(F_1',F_2'),q^*(F*)) < C_0,
\]
where the constant $C_0$ is uniform and depends only on the symmetric space $X$ and the constants $C$ and $L$. 
\item Let $F_1,F_2, F_3$ be flats such that $F_i\cap F_j=F*$, for $1\le i\neq j\le 3$. Then the coarse intersections $\Sc(F_1',F_2')$ and $\Sc(F_1',F_3')$ are parallel. 
\end{enumerate}
\end{lemma}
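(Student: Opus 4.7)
The plan is to prove both parts by systematically exploiting biLipschitz quasi-inverses of $q^*$ on each flat, together with the cited Fact that the coarse intersection of two flats in a symmetric space is a convex polytope inside a singular subflat.

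For Part (1), the containment $q^*(F^*) \subset \Nbhd_C(F_1')\cap \Nbhd_C(F_2')$ is immediate from $F^* \subset F_i$ and the hypothesis $d_{\Haus}(q^*(F_i), F_i') \leq C$. Writing $\Sc(F_1',F_2')$ as a convex polytope inside a singular subflat of some dimension $k$, the lower bound $k \geq \dim F^*$ follows because $q^*(F^*)$ is an $L$-biLipschitz image of $\Rb^{\dim F^*}$ lying in a bounded neighborhood of $\Sc(F_1',F_2')$, and volume growth forbids such a set from coarsely embedding into a lower-dimensional flat. For the upper bound $k \leq \dim F^*$, I would build quasi-inverses $\psi_i \colon F_i' \to F_i$ from the biLipschitz and Hausdorff hypotheses and observe that for each $x \in \Sc(F_1',F_2')$ the points $\psi_1(x) \in F_1$ and $\psi_2(x) \in F_2$ satisfy $d(\psi_1(x),\psi_2(x)) \leq 2L(R+C)$. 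Using the product structure $F(\gamma_{pz}) = F^* \times CS(\gamma_{pz})$ (so that each $F_i$ splits as $F^* \times f_i$ with $f_1 \cap f_2$ a single point in the symmetric factor $CS(\gamma_{pz})$), the set $F_1 \cap \Nbhd_\delta(F_2)$ is a bounded neighborhood of $F^*$ for every $\delta$; this confines $\psi_1(\Sc(F_1',F_2'))$ to a $\dim(F^*)$-dimensional strip in $F_1$, forcing $k \leq \dim F^*$.

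For the Hausdorff estimate under the angle assumption, I would pick $x \in \Sc(F_1',F_2') \subset \Nbhd_R(F_1') \cap \Nbhd_R(F_2')$, choose $y_i \in F_i$ with $d(q^*(y_i), x) \leq R + C$, and apply the biLipschitz bound to get $d(y_1, y_2) \leq 2L(R+C)$. The angle hypothesis then places $y_1 \in \Nbhd_{6L(R+C)}(F^*)$, so choosing $y \in F^*$ within $6L(R+C)$ of $y_1$ and applying $q^*$ yields $d(x, q^*(y)) \leq (R+C)(1+6L^2)$ by the triangle inequality. Combined with the first inclusion, this produces the uniform constant $C_0$, provided $R$ is chosen as the stabilization scale for coarse intersections of flats in $X$, which depends only on the symmetric space.

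For Part (2), I would use that both $\Sc(F_1',F_2')$ and $\Sc(F_1',F_3')$ are $\dim(F^*)$-dimensional singular subflats each coarsely inside the flat $F_1'$, and each coarsely contains $q^*(F^*)$ by Step~1. Orthogonal projection of $q^*(F^*)$ into the Euclidean flat $F_1'$ yields a set biLipschitz-equivalent to $\Rb^{\dim F^*}$ that sits in a uniform neighborhood of each of the two subflats; since two $\dim(F^*)$-dimensional affine subspaces of a Euclidean space lying within finite Hausdorff distance of a common biLipschitz copy of Euclidean space of the same dimension must themselves lie within finite Hausdorff distance, a standard argument in Euclidean geometry forces them to be parallel translates. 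The main obstacle I expect is the upper bound $k \leq \dim F^*$ in Part~(1), since it genuinely relies on transversality of the flats $f_1$ and $f_2$ in the symmetric factor $CS(\gamma_{pz})$; a secondary concern is ensuring that the defining radius $R$ of the coarse intersection can be chosen uniformly in terms of $X$, $L$, and $C$ alone.
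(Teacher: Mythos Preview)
Your approach is correct and genuinely different from the paper's. The paper proves the dimension statement in Part~(1) via asymptotic cones: it passes to the ultralimit, where $q^*$ induces a biLipschitz homeomorphism $q^{**}$ between the cone of $F_1\cup F_2$ (two flats glued along $F^*$) and the cone of $F_1'\cup F_2'$ (two flats glued along the cone of the coarse intersection $D$). Since a biLipschitz map must carry the non-manifold locus to the non-manifold locus, $q^{**}(F^*)$ equals the cone of $D$, forcing $\dim D=\dim F^*$. Your route via volume growth for the lower bound and quasi-inverses $\psi_i$ for the upper bound is more elementary and avoids the asymptotic cone machinery entirely; the cost is that you must explicitly invoke the symmetric-space fact that two maximal flats through a common point have coarse intersection equal to their exact intersection (equivalently, that $f_1\cap\Nbhd_\delta(f_2)$ is bounded in $CS(\gamma_{pz})$ when $f_1\cap f_2$ is a point). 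You gesture at this via the product splitting but do not justify it; the paper's cone argument uses the same fact implicitly in the claim that the cone of $F_1\cup F_2$ is diffeomorphic to $F_1\cup F_2$ itself.

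For the Hausdorff estimate your argument is essentially the same as the paper's, though your worry about a ``stabilization scale'' $R$ is unnecessary: one can simply take $R=2C$ throughout, as the paper does, and obtain $C_0$ of order $L^2C$ directly. For Part~(2) both arguments reduce to showing that $\Sc(F_1',F_2')$ and $\Sc(F_1',F_3')$ each lie at finite Hausdorff distance from $q^*(F^*)$; the paper does this by pulling the coarse intersections back through $q^*$, while you project forward into $F_1'$. These are equivalent, and the final step (two equidimensional singular subflats at finite Hausdorff distance are parallel) is identical.
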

\begin{proof}
 (1) Let $D$ be the coarse intersection of $F_1'$ and $F_2'$. Let $\omega$ be a non-principal ultrafilter and let $(r_n)$ be any sequence converging to infinity. Choose the sequence of base points as $(o_n)=(o)$, where $o\in F*$. The map $q^*$ induces a biLipschitz embedding, denoted by $q^{**}$, of the asymptotic cone $Cone(F(\gamma_{pz}),\omega,r_n,o_n))$ into the asymptotic cone $Cone(X,\omega, r_n,q*(o_n))$. The asymptotic cone of the union $F_1\cup F_2$  is diffeomorphic to $F_1\cup F_2$. The map $q^{**}$ maps this set to the asymptotic cone of $F_1'\cup F_2'$. On the other hand the asymptotic cone of $F_1'\cup F_2'$ is a union of two flats along the asymptotic cone of $D$. The biLipschitz map $q^{**}$ maps $F*$ (more precisely, the asymptotic cone of $F*$) to the asymptotic cone of $D$. Hence the asymptotic cone of $D$ is isometric to $F*$. It follows that there is a choice of $\Sc(F_1',F-2')$, the coarse intersection of $F_1'$ and $F_2'$, such that $\Sc(F_1',F_2')$ is a singular subflat of dimension $\dim(F*)$.

For the second claim, let $D_1\subset F_1'$ be the coarse intersection of $F_1'$ and $F_2'$. Since $q^*(F*)$ is in the $C$-neighborhood of both $F_1'$ and $F_2'$, we can choose the singular subflat $D_1$ to be in a $2C$-neighborhood of $F_2'$. Since $q^*$ is $L$-bilipschitz, $q^*(\Nbhd_{4LC}(F_i))\supset \Nbhd_{2C}(F_i')$. By assumption on the angle between flats $F_1$ and $F_2$, we get that
\[q^*(\Nbhd_{4LC}(F*))\supset \Nbhd_{2C}(F_1')\cap \Nbhd_{2C}(F_2')\supset D_1.\]
It follows that the singular subflat $D_1$ is in a $4L^2C$-neighborhood of $q^*(F*)$.

\smallskip

 \noindent (2) Let $D_1\subset F_1'$ and $D_2\subset F_1'$ be the coarse intersection of $F_1'$ and $F_2'$ and the coarse intersection of $F_1'$ and $F_3'$ respectively. By definition of coarse intersection, there exists $R>0$ such that 
\[d^{\Haus}(D_1, \Nbhd_R(F_1')\cap \Nbhd_R(F_2'))<+\infty,\]
and
\[d^{\Haus}(D_2, \Nbhd_R(F_1')\cap \Nbhd_R(F_3'))<+\infty.\]
There exists $R'$ such that 
\[d^{\Haus}(\Nbhd_{R'}(q^*(F_1))\cap \Nbhd_{R'}(q^*(F_2)), \Nbhd_R(F_1')\cap \Nbhd_R(F_2'))<+\infty,\]
and
\[d^{\Haus}(\Nbhd_{R'}(q^*(F_1))\cap \Nbhd_{R'}(q^*(F_3)), \Nbhd_R(F_1')\cap \Nbhd_R(F_3'))<+\infty.\]
On the other hand, $q^*(F*)$ has finite Hausdorff distances to $\Nbhd_{R'}(q^*(F_1))\cap \Nbhd_{R'}(q^*(F_2))$ and $\Nbhd_{R'}(q^*(F_1))\cap \Nbhd_{R'}(q^*(F_3))$. Hence, $D_1$ and $D_2$ have finite Hausdorff distance. Since $D_1$ and $D_2$ are singular subflats of the same dimension, having finite Hausdorff distance implies that they are parallel. 
\end{proof}

In the next result, we will use the notion of parallel set for a singular subflat from \ref{eqn:parallel_set_of_subflat}. 

\begin{proposition}\label{prop:subflat_existence}
Let $q^*:F(\gamma_{pz})\to X$ be a $L$-biLipschitz embedding and let $F*$ be the intersection of all flats in $F(\gamma_{pz})$. We assume that there is a constant $C$ such that the image of any flat in $F(\gamma_{pz})$ has  Hausdorff distance at most $C$ to a flat in $X$. 
Then there is a singular subflat $D$, of the same dimension as $\dim(F*)$,  such that $q^*(F(\gamma_{pz}))$ is contained in a bounded neighborhood of $F(D)$, the parallel set of $D$. 
\end{proposition}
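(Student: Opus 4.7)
The plan is to use \cref{lem:coarse_intersection}(1) to extract a candidate singular subflat $D$, and then show that for every flat $F \subset F(\gamma_{pz})$ the image $q^*(F)$ lies in a uniformly bounded neighborhood of the parallel set $F(D)$. The central geometric input will be a CAT(0)/flat-strip argument promoting neighborhood-closeness to parallelism.

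First, if $F(\gamma_{pz}) = F*$ there is nothing to show, since by hypothesis $q^*(F*)$ is close to a flat, which equals its own parallel set. Otherwise, I would use the isometric product decomposition $F(\gamma_{pz}) \cong F* \times CS$ from \cref{sec:parallel_sets} with $CS$ of positive rank to select two flats $F_1, F_2 \subset F(\gamma_{pz})$ of the form $F_i = F* \times G_i$, where $G_1, G_2 \subset CS$ are maximal flats through a common point whose tangent directions are sufficiently transverse, so that $F_1 \cap F_2 = F*$ and the angle condition $\Nbhd_R(F_1) \cap \Nbhd_R(F_2) \subset \Nbhd_{3R}(F*)$ required by \cref{lem:coarse_intersection}(1) holds. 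Applying that lemma then produces a singular subflat $D$ of dimension $\dim F*$ with $d_{\Haus}(D, q^*(F*)) < C_0$ for some constant $C_0 = C_0(X, C, L)$.

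Next, for an arbitrary flat $F \subset F(\gamma_{pz})$ (which necessarily contains $F*$), the hypothesis provides a flat $F' \subset X$ with $d_{\Haus}(q^*(F), F') \leq C$. Since $q^*(F*) \subset q^*(F) \subset \Nbhd_C(F')$, combining with the estimate on $D$ above yields $D \subset \Nbhd_{C+C_0}(F')$. The crucial step is to upgrade this neighborhood containment to $F' \subset F(D)$: for each complete geodesic $\sigma \subset D$, the inclusion $\sigma \subset \Nbhd_{C+C_0}(F')$ forces both ideal endpoints $\sigma(\pm \infty)$ into $\partial_\infty F'$, so by the flat strip theorem $\sigma$ is parallel to a geodesic lying in the flat $F'$. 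Running this over $\dim F*$ linearly independent directions spanning $D$ produces a singular subflat $D' \subset F'$ parallel to $D$. Since the parallel set depends only on the parallelism class of its defining subflat, this gives $F' \subset F(D') = F(D)$.

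Combining the inclusions, $q^*(F) \subset \Nbhd_C(F') \subset \Nbhd_C(F(D))$, and taking a union over all flats $F \subset F(\gamma_{pz})$ (using $F(\gamma_{pz}) = \bigcup\{F : F \text{ flat},\ F \supset F*\}$) gives $q^*(F(\gamma_{pz})) \subset \Nbhd_C(F(D))$, finishing the proof. The main obstacle I anticipate is the flat-strip step upgrading $D \subset \Nbhd_{C+C_0}(F')$ to $F' \subset F(D)$; a secondary technicality is ensuring the initial pair $F_1, F_2$ satisfies the quantitative $3R$ angle condition of \cref{lem:coarse_intersection}(1), which one obtains from the genericity of transverse maximal flats through a point in $CS$.
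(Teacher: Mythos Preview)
Your proof is correct and follows a somewhat different route from the paper's.

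Both arguments use \cref{lem:coarse_intersection}(1) to produce the candidate singular subflat $D$ close to $q^*(F*)$. The divergence is in how one verifies, for each flat $F'\subset X$ shadowing some $q^*(F)$, that $F'\subset F(D)$. You argue via CAT(0) geometry: $D\subset \Nbhd_{C+C_0}(F')$ forces $\partial_\infty D\subset \partial_\infty F'$, so $F'$ contains a parallel copy $D'$ of $D$, and then $F(D')=F(D)$ since the parallel set of a singular subflat depends only on its parallelism class (e.g.\ because $F(D)=F(\gamma)$ for any geodesic $\gamma\subset D$ whose minimal singular subflat is $D$). The paper instead reapplies \cref{lem:coarse_intersection}(1) to each flat $l\times F*$ paired with one of a fixed finite family $\{l_i\times F*\}$, chosen so that every $l$ makes a uniformly good angle with some $l_i$; the coarse intersection then exhibits the parallel subflat inside $F'$ directly. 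Your approach avoids the finite-family compactness step at the price of one extra CAT(0) input; the paper's approach stays entirely within the coarse-intersection framework it has already developed.

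One small correction: not every maximal flat in $F(\gamma_{pz})\cong F*\times CS$ contains $F*$, contrary to your parenthetical; only those passing through the base point of $CS$ do. This is harmless, since those flats already cover $F(\gamma_{pz})$, so restricting to them suffices and the rest of your argument goes through unchanged.
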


\begin{proof}
We first let $Y$  be a totally geodesic symmetric subspace such that $F(\gamma_{pz})$ isometrically splits as $Y\times F*$. Note that in this splitting, we identify $F*$ with $\{o\}\times F*$, where $o$ is a fixed point in $Y$.

We pick finitely many flats $\{l_1, \dots, l_k\}$ in $Y$ passing through $o$ such that for any point $y\in Y$ and any flat $l$ containing $o$ and $y$, there is a flat $l_i$ in $Y$ such that $l\times F*$ and $l_i\times F*$ intersect in $F*$ at the an angle bounded away from 0 in the sense stated in \cref{lem:coarse_intersection}. By \cref{lem:coarse_intersection}, the images of these two flats are contained in bounded neighborhood of two flats that are parallel to a singular subflat $D$. Furthermore, $D$ has a bounded Hausdorff distance to $q^*(F*)$

Therefore, $q^*(F(\gamma_{pz}))$ is contained in a bounded neighborhood of $F(D)$.
\end{proof}

\section{$C^0$ Local Semi-rigidity of Boundary Actions of Lattices}

In this section, we study actions of higher rank lattices in semisimple Lie groups of the non-compact type $G$ on projective boundaries of $G$ and prove the main theorem.  These are much better behaved than the actions of fundamental groups of general non-positively curved  manifolds on boundary spheres, and rather resemble negatively curved manifolds. We attribute this to the shared good properties for quasi-geodesics and quasi-flats and  the uniform transversality properties of weak stable and unstable manifolds.

We obtain the following local stability result which is a slightly more detailed version of \cref{thm:main_thm_lattice}:   

\begin{theorem}\label{thm:rigidity_G/Q}
Let $G$ be a connected linear semisimple Lie group without compact factors, $\Gamma$ be a uniform lattice in $G$, $Q$ be a parabolic subgroup of $G$ and $\rho_0: \Gamma \to \Homeo(G/Q)$ is the boundary action (i.e. action by left multiplication). Then there is a neighborhood $\Uc_0$ of $\rho_0$ and a neighborhood $V_0$ of $\id$ (in the space of continuous maps on $G/Q$) such that: for every $\rho \in \Uc_0$, there exists a unique $(\rho,\rho_0)$-semiconjugacy $\phi_\rho \in V_0$. Moreover, $\phi_\rho$ converges to $\id$ uniformly as $\rho$ converges to $\rho_0$. 
\end{theorem}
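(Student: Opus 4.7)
The proof will follow the outline sketched in the introduction, organised into four stages.

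\smallskip

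\emph{Stage 1: Suspension and the perturbed center-stable foliation.} By \cref{lem:G/Q_standard_action}, $\rho_0$ is a standard action on $F=G/Q$, with associated fiber bundle $X_F=G/M_Q$ over $X=G/K$. Given $\rho$ sufficiently $C^0$-close to $\rho_0$, apply \cref{prop:BM} to construct a continuous, $(\rho,\wh\rho_0)$-equivariant map $\til f=\til f_\rho\colon X\times F\to X_F$ covering the identity on $X$. Define the perturbed center-stable leaves by $\mc{F}^{cs}_\rho(\xi):=\til f(X\times\{\xi\})$; by part (2) of \cref{prop:BM} each leaf is $C^1$, and by parts (4)--(5) the leaves are $C^0$-close to genuine center-stable leaves $\wcs(\xi)$, with tangent planes arbitrarily close in Grassmannian distance. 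Since $\wcu(\eta)$ is uniformly transverse to $\wcs(\xi)$ whenever $(\xi,\eta)$ are in general position, the same is true for $\mc{F}^{cs}_\rho(\xi)$, so for $\rho$ close to $\rho_0$ the intersection $\mc{I}_{\xi,\eta}:=\wcu(\eta)\cap \mc{F}^{cs}_\rho(\xi)$ is a $C^1$-submanifold of $X_F$ of the same dimension as a center leaf $\wcs(\xi)\cap\wcu(\eta)$, which via $p_Q$ is diffeomorphic to the parallel set $F(\gamma_{\xi\eta})$.

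\smallskip

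\emph{Stage 2: From weak transversality to a biLipschitz shadowing.} The key intermediate step is to promote the local $C^1$-closeness of $\mc{F}^{cs}_\rho(\xi)$ to $\wcs(\xi)$ into a global biLipschitz correspondence. Projecting $\mc{I}_{\xi,\eta}$ by $p_Q$ and comparing with the center leaf in $F(\gamma_{\xi\eta})$ gives, locally, a biLipschitz map from a fundamental domain of $F(\gamma_{\xi\eta})$ into $X$ whose biLipschitz constant $L$ tends to $1$ as $\rho\to\rho_0$, uniformly by $\wh\rho_0(\Gamma)$-equivariance and the compactness of $\Gamma\backslash X_F$. Piecing these local pieces together using the equivariance in \cref{prop:BM}(1) upgrades this to a global $(L,C)$-biLipschitz embedding $q^*\colon F(\gamma_{\xi\eta})\to X$. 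Then \cref{lem:quasiflat}, in the regime where $L$ is close enough to $1$, yields that the image of each maximal flat $F\subset F(\gamma_{\xi\eta})$ lies within bounded Hausdorff distance of a single flat $F'\subset X$. Feeding this into \cref{prop:subflat_existence} shows that $q^*\bigl(F(\gamma_{\xi\eta})\bigr)$ lies in a bounded neighbourhood of a unique parallel set $F(D)$, i.e.\ $\mc{I}_{\xi,\eta}$ uniformly shadows a single genuine center leaf $\wcs(\xi')\cap\wcu(\eta')$ for some pair $(\xi',\eta')\in G/Q\times G/Q$.

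\smallskip

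\emph{Stage 3: Construction and properties of the semiconjugacy.} Define $\phi_\rho\colon G/Q\to G/Q$ by sending $\xi$ to the point $\xi'$ produced above (independent of the auxiliary choice of $\eta$, by part (2) of \cref{lem:coarse_intersection} applied to three flats). Equivariance $\rho_0(\gamma)\circ\phi_\rho=\phi_\rho\circ\rho(\gamma)$ is inherited from the $(\rho,\wh\rho_0)$-equivariance of $\til f$. Continuity and surjectivity follow from a Morse-lemma-style argument reminiscent of extending a quasi-isometric embedding of a symmetric space to the Furstenberg boundary: if $\xi_n\to\xi$ in $G/Q$, then $\mc{F}^{cs}_\rho(\xi_n)\to\mc{F}^{cs}_\rho(\xi)$ in $X_F$ by continuity of $\til f$, and the uniform bounded neighbourhood shadowing from Stage 2 forces the shadowing parallel sets to converge in the corresponding topology, hence $\phi_\rho(\xi_n)\to\phi_\rho(\xi)$. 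The $C^0$-estimate $\phi_\rho\to\id$ as $\rho\to\rho_0$ is a direct consequence of \cref{prop:BM}(3)--(4): the perturbed leaves $\mc{F}^{cs}_\rho(\xi)$ converge to $\wcs(\xi)$ in the $C^0$-topology on compact sets, so their shadowed leaves must be $\wcs(\xi')$ with $\xi'$ close to $\xi$.

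\smallskip

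\emph{Stage 4: Uniqueness and reduction to the torsion-free case.} The action $\rho_0$ on $G/Q$ is $(\lambda,\mc{U})$-uniformly expanding for a suitable open cover $\mc{U}$ and some $\lambda>1$: any regular element $a\in A$ acts on $G/Q$ with an open basin (the ``attracting Bruhat cell'') on which its action expands by a factor bounded below, and finitely many such $\Gamma$-translates of these basins cover the compact space $G/Q$. Then \cref{lem:semi_unique} gives uniqueness of $\phi_\rho$ in a $C^0$-neighbourhood of $\id$. For lattices with torsion, apply Selberg's lemma to obtain a finite-index torsion-free normal subgroup $\Gamma_0\lhd\Gamma$ and use \cref{cor:torsionfree} to promote the torsion-free semiconjugacy to one for all of $\Gamma$.

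\smallskip

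\textbf{Main obstacle.} The hardest step is Stage 2: controlling the biLipschitz constant uniformly as $L\to 1$ globally (not merely on a compact piece) and then using \cref{lem:quasiflat} in the $L\approx 1$ regime to force the image of the entire parallel set into a bounded neighbourhood of a single parallel set, rather than a finite union. This is where the cocompactness of $\Gamma$, the refined quasi-flat rigidity, and the explicit coarse-intersection control of \cref{lem:coarse_intersection} all need to be combined.
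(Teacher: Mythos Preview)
Your outline follows the paper's approach closely, but there is a genuine gap in Stage~3 at the step ``independent of the auxiliary choice of $\eta$, by part (2) of \cref{lem:coarse_intersection} applied to three flats.'' That lemma compares coarse intersections of flats \emph{inside a single} biLipschitz image $q^*(F(\gamma_{pz}))$ and shows they are parallel; it says nothing about comparing the two different intersections $\mc{I}_{\xi,\eta_1}\subset\wcu(\eta_1)$ and $\mc{I}_{\xi,\eta_2}\subset\wcu(\eta_2)$, which live in different center-unstable leaves. The paper's actual argument (\cref{lem:well-defined}) is considerably more delicate: assuming $\zeta_1\neq\zeta_2$, one uses a source--sink dynamics lemma (\cref{lem:source_sink}, via Prasad--Raghunathan) to find $\gamma_n\in\Gamma$ with $\rho_0(\gamma_n)\eta_1$ and $\rho_0(\gamma_n)\eta_2$ converging to the same point while $\rho_0(\gamma_n)\zeta_1$ and $\rho_0(\gamma_n)\zeta_2$ stay apart, and then derives a contradiction from the continuity of the foliation $\mc{F}(\xi,\eta)$ in both variables (\cref{lem:Q_foliation}) together with the uniqueness in \cref{prop:image-unique}. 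This step is the higher-rank replacement for the convergence-group argument in Bowden--Mann and cannot be shortcut by coarse geometry alone.

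A secondary gap in Stage~2: you assert a global biLipschitz embedding of $F(\gamma_{\xi\eta})$ onto $\mc{I}_{\xi,\eta}$, but the paper explicitly notes that it is \emph{not known} whether $\mc{I}_{\xi,\eta}$ is connected. \Cref{prop:geometric-orbit} only handles each connected component separately; an additional argument (\cref{lem:locallyclose} and \cref{prop:image-unique}) is then needed to show all components shadow the \emph{same} center leaf, by comparing them on a large ball and using that distinct center leaves eventually diverge. Your ``piecing these local pieces together using the equivariance'' does not address this, since $\Gamma$ need not act transitively on the components.
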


\begin{remark}
We do not necessarily require that $G$ is linear, but rather that $G$ is semisimple without compact factors and $\Gamma<G$ has a torsion-free normal subgroup $\Gamma_0\vartriangleleft  \Gamma$ such that $\Gamma/\Gamma_0$ is finitely generated and $\Gamma_0$ is isomorphic to a lattice in $\op{Isom}(X)^0$ for the symmetric space $X$ of noncompact type associated to $G$. However, we state it in the form above for a simpler statement.
\end{remark}

In \cref{thm:rigidity_G/Q}, the uniqueness of $\phi_\rho$ in $V_0$ is an application of \cref{lem:semi_unique}. Our goal in this section is to prove the existence of the semiconjugacies $\phi_\rho$ for all perturbations close to $\rho_0$, i.e. the $C^0$ local semi-rigidity of $\rho_0$. The construction of semi-conjugacies $\varphi_{\rho}$ will be done  in Section \ref{sec:construction}.  Then we will show in Section \ref{sec:local} that $\varphi_{\rho}$ is continuous and that it converges to the identity 
 uniformly as $\rho$ converges to $\rho _0$. 
\vspace{1em}

\noindent {\bf Initial reductions:} 
We will now explain how in \cref{thm:rigidity_G/Q}, we can always reduce to the case that $G$ is a connected center-free semisimple Lie group without compact factors and $\Gamma$ is a torsion-free uniform lattice in $G$.

Since $G$ is linear,  Selberg's lemma implies there is a normal torsion-free finite index subgroup  $\Gamma_0 \vartriangleleft \Gamma$. By \cref{cor:torsionfree}, it suffices to prove that the action $\rho|_{\Gamma_0}$ of $\Gamma _0$ is  $C^0$-locally semi-rigid and that a  semi-conjugacy is sufficiently close to $\id$ is unique.  

Let $Z(G)$ be the center of $G$ and $h: G \to G/Z(G)$. Then $\Gamma_0$ is isomorphic to $h(\Gamma_0)$ where $h(\Gamma_0)$ is a torsion-free uniform lattice in $h(G)$. Moreover $h(G)$ is a connected semisimple group with trivial center and without compact factors. Finally, $h(Q)$ is a parabolic subgroup of $h(G)$ and $G/Q$ is homeomorphic to $h(G)/h(Q)$.

\bigskip

\noindent {\bf Setup:} Henceforth in this section, we will work with  the following assumptions and notations. Let $G$ be a connected semisimple Lie group with trivial center and without compact factors. Let $X$ be the Riemannian symmetric space attached to $G$, i.e. $G=\Isom(X)^0$. Let $Q<G$ be a parabolic subgroup. We fix a torsion-free uniform lattice $\Gamma$ in $G$. 

\begin{itemize}
    \item  Let $\bdx \in \partial_{\infty}X$ be a point such that $Q=G_{\bdx}$.
    
    \item Fix a basepoint $p \in X$ and let $\gamma_{p \bdx}$ be the geodesic in $X$ that is asymptotic to $\bdx$. We will denote by $F(\gamma_{p\bdx})$ its parallel set (see \Cref{sec:parallel_sets}).

    \item The space $X_{G/Q}:=G/M_Q$ is the Weyl chamber $Q$-face bundle equipped with a $G$-invariant Riemannian metric, see \Cref{subsec:metric-on-G/MQ}. Recall that $p:X_{G/Q} \to X$ is a fiber bundle with base $X$ and fibers homeomorphic to $K/M_Q\cong G/Q$. The map $\Phi:X_{G/Q} \to X \times G/Q$ is the diffeomorphism introduced in \Cref{sec:G/M_Q}.

    \item The action $\rho_0:\Gamma\to \Homeo(G/Q)$ will always denote the algebraic action of $\Gamma$ on $G/Q$ by left multiplication. Then $\rho_0$ is a standard action with associated data $(X_{G/Q},p,\Phi,\{\pi_{x}\},\{g_x\},\ell)$ where $\ell \geq 1$. See \Cref{sec:standard-action} for the definition of standard action, \Cref{rem:associated_data_for_G/M_Q} for the explanation of associated data in this case, and \Cref{sec:G/Q-sec} for a proof of this fact.

\item
Let $\wcs, \wcu$ be the center-stable and center-unstable manifolds in $G/M_Q$ for the Weyl chamber flow, as in \Cref{sec:foliations_weyl_chamber_bundle}. 
\item If $\rho:\Gamma \to \Homeo(G/P)$, then $\wh{\rho}:\Gamma\to \Homeo(X\times G/Q)$ is the induced action (as in \Cref{def:induced action}).
\end{itemize}

\subsection{Construction of equivariant maps} \label{sec:construction}

 The construction proceeds in several steps. As the whole argument is rather complicated, let us give an outline here.  For simplicity, suppose the parabolic $Q$ is minimal. Then, eventually, we want to produce a quasi-flat that comes about as the intersection of unperturbed  center-unstable with the ``perturbed center stable''.  This gives us a perturbation of the center foliation of the unperturbed action which is given by flats.  The hope then is that these actually are quasi-flats.  Then we can apply the work of Kleiner Leeb and Eskin Farb on quasi-flats which gives us shadowing of the ``perturbed center leaves'' in Proposition \ref{prop:image-unique} and allows to define a $\rho-\rho _0$ equivariant map on the boundary in Proposition \ref{prop:semiconj}.  To get there we first get local bi-Lipschitz control of the ``perturbed center foliation'' in Proposition \ref{prop:BMMforG/Q} (the Bowden-Mann Lemma).  From this we then derive global bi-Lipschitz in Proposition \ref{prop:geometric-orbit}.  From a dynamical point of view, this is the analogue of orbit equivalence of the perturbation of an Anosov flow, i.e. structural stability.

We now begin the construction. We first apply the generalized lemma about standard actions to our setup of perturbing the standard action of a uniform lattice on the Furstenberg boundary.

\begin{proposition}
\label{prop:BMMforG/Q}
If $\rho: \Gamma \to \Homeo(F)$ is sufficiently close to $\rho_0$, then there is a continuous map $\til{f}:X \times G/Q \to X_{G/Q}$ satisfying:
\begin{enumerate}
\item $p(\til{f}(x,\xi))=x$ and $\til{f}(\gamma \cdot x,\rho(\gamma)\xi) =\wh\rho_0(\gamma)\til{f}(x,\xi)$
\item  the map $\til{f}(\cdot,\xi):X \to \til{f}(X \times \{\xi\})$ is a $C^{1}$ diffeomorphism for every $\xi \in G/Q$. Moreover, for any $(x_0,\xi_0)\in X \times F$, $(\rho,y,\xi) \mapsto D_y\til f(y,\xi)$ is continuous in all three variables at $(\rho_0,x_0,\xi_0)$.

\item for every $\epsilon>0$ and for every $x\in X$, there exists a neighborhood $\Uc = \Uc(x,\eps)$ of $\rho_0$ such that 
\[\sup_{\rho \in ~\Uc} \left( \sup_{\xi \in G/Q} d_{G/Q}(\pi_{x}(\til f(x,\xi)),\xi) \right)<\eps. \] 

\item  for every $\epsilon, R >0$, there is a neighborhood $\Uc = \Uc(\eps,R)$ of $\rho_0$ such that: if $\rho\in \Uc$, $(x,\xi)\in X\times G/Q$ and $\eta:=\pi_x(\til f(x,\xi))\in G/Q$, then 
$$\sup_{y \in B_R(x)}d_{F_y} \left( \til{f}(y,\xi), (\pr_F \circ \Phi)^{-1}(\eta) \cap F_y \right)<\epsilon,$$ 
where $d_{F_y}$ is the distance on the fiber $F_y$ (induced by the Riemannian metric $g_y$). 

\item  We consider the Riemannian metric on $X_{G/Q}$ constructed in \cref{subsec:metric-on-G/MQ}. For any $\epsilon > 0$, there exists a neighborhood $\Uc =\Uc(\eps)$ of $\rho_0$ such that
\[
\sup_{\rho \in ~\Uc}  \left( \sup_{(x,\xi) \in X \times F}d^{\op{Gr}}_{\til f(x,\xi)}\left( T_{\til f(x,\xi)} \til f(X \times \xi), T_{\til f(x,\xi)} (\pr_F \circ \Phi)^{-1}(\eta_x) \right) \right) < \eps,
\]
where $\eta_x:=\pi_x(\wt{f}(x,\xi))$.
\end{enumerate}
\end{proposition}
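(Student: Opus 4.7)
The plan is to deduce this proposition as a direct application of Proposition \ref{prop:BM} to the setting at hand, after verifying that all hypotheses of that proposition are met. First I would recall from the initial reductions that we may assume $\Gamma$ is a torsion-free uniform lattice in $G = \Isom(X)^0$ acting freely and properly discontinuously on $X$ with compact quotient, so the hypotheses on the group action on $X$ are satisfied. Next, by \cref{lem:G/Q_standard_action} and \cref{rem:associated_data_for_G/M_Q}, the action $\rho_0 \colon \Gamma \to \Homeo(G/Q)$ is a standard action with associated data $(X_{G/Q}, p, \Phi, \{\pi_x\}, \{g_x\}, \ell)$ for any $\ell \geq 1$, where $X_{G/Q} = G/M_Q$ is the Weyl chamber $Q$-face bundle. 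Thus we can invoke \cref{prop:BM} to obtain the map $\til{f}$, and parts (1), (2), and (3) of our proposition are immediate restatements of the corresponding parts of \cref{prop:BM}.

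For parts (4) and (5), the task is to verify the additional structural hypotheses needed. For part (5), we take $h$ to be the Riemannian metric on $X_{G/Q} = G/M_Q$ constructed in \cref{subsec:metric-on-G/MQ}, which is induced from a left $G$-invariant, $K$-bi-invariant metric on $G$. Since $\wh{\rho_0}(\Gamma)$ acts on $G/M_Q$ by left multiplication, it acts by $h$-isometries, and by construction $h|_{F_x} = g_x$ for every $x \in X$; hence \cref{prop:BM}(5) applies verbatim to yield our (5).

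For part (4), rather than verifying the uniform H\"older hypothesis, I would use the alternate assumption noted in the remark following \cref{prop:BM}: it suffices that each fiber $(F_x, g_x)$ be totally geodesic in $(X_{G/Q}, h)$, since that still forces $d_{X_{G/Q}}(u,v) \geq L \cdot d_{F_x}(u,v)$ on a fiber. This totally geodesic property was established in the lemma at the end of \cref{subsec:metric-on-G/MQ}, where it was shown that each fiber $F_g = g K / M_Q$ is totally geodesic in $G/M_Q$ with respect to our chosen metric $h$. With this alternate hypothesis in force, the argument in the proof of \cref{prop:BM}(4) goes through to give our (4) directly (alternatively, one may simply note that with a totally geodesic fiber, the iterative comparison across translates of the fundamental domain involves isometric identifications rather than H\"older ones, so the proof becomes simpler).

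Thus no genuinely new work is required: the proposition is a packaging of \cref{prop:BM} specialized to the homogeneous setting, and the only mild verification is that the natural $G$-invariant metric on $G/M_Q$ simultaneously makes the fibers totally geodesic and makes the induced action of $\Gamma$ isometric. The main thing to be careful about is that the Riemannian metric chosen in \cref{subsec:metric-on-G/MQ} really does satisfy both conditions simultaneously for \emph{every} choice of parabolic $Q$, which follows from its origin as induced by a left $G$-invariant, bi-$K$-invariant metric on $G$ together with the lemma in \cref{subsec:metric-on-G/MQ}.
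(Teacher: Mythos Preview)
Your proposal follows the same overall approach as the paper: both deduce the result by applying \cref{prop:BM} after checking its hypotheses in the homogeneous setting. Parts (1)--(3) and (5) are handled identically.

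For part (4) there is a minor difference in route: you invoke the alternate totally-geodesic hypothesis from the remark after \cref{prop:BM}, whereas the paper instead verifies the H\"older hypothesis directly, observing that the maps $\pi_x$ are smooth (hence Lipschitz on compact sets, hence H\"older with uniform constants depending only on $\diam(\Kc)$). Both routes are valid.

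There is, however, one genuine oversight in your treatment of part (4). Compare the statements carefully: in \cref{prop:BM}(4) the neighborhood $\Uc$ depends on $(\eps,R,x,\xi)$, whereas in the present proposition it depends only on $(\eps,R)$. This uniformity in $(x,\xi)$ is a strengthening and does not follow automatically from either the H\"older or the totally-geodesic hypothesis. The paper supplies the missing step: since $\wh{\rho_0}$ acts by isometries on $G/M_Q$, equivariance allows one to assume $x$ lies in a fixed compact fundamental domain for $\Gamma\backslash X$; combined with compactness of $F=G/Q$, a routine covering argument then makes the neighborhood $\Uc$ independent of $(x,\xi)$. You should add this observation to close the gap.
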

\begin{proof}
This is essentially an application \cref{prop:BM} to the standard action  $\rho_0$ of $\Gamma$ on $G/Q$. But deriving this result from \cref{prop:BM} requires a few observations  we will now remark on. Note that the maps $\pi_x:F_x \to F$ are in fact smooth and each fiber $F_x$ is totally geodesic in $G/M_Q$. Let $g_x$ denote the induced Riemannian metric on $F_x$.   If $x \in X$ and $\gamma \in \Gamma$, then the map $\gamma_x:(F_x,g_x) \to (F_{\gamma x},g_{\gamma x})$ is an isometry (so more than just affine). 

The smoothness of $\pi_x$ implies that the H\"older assumption in \cref{prop:BM} part (4) is satisfied. Since the action $\wh{\rho_0}$ is by isometries on $G/M_Q$, note that we can make the following assumption when applying part (4): we can assume that $x$ lies in a fundamental domain of $X/\Gamma$. Moreover, $F$ is also compact. Then the neighborhood $\Uc$ of $\rho_0$ in the conclusion of part (4) can be chosen in such a way that it does not depend on $x$ or $\xi$. 

Finally, part (5) of \cref{prop:BM} applies because $\wh{\rho_0}$ acts by isometries on $G/M_Q$; see \cref{subsec:metric-on-G/MQ}. 

We also remark that in the last claim of \cref{prop:BMMforG/Q}, since $\rho_0(\Gamma)\backslash X_{G/Q}$ is a closed manifold, the claim still holds for any lifted metric on $X_{G/Q}$ from a Riemannian metric on $\rho_0(\Gamma)\backslash X_{G/Q}$.
\end{proof}

For the rest of this section, $\til f:X\times G/Q \to X_{G/Q}$ will always denote the map obtained in \cref{prop:BMMforG/Q}.  In the next proposition, we will control the images of $\til{X}\times\set{\xi}$ under $\til{f}$. But first we make a definition.

\begin{definition}
For any $\xi, \eta \in G/Q$, we define 
\[ 
\Qc_{\xi,\eta}:=\wt{f}(X \times \set{\xi}) \cap \wcu(\eta).
\]
\end{definition}
Note that $\Qc_{\xi,\eta}$ may be an empty set for some choices of $\xi$ and $\eta$. 
\begin{remark}
Note that $\til{f}$ depends on $\rho: \Gamma \to \Homeo(G/P)$. Thus $\Qc_{\xi,\eta} \equiv \Qc_{\xi,\eta}(\rho)$. But we suppress  $\rho$ for the sake of brevity in this subsection. Further, if $\rho=\rho_0$, then \[\Qc_{\xi,\eta}(\rho_0)=\til f_{\rho_0}(X \times \xi) \cap \wcu(\eta)=\wcs(\xi) \cap \wcu(\eta).\] Thus, informally speaking, one should think of $\Qc_{\xi,\eta}\equiv \Qc_{\xi,\eta}(\rho)$ as `perturbed center' manifolds, where the perturbation comes from perturbing the action $\rho_0$ to $\rho$.
\end{remark}

\subsubsection{\bf $\Qc_{\xi,\eta}$ is contained in the neighborhood of a center manifold $\wcu(\eta) \cap \wcs(\zeta)$}
The main result of this subsection is \cref{prop:image-unique} which shows that each non-empty `perturbed center' manifold $\Qc_{\xi,\eta}$ is contained in a bounded neighborhood of a center manifold $\wcu(\eta) \cap \wcs(\zeta)$. Along the way, we will establish several properties of the `perturbed center manifolds' $\Qc_{\xi,\eta}$. 

In the following lemma, $d^{\op{Gr}}_{v}$ is the Grassmanian distance between linear subspaces of $T_v(G/M_Q)$, induced by the Riemannian metric on $G/M_Q$.
\begin{lemma}
\label{lem:Q_xi_eta}
\label{prop:quasi_flat_angle_control} 
For any $\eps >0$, there exists a neighborhood $\Uc = \Uc(\eps)$ of $\rho_0$ such that:  for any $\rho \in \Uc$,  if  $\xi,\eta \in G/Q$ and $x \in X$ are such that $\til f(x,\xi)\in \mathcal Q_{\xi,\eta}$, then 
\[ d^{\op{Gr}}_{\til f(x,\xi)}\left( T_{\til f(x,\xi)} \Qc_{\xi,\eta}, T_{\til f(x,\xi)} \left(\wcs(\zeta_0)\cap \wcu(\eta) \right) \right) < \eps\]
 where $\zeta_0=\pi_x(\til f(x,\xi))$. 
\end{lemma}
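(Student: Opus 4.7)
The plan is to reduce the statement to a direct application of part~(5) of \cref{prop:BMMforG/Q} combined with a general continuity property of transverse intersections in the Grassmannian. At the point $v := \til{f}(x,\xi) \in \Qc_{\xi,\eta}$, the definition of $\zeta_0 = \pi_x(v)$ forces $v \in \wcs(\zeta_0)$, so $v\in \wcs(\zeta_0)\cap \wcu(\eta)$. Assuming transversality, both sides of the claimed inequality can be rewritten as linear intersections: $T_v \Qc_{\xi,\eta} = T_v \til{f}(X\times\{\xi\}) \cap T_v \wcu(\eta)$ and $T_v\bigl(\wcs(\zeta_0) \cap \wcu(\eta)\bigr) = T_v \wcs(\zeta_0) \cap T_v \wcu(\eta)$. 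The claim then becomes a stability statement for these intersections when $T_v \til{f}(X\times\{\xi\})$ is perturbed toward $T_v \wcs(\zeta_0)$.

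The first ingredient I would need is uniform transversality of the foliations $\wcs$ and $\wcu$ at every common point. Since both foliations are $\wh{\rho_0}(G)$-invariant (equivalently, left-$G$-invariant on $G/M_Q$), the Riemannian metric on $G/M_Q$ from \cref{subsec:metric-on-G/MQ} is $G$-invariant, and $G$ acts transitively on $G/M_Q$, the principal angle between $T_v \wcs(\zeta)$ and $T_v \wcu(\eta)$ at any $v \in \wcs(\zeta) \cap \wcu(\eta)$ equals a single positive constant $\theta_0$, independent of the choice of $v, \zeta, \eta$. The second ingredient is part~(5) of \cref{prop:BMMforG/Q}: given any $\delta > 0$, there exists a neighborhood $\Uc$ of $\rho_0$ such that $d^{\op{Gr}}_v\bigl(T_v \til{f}(X\times\{\xi\}),\, T_v \wcs(\zeta_0)\bigr)<\delta$ uniformly in $(x,\xi)$ and $\rho \in \Uc$.

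With these two ingredients in hand, I would invoke a standard Grassmannian continuity lemma: if $W_1, W_2$ are subspaces of a Euclidean space meeting with principal angle at least $\theta_0$, and $W_1'$ is a subspace with $d^{\op{Gr}}(W_1, W_1')<\delta \ll \theta_0$, then $W_1'$ is transverse to $W_2$, $\dim(W_1'\cap W_2) = \dim(W_1\cap W_2)$, and $d^{\op{Gr}}(W_1\cap W_2,\, W_1'\cap W_2) \le C(\theta_0)\,\delta$ for a constant $C(\theta_0)$ depending only on $\theta_0$. Applying this with $W_1 = T_v\wcs(\zeta_0)$, $W_2 = T_v\wcu(\eta)$, and $W_1' = T_v\til{f}(X\times\{\xi\})$, and then shrinking $\Uc$ so that $C(\theta_0)\delta < \eps$, yields the conclusion. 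An auxiliary point, handled by the same transversality check, is that the perturbed intersection $\Qc_{\xi,\eta}$ is indeed a smooth submanifold near $v$ whose tangent space has the expected dimension, so that $T_v \Qc_{\xi,\eta}$ is literally the linear intersection $T_v \til{f}(X\times\{\xi\}) \cap T_v \wcu(\eta)$.

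The main obstacle I anticipate is making the uniform transversality constant $\theta_0$ precise. However, once one observes that the transitivity of $G$ on $G/M_Q$ together with $G$-invariance of both foliations and of the metric makes the angle genuinely constant across all intersection configurations, this step becomes essentially immediate; the remainder of the argument is a routine combination of \cref{prop:BMMforG/Q}(5) with the standard Grassmannian continuity lemma.
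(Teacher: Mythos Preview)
Your proposal is correct and follows essentially the same approach as the paper: reduce to linear intersections $T_v\til f(X\times\{\xi\})\cap T_v\wcu(\eta)$ versus $T_v\wcs(\zeta_0)\cap T_v\wcu(\eta)$, invoke part~(5) of \cref{prop:BMMforG/Q} to make the first factors $\delta$-close, and use uniform transversality of $\wcs$ and $\wcu$ to conclude the intersections are $\eps$-close. The paper is slightly terser---it asserts the angle with $\wcu(\eta)$ is at least $\tfrac{\pi}{2}-\eps'$ without spelling out why---whereas your $G$-transitivity argument for the constant angle $\theta_0$ is a cleaner justification of the same fact.
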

\begin{proof}
Fix any $\eps'>0$ and let $\Uc=\Uc(\eps')$ be the neighborhood of $\rho_0$ given by Proposition \ref{prop:BMMforG/Q} part (5).  Suppose $\Qc_{\xi,\eta} \neq \emptyset$, $x \in X$ and $\zeta_0=\pi_x(\til f(x,\xi))$. By Proposition \ref{prop:BMMforG/Q} part (5),  the tangent spaces $T_{\til f(x,\xi)}\til f(X \times\{\xi\})$ and  $T_{\til f(x,\xi)}\wcs(\zeta_0)$ are $\epsilon'$-close in the Grassmannian distance $d^{\op{Gr}}_{\til f(x,\xi)}$. On the other hand, for every $\til f(x,\xi)\in \mathcal Q_{\xi,\eta}$,
\[ T_{\til f(x,\xi)}\mathcal Q_{\xi,\eta}=T_{\til f(x,\xi)}\wt{f}(X \times\{\xi\})\cap T_{\til f(x,\xi)}\wcu(\eta).\]

Since $\til f(x,\xi)\in \Qc_{\xi,\eta}$, $\til f(x,\xi) \in \wcu(\eta)$. Thus $\til f(x,\xi) \in \wcs(\pi_x(\til f(x,\xi)) \cap \wcu(\eta)=\wcs(\zeta_0) \cap \wcu(\eta)$. Since $\wcs(\zeta_0)$ and $\til f(X\times \{\xi\})$ form angles at least $\frac \pi 2-\epsilon'$ with $\wcu(\eta)$, there exists $\epsilon=\epsilon(\epsilon')$ such that
\begin{align*}
&d^{\op{Gr}}_{\til f(x,\xi)}\left( T_{\til f(x,\xi)} \Qc_{\xi,\eta}, T_{\til f(x,\xi)} \left(\wcs(\zeta_0)\cap \wcu(\eta) \right) \right)\\ &= d^{\op{Gr}}_{\til f(x,\xi)} \left( T_{\til f(x,\xi)} \til f(X \times \xi)\cap T_{\til f(x,\xi)} \wcu(\eta), T_{\til f(x,\xi)} \wcs(\zeta_0)\cap T_{\til f(x,\xi)} \wcu(\eta)\right) <\eps.
\end{align*}
Furthermore $\epsilon$ tends to 0 when $\epsilon'$ tends to 0. We then obtain the lemma by choosing $\eps'$ sufficiently small.
\end{proof}

Next, we will show that each connected component of $\Qc_{\xi,\eta}$ is biLipschitz diffeomorphic to the parallel set $F(\gamma_{p \bdx})$.

\begin{proposition}\label{prop:geometric-orbit}
Let $L>1$.  Then there is a neighborhood $\mathcal{U}$ of $\rho _0$ such that: if $\rho \in \mathcal{U}$ and 
  $\xi, \eta \in G/Q$ are such that $\mathcal Q_{\xi,\eta} \neq \varnothing$,  then  for each connected component $\Qc$ of $\Qc_{\xi,\eta}$  there is a map $\alpha_{\xi,\eta}: F(\gamma_{p\bdx}) \to \wcu(\eta)$ which is an $L$-biLipschitz diffeomorphism onto $\Qc$, where the distance on $\mathcal Q$ is the restriction of the distance on $\wcu(\eta)$. 
\end{proposition}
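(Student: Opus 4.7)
The plan is to construct $\alpha_{\xi,\eta}$ as the inverse of a projection onto a fixed center leaf. Fix $q_0\in\Qc$, set $\zeta_0:=\pi_{p(q_0)}(q_0)$, and let $C_0:=\wcs(\zeta_0)\cap\wcu(\eta)$ be the center leaf through $q_0$; by the discussion in \cref{sec:parallel_sets} and \cref{sec:foliations_weyl_chamber_bundle}, $C_0$ is a smooth submanifold of $\wcu(\eta)$ isometric to $F(\gamma_{p\bdx})$, and I fix such an isometry $\iota\colon F(\gamma_{p\bdx})\to C_0$. The center-stable leaf $\wcu(\eta)$ carries two mutually orthogonal foliations---the foliation by center leaves $\{\wcs(\zeta)\cap\wcu(\eta)\}_\zeta$ and the strong unstable foliation---so projection along strong unstable leaves yields a smooth submersion $\pi^{su}\colon\wcu(\eta)\to C_0$. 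I will show that the restriction $\pi^{su}|_\Qc\colon\Qc\to C_0$ is a diffeomorphism, and then set $\alpha_{\xi,\eta}:=(\pi^{su}|_\Qc)^{-1}\circ\iota$.

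For the local estimate I will apply \cref{lem:Q_xi_eta}: after shrinking $\mc U$ around $\rho_0$, at every $q\in\Qc$ the tangent plane $T_q\Qc$ is arbitrarily close, in Grassmannian distance, to the tangent plane of the center leaf through $q$. Since the center distribution coincides with the orthogonal complement of $\ker d\pi^{su}$, the differential $d\pi^{su}|_{T_q\Qc}\colon T_q\Qc\to T_{\pi^{su}(q)}C_0$ is invertible and close to an isometry, so $\pi^{su}|_\Qc$ is a local diffeomorphism with local biLipschitz constant arbitrarily close to $1$. To promote this to a global diffeomorphism, I will use two facts. First, $\til f(\cdot,\xi)\colon X\to X_{G/Q}$ is a proper embedding (being a smooth section of the bundle $p$), so $\til f(X\times\{\xi\})$ is closed in $X_{G/Q}$, whence $\Qc$ is a closed subset of $\wcu(\eta)$ and therefore complete in the induced metric. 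Second, $C_0\cong F(\gamma_{p\bdx})$ is a Riemannian symmetric space, hence simply connected. A standard path-lifting argument---lifts through the local biLipschitz map $\pi^{su}|_\Qc$ of a finite-length curve in $C_0$ have length at most $L$ times that of the base curve, and so cannot escape the complete space $\Qc$---shows that $\pi^{su}|_\Qc$ is a covering of $C_0$, which simple connectivity of $C_0$ then promotes to a diffeomorphism on each connected component $\Qc$.

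Combining $\iota$ being an isometry, the local biLipschitz estimate for $(\pi^{su}|_\Qc)^{-1}$, and the fact that $\pi^{su}$ is $1$-Lipschitz on the ambient manifold (by orthogonality of the foliations), will yield the desired $L$-biLipschitz bound for $\alpha_{\xi,\eta}$ with respect to the restricted metric on $\Qc$. I expect the hardest step to be the passage from the infinitesimal estimate of \cref{lem:Q_xi_eta} to the global $L$-biLipschitz statement: this rests on the orthogonality of the center and strong unstable foliations in $\wcu(\eta)$, the properness of the section $\til f(\cdot,\xi)$, and simple connectivity of the model parallel set $F(\gamma_{p\bdx})$.
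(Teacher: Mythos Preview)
Your proposal is correct and follows essentially the same strategy as the paper: project $\Qc$ onto a center leaf in $\wcu(\eta)$ along strong unstable leaves, use \cref{lem:Q_xi_eta} to get the local $L$-biLipschitz estimate, upgrade this to a global diffeomorphism, and then invert. The one substantive difference is how you pass from the local to the global statement: you invoke completeness of $\Qc$, a uniform path-lifting/covering argument, and simple connectivity of $F(\gamma_{p\bdx})$, whereas the paper appeals to the Appendix result \cref{lem:pi_surj}, which works directly in the product foliation chart $\wcu(\eta)\cong\Rb^{\dim X}$. Both routes rely on the same ingredients (orthogonality of the center and strong unstable foliations, completeness of $\Qc$ as a closed subset of $\wcu(\eta)$, and the uniform angle bound from \cref{lem:Q_xi_eta}); your covering-space argument is a clean and standard alternative to the paper's hands-on lemma.
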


\begin{proof}
Let us fix the connected component $\Qc$ of $\Qc_{\xi,\eta}$.

Let $\xi =gQ\in G/Q$ and let $\gamma_{p\bdx'}$ be the geodesic passing asymptotic to $\bdx'=g\bdx$. We can identify $\wcu(\xi)$ with $F(\gamma_{p\bdx'})(gN_Q g^{-1})$ by first identifying $\wcu(\xi)$ with $X$, and then identifying $X$ with $F(\gamma_{p\bdx'})(gN_Q g^{-1})$. The overall identification can be thought of as the global product structure on a center unstable manifold in terms of center manifolds and strong unstable manifolds.

The center-unstable leaf $\wcu(\eta)$ is foliated by the family of symmetric subspaces $\{F(\gamma_{p{\bdx}}):p\in X\}$. Each  subspace in the family is identified with a center manifold of a point $\Cc_q(\eta)\in \wcu(\eta)\subset X_{G/Q}$. The projection along strong unstable leaves maps every center leaf isometrically to a fixed center leaf. 

Consider the map $retr:\wcu(\eta) \to F(\gamma_{p{\bdx}'})$ which is obtained by projection along strong unstable leaves. Then set $q_{\xi,\eta}:=retr|_{\Qc}$, the restriction of $retr$ to $\Qc \subset \wcu(\eta)$.

By \Cref{lem:Q_xi_eta}, the tangent spaces of $\mathcal Q$ are  $\epsilon$-close to $\wcs(\zeta) \cap \wcu(\eta)$ where $\zeta=\pi_x(\til{f}(x,\xi))$. Hence, the map $q_{\xi,\eta}$ is locally $L$-biLipschitz if $\eps$ is small enough. That is, for every $\til f(x,\xi)\in \Qc$, there is a neighborhood of $\til f(x,\xi)$ such that $q_{\xi,\eta}$ is $L$-biLipschitz. Then note that $q_{\xi,\eta}$ is globally $L$-Lipschitz. 

We now claim that $q_{\xi, \eta}$ is a diffeomorphism. This is an application of Lemma \ref{lem:pi_surj} from the Appendix. For applying the Lemma, we first need to fix a suitable foliation chart. Note that the center unstable manifold $\wcu(\eta)$ has a global product structure in terms of center manifolds and strong unstable manifolds. This provides two mutually orthogonal foliations on $\wcu(\eta)$ and $\wcu(\eta)$ is homeomorphic to $\Rb^{\dim(X)}$. Finally $\Qc$ is a complete connected  $C^1$ sub-manifold of $\wcu(\eta)$ with tangent spaces which are uniformly close to the center distribution (see \cref{lem:Q_xi_eta}). Thus, we can apply Lemma \ref{lem:pi_surj} and prove that $q_{\xi,\eta}$ is a diffeomorphism.

Then we define $\alpha_{\xi,\eta}: F(\gamma_{p\bdx}) \to \wcu(\eta)$ by setting $\alpha_{\xi,\eta}:=q_{\xi,\eta}^{-1}$. Thus $\alpha_{\xi,\eta}$ is a diffeomeorphism with image $\Qc$. As $\alpha_{\xi,\eta}=q_{\xi,\eta}^{-1}$ is locally $L$-Lipschitz, this implies that $\alpha_{\xi,\eta}$ is globally $L$-Lipschitz. Since $q_{\xi,\eta}=\alpha_{\xi,\eta}^{-1}$ is also globally $L$-Lipschitz, this implies that $\alpha_{\xi,\eta}$ is globally $L$-biLipschitz.
\end{proof}

Next, we show that each connected component $\Qc$ of $\Qc_{\xi,\eta}$ is contained in a uniformly bounded neighborhood of some `center' leaf $\wcu(\eta) \cap \wcs(\zeta)$. 

\begin{lemma}\label{lem:image-unique}
 Let $\rho$ be sufficiently close to $\rho_0$ and suppose $\xi,\eta \in G/Q$ are such that $\mathcal Q_{\xi,\eta}$ is non-empty. Then for each connected component $\Qc$ of $\mathcal Q_{\xi,\eta}$ there is a unique $\zeta\in G/Q$ such that $\Qc$ is contained in a uniform bounded  neighborhood of $\wcu(\eta)\cap \wcs(\zeta)$.
\end{lemma}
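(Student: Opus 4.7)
My plan is to combine the biLipschitz parametrization of $\Qc$ given by Proposition \ref{prop:geometric-orbit} with the quasiflat rigidity results from the previous section to force $\Qc$ into a bounded neighborhood of a unique center leaf. Fix a connected component $\Qc$ of $\Qc_{\xi,\eta}$. Identify $\wcu(\eta)$ with the symmetric space $X$ (equipped with a $G$-invariant metric) in such a way that the center leaves $\wcs(\zeta)\cap\wcu(\eta)$ correspond to parallel sets $F(\gamma_{q\bdx_\zeta})$ in $X$, and unipotent orbits correspond to the strong unstable foliation; this is the global product structure described in the proof of Proposition~\ref{prop:geometric-orbit}. Under this identification, $\alpha_{\xi,\eta}:F(\gamma_{p\bdx})\to \wcu(\eta)$ provided by Proposition~\ref{prop:geometric-orbit} becomes an $L$-biLipschitz embedding $q^*:F(\gamma_{p\bdx})\to X$ with image $\Qc$, where $L$ can be taken arbitrarily close to $1$ by choosing $\rho$ sufficiently close to $\rho_0$.

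Next I would verify the hypotheses of Proposition \ref{prop:subflat_existence}. By the decomposition $F(\gamma_{p\bdx})\cong\mathbb{R}^d\times CS(\gamma_{p\bdx})$ recalled in Section~\ref{sec:parallel_sets}, every flat of $X$ contained in $F(\gamma_{p\bdx})$ is a genuine (maximal) flat of $X$; its image under $q^*$ is an $(L,0)$-quasiflat in $X$. Taking $L$ close enough to $1$, the moreover part of Lemma~\ref{lem:quasiflat} furnishes a uniform constant $R\ge 0$ such that every such image is contained in an $R$-neighborhood of a single flat $F'\subset X$; a standard argument (using that a biLipschitz image of a flat is thick in all directions) then upgrades this to the uniform Hausdorff bound required by Proposition~\ref{prop:subflat_existence}. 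Applying that proposition yields a singular subflat $D\subset X$ of dimension $\dim F^{*}$ such that $\Qc=q^{*}(F(\gamma_{p\bdx}))$ is contained in a uniformly bounded neighborhood of the parallel set $F(D)$.

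Now I would interpret $F(D)$ in terms of center leaves. Because $D$ has the same dimension as $F^{*}$ (the minimal singular subflat containing $\gamma_{p\bdx}$), the parallel set $F(D)$ inside $X\cong\wcu(\eta)$ is, under the global product structure, exactly a leaf of the center foliation of $\wcu(\eta)$. In other words, there is a unique $\zeta\in G/Q$ for which $F(D)$ corresponds to $\wcu(\eta)\cap\wcs(\zeta)$, and by construction $\Qc$ lies in a uniformly bounded neighborhood of $\wcu(\eta)\cap\wcs(\zeta)$. The uniformity of the bound follows because the constants in Lemma~\ref{lem:quasiflat} and Proposition~\ref{prop:subflat_existence} depend only on $L$ and on $X$, both of which are fixed once $\rho$ is in the chosen neighborhood of $\rho_0$.

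For uniqueness, suppose $\zeta'\neq \zeta$ were another choice. Then $\Qc$ would lie in a bounded neighborhood of both $\wcu(\eta)\cap\wcs(\zeta)$ and $\wcu(\eta)\cap\wcs(\zeta')$, forcing these two center leaves to have finite Hausdorff distance. However, two distinct center leaves of $\wcu(\eta)$ correspond to parallel sets in $X$ that project to distinct points in the leaf space of the parallel-set foliation (identified with $N_{\bdx'}\cdot o$ in Section~\ref{sec:parallel_sets}), and hence diverge linearly. This contradiction establishes uniqueness of $\zeta$. The only delicate point in the argument is ensuring that $L$ can be made close enough to $1$ to invoke the single-flat conclusion of Lemma~\ref{lem:quasiflat}; this is precisely the content of Proposition~\ref{prop:geometric-orbit}, applied for $L$ smaller than the threshold dictated by the root system of $G$.
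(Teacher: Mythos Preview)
Your existence argument is essentially the paper's: biLipschitz parametrization from Proposition~\ref{prop:geometric-orbit}, quasiflat control from Lemma~\ref{lem:quasiflat} with $L$ near $1$, then Proposition~\ref{prop:subflat_existence} to land in a bounded neighborhood of a parallel set, which one identifies with a center leaf $\wcu(\eta)\cap\wcs(\zeta)$.

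The uniqueness step, however, has a gap. From ``$\Qc\subset\Nbhd_C(\wcu(\eta)\cap\wcs(\zeta))$ and $\Qc\subset\Nbhd_C(\wcu(\eta)\cap\wcs(\zeta'))$'' you cannot directly conclude that the two center leaves are at finite Hausdorff distance: containment in a neighborhood is one-sided, and you have not argued that each center leaf lies in a bounded neighborhood of $\Qc$. Your linear-divergence observation about distinct parallel sets is correct, but it only yields a contradiction once you know the \emph{two-sided} Hausdorff bound. The paper avoids this by a cleaner route: take a single maximal flat $F\subset F(\gamma_{p\bdx})$; its image under $q^*$ is at finite Hausdorff distance from an honest flat $F'\subset\wcu(\eta)$ (this is Lemma~\ref{lem:quasiflat}). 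Then $F'$ lies in a bounded neighborhood of each candidate center leaf, and since a maximal flat in a bounded neighborhood of a parallel set must actually be contained in it, $F'$ lies in both center leaves. Two center leaves in $\wcu(\eta)$ that share a maximal flat coincide, giving $\zeta=\zeta'$. You can patch your argument either by adopting this flat trick or by first proving that $\Qc$ is coarsely onto each center leaf (e.g.\ via the retraction map used in Proposition~\ref{prop:geometric-orbit}), but the former is simpler.
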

\begin{proof}
By \Cref{prop:geometric-orbit}, each connected component $\Qc$ of $\mathcal Q_{\xi,\eta}$ is the bi-Lipschitz image of $F(\gamma_{p\bdx})$ in $\wcu(\eta)$, which is isometric to the symmetric space $X$. By \cref{prop:subflat_existence}, there is a singular subflat $D$ such that $\mathcal Q$ is in a bounded neighborhood of the parallel set of $D$. As a subset of $\wcu(\eta)$, the parallel set of $D$ is the center manifold of some $\zeta\in G/Q$. 

For the uniqueness of $\zeta$, suppose that $\zeta'\in G/Q$ such that $\mathcal Q$ is contained in a bounded neighborhood of $\wcs(\zeta')\cap \wcu(\eta)$. Let $F$ be a maximal flat in $F(\gamma_{pz})$ that maps to $q_{\xi,\eta}^{-1}(F)$ and has finite Hausdorff distance to a flat $F'$ in $\wcu(\eta)$. On the other hand, if there is a flat contained in a bounded neighborhood of a center manifold then the flat is itself contained in the center manifold. Thus, as the flat $F'$ is contained in a bounded neighborhood of two centers $\wcs(\zeta)\cap \wcu(\eta)$ and $\wcs(\zeta')\cap \wcu(\eta)$, the two centers has non-empty intersection, hence coincide. Therefore, $\zeta=\zeta'$.
\end{proof}

Note that we do not know whether $\Qc_{\xi,\eta}$ is connected. So a priori, the previous lemma does not imply that $\Qc_{\xi,\eta}$ is contained in a uniformly bounded neighborhood of the `center' leaf $\wcu(\eta) \cap \wcs(\zeta)$.  We prove this in \cref{prop:image-unique}. But first, we need the following lemma which is a consequence of \cref{prop:BMMforG/Q}. 

\begin{lemma}\label{lem:locallyclose}
For every $R, D>0$, there exists a neighborhood $\mathcal{U}'$ of $\rho_0$ such that: suppose $\rho \in \mathcal{U}'$
and  $\xi,\eta \in G/Q$ are such that $\mathcal Q_{\xi,\eta}$ is non-empty. Then  if $\til f (x,\xi)\in \mathcal Q_{\xi,\eta}$ and $\zeta:=\pi_x(\til f(x,\xi))$, we get  
\begin{align*}
    \mathcal Q_{\xi,\eta}\cap B_R(\til f(x,\xi)) \subset \Nbhd_{D}\left( \wcs(\zeta)\cap \wcu(\eta)\cap B_R(\til f(x,\xi)) \right).
\end{align*}
\end{lemma}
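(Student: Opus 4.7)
My plan is to combine the fiberwise control given by Proposition~\ref{prop:BMMforG/Q}(4) with the uniform transversality of the $\wcs$ and $\wcu$ foliations on $X_{G/Q}$. Fix $R,D>0$ and set $z_0:=\tilde f(x,\xi)$. Since $z_0\in \mathcal Q_{\xi,\eta}\subset \wcu(\eta)$ and $\zeta=\pi_x(z_0)$ forces $z_0\in \wcs(\zeta)$, the center leaf $\wcs(\zeta)\cap \wcu(\eta)$ is non-empty and passes through $z_0$. Since $p:X_{G/Q}\to X$ is a Riemannian submersion, and in particular $1$-Lipschitz, every $z\in \mathcal Q_{\xi,\eta}\cap B_R(z_0)$ has basepoint $y=p(z)\in B_R(x)$ and equals $\tilde f(y,\xi)$.

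Given $\epsilon>0$ (to be chosen), Proposition~\ref{prop:BMMforG/Q}(4) provides a neighborhood $\Uc'$ of $\rho_0$ so that for all $\rho\in \Uc'$ and $y\in B_R(x)$,
\[
d_{X_{G/Q}}(\tilde f(y,\xi),\pi_y^{-1}(\zeta))\le d_{F_y}(\tilde f(y,\xi),\pi_y^{-1}(\zeta))<\epsilon,
\]
where the first inequality holds because the fiber is a subset of $X_{G/Q}$. By the construction of $\Phi$, the point $\pi_y^{-1}(\zeta)$ lies on $\wcs(\zeta)$; hence $z=\tilde f(y,\xi)\in \wcu(\eta)$ is within $\epsilon$ of $\wcs(\zeta)$. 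I then invoke the uniform transversality of the smooth, $G$-equivariant foliations $\wcs$ and $\wcu$: using the local product structure of transverse smooth foliations together with cocompactness of the $\Gamma$-action on $X_{G/Q}$, there exist constants $\epsilon_0,C>0$ such that whenever a point $z\in \wcu(\eta')$ satisfies $d(z,\wcs(\zeta'))<\epsilon_0$ and the intersection $\wcs(\zeta')\cap \wcu(\eta')$ is non-empty near $z$, there exists $z'\in \wcs(\zeta')\cap \wcu(\eta')$ with $d(z,z')<C\cdot d(z,\wcs(\zeta'))$. Applied to our $z$, and choosing $\epsilon<\min(\epsilon_0,D/C)$ at the outset, this yields $z'\in \wcs(\zeta)\cap \wcu(\eta)$ with $d(z,z')<D$. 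If $z'\notin B_R(z_0)$, I would slide $z'$ along the center leaf toward $z_0$ (the leaf is connected and contains $z_0$) to land inside $B_R(z_0)$, absorbing the adjustment by taking $\epsilon$ slightly smaller.

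The main obstacle is the uniform transversality step. The smoothness and transversality of $\wcs$ and $\wcu$ follow from their algebraic description as $Q$- and $Q^*$-orbits on $G/M_Q$, and their $G$-invariance combined with the compactness of $\Gamma\backslash X_{G/Q}$ yields uniform constants $\epsilon_0,C$ that are independent of the particular pair $(\zeta',\eta')$. The final $B_R$ adjustment is a minor technical point handled by the connectedness of the center leaf through $z_0$ and the fact that the ambient distance bounds the intrinsic distance on the center leaf.
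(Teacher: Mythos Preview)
Your proof is correct and follows the same line as the paper: both start from Proposition~\ref{prop:BMMforG/Q}(4) to get fiberwise $\epsilon$-closeness of $\tilde f(y,\xi)$ to $\wcs(\zeta)\cap F_y$ for $y\in B_R(x)$, and then conclude closeness to the center leaf $\wcs(\zeta)\cap\wcu(\eta)$.

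The only difference is one of explicitness. The paper's proof passes in a single line, with the \emph{same} $\epsilon$, from ``$z\in\wcu(\eta)$ is fiberwise $\epsilon$-close to $\wcs(\zeta)$'' to ``$z\in\Nbhd_\epsilon(\wcu(\eta)\cap\wcs(\zeta))$'', and then sets $\epsilon=D$. That step tacitly uses precisely the uniform transversality you spell out: both $\wcs$ and $\wcu$ are smooth $G$-equivariant foliations (each is the fiber foliation of a smooth submersion $G/M_Q\to G/Q$ for $Q$ or its opposite), so $G$-homogeneity gives uniform local product charts and a uniform Lipschitz constant $C$ for the passage from ``close to $\wcs(\zeta)$'' to ``close to $\wcs(\zeta)\cap\wcu(\eta)$''. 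Your argument makes this explicit at the cost of the constant $C$, which you then absorb in the choice of $\epsilon$. The final $B_R$-adjustment is handled informally in both proofs and is inessential for the application in Proposition~\ref{prop:image-unique}.
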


\begin{proof}
By \cref{prop:BMMforG/Q} part (4), for every $\epsilon, R>0$, there exists a neighborhood $\Uc$ of $\rho_0$ such that for every $\rho\in \Uc$, 
\[ \sup_{y\in B_R(x)} d_{F_y}(\til f(y,\xi), \wcs(\zeta)\cap F_y)<\epsilon.\]
Note that $B_R(\til f(x,\xi)) \cap \til f(X \times \xi) \subset \til f(B_R(x),\xi)$. It follows that 
\[  \mathcal Q_{\xi,\eta}\cap B_R(\til f(x,\xi)) \subset \Qc_{\xi,\eta} \cap \til f(B_R(x),\xi) \subset \Nbhd_\epsilon(\wcu(\eta) \cap \wcs(\zeta)).\]
Thus,
\[
\mathcal Q_{\xi,\eta}\cap B_R(\til f(x,\xi))  \subset \Nbhd_\epsilon(\wcu(\eta) \cap \wcs(\zeta)\cap B_R(\til f(x,\xi)).
\]
Now given any $R,D>0$, the lemma follows immediately by choosing $\eps=D$. 
\end{proof}

In the next proposition, the key idea is that any two components of $\mc{Q}_{\xi,\eta}$ ``fellow-travel'' and each lies in a uniform neighborhood of some center leaf which must therefore be the same.

\begin{proposition}\label{prop:image-unique}
        Let $\rho$ be sufficiently close to $\rho_0$ and suppose $\xi,\eta \in G/Q$ be such that $\mathcal Q_{\xi,\eta}$ is non-empty. Then there is a unique $\zeta\in G/Q$ such that $\mathcal Q_{\xi,\eta}$ is contained in a uniformly bounded neighborhood of $\wcu(\eta)\cap \wcs(\zeta)$.
\end{proposition}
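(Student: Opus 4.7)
The plan is to establish the proposition in two stages: first, a uniformity statement that the Hausdorff distance bound in Lemma \ref{lem:image-unique} can be chosen independent of the component, and second, the uniqueness of $\zeta$ across all connected components.

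For uniformity, Prop \ref{prop:geometric-orbit} says that for $\rho$ sufficiently close to $\rho_0$, each connected component $\Qc_j$ of $\Qc_{\xi,\eta}$ is an $L$-biLipschitz image of $F(\gamma_{p\bdx})$ with $L$ arbitrarily close to $1$. Taking $L$ close enough to trigger the ``moreover'' part of \cref{lem:quasiflat} guarantees every flat in $F(\gamma_{p\bdx})$ maps within a uniform Hausdorff distance $C=C(L,X)$ of a single flat in $\wcu(\eta)$. Feeding this into \cref{prop:subflat_existence} and the proof of Lemma \ref{lem:image-unique} produces, for each component, a $\zeta_j \in G/Q$ and a uniform bound $R=R(L,C,X)$ (independent of $j$, $\xi$, $\eta$) so that $\Qc_j \subset \Nbhd_R(\wcs(\zeta_j) \cap \wcu(\eta))$.

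For uniqueness, my first step is to show that for any $y = \til f(x,\xi) \in \Qc_j$, the fiber projection $\alpha_y := \pi_x(y)$ satisfies $\alpha_y = \zeta_j$. By definition $y \in \wcs(\alpha_y) \cap \wcu(\eta)$. Applying \cref{lem:locallyclose} with large radius $R'$ and small error $\epsilon$ (shrinking the neighborhood of $\rho_0$ accordingly), one has $\Qc_j \cap B_{R'}(y) \subset \Nbhd_\epsilon\!\left(\wcs(\alpha_y) \cap \wcu(\eta) \cap B_{R'}(y)\right)$. Combined with the global bound from the previous paragraph, the two parallel sets $\wcs(\alpha_y)\cap \wcu(\eta)$ and $\wcs(\zeta_j)\cap \wcu(\eta)$ are within Hausdorff distance $R+\epsilon$ on $B_{R'}(y)$. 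Since $\Qc_j$ is an unbounded biLipschitz copy of $F(\gamma_{p\bdx})$, $R'$ may be taken arbitrarily large. Since distinct parameters in $G/Q$ produce parallel sets in $\wcu(\eta)$ that diverge at infinity, this forces $\alpha_y = \zeta_j$.

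The hard part will be passing from within-component uniqueness to across-component uniqueness: even knowing that $\pi_x(\til f(x,\xi))=\zeta_j$ on each $\Qc_j$, the values $\zeta_j$ for distinct components could a priori differ. My plan to close this gap is to combine \cref{prop:BMMforG/Q}(3), which pins $\pi_x(\til f(x,\xi))$ to any prescribed $\epsilon$-neighborhood of $\xi$ uniformly over $x$ in a compact fundamental domain once $\rho$ is close enough to $\rho_0$, with the $\Gamma$-equivariance identity $\til f(\gamma x, \rho(\gamma)\xi) = \wh\rho_0(\gamma)\til f(x,\xi)$ and cocompactness of $\Gamma$ on $X$. For each component $\Qc_j$ one can locate a representative point $y_j = \til f(x_j,\xi)$ with $x_j$ in a fixed fundamental domain (using the $\wh\rho_0$-equivariance of the components), and the previous paragraph then identifies $\zeta_j = \pi_{x_j}(y_j)$, forcing all $\zeta_j$ to lie in a common $\epsilon$-neighborhood of $\xi$ that shrinks as $\rho \to \rho_0$. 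Finally, using that each center leaf $\wcs(\zeta_j)\cap \wcu(\eta)$ lies in the common uniform $R$-neighborhood of $\Qc_{\xi,\eta}$, and that $\wcs$-leaves corresponding to distinct $\zeta$'s have parallel sets in $\wcu(\eta)$ that separate at infinity, one concludes $\zeta_1 = \zeta_2 = \cdots$ for $\rho$ sufficiently close to $\rho_0$.
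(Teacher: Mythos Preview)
Your proposal has two genuine gaps.

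\textbf{First gap (step 2).} You argue that $\alpha_y=\pi_x(\til f(x,\xi))$ equals $\zeta_j$ exactly, by invoking \cref{lem:locallyclose} with $R'$ arbitrarily large. But the quantifier order in that lemma is: \emph{given} $R,D$, there exists a neighborhood $\Uc'$ of $\rho_0$. Once $\rho$ is fixed (however close to $\rho_0$), you only get a single pair $(R',\epsilon)$, not all $R'$. So you cannot let $R'\to\infty$ for a fixed $\rho$, and the exact equality $\alpha_y=\zeta_j$ is unjustified. In fact $\alpha_y$ need not be constant along $\Qc_j$; all you know is that the parallel set through $y$ labeled by $\alpha_y$ passes within $R$ of the one labeled by $\zeta_j$ at that point.

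\textbf{Second gap (step 3).} The equivariance identity is $\til f(\gamma x,\rho(\gamma)\xi)=\wh\rho_0(\gamma)\til f(x,\xi)$: acting by $\gamma$ moves the second coordinate from $\xi$ to $\rho(\gamma)\xi$, so the components of $\Qc_{\xi,\eta}$ are \emph{not} $\Gamma$-invariant. If you pick $y_j=\til f(x_j,\xi)\in\Qc_j$ and translate $x_j$ back to a fundamental domain by $\gamma_j$, you land in $\Qc_{\rho(\gamma_j)^{-1}\xi,\,\rho_0(\gamma_j)^{-1}\eta}$, not in $\Qc_{\xi,\eta}$. Applying \cref{prop:BMMforG/Q}(3) there tells you $\pi_{x_0}(\til f(x_0,\rho(\gamma_j)^{-1}\xi))$ is close to $\rho(\gamma_j)^{-1}\xi$, hence $\pi_{x_j}(y_j)$ is close to $\rho_0(\gamma_j)\rho(\gamma_j)^{-1}\xi$, which depends on $\gamma_j$. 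Different components may require unboundedly long $\gamma_j$, so there is no common $\epsilon$-ball around $\xi$ containing all the $\zeta_j$'s.

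The paper's argument avoids both issues by exploiting a single application of \cref{lem:locallyclose}: it fixes one pair $(R,D)$ (with $R$ large relative to the uniform constant $D_0$ from \cref{lem:image-unique}), then, assuming $\zeta_1\neq\zeta_2$, chooses a point $y\in\wcu(\eta)$ where the two center leaves $\wcs(\zeta_1)\cap\wcu(\eta)$ and $\wcs(\zeta_2)\cap\wcu(\eta)$ are still within $D_0/2$ of each other but have already separated enough on $B_R(y)$ that no single center leaf can $D$-capture both biLipschitz flats $\Qc_1\cap B_R(y)$ and $\Qc_2\cap B_R(y)$. Since \cref{lem:locallyclose} forces \emph{all} of $\Qc_{\xi,\eta}\cap B_R(y)$ into a $D$-neighborhood of one center leaf, this is a contradiction. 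The key point you missed is that \cref{lem:locallyclose} already treats $\Qc_{\xi,\eta}$ as a whole, not component by component.
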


\begin{proof}
    Let $\mathcal Q_1$ and $\mathcal Q_2$ be two connected components of $\mathcal Q_{\xi,\eta}$. By \cref{lem:locallyclose}, there exists $D_0$ such that each of $\mathcal Q_i$ is contained in a $D_0$-neighborhood of a center leaf. We note that $D_0$ depends only on the neighborhood of $\rho_0$ and the symmetric space $X$. We would like to prove the two center leaves coincide.

    We let $\zeta_1,\zeta_2\in G/Q$ such that $\mathcal Q_i$ is contained in $D_0$-neighborhood of $\wcs(\zeta_i)\cap \wcu(\eta)$, for $i=1,2$. Suppose for that $\zeta_1\neq \zeta_2$. We let $y\in \wcu(\eta)$ such that $y$ has distance at most $\frac{D_0}{2}$ to both $\wcs(\zeta_1)\cap \wcu(\eta)$ and $\wcs(\zeta_2)\cap \wcu(\eta)$ but $\Nbhd_{10D_0}(B_R(y)\cap \wcs(\zeta_{i})\cap \wcu(\eta))$ does not contain $B_R(y)\cap \wcs(\zeta_{3-i})\cap \wcu(\eta)$, for $i=1,2$, and where the constant $R$ is from \cref{lem:locallyclose}. Furthermore, any intersection of a $L$-biLipschitz flat in $\mathcal Q_i$ with $B_R(y)$ is not contained in a $8D_0$-neighborhood of $B_R(y)\cap \wcs(\zeta_{3-i})\cap \wcu(\eta)$.
    
    We assume that $\rho$ is sufficiently close to $\rho_0$ so that $D<D_0$ and $R>20D_0$. By \cref{lem:locallyclose}, $\mathcal Q_{\xi,\eta}\cap B_{R}(y)$ is contained in a $D$-neighborhood of a center leaf. But this contradicts the last paragraph.
\end{proof}

\subsubsection{\bf The $\zeta$ in \cref{prop:image-unique} is independent of $\eta$}

By \cref{prop:image-unique}, each $\Qc_{\xi,\eta}$ is contained in the bounded neighborhood of a center manifold $\wcu(\eta) \cap \wcs(\zeta)$. In this subsection, we will show that given $\xi$, this $\zeta$ is independent of $\eta$ (provided $\Qc_{\xi,\eta}$ is non-empty). This is the content of \cref{lem:well-defined}, the main result of this subsection. Then, an application of this lemma is \cref{prop:semiconj} where we construct a $(\rho,\rho_0)$ equivariant map on $G/Q$.

But first, we need some preparatory lemmas. Recall the notion of opposite points in $G/Q$ from \cref{sec:opp_weyl_faces}.

\begin{lemma}\label{lem:opposites}
If $\alpha, \beta$ in $G/Q$, then the set of elements opposite to both $\alpha$ and $\beta$ is open dense in $G/Q$.
\end{lemma}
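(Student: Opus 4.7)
The plan is to first establish that for each fixed $\alpha \in G/Q$, the set $O(\alpha) := \{\gamma \in G/Q : \gamma \text{ is opposite to } \alpha\}$ is itself open and dense in $G/Q$, and then deduce the statement for $O(\alpha) \cap O(\beta)$ by Baire category.

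To see that $O(\alpha)$ is open and dense, I would exploit transitivity together with the Bruhat-type decomposition of $G/Q$. Since $G$ acts transitively on $G/Q$, we may reduce to $\alpha = eQ$, whose stabilizer is $Q$. Since $G$ acts transitively on ordered pairs of opposite Weyl chamber $Q$-faces at infinity (cf.~\cref{sec:opp_weyl_faces}), the set $O(\alpha)$ is a single $Q$-orbit in $G/Q$: if $\gamma, \gamma' \in O(\alpha)$, pick $g \in G$ with $g \alpha = \alpha$ and $g\gamma = \gamma'$; then $g \in Q$. The $Q$-orbits on $G/Q$ form a finite stratification by smooth submanifolds (the generalized Bruhat cells), and $O(\alpha)$ is the unique top-dimensional stratum. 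Explicitly, letting $N$ denote the unipotent radical of the parabolic opposite to $Q$, the generalized Iwasawa decomposition $G = N \cdot Q$ (valid on an open dense subset of $G$, namely the big Bruhat cell) shows that the $N$-orbit of any point opposite to $\alpha$ is open dense in $G/Q$, and this orbit is precisely $O(\alpha)$.

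For the intersection: openness of $O(\alpha) \cap O(\beta)$ is immediate as a finite intersection of open sets. For density, I would invoke the Baire category theorem applied to the compact smooth manifold $G/Q$, which is in particular a Baire space, so the intersection of two open dense subsets is dense. This concludes the proof. No substantive obstacle is anticipated here, as the only nontrivial ingredient is the classical fact that the set of opposites to a given parabolic point is the big Bruhat cell, and hence open dense in $G/Q$; everything else is formal.
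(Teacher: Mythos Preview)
Your proposal is correct and follows essentially the same approach as the paper: both argue that the set of points opposite to a given $\alpha$ is the open dense big Bruhat cell, and then intersect. The only difference is cosmetic---you invoke Baire category for the intersection, whereas the intersection of two open dense sets is dense in any topological space by an elementary argument, so Baire is not needed.
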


\begin{proof}
The set of points opposite to $\alpha$ corresponds to an open dense Bruhat cell. The same holds for $\beta$.  Hence  the desired set is an intersection of open dense sets.
\end{proof}

\begin{lemma}\label{lem:source_sink}
Suppose $\zeta_1 \neq \zeta_2 \in G/Q$ and $\eta_1, \eta_2 \in G/Q$ are both opposite to $\zeta_1$. Then there exists a sequence $(\gamma_n)$ in $\Gamma$ such that 
\[\lim_{n\to \infty}\rho_0(\gamma_n) \zeta_1 \neq \lim_{n\to \infty}\rho_0(\gamma_n) \zeta_2\]
while 
\[\lim_{n\to \infty} \rho_0(\gamma_n) \eta_1= \lim_{n\to \infty}\rho_0(\gamma_n) \eta_2.\]
\end{lemma}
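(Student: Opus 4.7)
The plan is to construct the required sequence in the form $\gamma_n := g_n^{N_n}$, where $g_n\in\Gamma$ are regular hyperbolic elements whose repelling fixed points $g_n^-\in G/Q$ converge to $\zeta_1$, and where $N_n\to\infty$ is tuned so that the orbits of $\zeta_2,\eta_1,\eta_2$ have enough time to escape to a neighborhood of the attractor $g_n^+$, while the orbit of $\zeta_1$ remains trapped near $g_n^-$.

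First I would produce the elements $g_n$. Fix a regular hyperbolic $g\in\Gamma$ (available since the uniform lattice $\Gamma$ is Zariski dense in $G$) and let $g^{\pm}\in G/Q$ be its attracting and repelling fixed points. A standard minimality argument for the $\Gamma$-action on the space $(G/Q)^{(2)}$ of opposite pairs yields a sequence $\delta_n\in\Gamma$ such that $(\delta_n\cdot g^-,\delta_n\cdot g^+)\to(\zeta_1,\alpha)$, where $\alpha$ is any preselected point that is opposite to $\zeta_1$ and also opposite to $\zeta_2$ (such $\alpha$ exists because both conditions cut out open dense subsets of $G/Q$, in particular $\alpha\neq\zeta_1$). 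Moreover, density in $(G/Q)^{(2)}$ of the open set $\{(x,y): x \text{ opposite to } \zeta_2\}$ allows one to impose the additional constraint that $\delta_n\cdot g^-$ be opposite to $\zeta_2$ for every $n$. Setting $g_n:=\delta_n g\delta_n^{-1}$, we obtain regular hyperbolic elements with $g_n^{\pm}=\delta_n\cdot g^{\pm}$, so $g_n^-\to\zeta_1$, $g_n^+\to\alpha$, and $g_n^-$ is opposite to $\zeta_2$ for every $n$ and to $\eta_1,\eta_2$ for all large $n$ (using \Cref{lem:opposites} and the openness of opposition).

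Next I would analyze the dynamics. Because $\zeta_2,\eta_1,\eta_2$ lie in the open Bruhat cell of $g_n^-$ for all large $n$, the classical North--South dynamics of a regular hyperbolic element on $G/Q$ gives $\rho_0(g_n^k)\zeta_2,\ \rho_0(g_n^k)\eta_i\to g_n^+$ as $k\to\infty$ for each fixed large $n$. Since all $g_n$ are conjugate to $g$, their Jordan projection, and in particular the local expansion rate $\lambda>1$ at $g_n^-$, is independent of $n$. Write $\epsilon_n:=d_{G/Q}(\zeta_1,g_n^-)\to 0$. A linearization of $g_n$ near $g_n^-$ shows that $\rho_0(g_n^k)\zeta_1$ stays within $O(\lambda^{k}\epsilon_n)$ of $g_n^-$, while $\zeta_2,\eta_i$, lying at bounded distance from $g_n^-$ uniformly in $n$, reach within $O(\lambda^{-k})$ of $g_n^+$. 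Choosing $N_n:=\lfloor\tfrac{1}{2\log\lambda}\log(1/\epsilon_n)\rfloor$ makes both $\lambda^{N_n}\epsilon_n\to 0$ and $\lambda^{-N_n}\to 0$, so with $\gamma_n:=g_n^{N_n}$ one gets
\[
\rho_0(\gamma_n)\zeta_1\to\zeta_1,\qquad \rho_0(\gamma_n)\zeta_2\to\alpha,\qquad \rho_0(\gamma_n)\eta_i\to\alpha\ (i=1,2),
\]
which completes the proof since $\alpha\neq\zeta_1$.

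The main technical obstacle is carrying out the linearization estimate uniformly in $n$. The sequence $\delta_n$ need not remain bounded in $G$, so one cannot directly transfer the local estimates for $g$ near $g^{\pm}$ to $g_n$ near $g_n^{\pm}$ via a fixed chart. Instead one should either rescale the estimates to a shrinking $\epsilon_n$-chart around $g_n^-$ (using $\Gamma$-equivariance of the Bruhat decomposition relative to $(g_n^-, g_n^+)$) or work in the moving frames provided by the $\delta_n$. Either approach gives uniform constants because the Jordan projection of $g_n$ is constant in $n$, but spelling this out carefully is the most delicate piece of the argument.
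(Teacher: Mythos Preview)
Your approach has a genuine gap: the $\Gamma$-action on the space $(G/Q)^{(2)}$ of opposite pairs is \emph{not} minimal, so you cannot choose $\delta_n$ with $(\delta_n g^-,\delta_n g^+)\to(\zeta_1,\alpha)$ for a preselected $\alpha$. Already in rank one this fails: if $g\in\Gamma$ is hyperbolic then $(g^-,g^+)$ is fixed by $g$, so its $\Gamma$-orbit in $(G/P)^{(2)}$ corresponds to a periodic orbit of the geodesic flow on $\Gamma\backslash G$ and is therefore never dense. Without control of $\lim_n g_n^+$ you cannot guarantee $\alpha\neq\zeta_1$, and that inequality is exactly what separates the two limits in your conclusion. (Minimality on $G/Q$ alone gives $g_n^-\to\zeta_1$, but then $g_n^+$ could a priori drift to $\zeta_1$ as well; replacing conjugates of a fixed $g$ by arbitrary regular elements of $\Gamma$ would restore density of pairs but destroys your ``constant Jordan projection'' assumption and hence your uniform estimates.)

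The paper's argument sidesteps both this gap and the ``main technical obstacle'' you identify. It first proves the statement with $G$ in place of $\Gamma$: since $\eta_1$ is opposite to $\zeta_1$, there is a \emph{single} element $g\in G$ whose repeller on $G/Q$ is $\zeta_1$ and whose attractor is $\eta_1$ (take $g$ translating along the minimal singular subflat whose boundary at infinity contains both). Then $g^n$ fixes $\zeta_1$ and $\eta_1$; since $\eta_2$ is opposite to the repeller it is drawn to $\eta_1$; and $\zeta_2\neq\zeta_1$ cannot converge to $\zeta_1$ because only $\zeta_1$ does. Passing to $\Gamma$ is then a one-line cocompactness step: write $g^n=c_n\gamma_n$ with $c_n$ in a compact fundamental domain and extract a convergent subsequence $c_n\to c$. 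Because all the dynamics come from iterates of one element, no uniform-in-$n$ linearization is needed at all.
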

\begin{proof}

We will first prove this statement for $G$ in place of $\Gamma$.

We recall that every element $\eta$ of $G/Q$ corresponds to an equivalence class of asymptotic based Weyl chambers $Q$-faces. It follows that $\eta$ can be identified with the geodesic boundary of a based Weyl chamber $Q$-face in the equivalence class, and thus can be treated as a closed subset in the geodesic boundary at infinity of $X$.

 We let $F*$ be the minimal singular subflat with $\eta_1$ and $\zeta_1$ contained in its geodesic boundary at infinity(i.e. the singular subflat  of minimal dimension containing $\eta_1$ and $\zeta_1$ in its boundary).
 
 Let $g\in G$ be an element whose  axis is the parallel set of $F*$ (here, by axis of $g$, we mean the $\set{x \in X: d_X(x,g \cdot x)=\inf_{y \in X}d_X(y,g \cdot y)}$). By switching to $g^{-1}$ if necessary, we may assume that $\zeta_1$ is the source and $\eta_1$ is the sink of $g$. Under iteration, $g$ contracts to $\eta_1$ the (open dense) set of elements of $G/Q$ which are opposite to $\zeta_1$.
 
 In particular, $\eta_2$ converges to $\eta_1$ under iterations. Moreover, under iterations, the only element of $G/Q$ converging  to $\zeta_1$ is $\zeta_1$ itself. Hence, $\zeta_2$ converges to a point distinct from $\zeta_1$ under iterations of $g$. 

To obtain the claim for $\Gamma$, we approximate iterations of $g$ by elements of the lattice. Indeed, writing $g^n=c_n\gamma_n$, where $c_n$ lies in a compact fundamental domain, and passing to a subsequence, we can assume that the elements $c_n$ converge to a fixed element $c$. Hence the result follows using $\gamma_n$ instead of $g^n$.
\end{proof}

By pulling back the `perturbed center stable' leaves $\Qc_{\xi,\eta}$, we now introduce a foliation $\Fc(\xi,\eta)$ of $X \times G/Q$. This is the content of the next two lemmas. First however let us  define $\Fc(\xi,\eta)$. Let 
\[
\mc{V}_0:=\left\{ (\xi,\eta) \in G/Q \times G/Q : \mathcal Q_{\xi,\eta}\neq \varnothing \right\}.
\]
\begin{definition}
    For $(\xi,\eta)\in \mathcal{V}_0$, we let
\[\mathcal F(\xi,\eta):=\til{f}^{-1}(\wcu(\eta))\cap (X\times \{\xi\})=\til f^{-1}(\mathcal Q_{\xi,\eta})\cap (X\times \{\xi\}).\]
\end{definition}

We note that each $\mathcal F(\xi,\eta)$ is a $C^1$ manifold. Furthermore, the disjoint union of all such $\mathcal F(\xi,\eta)$ is $X\times G/Q$. The set of leaves is identified with an open subset $\mc{V}_0\subset G/Q \times G/Q$. We note that the intersection of the unstable foliation of $X_{G/Q}$ with $\til f(X\times \{\xi\})$ gives rise to a foliation of the $C^1$ manifold $\til f(X\times \{\xi\})$. By pulling this back by $\til f$ we obtain a foliation of $X\times \{\xi\}$ whose leaves are
\[\set{\mc{F}(\xi,\eta):\eta\in G/Q, (\xi,\eta)\in \mc{V}_0}.\]
Then, for any $\gamma \in \Gamma$, $\set{\wh{\rho}(\gamma)\mc{F}(\xi,\eta) : \eta\in G/Q, (\xi,\eta)\in \mc{V}_0}$ is a foliation of $X \times \set{\rho(\gamma) \xi}$ as $\til f$ is $(\wh\rho, \wh\rho_0)$-equivariant. 

We will now prove the continuity of $\Qc_{\xi,\eta}$ and $\Fc(\xi,\eta)$ using \cref{lem:continuity-of-transverse-intersection}.

\begin{lemma}
\label{lem:continuityQ}
    The map $\Vc_0 \ni (\xi,\eta) \mapsto \Qc_{\xi,\eta}$ is continuous with respect to the pointed Hausdorff topology. 
\end{lemma}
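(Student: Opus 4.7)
The plan is to deduce this from Lemma \ref{lem:continuity-of-transverse-intersection} by applying it to the two families of $C^1$ submanifolds of $X_{G/Q}$ given by $\{\wt{f}(X\times\{\xi\})\}_{\xi\in G/Q}$ and $\{\wcu(\eta)\}_{\eta\in G/Q}$, whose pairwise intersections are exactly the sets $\Qc_{\xi,\eta}$. To apply the lemma I need three things: continuity of each family, uniform transversality of the intersections, and a way to pass from local convergence to pointed Hausdorff convergence.

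First I would verify the continuity of each family. By \cref{prop:BMMforG/Q}(2), for each $\xi\in G/Q$ the map $\wt{f}(\cdot,\xi):X\to X_{G/Q}$ is a $C^1$ embedding and the derivative $(y,\xi)\mapsto D_y\wt{f}(y,\xi)$ is jointly continuous; hence the submanifolds $\wt{f}(X\times\{\xi\})$ vary continuously in $\xi$ in the $C^1$ topology on compact subsets of $X_{G/Q}$. The center-unstable foliation $\{\wcu(\eta)\}_{\eta\in G/Q}$ is a smooth homogeneous foliation of $X_{G/Q}=G/M_Q$ (the orbit foliation of a closed subgroup), so its leaves vary smoothly in $\eta$, in particular continuously in the $C^1$-pointed sense.

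Next I would check uniform transversality of the intersection at every point of $\Qc_{\xi,\eta}$ when $\rho$ is close to $\rho_0$. Fix $v\in\Qc_{\xi,\eta}$ and set $\zeta_0:=\pi_{p(v)}(v)$. By \cref{lem:Q_xi_eta}, given any $\epsilon>0$, if $\rho$ is in a suitable neighborhood of $\rho_0$ the tangent space $T_v\wt{f}(X\times\{\xi\})$ is $\epsilon$-close in the Grassmannian distance to $T_v\wcs(\zeta_0)$. Since the smooth foliations $\wcs$ and $\wcu$ are uniformly transverse in $X_{G/Q}$ (their tangent distributions are complementary and $G$-invariant, hence the angle between them attains a positive minimum modulo $\Gamma$-equivariance on the compact quotient $\Gamma\backslash X_{G/Q}$), it follows that $T_v\wt{f}(X\times\{\xi\})$ and $T_v\wcu(\eta)$ are uniformly transverse with intersection of the expected dimension $\dim(\wcs(\zeta_0)\cap\wcu(\eta))$.

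The main obstacle, and the reason the conclusion is only pointed Hausdorff rather than Hausdorff convergence, is that $\Qc_{\xi,\eta}$ is in general non-compact (by \cref{prop:geometric-orbit} each component is $L$-biLipschitz to the parallel set $F(\gamma_{p\bdx})$). Pointed Hausdorff continuity however reduces to the following local statement: given a base point $v_0\in\Qc_{\xi_0,\eta_0}$ and $R>0$, the sets $\Qc_{\xi,\eta}\cap B_R(v_0)$ converge in Hausdorff distance to $\Qc_{\xi_0,\eta_0}\cap B_R(v_0)$ as $(\xi,\eta)\to(\xi_0,\eta_0)$. This is exactly what \cref{lem:continuity-of-transverse-intersection} provides, via an implicit-function-theorem argument on a fixed compact ball using the uniform transversality together with the $C^1$-continuity of the two families; there is no global gluing issue because the statement is already local, and $\Gamma$-equivariance plus cocompactness of $\Gamma$ on $X_{G/Q}$ make the transversality and continuity estimates uniform in the choice of base point $v_0$.
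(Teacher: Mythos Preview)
Your approach is essentially the paper's: both reduce the statement to \cref{lem:continuity-of-transverse-intersection}. The paper handles the two variables separately and slightly more economically---continuity in $\eta$ follows directly from the fact that $\{\wcu(\eta)\}$ restricts to a continuous foliation of the fixed $C^1$ manifold $\wt{f}(X\times\{\xi\})$, so \cref{lem:continuity-of-transverse-intersection} is only invoked for continuity in $\xi$ with $M_n=\wt{f}(X\times\{\xi_n\})$ and $N=\wcu(\eta)$ fixed. One small caution: as stated, \cref{lem:continuity-of-transverse-intersection} fixes $N$ and only varies $M_n$, so to get joint continuity you should either apply it separately in each variable (as the paper does) or note that the same implicit-function argument goes through when both submanifolds vary in $C^1$; your proposal leans on the latter without quite saying so.
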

\begin{proof}
    Continuity in $\eta$ is straightforward as the unstable leaves produce a continuous foliation of $\til{f}(X \times \set{\xi})$ for any fixed $\xi$.
    For proving continuity in $\xi$, fix $(\xi_0,\eta)\in \Uc$ and consider a sequence $(\xi_n,\eta)$ in $\Uc$  converging to $(\xi_0,\eta)$. Apply \cref{lem:continuity-of-transverse-intersection} with $M_n= \til{f}(X\times\set{\xi_n})$ for $n\geq 0$, and $N=\wcu(\eta)$. 
\end{proof}

\begin{lemma}
\label{lem:Q_foliation}
    The map  $\Vc_0 \ni (\xi,\eta) \mapsto \mathcal F(\xi,\eta)$ is continuous with respect to the pointed Hausdorff topology.
\end{lemma}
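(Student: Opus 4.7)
The plan is to derive continuity of $\mathcal F(\xi,\eta)$ from the continuity of $\Qc_{\xi,\eta}$ established in \cref{lem:continuityQ}, by pulling back through the slicewise diffeomorphism $\til f(\cdot,\xi)$.

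First, I would take a convergent sequence $(\xi_n,\eta_n)\to (\xi_0,\eta_0)$ in $\Vc_0$ and show pointed Hausdorff convergence $\mathcal F(\xi_n,\eta_n)\to \mathcal F(\xi_0,\eta_0)$ on compact subsets of $X\times G/Q$. By \cref{lem:continuityQ} we already know that $\Qc_{\xi_n,\eta_n}\to \Qc_{\xi_0,\eta_0}$ in the pointed Hausdorff topology in $X_{G/Q}$. Since $\mathcal F(\xi,\eta)=(\til f|_{X\times\{\xi\}})^{-1}(\Qc_{\xi,\eta})$, the remaining task is to show that the slicewise inverses $(\til f|_{X\times\{\xi_n\}})^{-1}$ depend continuously on $\xi_n$ in a manner compatible with pointed Hausdorff limits.

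Second, I would argue this using \cref{prop:BMMforG/Q}(1)-(2): $\til f$ is jointly continuous in $(y,\xi)$, satisfies $p\circ \til f(y,\xi)=y$, and $\til f(\cdot,\xi)$ is a $C^1$-diffeomorphism onto its image with $D_y\til f(y,\xi)$ jointly continuous in $(y,\xi)$. The relation $p\circ \til f(y,\xi)=y$ already gives a one-sided Lipschitz bound, and combined with joint $C^1$ continuity of $D_y\til f$, a standard inverse function theorem argument (with uniform constants on compacta, using the compactness of $\Gamma$-fundamental domains and equivariance $\til f(\gamma\cdot y,\rho(\gamma)\xi)=\wh\rho_0(\gamma)\til f(y,\xi)$) yields that on any compact $K\subset X$ and any $\epsilon>0$, for all $n$ large enough the map $(\til f|_{X\times\{\xi_n\}})^{-1}$ is $\epsilon$-close to $(\til f|_{X\times\{\xi_0\}})^{-1}$ uniformly on $\til f(K\times\{\xi_0\})$. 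Combined with the convergence $\Qc_{\xi_n,\eta_n}\to \Qc_{\xi_0,\eta_0}$, this yields the desired pointed Hausdorff convergence $\mathcal F(\xi_n,\eta_n)\to \mathcal F(\xi_0,\eta_0)$.

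Alternatively, a cleaner route is to apply \cref{lem:continuity-of-transverse-intersection} directly in $X\times G/Q$: take $M_n=X\times\{\xi_n\}$ (converging trivially to $M_0$) and $N_n=\til f^{-1}(\wcu(\eta_n))$. Continuity of $\til f$ and the continuous variation of the center-unstable foliation $\{\wcu(\eta)\}$ give $N_n\to N_0$ locally in pointed Hausdorff. Transversality $M_n\pitchfork N_n$ follows from \cref{prop:BMMforG/Q}(5): tangent spaces of $\til f(X\times\{\xi\})$ are Grassmannian-close to tangent spaces of $\wcs(\pi_y(\til f(y,\xi)))$, which meet $\wcu(\eta)$ uniformly transversally; pulling this back by the $C^1$-diffeomorphism $\til f(\cdot,\xi)$ transfers uniform transversality to $M_n$ and $N_n$. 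Since $M_n\cap N_n=\mathcal F(\xi_n,\eta_n)$, the transverse intersection lemma concludes. The main obstacle, in either approach, is obtaining the required uniform continuity/transversality estimates on compact sets; this is where the joint $C^1$-continuity from \cref{prop:BMMforG/Q}(2) and the uniform angle control from \cref{prop:BMMforG/Q}(5) play the essential role.
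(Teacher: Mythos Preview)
Your first approach is correct but overcomplicates matters. The paper's proof uses a single observation that trivializes the argument: since $p\circ\til f(x,\xi)=x$ (\cref{prop:BMMforG/Q}(1)), the slicewise inverse $(\til f|_{X\times\{\xi\}})^{-1}$ is simply $z\mapsto (p(z),\xi)$. Hence
\[
\mathcal F(\xi,\eta)=p(\Qc_{\xi,\eta})\times\{\xi\},
\]
and continuity follows immediately from \cref{lem:continuityQ} by applying the fixed continuous map $p$. Your inverse function theorem argument with uniform constants on compacta is valid but unnecessary; you are working hard to control a family of inverses that are in fact all given by the same projection map.

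Your alternative route via \cref{lem:continuity-of-transverse-intersection} has a genuine gap. That lemma requires $N$ to be a fixed $C^1$ submanifold, while you propose $N_n=\til f^{-1}(\wcu(\eta_n))$. Even for fixed $\eta$, the map $\til f$ is only continuous in the $\xi$-variable (it is $C^1$ only along each slice $X\times\{\xi\}$), so there is no reason $\til f^{-1}(\wcu(\eta))$ should be a $C^1$ submanifold of $X\times G/Q$. The transversality estimates from \cref{prop:BMMforG/Q}(5) live on the image side in $X_{G/Q}$ and do not transfer to give $C^1$ structure on the preimage in the joint variables.
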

\begin{proof}
    Continuity in $\eta$ is immediate from the fact that $\{\mathcal F(\xi,\eta):\eta\in G/Q \text{ such that } \mathcal F(\xi,\eta) \neq \varnothing\}$ is a continuous foliation of $X\times \{\xi\}$.

    For proving continuity in $\xi$, let $(\xi_n)$ be a sequence converging to $\xi_0\in G/Q$. Note that since $\til f$ covers $\id$ on $X$, 
\[\mathcal F(\xi,\eta)=p((\til f(X\times \{\xi \})\cap \wcu(\eta))\times \{\xi\}=p (\Qc_{\xi,\eta}) \times \set{\xi}.\]
Since $\xi_n \to \xi_0$, \cref{lem:continuityQ} implies that $\Qc_{\xi_n, \eta} \to \Qc_{\xi_0,\eta}$ in the pointed Hausdorff topology. Then $p(\Qc_{\xi_n, \eta}) \to p(\Qc_{\xi_0,\eta})$ in the pointed Hausdorff topology. This finishes the proof.
\end{proof}

\begin{remark}
We point out that the last two  lemmas do not follow from a general statement about the continuity of leaves of continuous foliations of the leaves of a smooth foliation.

Indeed, Bonatti and Franks \cite[Theorem 1.1]{BonattiFranks04} even provide a H\"{o}lder continuous distribution $E'$ on $\R^2$ tangent to a continuum $(\scr{F}_s)_{s\in (0,1)}$ of pairwise distinct foliations. Then consider the H\"{o}lder continuous distribution $E$ on $\R^3$ that consists of a copy of $E'$ on each $\set{t}\times \R^2$. Now we can choose a discontinuous map $\sigma:\R\to (0,1)$ and consider for each $t\in \R$ the Bonatti-Franks foliation $\scr{F}_{\sigma(t)}$ on $\set{t}\times \R^2$. Now the union of these foliations of $\R^2$ does not form even a $C^0$ foliation of $\R^3$. In particular, a sequence of leaves $\ell_{t}$ through the point $(t,0,0)$ need not converge to the leaf $\ell_0$ through $(0,0,0)$.
\end{remark}

Now we are in a position to prove the key lemma of this subsection. Before starting the proof, we remark that the argument below is a higher rank analogue of the argument in \cite[Proposition 3.8]{BowdenMann19}. However, it is considerably more complicated because the leaves have higher dimension and because the $\rho_0$-action is no longer a convergence group action.

\begin{lemma}\label{lem:well-defined}
Let $\xi, \eta_1,\eta_2\in G/Q$ be such that $\Qc_{\xi,\eta_1}$ and $\Qc_{\xi,\eta_2}$ are non-empty. For $i \in \set{1,2}$, let $\zeta_i\in G/Q$ be the unique point (obtained by \cref{prop:image-unique}) such that $\mathcal Q_{\xi,\eta_i}$ is contained in a uniform neighborhood of $\wcs(\zeta_i)\cap \wcu (\eta_i)$. Then $\zeta_1=\zeta_2$.
\end{lemma}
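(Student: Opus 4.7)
The plan is to argue by contradiction, assuming $\zeta_1\neq \zeta_2$, and combine the source--sink dynamics of \cref{lem:source_sink} with the equivariance of $\til f$ to force a single ``perturbed center'' set to be shadowed by two distinct center leaves, contradicting the uniqueness part of \cref{prop:image-unique}. This is the higher-rank analog of \cite[Proposition 3.8]{BowdenMann19}.

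First I would establish that the map $\eta\mapsto \zeta_\eta$, defined on the open set $W:=\set{\eta\in G/Q:\Qc_{\xi,\eta}\neq \varnothing}$, is continuous. If $\eta^{(n)}\to \eta^{(\infty)}$ in $W$, pass to a subsequence so $\zeta_{\eta^{(n)}}\to \zeta^*$. By \cref{lem:continuityQ}, $\Qc_{\xi,\eta^{(n)}}\to \Qc_{\xi,\eta^{(\infty)}}$ in the pointed Hausdorff topology, while the center leaves $\wcs(\zeta_{\eta^{(n)}})\cap \wcu(\eta^{(n)})$ converge to $\wcs(\zeta^*)\cap \wcu(\eta^{(\infty)})$ (using continuity of the transverse intersection of the two foliations). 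Pushing the uniform $R$-neighborhood containment to the limit and invoking uniqueness in \cref{prop:image-unique} forces $\zeta^*=\zeta_{\eta^{(\infty)}}$. Using this continuity together with \cref{lem:opposites}, perturb $\eta_2$ to an $\eta_2'\in W$ opposite to $\zeta_1$, close enough to $\eta_2$ that $\zeta_{\eta_2'}\neq \zeta_1$; relabel so that both $\eta_1,\eta_2$ are opposite to $\zeta_1$ with $\zeta_1\neq \zeta_2$.

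Now \cref{lem:source_sink} supplies a sequence $\gamma_n\in\Gamma$ with $\rho_0(\gamma_n)\eta_i\to \eta_\infty$ (the same limit for $i=1,2$) while $\rho_0(\gamma_n)\zeta_i\to \zeta_{i,\infty}$ with $\zeta_{1,\infty}\neq \zeta_{2,\infty}$; after a further subsequence $\rho(\gamma_n)\xi\to \xi_\infty$. The equivariance in \cref{prop:BMMforG/Q}(1) combined with the fact that $\wh\rho_0(\gamma_n)$ is an isometry of $X_{G/Q}$ gives
\begin{align*}
\Qc_{\rho(\gamma_n)\xi,\rho_0(\gamma_n)\eta_i}=\wh\rho_0(\gamma_n)\Qc_{\xi,\eta_i}\subset \Nbhd_R\!\left(\wcs(\rho_0(\gamma_n)\zeta_i)\cap \wcu(\rho_0(\gamma_n)\eta_i)\right)
\end{align*}
for $i=1,2$. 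Passing to the limit point-by-point: for any $q$ in the limit set $\Qc_{\xi_\infty,\eta_\infty}$ and each $i$, approximate $q$ by $q_n^{(i)}\in \Qc_{\rho(\gamma_n)\xi,\rho_0(\gamma_n)\eta_i}$, pick a neighbor $r_n^{(i)}$ within distance $R$ in the shadowing center leaf, and extract a convergent subsequence, producing a point of $\wcs(\zeta_{i,\infty})\cap \wcu(\eta_\infty)$ within distance $R$ of $q$. Thus $\Qc_{\xi_\infty,\eta_\infty}$ lies in the $R$-neighborhood of both $\wcs(\zeta_{1,\infty})\cap \wcu(\eta_\infty)$ and $\wcs(\zeta_{2,\infty})\cap \wcu(\eta_\infty)$, contradicting \cref{prop:image-unique} since $\zeta_{1,\infty}\neq \zeta_{2,\infty}$.

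The main obstacle is the basepoint issue in this last limiting step: $\wh\rho_0(\gamma_n)$ translates any fixed basepoint of $\Qc_{\xi,\eta_i}$ to infinity in $X_{G/Q}$, so one cannot directly apply \cref{lem:continuityQ} with fixed basepoints to the unbounded images $\Qc_{\rho(\gamma_n)\xi,\rho_0(\gamma_n)\eta_i}$. The point-by-point approach above sidesteps pointed Hausdorff convergence of the full sets, but requires that the biLipschitz flats of \cref{prop:geometric-orbit} associated to $\Qc_{\rho(\gamma_n)\xi,\rho_0(\gamma_n)\eta_i}$ pass through some fixed bounded region in $X_{G/Q}$ for all large $n$, which should follow from the fact that each of these flats extends toward the common attracting direction $\eta_\infty$ shared by $\rho_0(\gamma_n)\eta_1$ and $\rho_0(\gamma_n)\eta_2$.
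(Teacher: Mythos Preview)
Your overall strategy matches the paper's. For the reduction to the case where both $\eta_i$ are opposite to $\zeta_1$, the paper takes a cleaner route: instead of perturbing $\eta_2$ via continuity of $\eta\mapsto\zeta_\eta$, it picks an auxiliary $\eta_3$ opposite to both $\zeta_1$ and $\zeta_2$ with $\Qc_{\xi,\eta_3}\neq\varnothing$ (possible since this is an open dense condition intersected with the open set $\set{\eta:\Qc_{\xi,\eta}\neq\varnothing}$) and applies the special case to the pairs $(\eta_1,\eta_3)$ and $(\eta_2,\eta_3)$.

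The real gap is the one you flag, and your proposed fix does not close it. For the point-by-point argument you need, for each $q\in\Qc_{\xi_\infty,\eta_\infty}$ and each $i$, points of $\Qc_{\rho(\gamma_n)\xi,\rho_0(\gamma_n)\eta_i}$ converging to $q$. For $i=1$ this is fine: writing $g^n=c_n\gamma_n$ with $c_n\to c$ as in the proof of \cref{lem:source_sink}, the pair $(\rho_0(\gamma_n)\zeta_1,\rho_0(\gamma_n)\eta_1)\to(c^{-1}\zeta_1,c^{-1}\eta_1)$ is opposite, so the shadowing centers converge to a nondegenerate center leaf. But for $i=2$ there is no control on whether $\zeta_{2,\infty}:=\lim\rho_0(\gamma_n)\zeta_2$ is opposite to $\eta_\infty$. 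If, say, $\zeta_2$ lies in the big Bruhat cell opposite the source $\zeta_1$, then $g^n\zeta_2\to\eta_1$ and hence $\zeta_{2,\infty}=\eta_\infty$; the centers $\wcs(\rho_0(\gamma_n)\zeta_2)\cap\wcu(\rho_0(\gamma_n)\eta_2)$ then escape every compact set, and with them (being in their uniform $R$-neighborhood) the sets $\Qc_{\rho(\gamma_n)\xi,\rho_0(\gamma_n)\eta_2}$. Your fix pins only one asymptotic end (the $\eta$-end) and cannot prevent this escape at the $\zeta$-end.

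The paper's resolution is an intermediate-value argument (its Claim). Fix $x_0$ on the limiting center $p(\wcs(\zeta_{1,\infty})\cap\wcu(\eta_\infty))$ and set $D=B_{7C}(x_0)$. Let $\nu$ run along the short geodesic $\sigma_n$ in $G/Q$ from $\rho_0(\gamma_n)\eta_1$ to $\rho_0(\gamma_n)\eta_2$, and take $\eta_n$ to be the first $\nu\in\sigma_n$ at which the leaf $\mc{F}(\rho(\gamma_n)\xi,\nu)$ fails to meet $B_{5C}(x_0)$ (or $\eta_n=\rho_0(\gamma_n)\eta_2$ if no such point exists). This guarantees simultaneously that (i) the leaf $\mc{F}(\rho(\gamma_n)\xi,\eta_n)$ still meets $D$, so basepoints are controlled and \cref{lem:Q_foliation} yields a nonempty limit $\mc{F}(\xi_0,\eta_0)$; and (ii) on $D$ this leaf is at Hausdorff distance $\geq 3C$ from the $i=1$ center, forcing $\lim_n\zeta_{\rho(\gamma_n)\xi,\eta_n}\neq\zeta_{1,\infty}$. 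One then exhibits $\mc{F}(\xi_0,\eta_0)$ in a bounded neighborhood of two distinct centers, contradicting \cref{prop:image-unique}.
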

\begin{proof}
Assume the lemma holds in the {\em special case} when $\eta_1$ and $\eta _2$ are both opposite to $\zeta_1$.  Let us first derive the lemma for $\eta _2$ possibly not  opposite to $\zeta_1$ from the \emph{special case}.
 In that case, note that $\set{\eta: \Qc_{\xi,\eta} \neq \varnothing}$ is an open set.  Also note that $\eta _1$ is always opposite to $\zeta_1$ by definition of $\zeta _1$.  Then, by \Cref{lem:opposites}, we may choose $\eta_3$ such that $\eta_3$ is opposite to both $\zeta_1$ and $\zeta_2$ and $\Qc_{\xi,\eta_3} \neq \varnothing$. Let $\zeta_3$ be such that $\Qc_{\xi,\eta_3}$ is contained in a uniform neighborhood of $\wcs(\zeta_3) \cap \wcu(\eta_3)$. Then, applying the {\em special case}  to $\eta_i$ and  $\eta_3$, we obtain that $\zeta_i=\zeta_3$ where $i \in \set{1,2}$. Thus it suffices to prove the {\em special case}.

Now suppose that we are in the special case, i.e. $\eta_1, \eta_2$ are both opposite to $\zeta_1$. Then by \cref{lem:source_sink}, there is a sequence $(\gamma_n)$ such that $\rho_0(\gamma_n)\zeta_1$ and $\rho_0(\gamma_n)\zeta_2$ converge to two distinct points in $G/Q$, while $\rho_0(\gamma_n)\eta_1$ and $\rho_0(\gamma_n)\eta_2$ converge to the same point.

We introduce the following  notation: if $\eta\in G/Q$ such that $\Qc_{\xi,\eta}$ is non-empty, let $\zeta_{\xi,\eta}\in G/Q$ be the point such that $\mathcal Q_{\xi,\eta}$ is contained in a uniform neighborhood of  $\wcu(\eta)\cap \wcs(\zeta_{\xi,\eta})$. Note that $\zeta_{\xi,\eta}$ is well-defined  by \Cref{prop:image-unique} and that $\zeta_i=\zeta_{\xi,\eta_i}$ for $i=1,2$ where $\zeta_i$ is as in the statement of the lemma.

Since $G/Q$ is compact, up to passing to a subsequence, we assume that $\xi_0:=\lim_{n \to \infty} \rho(\gamma_n) \xi$ exists. Let $\eta_0:=\lim_{n\to \infty} \rho_0(\gamma_n) \eta_1$. We will obtain a contradiction by showing that there are two distinct leaves in $X\times\{\xi_0\}$ belonging to $\til f^{-1}(\wcu(\eta_0))$.

We let $x_0\in X$ be a point contained in the minimal singular subflat whose geodesic boundary contains $\eta_0$ and $\lim_n \rho_0(\gamma_n)\zeta_{\xi,\eta_1}$. Let $C$ be the uniform constant such that whenever $\mathcal Q_{\xi',\eta'} \neq \emptyset$, $\Qc_{\xi',\eta'}$ is contained in a $C$-neighborhood $\wcs(\zeta_{\xi',\eta'}) \cap \wcu(\eta')$. Let $D$ be the ball of radius $7C$ around $x_0$.
\begin{claim*}\label{claim:1}
There exists a sequence $(\eta_n)_{n\ge 3}$ with $\eta_n\in G/Q$ such that:
\begin{enumerate}
    \item $\til f^{-1}(\wcu(\eta_n))\cap (X\times \{\rho(\gamma_n)\xi\})$ has non-empty intersection with $D\times \{\rho(\gamma_n)\xi\}$,
    \item $\lim_{n \to \infty}\zeta_{\rho(\gamma_n)\xi,\eta_n}$ is different from $\lim_{n\to \infty}\rho_0(\gamma_n)\zeta_{\xi,\eta_1}$,
    \item the sequence $(\eta_n)$ converges to $\eta_0 =\lim_{n\to \infty}\rho_0(\gamma_n)\eta_1$.
\end{enumerate}
\end{claim*}
We will postpone the proof of this claim until after finishing the proof of  Lemma \ref{lem:well-defined}.

For the latter, we proceed by contradiction. By \cref{lem:Q_foliation}, the sequence of $C^1$ manifolds $\mc{F}(\rho(\gamma_n)\xi,\eta_n)=\til f^{-1}(\wcu(\eta_n))\cap (X\times \{\rho(\gamma_n)\xi\})$ converges to $\mc{F}(\xi_0,\eta_0)=\til f^{-1}(\wcu(\eta_0))\cap (X\times \{\xi_0\})$.

We remark that, since $\til f$ covers the identity $\id:X\to X$,
$$\mc{F}(\xi_0,\eta_0)=\til f^{-1}(\wcu (\eta_0))\cap (X\times\set{\xi_0})=p\left(\wcu (\eta_0)\cap \til f(X\times \{\xi_0\})\right)\times \set{\xi_0}.$$
By the choice of $\gamma_n$ and \cref{lem:image-unique}, $\wcu (\eta_0)\cap \til f(X\times \{\xi_0\})$ is contained in a bounded neighborhood of $\wcu(\eta_0)\cap \wcs (\lim_{n\to \infty}\rho_0(\gamma_n)\zeta_{\xi,\eta_1})$. In other words, $p\left(\wcu (\eta_0)\cap \til f(X\times \{\xi_0\})\right)$ is contained in a bounded neighborhood of $p\left(\wcu(\eta_0)\cap \wcs (\lim_{n\to \infty}\rho_0(\gamma_n)\zeta_{\xi,\eta_1})\right)$.

On the other hand, $\wcu(\eta_n)\cap \til f(X\times \{\rho(\gamma_n)\xi\})$ is in a bounded neighborhood of $\wcu(\eta_n)\cap \wcs (\zeta_{\rho(\gamma_n)\xi,\eta_n})$. Set
$\Upsilon:=\lim_{n\to \infty}\zeta_{\rho(\gamma_n)\xi,\eta_n}$. Then the $\til f$ image  of the limit manifold is contained in a bounded neighborhood of $\wcu(\eta_0)\cap \wcs (\Upsilon)$. Thus, the manifold $\mc{F}(\xi_0,\eta_0)=\lim_n\mc{F}(\rho(\gamma_n)\xi,\eta_n)$ is contained in a bounded neighborhood of $p\left(\wcu(\eta_0)\cap \wcs (\Upsilon)\right)\times \{\xi_0\}$. Therefore, $p(\mc{F}(\xi_0,\eta_0))$ is contained in a bounded neighborhood of $p\left(\wcu(\eta_0)\cap \wcs (\Upsilon)\right)$. Since $\Upsilon\neq \lim_{n\to \infty}\rho_0(\gamma_n)\zeta_{\xi,\eta_1}$, and each fiber $F_x\cong G/Q$ is compact, this contradicts \cref{prop:image-unique}.
\end{proof}
\begin{proof}[Proof of Claim.]
For every $n$, we let $U_n$ be the neighborhood of $\eta_0$ defined as follows
\[U_n=\{\eta\in G/Q: \til f^{-1}(\wcu(\eta))\cap (D\times \{\rho(\gamma_n)\xi\})\neq \varnothing\}.\]

Recall that $(\gamma_n)$ is a sequence as in the proof of the above lemma (i.e. $\lim_{n\to \infty}\rho_0(\gamma_n) \zeta_1 \neq \lim_{n\to \infty}\rho_0(\gamma_n) \zeta_2$ while 
$\lim_{n\to \infty} \rho_0(\gamma_n) \eta_1= \lim_{n\to \infty}\rho_0(\gamma_n) \eta_2$). We let $\sigma_n$ be the shortest geodesic segment (with respect to a fixed Riemannian metric on $G/Q$) connecting $\rho_0(\gamma_n)\eta_1$ and $\rho_0(\gamma_n)\eta_2$. As $\rho_0(\gamma_n)\eta_1$ and $\rho_0(\gamma_n)\eta_2$ converge to the same point $\eta_0$, the segment $\sigma_n$ converges to $\eta_0$. Thus, for $n$ sufficiently large, $\sigma_n$ is contained entirely in $U_n$. We are going to pick $\eta_n$ belonging to $\sigma_n$, thus satisfying item (3). We note that $\til f|_{X\times\{\rho(\gamma_n)\xi\}}$ is a homeomorphism to its image. Thus $\{\mc{F}(\rho(\gamma_n)\xi,\nu)=\til f^{-1}(\wcu(\nu))\cap (X\times \{\rho(\gamma_n)\xi\}):\nu\in \sigma_n\}$ is a continuous family of $C^1$ submanifolds of $X\times \{\rho(\gamma_n)\xi\}$. We pick $\eta_n\in \sigma_n$ to be the first point (in the direction from $\rho_0(\gamma_n)\eta_1$ to $\rho_0(\gamma_n)\eta_2$) such that $\til f^{-1}(\wcu(\eta_n))\cap (X\times \{\rho(\gamma_n)\xi\})$ does not intersect $B_{5C}(x_0)\times \{\rho(\gamma_n)\xi\}$. If there is no such point in $\sigma_n$, then we can pick $\eta_n=\rho_0(\gamma_n)\eta_2$.

Observe that with this choice of $\eta_n$ parts (1) and (3) of the claim follow immediately because of our choices of $D$. For the second item of the claim, in the case $\eta_n=\rho_0(\gamma_n)\eta_2$, it is immediate. Thus we only need to verify the case that $\eta_n$ is the first point in $\sigma_n$ such that $\til f^{-1}(\wcu(\eta_n))\cap (X\times \{\rho(\gamma_n)\xi\})$ does not intersect $B_{5C}(x_0)\times \{\rho(\gamma_n)\xi\}$. Recall that the manifold $\til f^{-1}(\wcu(\eta_n))\cap (X\times \{\rho(\gamma_n)\xi\})$ is in a $C$-neighborhood of $p(\wcu(\eta_n)\cap \wcs(\zeta_{\rho(\gamma_n)\xi,\eta_n}))\times \{\rho(\gamma_n)\xi\}$ and the manifold $\til f^{-1}(\wcu(\rho_0(\gamma_n)\eta_1))\cap (X\times \{\rho(\gamma_n)\xi\})$ is in a $C$-neighborhood of $p(\wcu(\rho_0(\gamma_n)\eta_1)\cap \wcs(\rho_0(\gamma_n)\zeta_{\xi,\eta_1}))\times \{\rho(\gamma_n)\xi\}$. By the triangle inequality of the Hausdorff pseudo-distance, 
\[d^{\Haus}(p(\wcu(\eta_n)\cap \wcs(\zeta_{\rho(\gamma_n)\xi,\eta_n}))\cap D, p(\wcu(\rho_0(\gamma_n)\eta_1)\cap \wcs(\rho_0(\gamma_n)\zeta_{\xi,\eta_1}))\cap D)\ge 3C,\]
for every $n$. It follows that their limits are distinct. The limit of $p(\wcu(\eta_n)\cap \wcs(\zeta_{\xi,\eta_n}))$ is the manifold $p(\wcu(\eta_0)\cap \wcs(\lim_n \zeta_{\rho(\gamma_n)\xi,\eta_n})$. The limit of $p(\wcu(\rho_0(\gamma_n)\eta_1)\cap \wcs(\rho_0(\gamma_n)\zeta_{\xi,\eta_1}))$ is the manifold $p(\wcu(\eta_0)\cap\wcs(\lim_{n}\rho_0(\gamma_n)\zeta_{\xi,\eta_1}))$. As the two limit manifolds are different, $\lim_{n \to \infty}\rho_0(\gamma_n)\zeta_{\xi,\eta_1}\neq \lim_{n\to \infty}\rho_0(\gamma_n)\zeta_{\xi,\eta_1}$. This completes the proof of the claim.
\end{proof}

 Using the previous lemma, we can now define a continuous $(\rho,\rho_0)$-equivariant map whenever $\rho$ is sufficiently close to $\rho_0$.

\begin{proposition}\label{prop:semiconj}
Suppose $\rho$ is sufficiently close to $\rho_0$. Then there is a $(\rho,\rho_0)$-equivariant map $\varphi_{\rho}: G/Q\to G/Q$.
\end{proposition}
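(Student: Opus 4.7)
The plan is to define $\varphi_\rho(\xi)$ directly via the assignment supplied by \cref{prop:image-unique} and \cref{lem:well-defined}, and then verify equivariance and continuity separately. Concretely, given $\xi\in G/Q$, pick any $\eta\in G/Q$ with $\Qc_{\xi,\eta}\neq\varnothing$ --- such $\eta$ always exists because, for any $x\in X$, the point $\wt{f}(x,\xi)\in X_{G/Q}$ lies on some unstable leaf $\wcu(\eta)$. Then \cref{prop:image-unique} gives a unique $\zeta\in G/Q$ such that $\Qc_{\xi,\eta}$ is contained in a uniformly bounded neighborhood of $\wcs(\zeta)\cap\wcu(\eta)$, and \cref{lem:well-defined} shows that $\zeta$ is independent of the choice of $\eta$. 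Define $\varphi_\rho(\xi):=\zeta$.

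For $(\rho,\rho_0)$-equivariance, fix $\gamma\in\Gamma$ and $\xi\in G/Q$. The equivariance $\wt{f}(\gamma x,\rho(\gamma)\xi)=\wh\rho_0(\gamma)\wt{f}(x,\xi)$ from \cref{prop:BMMforG/Q}(1) yields $\wt{f}(X\times\{\rho(\gamma)\xi\})=\wh\rho_0(\gamma)\wt{f}(X\times\{\xi\})$, while the foliation identities $\wh\rho_0(\gamma)\wcu(\eta)=\wcu(\rho_0(\gamma)\eta)$ and $\wh\rho_0(\gamma)\wcs(\zeta)=\wcs(\rho_0(\gamma)\zeta)$ together give $\Qc_{\rho(\gamma)\xi,\rho_0(\gamma)\eta}=\wh\rho_0(\gamma)\Qc_{\xi,\eta}$. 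Since $\wh\rho_0(\gamma)$ is an isometry of the chosen metric on $X_{G/Q}$, it preserves the bounded-neighborhood containment, so $\Qc_{\rho(\gamma)\xi,\rho_0(\gamma)\eta}$ is contained in a uniform neighborhood of $\wcs(\rho_0(\gamma)\zeta)\cap\wcu(\rho_0(\gamma)\eta)$. The uniqueness clause in \cref{prop:image-unique} then forces $\varphi_\rho(\rho(\gamma)\xi)=\rho_0(\gamma)\zeta=\rho_0(\gamma)\varphi_\rho(\xi)$.

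Continuity is the main obstacle. Suppose $\xi_n\to\xi_0$ in $G/Q$, and fix $\eta$ with $\Qc_{\xi_0,\eta}\neq\varnothing$. By \cref{lem:continuityQ}, $\Qc_{\xi_n,\eta}\to\Qc_{\xi_0,\eta}$ in the pointed Hausdorff topology, so these sets are nonempty for large $n$ and $\zeta_n:=\varphi_\rho(\xi_n)$ is well-defined. From \cref{prop:image-unique}, there is a uniform constant $C$ with $\Qc_{\xi_n,\eta}\subset\Nbhd_C(\wcs(\zeta_n)\cap\wcu(\eta))$ for every large $n$. Pass to a subsequence with $\zeta_n\to\zeta_\infty$, using compactness of $G/Q$; continuity of the foliations $\wcs$ and $\wcu$ in the leaf variable gives $\wcs(\zeta_n)\cap\wcu(\eta)\to\wcs(\zeta_\infty)\cap\wcu(\eta)$ in pointed Hausdorff, and taking the limit of the containment yields $\Qc_{\xi_0,\eta}\subset\Nbhd_C(\wcs(\zeta_\infty)\cap\wcu(\eta))$. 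Once again the uniqueness clause of \cref{prop:image-unique} forces $\zeta_\infty=\varphi_\rho(\xi_0)$, so every subsequential limit coincides with $\varphi_\rho(\xi_0)$ and hence $\zeta_n\to\varphi_\rho(\xi_0)$.

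The delicate point in the last paragraph is that the pointed Hausdorff convergence $\Qc_{\xi_n,\eta}\to\Qc_{\xi_0,\eta}$ involves noncompact sets quasi-isometric to $F(\gamma_{p\bdx})$, so one cannot hope to deduce convergence of the parameter $\zeta_n$ from a local statement alone; it is the global bounded-neighborhood characterization of \cref{prop:image-unique} together with its uniqueness clause that converts global pointed Hausdorff convergence of the perturbed center leaves into convergence of the labels $\zeta_n\in G/Q$. Surjectivity of $\varphi_\rho$ is not addressed here; it will be established in the subsequent section, together with the quantitative estimate that $\varphi_\rho\to\id$ as $\rho\to\rho_0$.
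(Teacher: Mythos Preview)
Your construction of $\varphi_\rho$ and the verification of equivariance are correct and essentially identical to the paper's proof of this proposition. Note that the statement only asks for an equivariant map; continuity is deferred in the paper to a separate lemma (\cref{lem:cts}), so your third paragraph goes beyond what is required here.

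Your continuity argument is nevertheless correct and takes a genuinely different route from the paper's. The paper proves continuity by selecting biLipschitz flats inside $\Qc_{\xi_n,\eta}$ and $\Qc_{\xi_0,\eta}$ that are close on large balls, applying the quasiflat lemma (\cref{lem:quasiflat}) to obtain genuine flats nearby in the respective center leaves $\wcs(\varphi_\rho(\xi_n))\cap\wcu(\eta)$ and $\wcs(\varphi_\rho(\xi_0))\cap\wcu(\eta)$, and then reading off convergence of the forward labels from closeness of these flats sharing the common backward label $\eta$. Your approach---pass to a subsequential limit $\zeta_\infty$, take the limit of the containment $\Qc_{\xi_n,\eta}\subset\Nbhd_C(\wcs(\zeta_n)\cap\wcu(\eta))$ to get $\Qc_{\xi_0,\eta}\subset\Nbhd_C(\wcs(\zeta_\infty)\cap\wcu(\eta))$, and invoke the uniqueness clause in \cref{prop:image-unique}---is more direct and avoids the quasiflat machinery entirely. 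The one point you should make more explicit is why the limit containment holds and why the right-hand side is nonempty: for each $y\in\Qc_{\xi_0,\eta}$ approximated by $y_n\in\Qc_{\xi_n,\eta}$, the witnesses $w_n\in\wcs(\zeta_n)\cap\wcu(\eta)$ with $d(y_n,w_n)\le C$ stay bounded, and any limit point $w$ lies in $\wcu(\eta)$ (a closed leaf) and in $\wcs(\zeta_\infty)$ (since $\pr_F\circ\Phi$ is continuous and sends $w_n\mapsto\zeta_n\to\zeta_\infty$); this simultaneously certifies $\wcs(\zeta_\infty)\cap\wcu(\eta)\neq\varnothing$. The paper's argument, by contrast, is more constructive and produces quantitative closeness of flats on large balls, a mechanism reused in the subsequent stability analysis (\cref{lem:stab}).
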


\begin{proof}
 The map $\varphi_\rho$ is defined as follows. For every $\xi\in G/Q$, let $\eta\in G/Q$ be such that $\mathcal Q_{\xi,\eta}$ is not empty. By \cref{prop:image-unique}, there is a unique $\zeta\in G/Q$ such that $\mathcal Q_{\xi,\eta}$ is contained in a bounded neighborhood of $\wcu(\eta)\cap \wcs(\zeta)$. We then define $\varphi_{\rho}(\xi)=\zeta$. By \cref{lem:well-defined}, this map is well-defined as $\zeta$ does not depend on the choice of $\eta$. 

Since $\til f$ is $(\wh \rho,\wh \rho_0)$-equivariant,
\[\mathcal Q_{\rho(\gamma)\xi,\rho_0(\gamma)\eta}=\til f(X\times \{\rho(\gamma)\xi\})\cap \wcu(\rho_0(\gamma)\eta)=\wh\rho_0(\gamma)(\til f(X\times \{\xi\})\cap \wcu (\eta))=\wh\rho_0(\gamma)\mathcal Q_{\xi,\eta}.\]
Thus, if $\mathcal Q_{\xi,\eta}$ is contained in a uniform neighborhood of $\wcu(\eta)\cap \wcs(\zeta)$ then $\mathcal Q_{\rho(\gamma)\xi,\rho_0(\gamma)\eta}$ is contained in a uniform neighborhood of $\wh{\rho}_0(\gamma)(\wcu(\eta)\cap \wcs(\zeta))=\wcu(\rho_0(\gamma)\eta)\cap \wcs(\rho_0(\gamma)\zeta)$. It follows that $\varphi_{\rho}(\rho(\gamma)\xi)=\rho_0(\gamma)\varphi_{\rho}(\xi)$.
\end{proof}

\subsection{Local semirigidity and continuity of $\varphi _{\rho}$}\label{sec:local}

\begin{proof}[{\bf Proof of Theorem \ref{thm:rigidity_G/Q}}]
We let $\varphi_{\rho|_{\Gamma_0}}:G/Q\to G/Q$ be the map obtained from \cref{prop:semiconj}. We will show in \cref{lem:cts} and \cref{lem:stab}
 that $\varphi_{\rho|_{\Gamma_0}}$ is continuous and tends to $\id$ when $\rho\to \rho_0$.

 The surjectivity of $\varphi_{\rho|_{\Gamma_0}}$ then follows from minimality of the ${\rho_0|_{\Gamma_0}}$-action, the $({\rho|_{\Gamma_0}},{\rho_0|_{\Gamma_0}})$-equivariance and the continuity of $\varphi _{\rho|_{\Gamma_0}}$. Since ${\rho_0|_{\Gamma_0}}$ is a uniformly expanding action, we can now apply \cref{lem:semi_unique} to prove the uniqueness of $\varphi_{\rho|_{\Gamma_0}}$ semi-conjugacy provided it is close enough to $\id$ and $\rho$ is close enough to $\rho_0$.
\end{proof}

For the next two lemmas, to simplify notations, we assume that the lattice $\Gamma$ is torsion-free.
\begin{lemma} \label{lem:cts}
Suppose $\rho$ is sufficiently close to $\rho_0$.  Then the map $\varphi _{\rho}$ is continuous.
\end{lemma}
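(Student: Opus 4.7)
The plan is to establish sequential continuity. Let $\xi_n\to \xi$ in $G/Q$, write $\zeta_n:=\varphi_\rho(\xi_n)$ and $\zeta:=\varphi_\rho(\xi)$. By compactness of $G/Q$, it is enough to prove that any subsequential limit $\zeta':=\lim_k\zeta_{n_k}$ equals $\zeta$. The key leverage will be the uniqueness clause of \cref{prop:image-unique}: for any fixed $\eta$ with $\Qc_{\xi,\eta}\neq\varnothing$, there is at most one point of $G/Q$ that can serve as the ``shadowing'' parameter for $\Qc_{\xi,\eta}$.

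First I would pick an auxiliary $\eta\in G/Q$ that is opposite to both $\zeta$ and $\zeta'$ (possible by \cref{lem:opposites}) and that additionally satisfies $\Qc_{\xi,\eta}\neq\varnothing$ (possible since that set of $\eta$'s is open and nonempty). Because opposition is an open condition and $\zeta_{n_k}\to\zeta'$, for large $k$ the element $\eta$ is also opposite to $\zeta_{n_k}$, so each center manifold $\wcu(\eta)\cap\wcs(\zeta_{n_k})$ is nonempty. By \cref{lem:continuityQ} and \cref{prop:image-unique}, for large $k$ one has $\Qc_{\xi_{n_k},\eta}\neq\varnothing$, $\Qc_{\xi_{n_k},\eta}\to\Qc_{\xi,\eta}$ in the pointed Hausdorff topology, and there is a single constant $C>0$ (independent of $k$) with
\[
\Qc_{\xi_{n_k},\eta}\subset\Nbhd_C\!\left(\wcu(\eta)\cap\wcs(\zeta_{n_k})\right).
\]

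Next, using that $\wcs$ is a continuous foliation of $G/M_Q$ whose leaves are uniformly transverse to the smooth leaf $\wcu(\eta)$ (once $\eta$ is opposite to $\zeta'$), an application of the transverse-intersection continuity principle used in the proof of \cref{lem:continuityQ} gives $\wcu(\eta)\cap\wcs(\zeta_{n_k})\to\wcu(\eta)\cap\wcs(\zeta')$ in the pointed Hausdorff topology. I would then pass the uniform $C$-neighborhood containment to the limit by a diagonal extraction: given $x\in\Qc_{\xi,\eta}$, pick $x_k\in\Qc_{\xi_{n_k},\eta}$ with $x_k\to x$, choose $y_k\in\wcu(\eta)\cap\wcs(\zeta_{n_k})$ with $d(x_k,y_k)\le C$, and, using boundedness of $(y_k)$, extract a convergent subsequence $y_k\to y$ satisfying $d(x,y)\le C$ and $y\in\wcu(\eta)\cap\wcs(\zeta')$. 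This yields
\[
\Qc_{\xi,\eta}\subset\Nbhd_C\!\left(\wcu(\eta)\cap\wcs(\zeta')\right),
\]
while by definition $\Qc_{\xi,\eta}\subset\Nbhd_C(\wcu(\eta)\cap\wcs(\zeta))$. The uniqueness clause of \cref{prop:image-unique} then forces $\zeta=\zeta'$, which proves continuity.

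The main obstacle I anticipate is the second step: verifying that pointed Hausdorff convergence of $\wcs$-leaves descends to pointed Hausdorff convergence of their transverse intersections with the smooth leaf $\wcu(\eta)$. The $\wcs$ foliation is only transversally continuous, so one cannot invoke an implicit-function-type argument directly; however, the uniform transversality guaranteed by $\eta$ being opposite to the limiting $\zeta'$, together with continuity of the foliation in the sense used in \cref{lem:continuityQ}, should suffice. Everything else is a direct invocation of already-established results or a routine diagonal extraction.
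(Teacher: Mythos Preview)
Your argument is correct and takes a genuinely different route from the paper's proof. The paper argues directly: it picks biLipschitz flats $BF_n\subset\Qc_{\xi_n,\eta}$ and $BF_0\subset\Qc_{\xi_0,\eta}$ that are $\epsilon$-close on a large ball, invokes \cref{lem:quasiflat} and \cref{prop:geometric-orbit} to replace these by genuine flats $F_n\subset\wcu(\eta)\cap\wcs(\varphi(\xi_n))$ and $F_0\subset\wcu(\eta)\cap\wcs(\varphi(\xi_0))$, and then reads off that the center manifolds themselves converge, hence $\varphi(\xi_n)\to\varphi(\xi_0)$. Your approach instead extracts a subsequential limit $\zeta'$, passes the uniform $C$-containment $\Qc_{\xi_{n_k},\eta}\subset\Nbhd_C(\wcu(\eta)\cap\wcs(\zeta_{n_k}))$ to the limit, and then applies the uniqueness clause of \cref{prop:image-unique} to force $\zeta'=\zeta$. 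This is cleaner in that it avoids a second appeal to the quasiflat machinery; the paper's version is more constructive and yields an explicit rate of proximity on balls. Regarding your stated ``main obstacle'': in the symmetric-space setting the center manifolds $\wcu(\eta)\cap\wcs(\zeta)$ vary smoothly in $\zeta$ on the open Bruhat cell opposite to $\eta$ (they are orbits of conjugates of $Z_Q$), so the pointed Hausdorff convergence you need is immediate and does not require the general transverse-intersection lemma.
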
 

\begin{proof} Let $\xi_0\in G/Q$. We pick $\eta\in G/Q$ such that $(\xi_0,\eta)\in \mathcal U$, or equivalently $\mathcal Q_{\xi_0,\eta}\neq \varnothing$. If $(\xi_n)$ is a sequence converging to $\xi_0$ then $(\xi_n,\eta)\in \mathcal U$ for sufficiently large $n$ as $\mathcal U$ is an open set. By \cref{lem:continuityQ}, the sequence $\mathcal Q_{\xi_n,\eta}$ converges to $\mathcal Q_{\xi_0,\eta}$ in the pointed Hausdorff topology. Let $x_0\in \mathcal Q_{\xi_0,\eta}$. Then for every $\epsilon>0$ and for every $R>0$, there exists $n_0$ such that for every $n>n_0$,
\[d^{\Haus}(\mathcal Q_{\xi_n,\eta}\cap B(x_0,R), \mathcal Q_{\xi_0,\eta}\cap B(x_0,R))<\epsilon.\]
On the other hand, by \cref{prop:image-unique}, there is $C>0$ such that $\mathcal Q_{\xi_n,\eta}$ is contained in the $C$-neighborhood of $\wcu(\eta)\cap \wcs(\varphi(\xi_n))$, and $\mathcal Q_{\xi_0,\eta}$ is contained in the $C$-neighborhood of $\wcu(\eta)\cap \wcs(\varphi(\xi_0))$. In $\mathcal Q_{\xi_n,\eta}$ and $\mathcal Q_{\xi_0,\eta}$, we pick two biLipschitz flats $BF_n$ and $BF_0$ whose intersections with $B(x_0,R)$ are $\epsilon$-close. By \cref{lem:quasiflat} and \cref{prop:geometric-orbit}, when $\rho$ is close enough to $\rho_0$ so that the constant $L$ given there is close enough to $1$, the biLipschitz flat $BF_n$ is $C$-close to a flat $F_n$ in $\wcu(\eta)\cap \wcs(\varphi(\xi_n))$, and the biLipschitz flat $BF_0$ is $C$-close to a flat $F_0$ in $\wcu(\eta)\cap \wcs(\varphi(\xi_0))$. Thus, intersections of flats $F_n$ and $F_0$ with the ball $B(x_0,R)$ are $(C+\epsilon)$-Hausdorff close. In other words, center manifolds $\wcu(\eta)\cap \wcs(\varphi(\xi_n))$ and $\wcu(\eta)\cap \wcs(\varphi(\xi_0))$ contain flats that are uniformly close on large balls for sufficiently large $n$. 
As the backward endpoints are $\eta$ in each case, it follows that the center manifolds are close in the pointed Hausdorff topology.
Therefore, $\varphi(\xi_n)$ converges to $\varphi(\xi_0)$. This shows the continuity of $\varphi$.
\end{proof}

\begin{lemma} \label{lem:stab}
The map $\varphi_\rho$ tends to $\id$ uniformly as $\rho\to \rho_0$.
\end{lemma}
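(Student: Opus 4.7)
The plan is to argue by contradiction, combining the uniform control on $\til f$ from Proposition~\ref{prop:BMMforG/Q}, parts (3)--(5), with Proposition~\ref{prop:image-unique} and the elementary fact that two subsets of the symmetric space at bounded Hausdorff distance share their limit set at infinity. Suppose the conclusion fails: there exist $\epsilon_0 > 0$, $\rho_n \to \rho_0$, and $\xi_n \in G/Q$ with $d_{G/Q}(\varphi_{\rho_n}(\xi_n),\xi_n)\geq \epsilon_0$. By compactness of $G/Q$, pass to subsequences to obtain $\xi_n \to \xi_\infty$ and $\varphi_{\rho_n}(\xi_n) \to \zeta_\infty$ with $d_{G/Q}(\xi_\infty,\zeta_\infty)\geq \epsilon_0$. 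Using \cref{lem:opposites}, choose $\eta \in G/Q$ opposite to both $\xi_\infty$ and $\zeta_\infty$, and choose a basepoint $p\in X$ lying in the projection of the center leaf $\wcs(\xi_\infty)\cap\wcu(\eta)$. Openness of the oppositeness condition guarantees that $\eta$ is also opposite to $\xi_n$ and $\varphi_{\rho_n}(\xi_n)$ for $n$ large. The first key step is to locate points of $\Qc_{\xi_n,\eta}^{\rho_n}$ close to $\Cc_p(\xi_\infty)$: by Proposition~\ref{prop:BMMforG/Q}(3), $\til f_{\rho_n}(p,\xi_n)\to \pi_p^{-1}(\xi_\infty)=\Cc_p(\xi_\infty)\in\wcu(\eta)$, and by Proposition~\ref{prop:BMMforG/Q}(5) together with the $C^1$-regularity from Proposition~\ref{prop:BMMforG/Q}(2), the submanifold $\til f_{\rho_n}(X\times\{\xi_n\})$ is transverse to $\wcu(\eta)$ near $\Cc_p(\xi_\infty)$ with angle uniformly bounded below; the implicit function theorem then yields $x_n\in X$ close to $p$ with $\til f_{\rho_n}(x_n,\xi_n)\in\wcu(\eta)$, i.e. in $\Qc_{\xi_n,\eta}^{\rho_n}$.

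Now run the comparison. Let $\zeta_n^\sharp := \pi_{x_n}(\til f_{\rho_n}(x_n,\xi_n))$. Proposition~\ref{prop:BMMforG/Q}(3) applied at $x_n$ (uniformly close to $p$) gives $\zeta_n^\sharp\to \xi_\infty$, while Proposition~\ref{prop:BMMforG/Q}(4) yields that for every fixed $R>0$ and $n$ large, $\Qc_{\xi_n,\eta}^{\rho_n}\cap p^{-1}(B_R(x_n))$ is within Hausdorff distance $o_n(1)$ of $\wcs(\zeta_n^\sharp)\cap\wcu(\eta)\cap p^{-1}(B_R(x_n))$. On the other hand, Proposition~\ref{prop:image-unique} provides a uniform $C>0$ (depending only on $X$ and the neighborhood of $\rho_0$) such that $\Qc_{\xi_n,\eta}^{\rho_n}$ lies in the $C$-neighborhood of $\wcs(\varphi_{\rho_n}(\xi_n))\cap\wcu(\eta)$. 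Combining these two inclusions, on every ball of radius $R$ around $\til f_{\rho_n}(x_n,\xi_n)$ the center leaves $\wcs(\zeta_n^\sharp)\cap\wcu(\eta)$ and $\wcs(\varphi_{\rho_n}(\xi_n))\cap\wcu(\eta)$ have Hausdorff distance at most $C+o_n(1)$. Taking pointed Hausdorff limits along $n\to\infty$ (justified by continuity of the foliations $\wcs,\wcu$ and by \cref{lem:continuityQ}), the limit leaves $\wcs(\xi_\infty)\cap\wcu(\eta)$ and $\wcs(\zeta_\infty)\cap\wcu(\eta)$ are at Hausdorff distance at most $C$ on every ball in $X_{G/Q}$, hence globally. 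Projecting to $X$ via $p$, these become parallel sets asymptotic to the distinct Weyl chamber $Q$-faces $\xi_\infty$ and $\zeta_\infty$ at infinity. But two subsets of the symmetric space of noncompact type at bounded Hausdorff distance necessarily share their limit set in $\partial_\infty X$, forcing $\xi_\infty=\zeta_\infty$ and giving a contradiction.

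The main obstacle is the first step---producing $x_n$ close to $p$ with $\til f_{\rho_n}(x_n,\xi_n)\in\wcu(\eta)$. This requires a uniform quantitative persistence-of-transverse-intersections statement whose inputs are the $C^1$-regularity and continuous dependence on $\rho$ given by Proposition~\ref{prop:BMMforG/Q}(2), the uniform Grassmannian estimate Proposition~\ref{prop:BMMforG/Q}(5), and the openness of oppositeness which ensures a uniform transversal angle between $\wcs(\xi_n)$ and $\wcu(\eta)$ for $n$ large. Once this existence is secured, the remainder is a standard shadowing comparison combined with the fact that distinct center leaves have distinct limit sets at infinity, which is the geometric heart of the estimate.
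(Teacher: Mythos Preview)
Your argument is essentially correct and parallels the paper's in spirit, though organized by contradiction rather than directly. The paper fixes a basepoint $x_0$ in a fundamental domain, chooses for each $\xi$ the opposite face $\eta_{x_0,\xi}$, and proves a uniform existence claim (your ``first step'') via a Brouwer-degree argument in a local trivialization rather than via the implicit function theorem; it then traces a specific geodesic ray in the center direction to compare the based Weyl chamber $Q$-faces for $\xi$ and $\varphi_\rho(\xi)$ quantitatively on balls of growing radius. Your soft limit argument and the paper's explicit ray-tracing arrive at the same comparison of center leaves; the paper's route is more quantitative and self-contained, yours is cleaner once the existence step is granted.

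There is one genuine gap to patch. You assert that on balls of radius $R$ the two center leaves $\wcs(\zeta_n^\sharp)\cap\wcu(\eta)$ and $\wcs(\varphi_{\rho_n}(\xi_n))\cap\wcu(\eta)$ are at Hausdorff distance $\leq C+o_n(1)$, but your two inputs are both \emph{one-sided} containments of $\Qc_{\xi_n,\eta}^{\rho_n}$ in neighborhoods of the respective leaves (Lemma~\ref{lem:locallyclose} and Proposition~\ref{prop:image-unique}); from $\Qc\subset\Nbhd_{\eps_1}(L_1)$ and $\Qc\subset\Nbhd_{\eps_2}(L_2)$ one cannot directly compare $L_1$ and $L_2$. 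The cleanest fix is to drop $\zeta_n^\sharp$ entirely: apply Lemma~\ref{lem:continuity-of-transverse-intersection} with $M_n=\til f_{\rho_n}(X\times\{\xi_n\})$ and $N=\wcu(\eta)$ to get $\Qc_{\xi_n,\eta}^{\rho_n}\to\wcs(\xi_\infty)\cap\wcu(\eta)$ in the pointed Hausdorff topology; combined with the uniform containment $\Qc_{\xi_n,\eta}^{\rho_n}\subset\Nbhd_C(\wcs(\varphi_{\rho_n}(\xi_n))\cap\wcu(\eta))$ and the pointed Hausdorff convergence $\wcs(\varphi_{\rho_n}(\xi_n))\cap\wcu(\eta)\to\wcs(\zeta_\infty)\cap\wcu(\eta)$, passing to the limit yields the single inclusion $\wcs(\xi_\infty)\cap\wcu(\eta)\subset\Nbhd_C(\wcs(\zeta_\infty)\cap\wcu(\eta))$. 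That already suffices: any maximal flat in the first leaf lies in a bounded neighborhood of the second totally geodesic center leaf and hence (as in the proof of Lemma~\ref{lem:image-unique}) is contained in it, forcing $\xi_\infty=\zeta_\infty$. Your concluding sentence about ``two subsets at bounded Hausdorff distance share their limit set'' should be replaced by this one-inclusion version.
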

\begin{proof}
We will  use the subscript $\rho$ for  $\til f$ and $\mathcal Q$ to indicate the dependence on the action $\rho$. In particular, we will use the notations $\til f_\rho$ and $\mathcal Q_{\rho,\xi,\eta}=\til f_{\rho}(X \times \xi) \cap \wcu(\eta)$. Note that $\mathcal Q_{\rho_0,\xi,\eta}=\wcs(\xi)\cap \wcu(\eta)$ and $\varphi_{\rho_0}=\id$. 
 Then we have point-wise convergence of $\varphi_{\rho_n}$ to $\varphi_{\rho_0}$ for any sequence $\rho_n \to \rho_0$. Indeed, if $\rho_n \to \rho_0$, then \cref{lem:continuity-of-transverse-intersection} implies that $\varphi_{\rho_n}(\xi) \to \varphi_{\rho_0}(\xi)=\xi$. We will prove that the convergence is uniform in $\rho$, i.e. for any $\eps>0$, there exists a neighborhood of $\Uc = \Uc(\eps)$ of $\rho_0$ such that $\sup_{\xi \in G/Q} d(\varphi_{\rho}(\xi),\xi)<\eps$ for any $\rho \in U$.

We let $D$ be the chosen fundamental domain in the construction of $\til f_\rho$ in \cref{prop:BMMforG/Q} and \cref{prop:BM}. Let $x_0$ be a point in $D$. Let $d_{G/Q}$ be any Riemannian metric on $G/Q$. Since $G/Q$ is compact, the choice of a Riemannian metric on $G/Q$ will not affect the argument.

For every $\xi\in G/Q$, pick  $\eta_{x_0,\xi}\in G/Q$ such that the  Weyl chamber $Q$-face $\mathcal C_{x_0}(\eta_{x_0,\xi})$ based at $x_0$, is opposite to the  Weyl chamber $Q$-face $\mathcal C_{x_0}(\xi)$ based at $x_0$. Observe that $\wcs(\xi)\cap \wcu(\eta_{x_0,\xi})\neq \varnothing$. 

\begin{claim*} \label{claim}
There exist constants $\epsilon', d'>0$ such that for every $\epsilon<\epsilon'$, there exists a neighborhood $\Uc = \Uc(\eps)$ of $\rho_0$  such that: for every $\rho\in \Uc$, the set $\mathcal Q_{\rho,\xi,\eta_{x_0,\xi}} \neq \varnothing$  and contains a point of distance at most $d'\epsilon$ from $p^{-1}(x_0) \cap \wcs(\xi)$. Furthermore, the constants $\epsilon'$, $d'$ and the neighborhood $\Uc$ are independent of $x_0$ and $\xi$.
\end{claim*} 
\begin{proof}[Proof of Claim]
Set $\eta:=\eta_{x_0,\xi}$. Let $\bar{x_0} \in \wcs(\xi)\cap \wcu(\eta)$ be such that $p(\bar{x_0})=x_0$.
Let $m=\dim(X)$ and $k=\dim(G/Q)$. Pick a smooth local trivialization $\Psi_{\bar{x_0}}: U_{\bar{x_0}} \to (-1,1)^m \times (-1,1)^k$ where $U_{\bar{x_0}}$ is a neighborhood of $\bar{x_0}$. We can assume that $\Psi_{\bar{x_0}}(x_0)=0^{m+k}$.

In this local trivialization, $\wcs(\xi)$ and $\wcu(\eta)$ are graphs $G_1$ and $G_2$ of two smooth functions defined on $(-1,1)^m$. Moreover, $G_1$ and $G_2$  intersect transversely along the center manifold, whose dimension is $m-k$. Let $\pi_k: (-1,1)^m \to (-1,1)^k$ be the projection to the first $k$ coordinates.
Then we can choose smooth fiber-preserving coordinate change such that 
\[G_1=\set{(v,-\pi_k(v)): v \in (-1,1)^m}\] and 
\[G_2=\set{(v,\pi_k(v)): v \in (-1,1)^m}.\]
Then $G_1 \cap G_2=\set{(0^k,w,0^k): w \in (-1,1)^{m-k}}$. 

Observe that for any $\epsilon_1$ sufficiently small, there exist $\epsilon_1'$ such that $$\Nbhd_{\epsilon_1}(G_1)\cap \Nbhd_{\epsilon_1}(G_2)\subset \Nbhd_{\epsilon_1'}(G_1\cap G_2).$$
Now suppose $G_3$ is the graph of a continuous function defined on $(-1,1)^m$ and suppose that $G_3$ is $\eps_2$ close to $G_1$ for some $\eps_2>0$. In particular, this means that $G_3=\set{(v,-\pi_k(v)+q(v)): v \in (-1,1)^m}$ where $q: (-1,1)^m \to (-1,1)^k$ is a continuous function for which $\norm{q(v)}< \eps_2$. We claim that there exists $\delta<\eps_1$ such that, whenever $\eps_2<\delta$ , then $G_3 \cap G_2 \neq \varnothing$ and $G_3 \cap G_2$ contains a point that is $\eps_1'$ close to the image of $\bar{x_0}$ in this trivialization. 
 
To prove this claim, note that  $G_3\cap G_2\neq \emptyset$ if and only if $q(v)=2\pi_{k}(v)$ for some $v\in (-1,1)^m$. Suppose this claim is false. Then there is a map $h:(-1,1)^m\to S^{k-1}(1)$ to the unit sphere $S^{k-1}(1)$ given by 
\[h(v)=\frac{-q(v)+2\pi_{k}(v)}{\norm{-q(v)+2\pi_{k}(v)}}.
\]
Let us restrict $h$ to the subset $S:=S^{k-1}(4\eps_2) \times 0^{m-k} \subset (-1,1)^m$, which is homeomorphic with $S^{k-1}$.  When $\epsilon_2$ is sufficiently small, $h|_{S}$ is close, and thus homotopic, to the $\frac{1}{4\eps_2}$-homothety $S^{k-1}(4\eps_2)\to S^{k-1}(1)$. 

Thus $h|_{S}$ has degree one. This is impossible as $h$ extends to a map of the $4\epsilon_2$-ball in $\Rb^k$ to $S^{k-1}$ and hence must have degree zero. This shows that $G_3 \cap G_2 \neq \varnothing$. 

Moreover, the above arguments further show that in fact, there exists $(w,0^{m-k}) \in (-1,1)^m$ with $\norm{w}<4\eps_2$ for which $q(w,0)=2w$. In particular, this implies that $G_3 \cap G_2$ contains a point at distance at most $4\epsilon_2$ to $0^{m+k}$.

Now we will explain how this above discussion proves the claim. Consider a sufficiently small neighborhood $\Uc'$ of $\rho$. For $\rho \in \Uc'$, set $G_3=\Psi_{\bar{x_0}}(\til f_{\rho}(X \times \xi))$. Set $\eps':=\delta$. Given any $\eps<\eps'$, set $\eps_2:=\eps$. Then choose a neighborhood $\Uc= \Uc(\eps) \subset \Uc'$ of $\rho_0$ such that $G_3$ is $\eps_2$ close to $G_1$.  Then $\Psi_{\bar{x_0}}(\Qc_{\rho, \xi,\eta_{x_0,\xi}} \cap \: U_{\bar{x_0}})=G_3 \cap G_2$ is non-empty and contains a point at a distance at most $4 \eps_2$ from $0=\Psi_{\bar{x_0}}(\bar{x_0})$. Then $\Qc_{\rho, \xi,\eta_{x_0,\xi}}$ is non-empty and contains a point within a distance $d'\eps_2$ from $\bar{x_0}$ where $\eps=\eps_2<\delta(=\eps')$ and $d'$ is a constant that depends on the chart $\Psi_{x_0}$. Thus the first part of the claim holds. But as $G$ acts transitively on $X_{G/Q}$ by isometries,  we can pre-compose this fixed chart $\Psi_{\bar{x_0}}$ with appropriate isometries and assume that the constants $\epsilon', d'$ and the neighborhood $\Uc$ are independent of $x_0$ and $\xi$.
\end{proof}

Now we use the Claim to prove the lemma. First fix $\eps',d'$ as in the Claim above. 

\cref{prop:BMMforG/Q} part (3) implies that for every $\eps> 0$, there is an open neighborhood $U = U(x_0,\xi)$ of $\rho_0$ such that for every $\rho\in U$, we have 
\[\sup_{\xi \in G/Q} d_{G/Q}(\pi_{x_0}(\til f_\rho(x_0,\xi)),\xi)<\eps. \]

Choose $\eps <\eps'$ small enough  so that $d_{G/Q}(\pi_{x_0}(\til f_\rho(x_0,\xi),\xi)<\eps$ implies 
that $\wcs(\xi)$ and $\wcs(\pi_{x_0}(\til f(x_0,\xi)))$ are sufficiently close on a ball. More precisely, for every $R>0$ and $\epsilon>0$, there exists a neighborhood $U$ of $\rho_0$ such that for every $\rho\in U$,
\[ \sup_{\xi \in G/Q} d_{\Haus} \left( \wcs(\xi)\cap p^{-1}(B_R(x_0)),\wcs \left( \pi_{x_0}(\til f_\rho(x_0,\xi)) \right) \cap p^{-1}(B_R(x_0) \right) <\eps.\]

On the other hand, \cref{prop:BMMforG/Q} part (4) implies that for any $\eps, R >0$, there is a neighborhood $U$ of $\rho_0$ such that for every $\rho \in U$,
\[ \sup_{y\in \bar{B_{R}(x_0)}} \sup_{\xi \in G/Q} d \left( \til f_\rho(y,\xi),\wcs \left( \pi_{x_0}(\til f_\rho(x_0,\xi)) \right)  \cap p^{-1}(y)\right)<\eps.\]

Thus triangle inequality implies that for any $\eps,R>0$, there exists a neighborhood $U$ of $\rho_0$ such that for any $\rho \in U$, 
\begin{align} 
\label{eqn:uniform_fiberwise_comparison_f_tilde}
\sup_{y\in \bar{B_R(x_0)}} ~\sup_{\xi \in G/Q} ~ d \left( \til f_\rho(y,\xi),\wcs(\xi)\cap p^{-1}(y) \right) <2\eps < 2 \eps'. 
\end{align}

It follows that $\til f_\rho(\bar{B_R(x_0)},\xi)\cap \wcu(\eta_{x_0,\xi})$ is contained in a $2\epsilon$-neighborhood of $\wcs(\xi)\cap \wcu(\eta_{x_0,\xi}))$.
On the other hand, by \cref{prop:image-unique},  $\til f_\rho(X\times\{\xi\})\cap \wcu(\eta_{x_0,\xi})$ is contained in a $C$-neighborhood of $\wcs(\varphi_\rho(\xi))\cap \wcu(\eta_{x_0,\xi})$, where $C$ is independent of $\rho$ in a sufficiently small neighborhood of $\rho_0$.

We fix $R>4C$ and $\eps<\eps'$. Let $U$ be the neighborhood of $\rho_0$ as in \cref{eqn:uniform_fiberwise_comparison_f_tilde}. By further shrinking $U$ if necessary, we can assume that $L$ in \cref{prop:geometric-orbit} is sufficiently close to 1. We identify $\wcu(\eta_{x_0,\xi})$ with the symmetric space $X$. All considered sets and points are subsets and points of $\wcu(\eta_{x_0,\xi})$. Thus they can be considered as subsets and points of $X$. By the Claim  above there exists $x_0'\in \mathcal Q_{\rho,\xi,\eta_{x_0,\xi}}$ of distance at most $d'\epsilon$ from $x_0$. Thus $x_0$ is in a $(C+d'\epsilon)$-neighborhood of the center manifold $\wcs(\varphi_\rho(\xi))\cap \wcu(\eta_{x_0,\xi})$. 

We denote by $\gamma_{x_0,\xi}$ the geodesic ray from $x_0$ in the center direction of the Weyl chamber $Q$-face $\xi$ based at $x_0$. We let $y_R\in \gamma_{x_0,\xi}$ be a point of distance $\frac R 2$ from $x_0$. Let $z_R\in \mathcal Q_{\rho,\xi,\eta_{x_0,\xi}}$ that is at most $d'\epsilon$ away from $y_R$. We let $BF\subset \mathcal Q_{\rho,\xi,\eta_{x_0,\xi}}$ be an $L$-biLipschitz flat  containing $z_R$. Let $F$ be the unique flat in $X$ that is at most $C$-Hausdorff distance from $BF$, see \cref{lem:quasiflat}. The flat $F$ is asymptotic to  $\eta_{x_0,\xi}$ and $\varphi_\rho(\xi)$ as opposite Weyl chamber $Q$-faces. 

Let $w_R\in F$ be a point of distance at most $C$ from $z_R$.  Recall that $x_0'\in \mathcal Q_{\rho,\xi,\eta_{x_0,\xi}}$ is at most $d'\epsilon$ far away from $x_0$. Let $x_0''\in F$ be a point of distance at most $C$ from $x_0'$. We denote by $\gamma_{x_0,\eta_{x_0,\xi}}$ and $\gamma_{x_0'',\eta_{x_0,\xi}}$ the geodesic rays in the center directions of Weyl chamber $Q$-faces $\eta_{x_0,\xi}$ based at $x_0$ and $x_0''$, respectively. We note that the rays $\gamma_{x_0,\eta_{x_0,\xi}}$ and $\gamma_{x_0'',\eta_{x_0,\xi}}$ are at  Hausdorff distance at most $C+d'\epsilon$. Let $\gamma_{w_R,\eta_{x_0,\xi}}$ be the geodesic ray initiating from $w_R$ in the center direction of Weyl chamber $Q$-face $\eta_{x_0,\xi}$. The two geodesic rays $\gamma_{y_R,\eta_{x_0,\xi}}$ and $\gamma_{w_R,\eta_{x_0,\xi}}$ are $(C+d'\epsilon)$-Hausdorff close. It follows that, in the flat $F$, the two parallel geodesic rays $\gamma_{w_R,\eta_{x_0,\xi}}$ and $\gamma_{x_0'',\eta_{x_0,\xi}}$ have distance at most $2(C+d'\epsilon)$. Hence, the geodesic segment $[x_0'', w_R]$ is contained in a $2(C+d'\epsilon)$-neighborhood of geodesic rays initiating from $x_0''$ in the center direction of $\varphi_\rho(\xi)$. Therefore, the intersections of the Weyl chamber $Q$-faces $\xi$ and $\varphi_\rho(\xi)$ based at $x_0$ with the ball radius $\frac R 3$ are $4(C+d'\epsilon)$-Hausdorff close. We note that $R$ is independent of $\xi$.
As $R>4C$ is arbitrary, we can let $R \to \infty$ when the neighborhood $U$ of $\rho_0$ shrinks further. Thus as $\rho\to \rho_0$, the semiconjugacy $\varphi_\rho$ converges uniformly to $\varphi_{\rho_0}=\text{id}$.
\end{proof}

\section{Applications of the Main Theorem: Differentiable and Lipschitz  Local Rigidity}\label{sec:Applications}

In this section, we give quick applications of our main result to local rigidity of projective actions of uniform lattices in higher rank in both the  Lipschitz and $C^1$-topology. 
This yields well-known results by Kanai \cite{Kanai} and Katok-Spatzier \cite{KatokSpatzier97} for the $C^1$-topology and a recent one by  Kapovich-Kim-Lee \cite{KapovichKimLee19} in the Lipschitz topology. The applications are deduced from  \cref{thm:rigidity_G/Q} and the fact that the $\rho_0$ action of $\Gamma$ on $G/Q$ by left multiplication is a uniformly expanding action.

We first prove \cref{cor:LipRigidityIntro} which we restate here from the introduction. This recovers the result of \cite{KapovichKimLee19} for Lipschitz neighborhoods of the standard action.

\begin{repcorollary}{cor:LipRigidityIntro}
    Let $\Gamma<G$ be a uniform lattice in a connected linear semisimple Lie group $G$ of higher rank and without compact factors. Let $Q$ be a parabolic subgroup of $G$, and let $\rho_0$ be the action of $\Gamma$ on $G/Q$ by left translation. Then there is a neighborhood $U$ of $\rho_0$ in Lipschitz topology such that every $\rho\in U$ is $C^0$-conjugate to $\rho_0$.
\end{repcorollary}
\begin{proof}
    By \cref{thm:rigidity_G/Q}, there is a neighborhood $U$ of $\rho_0$ such that every $\rho\in U$, there exists a semi-conjugacy $\varphi_\rho$. \cref{prop:removesemi} shows that $\varphi_\rho$ is a conjugacy.
\end{proof}

If we restrict to $C^1$ neigborhoods of the standard action, we obtain the following corollary, which was proved by Katok and Spatzier \cite[Theorem 17]{KatokSpatzier97} and by Kanai under additional assumptions \cite{Kanai}. Similar assertions have been conjectured for semisimple Lie groups of rank 1 but remain unresolved.

\begin{repcorollary}{cor:ks}
    Let $\Gamma<G$ be a uniform lattice in a connected linear semisimple Lie group $G$ of higher rank and without compact factors. Let $Q$ be a parabolic subgroup of $G$, and let $\rho_0$ be the action of $\Gamma$ on $G/Q$ by left translation. Then there is a neighborhood $U$ of $\rho_0$ of smooth actions that are close to $\rho_0$ in $C^1$-topology such that every $\rho\in U$ is $C^\infty$-conjugate to $\rho_0$.
\end{repcorollary}
\begin{proof}
By \cref{thm:rigidity_G/Q}, there is such a neighborhood $U$ of $\rho_0$ such that every $\rho\in U$ is semi-conjugate to $\rho_0$. We claim that the semi-conjugacy maps are actually conjugacies. Indeed, this follows immediately from \cref{prop:removesemi}.

Now apply \cite[Theorem 1.2]{GorodnikSpatzier18} to the $C^0$ conjugacies to conclude that the conjugacies are $C^\infty$.
\end{proof}

\section{Actions on the Geodesic Boundary}
\label{sec:action_on_geod_bdry}
While in rank one symmetric spaces, geodesic boundaries coincide with Furstenberg boundaries, the situation in higher rank is different. The geodesic boundary of a higher rank symmetric space $X$ is diffeomorphic to a sphere $S^{\dim(X)-1}$, and always has dimension strictly larger than the dimension of any Furstenberg boundary. Our \cref{thm:main_thm_lattice} shows that local semi-rigidity hold for standard actions on Furstenberg boundaries. However, we now prove \cref{thm:geod_bdy_not_rigid} which shows that local semi-rigidity fails for standard actions on geodesic boundaries. First let us recall the statement.

\begin{reptheorem}{thm:geod_bdy_not_rigid}
    Let $\Gamma$ be a lattice for a  higher rank symmetric space of noncompact type $X$, not necessarily  uniform. Then the standard action of $\Gamma$ on $\partial_\infty X$ is not locally semi-rigid among $C^k$ actions for any $k\in [0,\infty]$.
\end{reptheorem}
Before we start to construct perturbed actions that are not semi-conjugate to the standard ones, let us explain the structure of geodesic boundaries of higher rank symmetric spaces. 

The geodesic boundary $\partial_\infty X$ of $X$ has the structure of a spherical building. It is the union of closed Weyl chamber simplices of dimension $\op{rk}(X)-1$ where $\op{rk}(X)$ is the rank of the symmetric space. The set of  Weyl chamber simplices can be identified with the Furstenberg boundary $G/P$, where $G$ is a semisimple Lie group associated to $X$ and $P$ is a minimal parabolic subgroup of $G$. On $\partial_\infty X$ there is a visual topology and a Tits metric. All Weyl chamber simplices are isometric with respect to the Tits metric. We fix a model Weyl chamber $W$. Then there is a continuous map with respect to visual topology $\phi:\partial_\infty X\to W$ such that $\phi$ is an isometry with respect to the Tits metric on each Weyl chamber simplex. If we identify $W$ with a Weyl chamber in $\partial_\infty X$ and identify $\partial_\infty X= \cup_{g\in G}gW$, then the map $\phi$ coincides with translating points in $\partial_\infty X$ into $W$ by elements of $G$.
Indeed, $G$ acts transitively on the Weyl chambers with stabilizer $P$, and $P$ itself acts by the identity on $W$.

On $G/P$, we let $\mu$ be a  Haar measure.  The action of $G$ on $G/P$ preserves the measure class of $\mu$. We thus have the following smooth Radon-Nikodym derivative cocycle $c:G\times G/P\to \mathbb R$, defined by
\[c(g,x)=\frac{dg_*\mu}{d\mu}(x).\]
Since $G$ does not preserve a probability measure on $G/P$, this cocycle is not trivial.
Note also that for every $\alpha\in [0,+\infty)$, $c^\alpha$ is also a smooth cocycle.
\vspace{.2em}

\subsection*{Construction of perturbed actions.} We construct  new $G$-actions with the following properties:
\begin{enumerate}
    \item each $g \in G$ maps each closed Weyl chamber simplex to a closed Weyl chamber simplex by a homeomorphism ($C^k$-diffeomorphism resp.),
    \item the induced action on $G/P$ is the standard action $\rho_0$ of $G$ on $G/P$.
\end{enumerate}
\vspace{.2em}

We thus only need to define how $G$ acts on the model Weyl chamber simplex $W$. For this, first we choose a center $w\in \op{int}(W)$, and pick a continuous homeomorphism $\tau: W\to \mathbb R^{\op{rk}(X)-1}\cup \partial_\infty \mathbb R^{\op{rk}(X)-1}$ with $\tau(w)=0$. For every $\delta\neq 0$, we let $D_\delta:\mathbb R^{\op{rk}(X)-1} \to \mathbb R^{\op{rk}(X)-1}$ be the linear dilation by factor $\delta$. It is clear that $D_\delta$ extends to a homeomorphism on $\mathbb R^{\op{rk}(X)-1}\cup \partial_\infty \mathbb R^{\op{rk}(X)-1}$ that fixes $\partial_\infty \mathbb R^{\op{rk}(X)-1}$ pointwise. By pulling back by $\tau$, i.e. the map $\tau^*D_{\delta}=\tau^{-1}\circ D_{\delta}\circ \tau$, we obtain a homeomorphism on $W$ that fixes $w$ and $\partial W$ pointwise. Since points on $\partial W$ are fixed, $\tau^*D_{\delta}$ extends to a homeomorphism, still denoted by $\tau^*D_{\delta}$, from $W \cup \partial W$ into itself by fixing every point in $\partial W$ and $w$.

Now let $g\in G$ and $\mathcal C\subset \partial_\infty X$ be a Weyl chamber. We define the homeomorphism $\kappa_g^\alpha: \mathcal C\to g\mathcal C$ such that $$\phi\circ \kappa_g^\alpha=(\tau^*D_{c(g,\mathcal C)^\alpha})\circ \phi,$$ where $\mathcal C$ here is regarded as an element of $G/P$. In other words, $\kappa_g^\alpha$ from $\mathcal C$ to $g\mathcal C$ is just the pulling back of $\tau^*D_{c(g,\mathcal C)^\alpha}$ by the model map $\phi$. 

Explicitly, for $g\in G$, the map $\kappa_g^\alpha:\partial_\infty X\to \partial_\infty X$ defined as follows: if $x\in hW$ for an $h\in G$, then
\[\kappa_g^\alpha(x)=gh\cdot (\tau^*D_{c(g,hW)^\alpha}(h^{-1}\cdot x))=gh\cdot (\tau^*D_{c(g,hW)^\alpha}(\phi (x))).\]
To extend $\kappa_g^\alpha$ to all of $\partial_\infty X$,  note that $\kappa_g^\alpha$ agrees with the standard action on Weyl chamber faces.   Indeed, the map  $\tau^*D_{c(g,hW)^\alpha}$ is the identity on the Weyl chamber faces.  This matters since the  cocycle $c(g,hW)$ is not constant on the star of a face.
From this it follows easily  that $\kappa_g^\alpha$ is a homeomorphism.

For any fixed $k\in [0,\infty]$, we observe that, for a good choice of $\tau$, the map $\tau^*D_{\delta}$ is even $C^k$ and agrees with the standard action on $\partial W$ up to order $k$. Hence the action $g\mapsto \kappa_g^\alpha$ is also $C^k$.

\begin{remark} 
\label{rem:semi}
We can use  flows other than the $D_{\delta}$ on the model Weyl chamber to define actions of $G$ on $G/P$ as long as the flows are the identity on the Weyl chamber faces.  We suspect that such flows could lead to more semi-conjugacy classes. 
\end{remark}

\begin{proof}[{\bf Proof of Theorem \ref{thm:geod_bdy_not_rigid}}]
    Fix $k\in[0,\infty]$.
    The map $\rho_\alpha: g\mapsto \kappa_g^\alpha\in \op{Diff}^k(\partial_\infty X)$ constructed above defines an action of $G$ on $\partial_\infty X$ by $C^k$-diffeomorphisms. Furthermore, when $\alpha$ varies from 0 to $+\infty$, we obtain a continuous deformation of the standard action. The actions of $\Gamma$ are just the restrictions of $G$-actions to $\Gamma$. 

    We prove that for $0\neq \alpha$, then the action $\rho_{\alpha}$ is not semi-conjugate to $\rho_0$. Suppose, for the sake of contradiction, that there exists a continuous and surjective map $\varphi:\partial_\infty X\to \partial_\infty X$ such that $\rho_{0}\circ \varphi =\varphi \circ \rho_{\alpha}$. We first need the following lemma.

\begin{lemma}\label{lem:phi_const_C}
Let $\mathcal C$ be a Weyl chamber such that there exists $\gamma\in \Gamma$ stabilizing $\mathcal C$ with $c(\gamma, C)\neq 1$. Then $\phi(\varphi(\mc{C}))=\{\phi(\varphi(c))\}$  where $c$ is the center of $\mathcal C$.
\end{lemma}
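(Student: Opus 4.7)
The plan is to combine the contracting dynamics of $\rho_\alpha(\gamma)$ on $\mathcal{C}$ with the $G$-invariance of the type map $\phi$. The decisive property is that $\phi\circ g=\phi$ for every $g\in G$, which follows from the paragraph in the text where $\phi$ is defined: it is given by translating points into $W$ by elements of $G$, and $P$ (the stabilizer of $W$) acts as the identity on $W$.

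First, I would verify that the unperturbed action $\rho_0(\gamma)$ fixes $\mathcal{C}$ pointwise. Since $\gamma$ stabilizes $\mathcal{C}$, for every $x\in\mathcal{C}$ one has $\gamma\cdot x\in\mathcal{C}$ and $\phi(\gamma x)=\phi(x)$; but $\phi|_{\mathcal{C}}\colon\mathcal{C}\to W$ is a bijection, so $\gamma x=x$. Next, I would identify $\rho_\alpha(\gamma)|_{\mathcal{C}}$ with a dilation. Writing $\mathcal{C}=hW$, the definition of $\kappa_\gamma^\alpha$ gives
\[
\rho_\alpha(\gamma)(x)=\gamma h\cdot\tau^*D_{c(\gamma,\mathcal{C})^\alpha}(h^{-1}x)=h\,(h^{-1}\gamma h)\,\tau^*D_{c(\gamma,\mathcal{C})^\alpha}(h^{-1}x);
\]
since $h^{-1}\gamma h\in P$ and $P$ fixes $W$ pointwise, this simplifies to $\phi\circ\rho_\alpha(\gamma)=\tau^*D_{c(\gamma,\mathcal{C})^\alpha}\circ\phi$ on $\mathcal{C}$. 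The cocycle identity together with $\gamma\mathcal{C}=\mathcal{C}$ yields $c(\gamma^n,\mathcal{C})=c(\gamma,\mathcal{C})^n$, so iterating gives $\phi\circ\rho_\alpha(\gamma^n)=\tau^*D_{c(\gamma,\mathcal{C})^{\alpha n}}\circ\phi$ on $\mathcal{C}$.

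Since $c(\gamma,\mathcal{C})\neq 1$ and $\alpha\neq 0$, the dilation factor is not $1$. After replacing $\gamma$ by $\gamma^{-1}$ if necessary (and using $c(\gamma^{-1},\mathcal{C})=c(\gamma,\mathcal{C})^{-1}$), we may assume $c(\gamma,\mathcal{C})^\alpha<1$. Under this normalization $D_{c(\gamma,\mathcal{C})^{\alpha n}}$ contracts $\mathbb R^{\op{rk}(X)-1}$ to $0=\tau(w)$, so $\rho_\alpha(\gamma^n)(x)\to c$ for every $x\in\op{int}(\mathcal{C})$.

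To conclude, take $x\in\op{int}(\mathcal{C})$. The semi-conjugacy relation and continuity of $\varphi$ give
\[
\rho_0(\gamma)^n\varphi(x)=\varphi\bigl(\rho_\alpha(\gamma^n)x\bigr)\xrightarrow[n\to\infty]{}\varphi(c).
\]
Applying the $G$-invariant continuous map $\phi$ then yields
\[
\phi(\varphi(x))=\phi\bigl(\rho_0(\gamma)^n\varphi(x)\bigr)\xrightarrow[n\to\infty]{}\phi(\varphi(c)),
\]
whence $\phi(\varphi(x))=\phi(\varphi(c))$ for every $x\in\op{int}(\mathcal{C})$, and by continuity of $\phi\circ\varphi$ and density of $\op{int}(\mathcal{C})$ in $\mathcal{C}$, for every $x\in\mathcal{C}$. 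The main obstacle is really only bookkeeping: one must carefully unwind the definition of $\kappa_\gamma^\alpha$ on a chamber that is stabilized (not moved) by $\gamma$, using that $P$ acts trivially on $W$ to turn the formula into a genuine $\phi$-conjugate of a Euclidean dilation; once that is in place, the $G$-invariance of $\phi$ immediately collapses the forward orbit under $\rho_0(\gamma)$.
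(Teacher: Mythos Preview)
Your proof is correct and follows essentially the same approach as the paper's: both use that $\rho_\alpha(\gamma)$ contracts $\op{int}(\mathcal C)$ to the center $c$ (after possibly replacing $\gamma$ by $\gamma^{-1}$), combine this with the semi-conjugacy relation to get $\rho_0(\gamma^n)\varphi(x)\to\varphi(c)$, and then apply the $G$-invariance of $\phi$ to conclude $\phi(\varphi(x))=\phi(\varphi(c))$. Your write-up is simply more explicit in unwinding the definition of $\kappa_\gamma^\alpha$ and in invoking the cocycle identity, whereas the paper states the fixed-point and contraction properties of $\rho_\alpha(\gamma)$ on $\mathcal C$ more tersely by appealing directly to the construction.
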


\begin{proof}
Let $c\in \mc{C}$ be the center, namely the point such that $\phi(c)=w\in W$. For all $\alpha\in[0,\infty)$, by construction $\rho_\alpha(\gamma)$ stabilizes $\mc{C}$ since it has the same quotient action on $G/P$ as $\rho_0$. Moreover $c$ and $\partial\mc{C}$ are pointwise fixed by both $\rho_\alpha(\gamma)$ and $\rho_0(\gamma)$. Hence the same conclusions hold for $\rho_\alpha(\gamma^n)$ for $n\in \Z$. 

For any other point $x\in \op{int}(\mc{C})$, after possibly swapping $\gamma$ with $\gamma^{-1}$, we have $\lim_{n\to\infty}\rho_\alpha(\gamma^n)(x)=c$. Hence by continuity 
$\varphi(c)=\lim_{n\to\infty}\varphi(\rho_\alpha(\gamma^n)x) =\lim_{n\to\infty}\rho_0(\gamma^n)\varphi(x)$. It follows that,
\[\phi \circ \varphi(c)=\lim_{n\to\infty}\phi \circ \rho_0(\gamma^n)\varphi(x)=\phi \circ \varphi(x),\]
for every $x\in \op{int}(\mathcal C)$. By continuity of $\phi\of\varphi$, $\phi(\varphi(\mc{C}))=\{\phi(\varphi(c))\}$.
\end{proof}

We continue the proof of Theorem \ref{thm:geod_bdy_not_rigid}. Let $\mathcal C$ be a Weyl chamber as in \cref{lem:phi_const_C}. Such $\mathcal C$ and associated $\gamma$ always exist by work of Prasad and Raghunathan \cite[Theorem 2.8]{Prasad-Raghunathan1972}.
Then for every $\delta\in \Gamma$, $\delta\cdot \mathcal C$ also satisfies the assumption of \cref{lem:phi_const_C}. By the minimality of $\Gamma$-action on $G/P$, \cref{lem:phi_const_C} holds for every Weyl chamber.

Let $\mathcal C_1$ and $\mathcal C_2$ be two adjacent Weyl chambers and let $c_1$ and $c_2$ be their centers. We have that $\phi(\varphi(\mc{C}_1))=\{\phi(\varphi(c_1))\}$ and $\phi(\varphi(\mc{C}_2))=\{\phi(\varphi(c_2))\}$. Since $\varphi(\mc{C}_1)$ and $\varphi(\mc{C}_2)$ have non-empty intersection, the point $\phi(\varphi(c_1))$ coincides with $\phi(\varphi(c_2))$.

Note that for any pair of Weyl chambers, there exists a chain (gallery) of Weyl chambers from one to the other such that any two consecutive chambers in the chain are adjacent. It follows that there exists $z\in W$ such that $\phi(\varphi(\mathcal C))=\{z\}$ for every Weyl chamber $\mathcal C$. Since $\varphi$ is surjective, we obtain $\phi(\partial_\infty X)=\phi(\cup_{\mathcal C}\varphi(\mathcal C))=\{z\}$, a contradiction. Therefore, the action $\rho_\alpha$ is not semiconjugate to $\rho_0$.
\end{proof}

\appendix
\section{Barycenters and Estimates}\label{sec:derivative_barycenter}
For the rest of this section, let us fix a compact, connected $C^k$ manifold $F$ where $k \geq 3$. We will study how the barycenter maps on $F$ vary as we vary measures and Riemannian metrics on $F$ along a parametrized family.

\subsection{Two lemmas from differential geometry}
The goal of this section is to establish two basic lemmas that will be used in the next section for estimating derivatives of the barycenter map. In this section, assume that $F$ is equipped with a Riemannian metric $g$ with associated norm $\norm{\cdot}$. We will denote by $\nabla$ the Riemannian connection on $F$ and by $D$ the covariant derivative operator, namely if $z \in F$ and $X, Y$ are smooth vector fields on $F$, then $D_zY(X)=(\nabla_X Y)(z)$. Moreover, if $f:F \to \Rb$ is a smooth function, then $\nabla f$ is the vector field dual to the 1-form $Df$, i.e. for any $v \in T_zF$, $g(\nabla_zf,v)=D_zf(v)$.

\begin{lemma}
\label{lem:geom_lemma_1}
Suppose the sectional curvatures of $F$ are bounded by $-a^2\leq \kappa_F\leq b^2$ for $a,b\geq 0$. For any point $z\in B(x_0,\frac{\pi}{2b})$, we have
\[
D_{x=x_0}d(x,z)\tensor \nabla_{x=x_0}d(x,z)+d(x_0,z)D_{x=x_0}\nabla_x d(x,z)=I+T,
\]
where $I$ is the identity and $T$ is a $(1,1)$-tensor with $\norm{T}\leq \max\set{\frac{a^2}{3}, \frac{b^2}{2}} d(x_0,z)^2$.
\end{lemma}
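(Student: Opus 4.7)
The plan is to recognize the left-hand side as the Hessian at $x_0$ of the squared distance function $f(x) := \tfrac{1}{2}d(x,z)^2$, and then to analyze this Hessian via the standard radial/tangential splitting in geodesic polar coordinates centered at $z$, combined with the Hessian comparison theorem for distance functions. Concretely, in a normal neighborhood of $x_0$ containing $z$ (which exists since $d(x_0,z)<\pi/(2b)$ is below the conjugate radius guaranteed by $K\le b^2$ via Rauch), the function $x\mapsto d(x,z)$ is smooth and a Leibniz computation gives $\nabla_x f = d(x,z)\,\nabla_x d(x,z)$, so the left-hand side of the lemma is exactly $\op{Hess}_{x_0} f = D_{x_0}\nabla_x f$.

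Next, I would split $T_{x_0}F = \R\,\nabla_{x_0} d(\cdot,z)\;\oplus\;\nabla_{x_0} d(\cdot,z)^\perp$ and compute each summand. Since $\|\nabla d\|\equiv 1$, differentiating gives $g(\nabla_v\nabla d,\nabla d)=0$, so $D_{x_0}\nabla d$ vanishes on the radial line and preserves its orthogonal complement. On the radial line the Leibniz formula collapses to the identity (this is essentially the Gauss lemma), so the radial eigenvalue of $\op{Hess}_{x_0} f$ equals $1$. For $v\perp \nabla_{x_0}d$, only the second term contributes and $\op{Hess}_{x_0} f(v) = r\,S(v)$, where $r := d(x_0,z)$ and $S := \nabla\nabla d$ is the shape operator of the distance sphere of radius $r$ centered at $z$. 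The Hessian comparison theorem for distance functions then yields, as symmetric endomorphisms of $\nabla_{x_0} d^{\perp}$,
\[
 b\cot(br)\,I \;\leq\; S \;\leq\; a\coth(ar)\,I
\]
for $r < \pi/(2b)$. Multiplying by $r$ and subtracting $I$, the perpendicular eigenvalues of $T := \op{Hess}_{x_0} f - I$ lie in the interval $[\,br\cot(br)-1,\; ar\coth(ar)-1\,]$, while the radial eigenvalue of $T$ vanishes.

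It then remains to bound these two scalar quantities. Differentiating the auxiliary functions $(1+u^2/3)\sinh u - u\cosh u$ and $u\cos u - (1-u^2/2)\sin u$, each of which vanishes at $u=0$, one finds the derivative is nonnegative on the relevant interval (both reduce, after one differentiation, to $u\sinh u$ or $(u^2/2)\cos u$ up to a positive factor). This yields the elementary inequalities $u\coth u \leq 1 + u^2/3$ for all $u\geq 0$ and $u\cot u \geq 1 - u^2/2$ for $u\in[0,\pi/2)$. Applied with $u=ar$ and $u=br$, these give $|\lambda(T)|\leq \max\{a^2/3,\,b^2/2\}\, r^2$ on $\nabla_{x_0} d^{\perp}$, and the same bound trivially on the radial line, whence $\|T\|\leq \max\{a^2/3,b^2/2\}\, d(x_0,z)^2$. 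The only real obstacle I anticipate is bookkeeping around the sign convention in the comparison theorem and verifying the two scalar inequalities; the underlying geometric picture is the familiar space-form calculation, which gives perpendicular Hessian eigenvalues $br\cot(br)$ and $ar\coth(ar)$ in constant curvature $b^2$ and $-a^2$, and the general case is sandwiched between these by Rauch.
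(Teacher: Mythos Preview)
Your proposal is correct and follows essentially the same approach as the paper: both split into the radial direction (where the expression is identically $I$) and the tangential direction (where it equals $r$ times the shape operator/second fundamental form of the distance sphere), then invoke the same comparison bounds $b\cot(br)\,I \le II \le a\coth(ar)\,I$ and the same scalar inequalities $u\coth u \le 1+u^2/3$ and $u\cot u \ge 1-u^2/2$. Your repackaging of the left-hand side as $\op{Hess}_{x_0}(\tfrac12 d(\cdot,z)^2)$ is a clean conceptual framing, but the content is identical to the paper's proof.
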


\begin{proof}
The $(1,1)$-form $D_{x=x_0}d(x,z)\tensor \nabla_{x=x_0}d(x,z)$ has norm at most $1$ since one term  is the gradient of the distance function, and the other is its dual.  It is equal to $1$ precisely in the radial direction at $x_0$ pointing away from $z$, and vanishes on the orthogonal subspace.  

Since the points  $z$ belong to a convex ball centered at $x_0$. Then  the second fundamental form $D_{x=x_0}\nabla_x d(x,z)$ of the sphere $S(z,d(x_0,z))$ at the point $x_0$ is positive semidefinite, and vanishes only in the radial direction $\nabla_{x=x_0}d(x,z)$.

Indeed, the $(1,1)$-tensor $d(x_0,z)D_{x=x_0}\nabla_{x} d(x,z)$
is zero in the radial direction and in the tangent directions it is $d(x_0,z)\cdot II(x_0)$ where $II(x_0):T_{x_0}S(z,d(x_0,z))\to T_{x_0}S(z,d(x_0,z))$ is the second fundamental form of the sphere $S(z,d(x_0,z))$ at the point $x_0$.

If the sectional curvatures are bounded above by $b^2$ for $b\geq 0$, then by comparison geometry (\cite[Theorem 27]{Petersen16}), we may estimate the second fundamental form $II(y(t))$ of $S(z,t)$ at the point $y(t)$ by,
\[
a \coth(a t) I\geq II(y(t))\geq b \cot(b t) I.
\]
where the inequality is as positive definite forms and $II(y(t))$ is positive definite for $t<\frac{\pi}{2 b}$. We have the expansions 
\[
b t \cot(b t) =1- b^2 t^2/3 + O((b t)^4)\quad \text{and}\quad a t\coth(a t)=1+a^2 t^2/3 +O((at)^4).
\]
Moreover, for all $t>0$, $a t\coth(a t)-1<\frac{a^2}{3} t^2$ and for all $0<t<\frac{\pi}{2 b}$, we have $b t \cot(b t)-1 > -\frac{b^2}{2} t^2$.
Choosing $t=d(x_0,z)$ we thus have the expansion,
\[
d(x_0,z)\cdot II(x_0)= I  + T'
\]
where $\norm{T'} \leq \max\set{\frac{b^2}{2}, \frac{a^2}{3}} d(x_0,z)^2$. Combining these two terms establishes the lemma.
\end{proof}

 If $x, z \in F$ are connected by a unique minimal geodesic,  we denote  $\lVert^{x}_{z}: T_{z}F \to T_{x}F$ the parallel translation along this geodesic. This applies when $z\in B(x,\frac{\pi}{2b})$ where $b$ is an upper bound for sectional curvature of $F$.
 
 \begin{lemma}
\label{lem:geom_lemma_2}
Suppose the sectional curvatures of $F$ are bounded by $-a^2\leq \kappa_F\leq b^2$ for $a,b\geq 0$. For any point $x_0\in B(z_0,\frac{\pi}{2b})$,
\begin{align*}
D_{z=z_0}d(x_0,z)\tensor \lVert^{z_0}_{x_0}\nabla_{x=x_0}d(x,z_0)
+d(x_0,z_0)\lVert^{z_0}_{x_0} D_{z=z_0}\nabla_{x=x_0} d(x,z)=-(I+S),
\end{align*}
where $S$ is a $(1,1)$ tensor with $||S|| \leq 2\max\set{a^2,b^2} d(x_0,z_0)^2$.
\end{lemma}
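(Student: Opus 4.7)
The plan is to reduce Lemma \ref{lem:geom_lemma_2} to Lemma \ref{lem:geom_lemma_1} by exploiting the symmetry $d(x,z)=d(z,x)$ together with parallel transport along the unique minimal geodesic $\gamma:[0,L]\to F$ from $x_0$ to $z_0$, where $L=d(x_0,z_0)<\pi/(2b)$.

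The first step is the pointwise sign-flip identity $\lVert^{z_0}_{x_0}\nabla_{x=x_0}d(x,z_0)=-\nabla_{z=z_0}d(x_0,z)$, which follows because $\nabla_{x=x_0}d(x,z_0)=-\gamma'(0)$, $\nabla_{z=z_0}d(x_0,z)=\gamma'(L)$, and $\gamma'$ is parallel along $\gamma$. The second step is the corresponding derivative identity: for $z$ in a convex neighborhood of $z_0$ one has $\nabla_{x=x_0}d(x,z)=-\lVert^{x_0}_z\nabla_z d(x_0,z)$; differentiating this in $z$ at $z_0$ and then applying $\lVert^{z_0}_{x_0}$ I expect
\[
\lVert^{z_0}_{x_0}D_{z=z_0}\nabla_{x=x_0}d(x,z)=-D_{z=z_0}\nabla_z d(x_0,z)+E,
\]
where the tensor $E$ at $z_0$ encodes the dependence of the parallel transport operator $\lVert^{x_0}_z$ on the base-point $z$.

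Substituting these two identities into the left-hand side of the lemma yields
\[
-\Bigl(D_{z=z_0}d(x_0,z)\tensor\nabla_{z=z_0}d(x_0,z)+d(x_0,z_0)D_{z=z_0}\nabla_z d(x_0,z)\Bigr)+d(x_0,z_0)E.
\]
Applying Lemma \ref{lem:geom_lemma_1} at the point $z_0$ with the distance function $d(\cdot,x_0)$ identifies the parenthesized expression as $I+T'$ with $\|T'\|\le \max\{a^2/3,b^2/2\}\,d(x_0,z_0)^2$. Setting $S:=T'-d(x_0,z_0)E$ delivers the lemma provided one has the bound $\|d(x_0,z_0)\,E\|\le (3/2)\max\{a^2,b^2\}\,d(x_0,z_0)^2$, which combined with the estimate on $T'$ gives $\|S\|\le 2\max\{a^2,b^2\}\,d(x_0,z_0)^2$.

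The main obstacle is estimating the error $E$. Since $\lVert^{x_0}_{z(t)}$ is parallel transport along the geodesic from the moving base-point $z(t)$ to the fixed point $x_0$, its $t$-derivative at $t=0$ measures the infinitesimal holonomy around a thin triangle with vertices $z_0$, $z(t)$, and $x_0$: two geodesic sides of length $\approx L$ and one short side of length $|t|\cdot|\dot z(0)|$. A Rauch-type comparison argument, or equivalently a direct bound on the Jacobi field along $\gamma$ with boundary values $0$ at $x_0$ and $\dot z(0)$ at $z_0$, should give $\|E\|\le C\,\max\{a^2,b^2\}\,L$ for a universal constant $C$, yielding the required control on $d(x_0,z_0)\,E$ and thus on $S$.
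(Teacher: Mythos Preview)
Your proposal is correct and follows essentially the same route as the paper: the sign-flip identity, the reduction to Lemma~\ref{lem:geom_lemma_1}, and the identification of the residual error $E$ as the derivative of parallel transport are exactly the paper's steps, and your $d(x_0,z_0)E$ coincides with the paper's $T_1$. The paper carries out your Jacobi-field sketch concretely: it writes $D_{z=z_0}\nabla_z d(x_0,z)(v)=J_v'(t_0)$ and $D_{z=z_0}\nabla_{x=x_0}d(x,z)(v)=-J_v'(0)$ for the Jacobi field $J_v$ along $\gamma$ with $J_v(0)=0$, $J_v(t_0)=v$, and then bounds $\lVert J_v'(t_0)-\lVert^{c(t_0)}_{c(0)}J_v'(0)\rVert\le\int_0^{t_0}\lVert J_v''\rVert\,dt\le\max\{a^2,b^2\}\,t_0$ via the Jacobi equation and Rauch comparison, giving $\lVert T_1\rVert\le\max\{a^2,b^2\}\,d(x_0,z_0)^2$ (so in fact $C=1$, better than the $C=3/2$ you budgeted for).
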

\begin{proof}

We observe that if $d(x,z) < \pi/2b$, then
\begin{align*}
    \nabla_{x}d(x,z)= -\lVert^{x}_{z}\nabla_{z}d(x,z).
\end{align*}
This implies that
\begin{align*}
 D_{z=z_0}d(x_0,z)\tensor  \lVert^{z_0}_{x_0}  \nabla_{x=x_0}d(x,z_0) =-
     D_{z=z_0}d(x_0,z)\tensor \nabla_{z=z_0}d(x_0,z).
\end{align*}

Since $x_0 \in B(z_0,\pi/2b)$, \cref{lem:geom_lemma_1} and the previous equation implies that: 
\begin{align*}
     D_{z=z_0}d(x_0,z)\tensor \lVert^{z_0}_{x_0} \nabla_{x=x_0}d(x,z_0)=-
     \left( I + T - d(x_0,z_0) D_{z=z_0} \nabla_{z} d(x_0,z)\right)
\end{align*}
where $\norm{T} \leq \max\set{\frac{a^2}{3},\frac{b^2}{2}} d(z_0,x_0)^2$. Then
\begin{align*}
    D_{z=z_0}d(x_0,z)\tensor \lVert^{z_0}_{x_0} \nabla_{x=x_0}d(x,z_0) + d(x_0,z_0)\lVert^{z_0}_{x_0} D_{z=z_0}\nabla_{x=x_0} d(x,z)=-(I+T)+T_1
\end{align*}
where $$T_1:=d(x_0,z_0)(   D_{z=z_0}\nabla_z d(x_0,z) +  \lVert^{z_0}_{x_0} D_{z=z_0}\nabla_{x=x_0} d(x,z)).$$ In order to finish the proof of this lemma, it suffices to show that:

\begin{claim*}
$\norm{T_1} \leq \max\set{a^2,b^2} d(x_0,z_0)^2$. 
\end{claim*}

We will now prove this claim. Note that it suffices to show that 
\begin{align*}
   \norm{ D_{z=z_0}\nabla_z d(x_0,z) +  \lVert^{z_0}_{x_0} D_{z=z_0}\nabla_{x=x_0} d(x,z)} \leq \max\set{a^2,b^2} d(x_0,z_0).
\end{align*}

Let $t_0:=d(x_0,z_0)$. For $0 \leq t \leq t_0$, let $S(x_0,t):=\{z : d(x_0,z)=t\}$. Then, if $z\in S(x_0,t)$, then $\nabla_z d(x_0,z)$ is the outward pointing unit normal to $S(x_0,t)$ at $z$ while $\nabla_{x=x_0}d(x,z)$ is the inward pointing unit normal to $S(x_0,t)$ at $z$, parallel translated to $x_0$ along the geodesic joining $x_0$ and $z$.

Let $v \in T_{z_0}F$ and let $\gamma_v:(-\varepsilon,\varepsilon) \to F$ be a curve on $S(x_0,t_0)$ such that $\gamma_v(0)=z_0$ and $\dot{\gamma_v}(0)=v$. If $v \in (T_{z_0}S(x_0,t_0))^{\perp}$, then $T_1(v)=\lVert^{x_0}_{z_0}D_{\dot{\gamma}(t_0)}(-\dot{\gamma})(t_0)+D_{\dot{\gamma}(0)}\dot{\gamma}(0)=0$. So we now assume that $v \in T_{z_0}S(x_0,t_0)$ and consider the  geodesic variation given by $p_v(s,t):=\exp_{x_0}(t \cdot \alpha_v(s))$
where $\alpha_v(s):=\frac{1}{t_0}\exp_{x_0}^{-1}(\gamma_v(s))$. 
Then 
\begin{align*}
    &D_{z=z_0}\nabla_{z}d(x_0,z)(v)=\frac{\partial}{\partial s}\at_{s=0} \frac{\partial}{\partial t}\at_{t=t_0} p_v(s,t) \\
    &\text{ and } D_{z=z_0}\nabla_{x=x_0}d(x,z)(v)= - \frac{\partial}{\partial s}\at_{s=0}\frac{\partial}{\partial t}\at_{t=0} p_v(s,t).
\end{align*}
Since the map $p_v$ is smooth, we can interchange the order of the derivatives above to get that 
\begin{align*}
     D_{z=z_0}\nabla_{z}d(x_0,z)(v)=J_v'(t_0) \quad \text{ and } \quad D_{z=z_0}\nabla_{x=x_0}d(x,z)(v)= - J_v'(0)
\end{align*}
where $J_v(t):=\frac{\partial}{\partial s}\big|_{s=0} p_v(s,t)$ is a Jacobi field along the geodesic $c(t):=p_v(0,t)$ associated to the geodesic variation $p_v(s,.)$ \cite[section 2, Proposition 2.4]{doCarmo92}. Also note that $J_v(t_0)=v$. Thus
\begin{align*}
   \norm{  D_{z=z_0}\nabla_z d(x_0,z) +  \lVert^{z_0}_{x_0} D_{z=z_0}\nabla_{x=x_0} d(x,z)} = \sup_{||v||=1}   \norm{J_v'(t_0)-\lVert^{c(t_0)}_{c(0)}J_v'(0)}.
\end{align*}
However
\begin{align*}
    & \norm{J_v'(t_0)-\lVert^{c(t_0)}_{c(0)}J_v'(0)}= \norm{\int_{0}^{t_0}  \lVert^{c(t_0)}_{c(t)}J_v''(t) dt} \leq   \int_{0}^{t_0} \norm{ J_v''(t)} dt \\
    &= \int_0^{t_0} \norm{R_{c(t)}\left(J_v(t),\dot{c}(t)\right)\dot{c}(t) }   dt \leq \int_0^{t_0}\norm{R_{c(t)}}||J_v(t)||dt\\
    &\leq t_0 M = Md(x_0,z_0),
\end{align*}
where $M:=\sup_{x \in F} ||R_x|| \cdot \sup_{t \in [0,t_0]}||J_v(t)||$. Note that $\sup_{x\in F} ||R_x||\leq \max\set{a^2,b^2}$.  
Also, by generalized Rauch Comparison (Corollary 2.19 of \cite{Sakai84}) we have for $t\leq \frac{\pi}{2 b}$, $\norm{J_v(t)}\leq \norm{J^b(t)}\leq 1$ where $J^b(t)$ is any Jacobi field in constant curvature $b^2\geq 0$ with conditions $J^b(0)=0$ and $\norm{J^b(t_0)}=1=\norm{J_v(t_0)}$. Thus $M\leq \max\set{a^2,b^2}$.

This establishes the Claim and hence the lemma.
\end{proof}

\subsection{Derivatives of the barycenter map}
The goal of this section is to establish \cref{prop:dbary_limit}. Recall that for this entire Appendix, we have fixed a compact, connected $C^1$ (hence smooth) manifold $F$. 

In the first part of this section (Lemma \ref{lem:Q_id} to Lemma \ref{lem:dbary_measure}), we will assume the following standing assumptions.
\vspace{.5em}

\label{link:setup-a}
\textbf{\bf Assumptions A:} 
\emph{Let $a,b \geq 0$ and $n \in \Nb$ be fixed constants. Suppose that:
\begin{enumerate}
    \item $F$ is equipped with a $C^2$ Riemannian metric $g$ whose sectional curvatures are bounded between $-a^2\leq \kappa_F\leq b^2$.
    \item $\mu=\sum_{i=1}^n w_i \delta_{z_i}$ is a probability measure on $F$ where $0 \leq w_i \leq 1$ are the weights and $\set{z_1, \dots, z_n}\subset F$ is the support of $\mu$. 
    \item We have $\op{diam}\set{z_1, \dots, z_n}< \min\set{\frac{\pi}{4b},\frac12\op{InjRad}_g(F)}$, where $\op{InjRad}_g(F)$ denotes the injectivity radius of $(F,g)$.
\end{enumerate}
We remark that any open ball of radius $\min\set{\frac{\pi}{2b},\frac12\op{InjRad}_g(F)}$ is strongly geodesically convex (cf. \cite[Theorem IX.6.1]{Chavel06}). 
}
\vspace{.5em}

\noindent \textbf{Notation for Assumptions A:} We standardize the following notation for Assumptions A. We will denote by $\nabla$ the Riemannian connection on $F$ and by $\norm{\cdot}$ the norm associated to the metric $g$. We will denote by $D$ the covariant derivative operator, namely if $z \in F$, $X$ is a smooth vector field, and  $Y$ is a vector field on $F$, then $D_zY(X)=(\nabla_X Y)(z)$. Moreover, if $f:F \to \Rb$ is a smooth function, then $\nabla f$ is the vector field dual to the 1-form $Df$.

\medskip
Under Assumptions A, the function $G$ defined in \cref{eq:G}) becomes 
\begin{equation}
\label{eqn:G_in_appendix}
    G(x,w,z)=\sum_{i=1}^nw_id(x,z_i)\nabla_xd(x,z_i).
\end{equation}
We denote by $\op{bar}(\mu)$ denotes the barycenter of $\mu$ (which is defined as the unique point in $F$ such that $G(\op{bar}(\mu), w,z)=0$; see \cref{sec:barycenter} for details). This exists and is well defined (see Section \ref{sec:barycenter} or \cite[Proposition IX.7.1]{Chavel06}).

In what follows, we define a $(1,1)$-tensor $Q$ by
\begin{equation}\label{eq:Q}
Q:=\sum_{j=1}^n w_j \left(D_{x=\op{bar}(\mu)}d(x,z_j)\tensor \nabla_{x=\op{bar}(\mu)}d(x,z_j)+d(\op{bar}(\mu),z_j)D_{x=\op{bar}(\mu)}\nabla_x d(x,z_j)\right).
\end{equation}

\begin{lemma}\label{lem:Q_id}
Under \hyperref[link:setup-a]{\bf Assumptions A},   $\max_{1\leq i \leq n} d(\op{bar}(\mu),z_i) \leq 2\op{diam}\set{z_1, \dots, z_n}$, and 
\begin{align*}
  \norm{Q-I} & \leq \max\set{\frac{a^2}{3}, \frac{b^2}{2}}\max_{1\leq i \leq n} d(\op{bar}(\mu),z_i)^2 \\
  &\leq 2\max\set{a^2,b^2}\op{diam}\set{z_1, \dots, z_n}^2.
\end{align*}

\end{lemma}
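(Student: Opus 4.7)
The argument splits naturally into two independent pieces: first, a diameter estimate controlling the distance from $\op{bar}(\mu)$ to each $z_i$; second, a term-by-term application of \cref{lem:geom_lemma_1} to the defining expression \eqref{eq:Q} for $Q$. The second piece is essentially mechanical once the first is established, since the diameter bound guarantees that each $z_j$ lies in the ball around $\op{bar}(\mu)$ where \cref{lem:geom_lemma_1} is applicable.

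For the diameter estimate, I would exploit the variational characterization of the barycenter. Write $d := \op{diam}\set{z_1, \dots, z_n}$ and $p := \op{bar}(\mu)$. The minimization property of $p$ against the test point $z_1$ gives
\[
\sum_i w_i\, d(p, z_i)^2 \;\leq\; \sum_i w_i\, d(z_1, z_i)^2 \;\leq\; d^2.
\]
Jensen's inequality applied to the concave function $\sqrt{\cdot}$ yields $\sum_i w_i\, d(p, z_i) \leq d$. Now fix any $j$ and use the triangle inequality $d(p, z_j) \leq d(p, z_i) + d(z_i, z_j) \leq d(p, z_i) + d$ for every $i$; taking the weighted sum over $i$ (using $\sum_i w_i = 1$) produces $d(p, z_j) \leq d + d = 2d$, giving the first claim. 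The strong convexity of $x\mapsto d(x,z_i)^2$ on the geodesically convex ball furnished by Assumption A (3) ensures that this variational argument is legitimate and that $p$ is well-defined there.

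For the bound on $\norm{Q - I}$, the first part gives $d(p, z_j) \leq 2d < \pi/(2b)$ by Assumption A (3), so each $z_j$ sits in the ball $B(p, \pi/(2b))$ where \cref{lem:geom_lemma_1} applies with $x_0 = p$. That lemma rewrites the $j$th summand in \eqref{eq:Q} as $I + T_j$ with $\norm{T_j} \leq \max\set{a^2/3, b^2/2}\, d(p, z_j)^2$. Summing over $j$ with weights $w_j$ and using $\sum_j w_j = 1$ gives
\[
Q = I + \sum_j w_j T_j,\qquad \norm{Q - I} \leq \max\set{\tfrac{a^2}{3}, \tfrac{b^2}{2}} \max_j d(p, z_j)^2.
\]
The final inequality $\norm{Q - I} \leq 2\max\set{a^2, b^2}\, d^2$ then follows by substituting the bound $\max_j d(p, z_j)^2 \leq 4 d^2$ and observing that $4\max\set{a^2/3, b^2/2} \leq 2\max\set{a^2, b^2}$.

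I do not anticipate any serious obstacle: both steps are short once Assumption A is in place. The only subtlety worth flagging is the verification that every relevant point lies inside a strongly geodesically convex normal ball (so that $d(\cdot, z_i)$, its gradient, and the Hessian of $d(\cdot, z_i)^2$ behave as needed, and \cref{lem:geom_lemma_1} applies); this is exactly what the smallness of $\op{diam}\set{z_1,\dots,z_n}$ in Assumption A (3) is designed to ensure.
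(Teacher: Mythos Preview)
Your proposal is correct and follows essentially the same route as the paper: the second part is identical (apply \cref{lem:geom_lemma_1} termwise and use $\sum_j w_j = 1$), and for the first part the paper simply cites \cref{rem:barybound}, whose one-line comparison argument (any point outside $B_{2d}(z_i)$ gives a larger value of $\mc{B}_\mu$ than $z_i$ does) is a minor variant of your Jensen-plus-triangle-inequality computation.
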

\begin{proof}
By Remark \ref{rem:barybound}, $\max_i d(\op{bar}(\mu),z_i) \leq 2\op{diam}\set{z_1, \dots, z_n}$. The second statement of the lemma follows from \cref{lem:geom_lemma_1} by letting $x_0=\op{bar}(\mu)$, $z=z_j$ and  since $\sum_j w_j=1$ (which implies that each summand is close to identity).
\end{proof}

Noting that $Q$ is a symmetric tensor, we immediately obtain the following from Lemma \ref{lem:Q_id}.
\begin{corollary}
\label{cor:Q_pos_def}
Under \hyperref[link:setup-a]{\bf Assumptions A}, 
if  $\diam \set{z_1,\dots,z_n}<\frac{1}{3\max\set{a,b}}$, then $\norm{Q-I} \leq \frac14$ and $Q$ is a strictly positive definite tensor and thus  invertible. Moreover, $\norm{Q^{-1}}\leq \frac43 < 2$. 
\end{corollary}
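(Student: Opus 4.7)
The proof of this corollary is essentially a direct substitution and linear algebra computation, so my plan is short.

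First I would invoke Lemma \ref{lem:Q_id}, which gives the quantitative bound
\[
\norm{Q-I}\;\leq\;2\max\{a^2,b^2\}\,\diam\{z_1,\dots,z_n\}^2.
\]
Under the hypothesis $\diam\{z_1,\dots,z_n\}<\tfrac{1}{3\max\{a,b\}}$, squaring yields $\diam\{z_1,\dots,z_n\}^2<\tfrac{1}{9\max\{a^2,b^2\}}$, so plugging in gives $\norm{Q-I}<\tfrac{2}{9}<\tfrac{1}{4}$, which is the first claim.

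Next I would exploit the symmetry of $Q$ already noted in the statement of the corollary. Because $Q$ is symmetric with $\norm{Q-I}\leq\tfrac14$, its eigenvalues are real and lie in the interval $[\tfrac34,\tfrac54]$. In particular $Q$ is strictly positive definite, hence invertible, and the operator norm bound
\[
\norm{Q^{-1}}\;=\;\frac{1}{\min_{\lambda\in\mathrm{Spec}(Q)}\lambda}\;\leq\;\frac{1}{3/4}\;=\;\frac{4}{3}
\]
follows immediately. This establishes the moreover part.

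There is no real obstacle here; the only subtlety is making sure that the bound from Lemma \ref{lem:Q_id} is genuinely the ``chord-squared'' quantity $\max_i d(\op{bar}(\mu),z_i)^2$ and that the factor of $2$ arising from the Remark \ref{rem:barybound} estimate $d(\op{bar}(\mu),z_i)\leq 2\diam\{z_1,\dots,z_n\}$ has already been absorbed in the second displayed inequality of Lemma \ref{lem:Q_id}. With that accounting done correctly, $\tfrac{2}{9}<\tfrac{1}{4}$ and the conclusion follows. I would not expand on the calculation further in the final write-up: the corollary is literally a one-line deduction from Lemma \ref{lem:Q_id} combined with the standard Neumann-type fact that a symmetric tensor within distance $<1$ of the identity is positive definite with inverse norm controlled by $1/(1-\norm{Q-I})$.
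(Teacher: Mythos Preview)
Your proposal is correct and matches the paper's approach exactly: the paper simply notes that $Q$ is symmetric and says the corollary follows immediately from Lemma~\ref{lem:Q_id}, which is precisely the substitution-plus-spectral-bound argument you wrote out. Your expanded version (the $\tfrac{2}{9}<\tfrac14$ computation and the eigenvalue interval $[\tfrac34,\tfrac54]$) just makes explicit what the paper leaves implicit.
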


\begin{remark}
We allow $\max\set{a,b}=0$ in the above corollary. In this case $Q=I$.
\end{remark}

\begin{lemma}[Derivative of the barycenter with respect to the measure]
\label{lem:dbary_measure}
Suppose \hyperref[link:setup-a]{\bf Assumptions A}. If $\diam \set{z_1,\dots, z_n}< \frac{1}{3\max\set{a,b}}$, then the partial derivative of the map $(w_1,\dots,w_n, z_1,\dots,z_n) \mapsto \op{bar}(\mu)$ in the weight $w_i$ is
\begin{align*}
D_{w_i}\op{bar}(\mu)=&-Q^{-1}\left(d(\op{bar}(\mu),z_i)\nabla_{x=\op{bar}(\mu)}d(x,z_i)\right)
\end{align*}
and the partial derivative in $z_i$ is
\begin{align*}
D_{z_i}\op{bar}(\mu)=&-Q^{-1} \left(w_i D_{z_i}d(\op{bar}(\mu),z_i)\tensor \nabla_{x=\op{bar}(\mu)}d(x,z_i)+w_i d(\op{bar}(\mu),z_i) D_{z_i}\nabla_x d(x,z_i)\right).
\end{align*}
\end{lemma}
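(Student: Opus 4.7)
The plan is a direct application of the implicit function theorem to the defining equation $G(\op{bar}(\mu), w, z) = 0$, where $G$ is given by \cref{eqn:G_in_appendix}. The tensor $Q$ defined in \cref{eq:Q} is, by construction, exactly the covariant derivative $D_x G(x,w,z)$ evaluated at $x = \op{bar}(\mu)$. By \cref{cor:Q_pos_def}, the diameter hypothesis $\diam\{z_1,\dots,z_n\} < \frac{1}{3\max\{a,b\}}$ guarantees that $Q$ is strictly positive definite, and in particular invertible with $\|Q^{-1}\| < 2$.

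With $Q$ invertible, the implicit function theorem ensures that $\op{bar}(\mu)$ is a $C^1$ function of the parameters $(w_1,\dots,w_n,z_1,\dots,z_n)$, and that its partial derivatives are given by
\[
D_{w_i} \op{bar}(\mu) = -Q^{-1} \bigl( D_{w_i} G(x,w,z)|_{x = \op{bar}(\mu)} \bigr), \quad D_{z_i} \op{bar}(\mu) = -Q^{-1} \bigl( D_{z_i} G(x,w,z)|_{x = \op{bar}(\mu)} \bigr).
\]
So the remaining task is to compute these two partial derivatives of $G$.

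For the $w_i$ derivative, differentiating \cref{eqn:G_in_appendix} in the scalar weight $w_i$ gives immediately $D_{w_i} G = d(x,z_i)\nabla_x d(x,z_i)$, which at $x = \op{bar}(\mu)$ yields the first formula of the lemma. For the $z_i$ derivative, only the $i$-th summand of $G$ depends on $z_i$, and differentiating $z_i \mapsto w_i d(x,z_i)\nabla_x d(x,z_i)$ via the Leibniz rule produces exactly the two terms
\[
w_i\bigl(D_{z_i} d(x,z_i) \otimes \nabla_x d(x,z_i) + d(x,z_i) D_{z_i}\nabla_x d(x,z_i)\bigr),
\]
which when evaluated at $x=\op{bar}(\mu)$ and composed with $-Q^{-1}$ yields the second formula.

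I do not expect any real obstacle here: the diameter bound keeps everything inside a geodesically convex ball so that $d(x,\cdot)$ and its gradient are $C^2$ (see \cref{lem:reg_of_dist}/\cref{rem:C2_regularity_dist_function}), which justifies all the differentiations, and the invertibility of $Q$ is already recorded in \cref{cor:Q_pos_def}. The only mildly subtle point is that one should verify the hypotheses of the implicit function theorem at an arbitrary parameter value --- i.e. one must check that at the given $(w,z)$ with $\diam\{z_1,\dots,z_n\}$ small, the point $\op{bar}(\mu)$ remains inside the convex ball so that \cref{cor:Q_pos_def} continues to apply --- but this is exactly the content of \cref{rem:barybound}, which gives $d(\op{bar}(\mu),z_j) \leq 2\diam\{z_1,\dots,z_n\}$, keeping us well within the regime where $Q$ is positive definite.
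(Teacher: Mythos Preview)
Your proposal is correct and follows essentially the same approach as the paper: implicit differentiation of the defining equation $G(\op{bar}(\mu),w,z)=0$, identification of $D_xG|_{x=\op{bar}(\mu)}$ with $Q$, invertibility of $Q$ via \cref{cor:Q_pos_def}, and direct computation of $D_{w_i}G$ and $D_{z_i}G$. The paper writes out the chain rule explicitly rather than invoking the implicit function theorem by name, but the content is identical.
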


\begin{proof}
We observe that $0= G(\op{bar}(\mu),w,z)$ and differentiating this we obtain,
\begin{align}
\label{eqn:solve_D_w_G}
0=D_{w_i} G(\op{bar}(\mu),w,z)=D_{x=\op{bar}(\mu)} G(x,w,z)\of D_{w_i}\op{bar}(\mu)+D_{w_i}G(x,w,z)\rest{x=\op{bar}(\mu)} .
\end{align}
We have already seen from \eqref{eq:DG} that
\[D_x G(x,w,z)=\sum_{j=1}^n w_j \left(D_xd(x,z_j)\tensor \nabla_xd(x,z_j)+d(x,z_j)D_{x}\nabla_x d(x,z_j)\right).\]
Then $D_{x=\op{bar}(\mu)}G(x,w,z)=Q$ (cf. \cref{eq:Q}). Since $\diam \set{z_1,\dots,z_n}<\frac{1}{3\max\set{a,b}}$, Corollary \ref{cor:Q_pos_def} implies that $Q$ is strictly positive definite and hence invertible. Moreover, from \cref{eqn:G_in_appendix}, 
\[
 D_{w_i}G(x,w,z)\rest{x=\op{bar}(\mu)}=d(\op{bar}(\mu),z_i)\nabla_{x=\op{bar}(\mu)}d(x,z_i).
\]
Then we can solve \cref{eqn:solve_D_w_G} to obtain: $$D_{w_i}\op{bar}(\mu)=-Q^{-1} \left( d(\op{bar}(\mu),z_i)\nabla_{x=\op{bar}(\mu)}d(x,z_i) \right).$$

The case of $D_{z_i}\op{bar}(\mu)$ is similar. By differentiating $0= G(\op{bar}(\mu),w,z)$ we obtain
\[
0=D_{z_i} G(\op{bar}(\mu),w,z)=D_x G(x,w,z)\of D_{z_i}\op{bar}(\mu)+D_{z_i}G(x,w,z)\rest{x=\op{bar}(\mu)} .
\]
For the second term we observe that 
\begin{align}
\label{eqn:solve_d_zi}
    D_{z_i}G(x,w,z)\rest{x=\op{bar}(\mu)}=D_{z_i}d(\op{bar}(\mu),z_i)\tensor \nabla_{x=\op{bar}(\mu)}d(x,z_i)+w_i d(\op{bar}(\mu),z_i) D_{z_i}\nabla_{x=\op{bar}(\mu)}d(x,z_i). 
\end{align}
As $\diam\set{z_1, \dots, z_n}<\frac{\pi}{4b}$, 
\cref{lem:geom_lemma_2}  implies: 
$D_{z_i}G(x,w,z)|_{x=\op{bar}(\mu)}=-\sum_{i=1}^n w_i \lVert^{\op{bar}(\mu)}_{z_i}(I+S)$ (where $S$ is as in \cref{lem:geom_lemma_2}).  Thus, by similar reasoning as in the proof of \cref{cor:Q_pos_def}, we see that: there exists $r'$ sufficiently small (depending only on the constants $a$ and $b$) such that $\diam \set{z_1,\dots, z_n}< r'$ implies  $D_{z_i}G(x,w,z)$ is invertible. Then solving for $D_{z_i}\op{bar}(\mu)$ from \cref{eqn:solve_d_zi} completes the result. 
\end{proof}

In the next two lemmas, we will consider families of metrics $g_s$ on $F$ jointly satisfying \hyperref[link:setup-a]{\bf Assumptions A}.  Recall the definition of $C^{1,2}$ family of metrics from \cref{sec:C_12_functions}.

We precisely formulate the:
\vspace{.5em}

\textbf{\bf Assumptions B: }
\label{link:setup-b}
\emph{Let $\eps_0>0$, $a,b \geq 0$ and $m, n \in \Nb$ be fixed constants. Suppose that:
\begin{enumerate}
    \item $B(0,\eps_0) \ni s \mapsto g_s$ is a family of $C^{1,2}$ Riemannian metrics on $F$ where each $g_s$ is a $C^2$ Riemannian metric with sectional curvature $\kappa_s \in [-a^2,b^2]$.  
    \item for each $s$, there is a probability measure $\mu(s):=\sum_{i=1}^n w_i(s) \delta_{z_i(s)}$ where the weights $w_i(s)$ and the points $z_i(s)$ depend on the parameter $s \in B(0,\eps_0)$ in a $C^1$ way. 
    \item We have $\op{diam}_s\set{z_1(s), \dots, z_n(s)}< \min\set{\frac{\pi}{4b},\frac12\op{InjRad}_{g_s}(F),\frac{1}{3\max\set{a,b}}}$, where $\op{InjRad}_{g_s}(F)$ denotes the injectivity radius of $(F,g_s)$. (We interpret $\frac{\pi}{4b}=\infty$ when $b=0$ and $\frac{1}{3\max\set{a,b}}=\infty$ when $a=b=0$.)
\end{enumerate}
}
\noindent \textbf{Notations under Assumptions B:} We also standardize the following notation for use henceforth. For each Riemannian metric $g_s$, we denote the corresponding Riemannian connection by $\nabla^s$, distance function by $d_s$, diameter measured in $d_s$ by $\text{diam}_s$, barycenter map by $\op{bar}_s$, and the covariant derivative operator by $D^s_x$ (i.e. if $X, Y$ are smooth vector fields on $F$ and $x \in F$, then $D^s_xY(X)=\nabla^s_XY(x)$). Moreover, let $\nabla^s_xf$ be the gradient of a smooth function $f:F  \to \Rb$ for the metric $g_s$ (i.e. $\nabla^s_xf(v)=D_vf(x)$ for any $v \in T_xF$). Finally, $D_s$ is the usual covariant derivative operator on $B(0,\eps_0) \subset\Rb^m$. 

Finally, for each $s$, let $Q_s$ denote the $(1,1)$ tensor $Q$ (cf. \cref{eq:Q}) that corresponds to the distance function $d_s$, i.e. 
\begin{align}
\label{eqn:Qs}
Q_{s}:=\sum_{i=1}^n w_i(s) \left(D_{x=x_{s}}^{s}d_{s}(x,z_i(s))\tensor \nabla^{s}_{x=x_s}d_{s}(x,z_i(s))+d_{s}(x,z_i(s))D_{x=x_s}^{s}\nabla^{s}_x d_{s}(x,z_i(s))\right).
\end{align}
where $x_s:=\op{bar}_{s}(\mu(s))$.

In the next three lemmas, we will compute the derivatives of the map $s \mapsto \op{bar}_s(\mu(s))$ under Assumptions B as above.

\begin{lemma}
\label{lem:reg_of_dist}
Suppose \hyperref[link:setup-b]{\bf Assumptions B part (1)} holds. Then the map $E_x:T_xF\times B(s_0,\eps)\to F$ defined by $E_x(v,s)=\exp^{g_s}_{x}(v)$ satisfies:
\begin{enumerate}[label=(\alph*)]
    \item it is $C^1$ in $x$ and $v$  and the first order derivatives (in $x$ and $v$) are $C^1$ in $s$,
    \item $D_sE_x(v,s)$ is differentiable with respect to $v$ and the mixed partial derivatives with respect to $s$ and $v$ are equal (i.e. $D_vD_sE_x(v,s)=D_sD_vE_x(v,s)$). 
\end{enumerate}
 Moreover, for all $z$ and $x$ with $d_s(x,z)$ sufficiently small, $d_s(x,z)^2$ is $C^2$ in $x$ and $z$, $C^1$ in $s$ and $\nabla_{x}^s (d_s(x,z))^2$ is $C^1$ in $s$. 
\end{lemma}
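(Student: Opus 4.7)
The plan is to reduce all claims to the standard theory of $C^1$-dependence of ODEs on initial conditions and parameters, applied to the geodesic equation for the family $\{g_s\}$. In local coordinates on $F$, the geodesic for $g_s$ starting at $x$ with velocity $v$ satisfies
\[
\ddot{\gamma}^k + \Gamma^k_{ij}(s,\gamma)\dot{\gamma}^i\dot{\gamma}^j = 0, \qquad \gamma(0)=x,\ \dot{\gamma}(0)=v,
\]
where the Christoffel symbols $\Gamma^k_{ij}$ are built from $g_s$ and its first leafwise derivatives. By the $C^{1,2}$ hypothesis, the maps $(s,x)\mapsto \partial^{\alpha}_{x} g_s(x)$ are jointly continuous and $C^1$ in $s$ for $|\alpha|\leq 2$. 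Consequently $\Gamma^k_{ij}$ and its first $x$-derivative are $C^1$ in $s$ and continuous in $x$; in particular, $\Gamma^k_{ij}$ is $C^1$ in $x$ as well.

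First I would apply the classical smooth-dependence theorem for ODEs to the first-order geodesic system on $TF\times B(0,\eps_0)$ to obtain claim (a): $E_x(v,s)=\exp^{g_s}_x(v)$ is $C^1$ in $(x,v)$ with first-order spatial partials $D_v E_x$, $D_x E_x$ that are jointly continuous in $(x,v,s)$. To upgrade continuity in $s$ to $C^1$ in $s$, I would invoke the variational (Jacobi) equation: the Jacobi fields $D_v E_x(v,s)[\cdot]$ solve a linear second-order ODE along $\gamma$ whose coefficients are expressed in terms of $\Gamma$ and $\partial_x \Gamma$, hence in terms of derivatives of $g_s$ of order $\leq 2$. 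By the $C^{1,2}$ assumption these coefficients are $C^1$ in $s$, and ODE parameter regularity then yields $C^1$ dependence of $D_v E_x$ (and similarly $D_x E_x$) on $s$.

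For part (b), I would observe that $D_s E_x(v,s)$ itself satisfies the parameter-variational linear ODE along $\gamma$, with forcing determined by $\partial_s \Gamma$, which is continuous and has continuous $v$-derivatives by the same $C^{1,2}$ bookkeeping. This gives differentiability of $D_s E_x$ in $v$. The equality $D_v D_s E_x = D_s D_v E_x$ is then a consequence of Schwarz's theorem once both mixed partials have been shown continuous in $(v,s)$ via the previous paragraph.

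For the moreover statement, I would restrict to a neighborhood where $d_s(x,z)$ is smaller than the injectivity radius of $g_s$, uniformly in $s$, so that the local inverse $E_x^{-1}(z,s)$ is defined and inherits the regularity of $E_x$ via a parametric inverse function theorem. Then
\[
d_s(x,z)^2 = g_s(x)\bigl(E_x^{-1}(z,s),\,E_x^{-1}(z,s)\bigr),
\]
which is $C^2$ in $(x,z)$ (using $C^2$-ness of $g_s$ in $x$ and of $E_x^{-1}(\cdot,s)$) and $C^1$ in $s$. The classical identity $\nabla^s_x d_s(x,z)^2 = -2\, E_x^{-1}(z,s)$ delivers the $C^1$ dependence of the gradient on $s$ at no extra cost. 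The main obstacle I anticipate is ensuring the $C^{1,2}$ transverse regularity suffices: the plan depends crucially on the fact that every variational ODE I use is first order in $s$, so that a single transverse derivative of $g_s$ (of order up to two in $x$) is enough and I never need a second $s$-derivative of the metric.
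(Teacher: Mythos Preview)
Your proposal is correct and follows essentially the same approach as the paper: ODE regularity for the geodesic equation gives the claimed regularity of $E_x$, a Schwarz-type argument handles the mixed partials, and the identity $\nabla^s_x d_s(x,z)^2 = -2\,(\exp^{g_s}_x)^{-1}(z)$ together with the implicit/inverse function theorem transfers this to $d_s^2$. The paper is slightly more economical in part (b), invoking Rudin's Theorem 9.41 directly (from $E_x$ being $C^1$ in $(v,s)$ and $D_vE_x$ being $C^1$ in $s$) rather than setting up the parameter-variational ODE for $D_sE_x$; and for the ``moreover'' it uses the gradient identity immediately rather than the formula $d_s(x,z)^2=g_s(x)(E_x^{-1}(z,s),E_x^{-1}(z,s))$, which avoids the need to argue $C^2$-ness of $E_x^{-1}$ itself (which you do not actually have under a $C^2$ metric---only $C^1$-ness, which suffices once you pass through the gradient identity).
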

\begin{proof}
By assumption, the Christoffel symbols for $g_s$ are $C^1$ in $x$ and $s$ and have derivatives up to order one which are also $C^1$ in $s$. Regularity of solutions of ODEs (the geodesic equations) implies that $E_x(v,s)$ is $C^1$ in $x$ and $v$ and its derivatives up to order $1$ are $C^1$ functions of $s$. This proves part (a). Part (b) follows from \cite[Theorem 9.41, page 236]{Rudin-book}, because $E_x(v,s)$ is $C^1$ in $(v,s)$ and $D_vE_x(v,s)$ is $C^1$ in $s$. 

By the Implicit Function Theorem, there is a map $\alpha_x:B(s_0,\eps)\to T_xF$ with the same regularity as $E_x$ (i.e. $\alpha_x(s)$ is $C^1$ in both $x$ and $s$) such that $E_x(\alpha_x(s),s)=z$. However, observe that 
\[
\alpha_x(s)=- d_s(x,z)\nabla^s_{x}d_s(x,z)=-\frac12\nabla^s_{x} (d_s(x,z))^2.
\]
Hence the stated regularity of $d_s(x,z)^2$ follows.
\end{proof}

\begin{corollary}\label{cor:reg_of_dist}
Suppose \hyperref[link:setup-b]{\bf Assumptions B part (1)} holds and $s \mapsto z(s)$ is $C^1$ as $s$ varies in $B(0,\eps_0)$. Then $\nabla_x^sd_s(x,z(s))^2$ varies $C^1$ in both $s$ and $x$. 
\end{corollary}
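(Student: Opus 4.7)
The strategy is to upgrade the pointwise/leafwise regularity assertions of Lemma \ref{lem:reg_of_dist} to joint $C^1$ regularity in all variables, and then compose with the $C^1$ curve $s\mapsto z(s)$. More precisely, writing
\[
\nabla_x^s d_s(x,z)^2 = -2\,\alpha(x,s,z),
\]
where $\alpha(x,s,z)\in T_xF$ is defined locally by the implicit equation $E(x,\alpha(x,s,z),s) = z$ with $E(x,v,s)=\exp_x^{g_s}(v)$, the claim reduces to showing that $\alpha$ is jointly $C^1$ in $(x,s,z)$. Then, since $s\mapsto(s,x,z(s))$ is a $C^1$ map from $B(0,\epsilon_0)\times F$ into the domain of $\alpha$, the composition $(s,x)\mapsto \alpha(x,s,z(s))$ inherits joint $C^1$ regularity in $(s,x)$.

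First, I would extract from the $C^{1,2}$ assumption on the family $\{g_s\}$ that, in any local chart, the coefficients $g_{ij}(x,s)$ are $C^2$ in $x$, $C^1$ in $s$, and their tangential derivatives (up to order two, i.e.\ the ones needed to build the Christoffel symbols) are themselves $C^1$ in $s$. Consequently the Christoffel symbols $\Gamma^k_{ij}(x,s)$ are \emph{jointly} $C^1$ on $B(0,\epsilon_0)\times(\text{chart})$. Then, by the classical theorem on $C^1$ dependence of solutions of ODEs on initial conditions and parameters applied to the geodesic equation, the map $E(x,v,s)=\exp_x^{g_s}(v)$ is jointly $C^1$ in $(x,v,s)$ on a uniform neighborhood of the zero section. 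Note that Lemma \ref{lem:reg_of_dist}(a) already records the partial aspects of this statement; the additional input needed here is only the continuity of mixed partials, which is standard for ODEs with $C^1$ coefficients.

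Second, since $D_vE(x,0,s)=\mathrm{id}_{T_xF}$, the Implicit Function Theorem applied in a neighborhood of $(x_0,0,s_0,z_0=x_0)$ yields a jointly $C^1$ function $\alpha(x,s,z)$ solving $E(x,\alpha(x,s,z),s)=z$. This is exactly the function appearing in the proof of Lemma \ref{lem:reg_of_dist}; the only new content is that now $\alpha$ is $C^1$ in $x$ and $z$ as well as in $s$, as a single joint statement. Compactness of $F$ combined with the diameter smallness built into the ambient hypotheses allows one to patch these local assertions together into a global one on the set where $d_s(x,z)$ is sufficiently small.

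The main obstacle is the bookkeeping in the first step: turning the somewhat asymmetric $C^{1,2}$ hypothesis (which privileges the leafwise $x$-direction over the transverse $s$-direction) into the symmetric statement that the Christoffel symbols are jointly $C^1$ in $(x,s)$, and then invoking the correct version of the ODE parameter-dependence theorem so that $E(x,v,s)$ is genuinely jointly $C^1$ in all three variables. Once this is in place, the rest is a routine application of the Implicit Function Theorem and composition, and the final $C^1$ statement in $(s,x)$ follows immediately.
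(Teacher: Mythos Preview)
Your proposal is correct and follows essentially the same route as the paper: both argue via the Implicit Function Theorem applied to the exponential map equation $\exp_x^{g_s}(v)=z$, using the regularity of $E_x(v,s)$ already recorded in Lemma~\ref{lem:reg_of_dist}. The only organizational difference is that the paper bakes $z(s)$ into the implicit equation from the start by setting $F_x(v,s):=(\exp_x^{g_0})^{-1}\circ E_x(v,s)-(\exp_x^{g_0})^{-1}(z(s))$ with a \emph{fixed} reference metric $g_0$ (so the target is a vector space and the IFT applies directly), whereas you treat $z$ as an independent variable, obtain $\alpha(x,s,z)$ jointly $C^1$, and then compose with $s\mapsto z(s)$; both variants are equivalent and the paper's trick simply avoids your ``bookkeeping'' step by leaning on the already-established regularity of $E_x$.
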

\begin{proof}
We will prove this on $B(s_0,\eps)$ and let $g_0:=g_{s_0}$. Let $E_x(v,s)$ be as above and define $F_x(v,s):=(\exp^{g_0}_x)^{-1} \circ E_x(v,s) - (\exp^{g_0}_x)^{-1} (z(s))$. Then $F_x(v,s)$ is $C^1$ in both $x$ and $s$ and further, 
\[
D_v F_x(v,s)=D_{w=E_x(v,s)}(\exp^{g_0}_x)^{-1}(w) \circ D_vE_x(v,s).
\]
This implies that $D_vF_x(v,s)$ is invertible whenever $\norm{v}_{g_0}$ sufficiently small. Then, by Implicit Function theorem, there exists a function $\beta_x: B(s_0,\eps) \to F$ which is $C^1$ in both $x$ and $s$ such that $F_x(\beta_x(s),s)=0$. Note that $F_x(v,s)=0$ if and only if $E_x(v,s)=z(s)$. Thus $E_x(\beta_x(s),s)=z(s).$
Then 
\[\beta_x(s)=-d_s(x,z(s)) \nabla^s_{x} d_s(x,z(s)) = -\frac12\nabla^s_{x} (d_s(x,z(s)))^2,
\]
which finishes the proof as $\beta_x$ is $C^1$ in $s$ and $x$. 
\end{proof}

\begin{lemma}[Derivative of the barycenter with respect to the metric]
\label{lem:dbary_metric}
Suppose  \hyperref[link:setup-b]{\bf Assumptions B} hold. Assume that $w_i(s) = w_i$ and $z_i(s)= z_i$ are constant functions for each $i \in \set{1,\dots,n}$ and $\mu(s)\equiv \mu = \sum_{i=1}^n w_i \delta_{z_i}$. 
Then the derivative of the map $s \mapsto \op{bar}_s(\mu)$ with respect to the parameter $s$ at $s=s_0$ is:
\begin{align*}
D_{s=s_0}\op{bar}_{s}(\mu)=-Q_{s_0}^{-1} \left(\sum_{i=1}^n w_i \left(D_{s=s_0} d_{s}(x_0,z_i)\tensor\nabla^{s_0}_{x=x_0}d_{s_0}(x,z_i)+ d_{s_0}(x_0,z_i)D_{s=s_0}\nabla^s_{x=x_0}d_{s}(x,z_i)\right) \right)
\end{align*}
where $x_0:=\op{bar}_{s_0}(\mu)$. 
\end{lemma}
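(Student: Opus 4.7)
The plan is to apply an implicit differentiation argument modeled on the proof of \cref{lem:dbary_measure}. The barycenter $x_s := \op{bar}_s(\mu)$ is characterized by the identity
\[
G_s(x_s, w, z) = 0, \qquad G_s(x, w, z) := \sum_{i=1}^n w_i\, d_s(x, z_i)\nabla^s_x d_s(x, z_i) = \tfrac{1}{2}\sum_{i=1}^n w_i \nabla^s_x d_s(x, z_i)^2.
\]
I would differentiate this identity with respect to $s$ at $s = s_0$ and solve for $D_{s=s_0} x_s$ using the invertibility of the fiberwise differential of $G_s$.

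First I would establish the joint $C^1$-regularity of $G_s$ in $(x,s)$ needed to apply the chain rule. Under Assumptions B, the family $s \mapsto g_s$ is $C^{1,2}$ with uniformly bounded sectional curvatures, and the $z_i$ are fixed. For each $i$, \cref{lem:reg_of_dist} then shows that $(x, s) \mapsto d_s(x, z_i)^2$ is $C^2$ in $x$, $C^1$ in $s$, and that $\nabla^s_x d_s(x, z_i)^2$ is $C^1$ in $s$; in particular, $G_s(x, w, z)$ is jointly $C^1$ in $(x, s)$ on a neighborhood of $(x_{s_0}, s_0)$.

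Next, applying $D_{s=s_0}$ to the implicit equation and writing $x_0 := x_{s_0}$, the chain rule yields
\[
0 = D^{s_0}_{x=x_0} G_{s_0}(x, w, z) \circ D_{s=s_0} x_s + \bigl.\partial_s G_s(x_0, w, z)\bigr|_{s = s_0}.
\]
The first operator is precisely the tensor $Q_{s_0}$ of \cref{eqn:Qs}, and by \cref{cor:Q_pos_def}, whose small-diameter hypothesis is built into Assumptions B, it is invertible with $\norm{Q_{s_0}^{-1}} \leq 4/3$. Computing the second term summand-by-summand by the Leibniz rule applied to $d_s(x_0, z_i)\cdot \nabla^s_{x=x_0} d_s(x, z_i)$ gives
\[
\bigl.\partial_s G_s(x_0, w, z)\bigr|_{s=s_0} = \sum_{i=1}^n w_i\Bigl( D_{s=s_0} d_s(x_0,z_i)\tensor \nabla^{s_0}_{x=x_0} d_{s_0}(x,z_i) + d_{s_0}(x_0,z_i)\, D_{s=s_0}\nabla^s_{x=x_0} d_s(x,z_i) \Bigr),
\]
and premultiplying by $-Q_{s_0}^{-1}$ produces the stated formula.

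The delicate point, which I expect to be the main obstacle, is justifying that $D_{s=s_0}\nabla^s_{x=x_0} d_s(x, z_i)$ is a well-defined quantity and that the interchange of mixed partial derivatives implicit in applying the Leibniz rule to $G_s$ is legitimate. This is handled by the conclusions of \cref{lem:reg_of_dist}, in particular parts (a) and (b) on the joint regularity of the exponential map $E_x(v,s)$ together with the equality $D_v D_s E_x(v,s) = D_s D_v E_x(v,s)$, which via the formula $-\tfrac12\nabla^s_x d_s(x,z)^2 = (\exp^{g_s}_x)^{-1}(z)$ transfers to the required $s$-differentiability of $\nabla^s_x d_s(x,z)^2$ as a vector field on $F$.
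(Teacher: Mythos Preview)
Your proposal is correct and follows essentially the same approach as the paper: implicit differentiation of $G(\op{bar}_s(\mu),w,z,s)=0$ modeled on \cref{lem:dbary_measure}, identification of the $x$-derivative with $Q_{s_0}$ and invocation of \cref{cor:Q_pos_def} for invertibility, and a Leibniz-rule computation of the $s$-partial. Your discussion of the regularity needed (via \cref{lem:reg_of_dist}) is in fact slightly more explicit than the paper's own proof, which takes this for granted.
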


\begin{proof}
For every $s$, we define a function $G_s(x,w,z) = G(x,w,z,s)$ for the distance function $d_s$ as in equation \ref{eqn:G_in_appendix}. 
As in Lemma \ref{lem:dbary_measure} we begin by differentiating the equation $0= G(\op{bar}_{s}(\mu),w,z,s)$ 
to obtain
\[
0=D_{s=s_0} G(\op{bar}_{s}(\mu),w,z,s)= D_{x=\op{bar}_{s_0}(\mu)}^{s_0} G(x,w,z,s_0)\of D_{s=s_0}\op{bar}_{s}(\mu)+D_{s=s_0}G(x,w,z,s)\rest{x=\op{bar}_{s_0}(\mu)}. 
\]

From \eqref{eq:DG} we have 
\[
D_x^{s_0} G(x,w,z,s_0)=\sum_{i=1}^n w_i \left(D_x^{s_0}d_{s_0}(x,z_i)\tensor \nabla^{s_0}_xd_{s_0}(x,z_i)+d_{s_0}(x,z_i)D_{x}^{s_0}\nabla^{s_0}_x d_{s_0}(x,z_i)\right).
\]

Note that $D_{x=\op{bar}_{s_0}(\mu)}^{s_0} G(x,w,z,s_0)=Q_{s_0}$ and since $\diam\set{z_1,\dots,z_n}<\frac{1}{3\max\set{a,b}}$, \cref{cor:Q_pos_def} implies that $Q_{s_0}$ is a strictly positive definite (hence invertible) tensor. 

Moreover, we compute,
\begin{align*}
D_{s=s_0}G(x,w,z,s)\rest{x=\op{bar}_{s_0}(\mu)}&=\sum_{i=1}^n w_i D_{s=s_0} d_{s}(\op{bar}_{s_0}(\mu),z_i)\tensor\nabla^{s_0}_{x=\op{bar}_{s_0}(\mu)}d_{s}(x,z_i(s))\\
&\qquad\qquad\qquad +w_i d_{s_0}(\op{bar}_{s_0}(\mu),z_i)D_{s=s_0}\nabla^s_{x=\op{bar}_{s_0}(\mu)}d_{s}(x,z_i).
\end{align*}
We may now solve for $D_{s=s_0}\op{bar}_s(\mu)$ to obtain the result stated.
\end{proof}

We now prove \cref{prop:bary_C1}.

\begin{proposition}
\label{prop:dbary_limit}
Suppose  \hyperref[link:setup-b]{\bf Assumptions B} hold and let $r(s):=\op{diam}_{(F,d_s)}\set{z_1(s),\dots,z_n(s) }$. Then $s\mapsto \op{bar}_s(\mu(s))$ is $C^1$ and for any $s_0 \in B(0,\eps_0)$, 
\begin{align*}
\norm{D_{s=s_0} \op{bar}_s(\mu(s)) -  Q_{s_0}^{-1}  \left(\sum_{i=1}^n w_i(s_0)\lVert^{\op{bar}_{s_0}(\mu(s_0))}_{z_i(s_0)} D_{s} z_i(s) \right)} \leq 32 \max \set{a^2,b^2} L(s_0) r(s_0)
\end{align*}
where 
\begin{align*}
L(s):= \max_{1\leq i \leq n} \set{\norm{D_s z_i(s)}, \sum_{i=1}^n\norm{D_sw_i(s)}}+ \norm{g_s}_{C^{1,2}},
\end{align*}
where $\norm{g_s}_{C^{1,2}}$ represents the bounds of derivatives up to order one in $s$ at $s\in B(0,\eps_0)$ of derivatives up to order two of components of $g_s$ with respect to $g_0$-exponential coordinates.
\end{proposition}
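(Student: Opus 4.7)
The plan is to derive the formula and estimate by an implicit function / chain rule argument applied to the defining equation $G_s(x_s,s)=0$, where $x_s:=\op{bar}_s(\mu(s))$ and $G$ is as in \cref{eqn:G_in_appendix} (now depending on $s$ through the metric, the weights and the support points). I will first establish that this implicit equation yields a $C^1$ solution, then differentiate and recognize the dominant term via \cref{lem:geom_lemma_2}, then bound the remainders.

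First I would verify that $G_s(x,s):=\sum_{i=1}^n w_i(s)\,d_s(x,z_i(s))\,\nabla^s_x d_s(x,z_i(s))$ is jointly $C^1$ in $(x,s)$ on a neighborhood of $(x_{s_0},s_0)$. This follows from \cref{lem:reg_of_dist} and \cref{cor:reg_of_dist} (which give $C^1$ joint regularity of $\nabla^s_x d_s(x,z(s))^2$, hence of $d_s(x,z(s))\nabla^s_x d_s(x,z(s))$) combined with the assumed $C^1$ dependence of $w_i$ and $z_i$ on $s$. The derivative $D_x G_s(x,s)\rest{x=x_{s_0}}$ equals $Q_{s_0}$ (as in \cref{eqn:Qs}), which is invertible by \cref{cor:Q_pos_def} since $r(s_0)<1/(3\max\{a,b\})$. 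Hence the Implicit Function Theorem yields a $C^1$ map $s\mapsto x_s$.

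Next I would differentiate the identity $G_s(x_s,s)=0$ to obtain
\[
Q_{s_0}\cdot D_{s=s_0}x_s \;=\; -\sum_{i=1}^{n} D_{w_i}G\rest{x=x_{s_0}}\cdot D_s w_i \;-\; \sum_{i=1}^n D_{z_i}G\rest{x=x_{s_0}}\cdot D_s z_i \;-\; \bigl(D_s G_s(x,s)\rest{\text{metric only},\,x=x_{s_0}}\bigr),
\]
mirroring the computations in \cref{lem:dbary_measure} and \cref{lem:dbary_metric}. The crucial identification is that, by \cref{lem:geom_lemma_2} applied with $(x_0,z_0)=(x_{s_0},z_i(s_0))$,
\[
D_{z_i}G\rest{x=x_{s_0}} \;=\; -\,w_i(s_0)\,\lVert^{x_{s_0}}_{z_i(s_0)}(I+S_i),
\]
with $\|S_i\|\le 2\max\{a^2,b^2\}\,d_{s_0}(x_{s_0},z_i(s_0))^2\le 8\max\{a^2,b^2\}\,r(s_0)^2$ (using \cref{rem:barybound}). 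Applying $Q_{s_0}^{-1}$ (with $\|Q_{s_0}^{-1}\|\le 4/3<2$ by \cref{cor:Q_pos_def}) to the $D_{z_i}$-terms produces the claimed main term $Q_{s_0}^{-1}\sum_i w_i(s_0)\lVert^{x_{s_0}}_{z_i(s_0)} D_s z_i(s)$, plus an error of size at most $2\cdot\sum_i w_i\cdot 8\max\{a^2,b^2\}r(s_0)^2\cdot\max_i\|D_s z_i\|$, which is absorbed into a constant times $\max\{a^2,b^2\}\,L(s_0)\,r(s_0)$ once one notes $r(s_0)\le 1$.

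The remaining obstacle is controlling the two ``irrelevant'' contributions so that they too are bounded by $\max\{a^2,b^2\}L(s_0)r(s_0)$. For the weight-variation term, I would use that $\sum_i D_s w_i=0$ (since $\mu(s)$ is a probability measure), rewrite
\[
\sum_i D_s w_i\cdot D_{w_i}G\rest{x=x_{s_0}} \;=\; \sum_i D_s w_i\bigl(d_{s_0}(x_{s_0},z_i(s_0))\nabla^{s_0}_{x=x_{s_0}}d_{s_0}(x,z_i(s_0))\bigr),
\]
recognize each summand as $-\exp^{-1}_{x_{s_0}}(z_i(s_0))$ (up to parallel transport), and exploit that these vectors lie in a $2r(s_0)$-ball about $0$ so their weighted sum (with weights summing to $0$) is of order $r(s_0)\sum_i |D_sw_i|$. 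For the metric contribution, I would invoke \cref{lem:dbary_metric}: each summand contains a factor $d_{s_0}(x_{s_0},z_i(s_0))\le 2r(s_0)$ and first-order derivatives in $s$ of the distance and its gradient, which are controlled by $\|g_s\|_{C^{1,2}}$ (included in $L(s_0)$) together with the distances themselves, yielding a bound proportional to $r(s_0)L(s_0)$ (with a curvature constant coming from comparison of metrics). The hard part will be the detailed curvature/bi-Lipschitz bookkeeping needed to pull the factor $\max\{a^2,b^2\}$ out of the weight and metric error terms so that all three pieces of the remainder fit under the uniform bound $32\max\{a^2,b^2\}L(s_0)r(s_0)$; I would do this by combining Jacobi-field/Rauch comparison (as in the proof of \cref{lem:geom_lemma_2}) with uniform convexity of $\Bc_\mu$ on normal balls.
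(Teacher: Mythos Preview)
Your approach is correct and is essentially the same as the paper's: implicit function theorem for $C^1$ regularity, then the diagonal chain-rule split into metric-variation and measure-variation pieces, handled respectively via \cref{lem:dbary_metric} and \cref{lem:dbary_measure}, with \cref{lem:geom_lemma_2} isolating the main term from the $D_{z_i}$-contribution.

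The one place you diverge is your last paragraph, and there you are making your life harder than necessary. You do \emph{not} need to extract a factor $\max\{a^2,b^2\}$ from the weight-variation and metric-variation remainders, and the paper does not attempt to. For the weight term the paper simply bounds $\|D_{w_i}\op{bar}_{s_0}(\mu)\|\le 2\,d_{s_0}(\op{bar}_{s_0}(\mu),z_i)\le 4r(s_0)$ directly (so your $\sum_i D_s w_i=0$ trick, while correct, is not needed), yielding $4L(s_0)r(s_0)$ with no curvature factor. For the metric term the paper bounds $\|D_{s}\nabla_x^s d_s(x,z_i)^2\|$ by an exponential-map/implicit-function argument: writing $\alpha(s)=-\tfrac12\nabla_x^s d_s(x,z_i)^2$ one has $E_{x_0}(\alpha(s),s)=z_i$, hence $D_s\alpha=-(D_vE)^{-1}D_sE$; then $\|D_sE(v,s)\|\le 2L(s_0)\|v\|$ (from $\|g_s\|_{C^{1,2}}$ and $D_sE(0,s)=0$) together with a Rauch bound $\|(D_vE)^{-1}\|\le 2$ gives $\|D_s\alpha\|\le 8L(s_0)r(s_0)$, so the whole metric piece is $\le 32L(s_0)r(s_0)$, again with no curvature factor. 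Only the $z_i$-error genuinely carries $\max\{a^2,b^2\}$, exactly as you computed. (The paper's stated constant $32\max\{a^2,b^2\}$ thus does not literally dominate the individual pieces when $\max\{a^2,b^2\}<1$; what is actually used downstream is only the $O(L(s_0)r(s_0))$ form.)
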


\begin{remark}
In the above lemma (and its proof below), whenever we have a vector (or a tensor) decorated by $s$, then the corresponding norm $\norm{\cdot}$ should be understood as being taken with respect to the Riemannian metric $g_s$. For example, $\norm{T_s}$, $\norm{D_sw_i(s)}, \norm{D_s z_i(s)}$, etc. are all norms taken with respect to the metric $g_s$. We avoid the more precise but cumbersome notation $\norm{\cdot}_s$.  
\end{remark}

\begin{proof}

We first note that the function $L(s)$ is well-defined. Indeed both $w_i(s)$ and $z_i(s)$ vary $C^1$ in $s$ and $(s,v) \to g_s(v,v)$ is at least $C^2$.
Moreover  \cref{lem:reg_of_dist} and \cref{cor:reg_of_dist} implies that whenever $d_s(x,z_i(s))$ is sufficiently small, $d_s(x,z_i(s))^2$ is $C^2$ in $x$ and $C^1$ in $s$ and its gradient $\nabla^s_x d_s(x,z_i(s))^2$ is $C^1$ in $s$. By the Implicit Function Theorem, we obtain that $s \mapsto \op{bar}_s(\mu(s))$ is $C^1$ in $s$.

Fix any $s_0 \in B(0,\eps_0)$ and let $\mu:=\mu(s_0)$, $w_i:=w_i(s_0)$, $z_i:=z_i(s_0)$ and $x_0:=\op{bar}_{s_0}(\mu)$. Considering the map $(t,s)\mapsto \op{bar}_{t}(\mu(s))$, and  differentiating along the diagonal we obtain,
\begin{equation}
\label{eqn:ds_bary_mu_s}
   D_{s=s_0}\left( \op{bar}_s(\mu(s)) \right) =D_{s=s_0}\op{bar}_s(\mu)+D_{s=s_0}\op{bar}_{s_0}(\mu(s)). 
\end{equation}

We will evaluate the  two terms on the right hand side of this equation using Lemmas \ref{lem:dbary_metric} and \ref{lem:dbary_measure} respectively.

\setcounter{claim}{0}

\begin{claim} \label{claim1}
$\norm{D_{s=s_0} \op{bar}_{s}(\mu)} \leq 32 L(s_0) r(s_0).$
\end{claim}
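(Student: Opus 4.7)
The plan is to directly estimate the explicit formula for $D_{s=s_0}\op{bar}_s(\mu)$ given by Lemma \ref{lem:dbary_metric}, exploiting a reduction that collapses the two terms in each summand into a single quantity.

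First, since $r(s_0) < \tfrac{1}{3\max\set{a,b}}$, Corollary \ref{cor:Q_pos_def} yields $\norm{Q_{s_0}^{-1}} \leq 2$. So if we write
\[
A_i := D_{s=s_0} d_s(x_0, z_i)\tensor\nabla^{s_0}_{x=x_0}d_{s_0}(x,z_i)+ d_{s_0}(x_0, z_i)\, D_{s=s_0}\nabla^s_{x=x_0}d_{s}(x, z_i),
\]
it suffices to bound each $\norm{A_i}$ by a constant times $r(s_0)\,L(s_0)$ and then sum with the weights $w_i$ (which satisfy $\sum_i w_i = 1$).

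The key observation is the identity
\[
d_s(x_0, z_i)\,\nabla^s_{x=x_0} d_s(x, z_i) \;=\; -\exp_{x_0}^{g_s,\,-1}(z_i),
\]
valid for $d_s(x_0, z_i)$ sufficiently small. Applying the Leibniz rule at $s = s_0$ to the left-hand side produces exactly $A_i$; thus
\[
A_i \;=\; -D_{s=s_0}\,\alpha(s), \qquad \alpha(s) := \exp_{x_0}^{g_s,\,-1}(z_i).
\]
So the entire problem reduces to estimating $\norm{D_{s=s_0}\alpha(s)}$. By Lemma \ref{lem:reg_of_dist}, the map $E_{x_0}(v,s) := \exp_{x_0}^{g_s}(v)$ is $C^1$ in $(v,s)$ with first derivatives $C^1$ in $s$, and $\alpha(s)$ is defined implicitly by $E_{x_0}(\alpha(s), s) = z_i$. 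Differentiation and inversion give
\[
D_{s=s_0}\alpha(s) \;=\; -\bigl(D_v E_{x_0}(\alpha(s_0), s_0)\bigr)^{-1} \cdot D_{s=s_0}E_{x_0}(\alpha(s_0), s).
\]

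For the two factors: by Remark \ref{rem:barybound}, $\abs{\alpha(s_0)}_{g_{s_0}} = d_{s_0}(x_0, z_i) \leq 2r(s_0)$, so the initial velocity is small. Standard bounded-geometry estimates show $D_vE_{x_0}$ is close to the identity for small $v$, so its inverse has norm bounded by an absolute constant. The main technical step is bounding $\norm{D_{s=s_0}E_{x_0}(v,s)}$: linearizing the geodesic ODE in $s$ yields a Jacobi-type equation with zero initial conditions and source term $\partial_s\Gamma^k_{ij}\,\dot\gamma^i\dot\gamma^j$ of order $\norm{g_s}_{C^{1,2}}\,\abs{v}^2$; a Gronwall argument then gives $\norm{D_{s=s_0}E_{x_0}(v,s)} \lesssim \abs{v}^2\,L(s_0) \lesssim r(s_0)^2\,L(s_0)$. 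Assembling these pieces, $\norm{A_i} \lesssim r(s_0)^2 L(s_0) \leq \frac{1}{3\max\set{a,b}}\,r(s_0)\,L(s_0)$, which absorbs into the stated constant once we multiply by $\norm{Q_{s_0}^{-1}} \leq 2$ and sum against the $w_i$.

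The main obstacle is the careful tracking of constants in the ODE perturbation step: one must verify that the $C^{1,2}$ regularity of the family $\set{g_s}$ is precisely what is needed for the source term in the linearized geodesic equation to be controlled by $L(s_0)$, and then Gronwall's inequality must be applied on the geodesic interval of length $\abs{v} \le 2r(s_0)$ to extract the quadratic factor in $\abs{v}$. Everything else is routine linear algebra and the implicit function theorem.
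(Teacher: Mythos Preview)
Your proposal follows essentially the same architecture as the paper's proof: bound $\norm{Q_{s_0}^{-1}}$ via Corollary~\ref{cor:Q_pos_def}, collapse each summand $A_i$ into $-D_{s=s_0}\alpha(s)$ with $\alpha(s)=(\exp_{x_0}^{g_s})^{-1}(z_i)$, then implicitly differentiate $E_{x_0}(\alpha(s),s)=z_i$ and bound the two resulting factors. The only substantive difference is in how you estimate $\norm{D_{s=s_0}E_{x_0}(v,s)}$: the paper argues abstractly from Lemma~\ref{lem:reg_of_dist} that $D_sE_{x_0}(0,s)=0$ and $D_vD_sE_{x_0}$ exists at $v=0$, so a first-order Taylor expansion gives $\norm{D_sE_{x_0}(v,s)}\leq 2L(s_0)\norm{v}$ directly, with $\norm{D_{v=0}D_sE_{x_0}}\leq \norm{g_{s_0}}_{C^{1,2}}\leq L(s_0)$; you instead linearize the geodesic ODE and apply Gronwall to get a quadratic bound $\lesssim L(s_0)\norm{v}^2$. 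Both work, and yours is nominally sharper, but the paper's route keeps the constant tracking cleaner (yielding exactly the $32$) and avoids having to control the Christoffel symbols of the fixed metric $g_{s_0}$ that enter the homogeneous part of your linearized equation. The paper also bounds $\norm{(D_vE_{x_0})^{-1}}$ explicitly via the Rauch comparison theorem (getting $\pi/2<2$), which makes precise your ``standard bounded-geometry estimates''.
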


{\em Proof of Claim 1:} To prove this, we use the equation for $D_{s} \op{bar}_s(\mu)$ from Lemma \ref{lem:dbary_metric}. 
Since $r(s_0)<\frac{1}{3\max\set{a,b}}$, \cref{cor:Q_pos_def} implies that $\norm{Q_{s_0}^{-1}} < 2$. Since $\norm{Q_{s_0}^{-1}}<2$ and $\sum {w_i}= 1$, it suffices to show that 

\begin{align}
    \label[ineq]{eqn:bound_ds_square}
\norm{D_{s=s_0} \nabla^s_{x=x_0} (d_s(x,z_i))^2} \leq 16 L(s_0) r(s_0), 
\end{align}
as 
\begin{align*}
    \norm{D_{s=s_0}\op{bar}_s(\mu)} &\leq 2 \max_{1 \leq i \leq n}\norm{\left(D_{s=s_0} d_{s}(x_0,z_i)\tensor\nabla^{s_0}_{x=x_0}d_{s_0}(x,z_i) + d_{s_0}(x_0,z_i)D_{s=s_0}\nabla^s_{x=x_0}d_{s}(x,z_i)\right)} \\
    &=2 \max_{1 \leq i \leq n} \norm{D_{s=s_0} \nabla^s_{x=x_0} (d_s(x,z_i))^2}. \\
\end{align*}

In order to prove \cref{eqn:bound_ds_square}, it suffices to show that 
\[\norm{D_{s=s_0}\alpha(s)} \leq 8 L(s)r(s)\]
where \[\alpha(s):=- d_s(x_0,z_i)\nabla^s_{x=x_0}d_s(x,z_i)=-\frac{1}{2}\nabla^s_{x=x_0} (d_s(x,z_i))^2.
\]
By \cref{lem:reg_of_dist}, $\alpha: B(s_0,\eps) \to F$ is a $C^1$ map such that $E(\alpha(s),s)=z_i$ for all $s \in B(s_0,\eps)$. Here $E(v,s)\equiv E_{x_0}(v,s)=\exp^{g_s}_{x_0}(v)$ is the exponential map in the $g_s$ metric, as defined in \cref{lem:reg_of_dist}.  Then
\[
D_s\alpha(s)=-(D_{v=\alpha(s)}E(v,s))^{-1} \of D_sE(v,s)|_{v=\alpha(s)},
\]
as $0=D_s E(\alpha(s),s)=D_{v=\alpha(s)}E(v,s)\of D_s\alpha(s)+D_sE(v,s)\rest{v=\alpha(s)}$.

For all $(v,s)$ in a sufficiently small neighbourhood of $(0,s_0) \in T_{x_0}F \times B(s_0, \eps)$ (whose diameter only depends on the curvature bounds $a$ and $b$ of $F$), $D_{v=0}E(v,s)=I:T_{x_0}F\to T_{x_0}F$ and $(D_vE(v,s))$ is uniformly bounded away from $0$.

Since $g_s$ satisfies \hyperref[link:setup-b]{\bf Assumptions B} part (1),  \cref{lem:reg_of_dist} implies that 
$D_{s=s_0}E(v,s)$ is differentiable at $v=0$, that is, 
\begin{align*}
    \lim_{\norm{v} \to 0}\frac{\norm{D_{s=s_0}E(v,s)-D_{s=s_0}E(0,s)-D_{v=0}D_{s=s_0}E(v,s)(v)}}{\norm{v}} =0.
\end{align*}
Since $D_{s=s_0}E(0,s)=0$, 
\begin{align*}
    \norm{D_{s=s_0}E(v,s)} \leq (\norm{D_{v=0}D_{s=s_0}E(v,s)}+h(v)) \norm{v}
\end{align*}
for some function with $\lim_{v\to 0}h(v)=0$. Since $\norm{D_{v=0}D_{s=s_0}E(v,s)} \leq \norm{g_{s_0}}_{1,2} \leq L(s_0)$, 
\[
\norm{D_{s=s_0}E(v,s)} \leq 2L(s_0) \norm{v}
\]
whenever $\norm{v}$ is sufficiently close to $0$. Since at $v=\alpha(s_0)$, we have $\norm{\alpha(s_0)}=d_{s_0}(x,z_i)< 2r(s_0)$, this implies that 
\[
\norm{D_{s=s_0}\alpha(s)} \leq 4 L(s_0) r(s_0) \norm{(D_{v=\alpha(s_0)}E(v,s_0))^{-1}}.
\]
By Theorem II.7.1 of \cite{Chavel06}, $D_{v=\alpha(s_0)}E(v,s_0)(w)=\frac{1}{t_0}J(t_0)$ where $t_0=\norm{\alpha(s_0)}$ and $J$ is the Jacobi field along $t \to E(t\frac{\alpha(s_0)}{\norm{\alpha(s_0)}},s_0)$ such that $J(0)=0$ and $J'(0)=w$. By Rauch comparison theorem, $\norm{J(t)} \geq \frac{\norm{w}}{a}\sin(at)$. Since $\frac{\sin(t)}{t}$ is monotonically decreasing on $[0,\pi]$ and $t_0<\frac{1}{3a}<\frac{\pi}{2a}$, we have $\norm{D_vE(v,s_0)(w)} \geq \frac{2}{\pi}\norm{w}$.   Then, $\norm{(D_{v=\alpha(s_0)}E(v,s_0))^{-1}} \leq \pi/2 < 2. $

Thus 
\[
\norm{D_{s=s_0}\alpha(s)} \leq 8 L(s_0) r(s_0)
\]
which finishes the proof of Claim \ref{claim1}.   \qed

We will now finish the proof of \cref{prop:dbary_limit} by establishing:
\vspace{.5em}

\begin{claim} \label{claim2}
$ \norm{D_{s=s_0}\op{bar}_{s_0}(\mu(s))- Q_{s_0}^{-1} \of \sum_{i=1}^n w_i\lVert_{z_i}^{\op{bar}_{s_0}(\mu)}D_{s=s_0}z_i(s)} \leq 16 \max\set{1, a^2, b^2}L(s_0) r(s_0).$
\end{claim}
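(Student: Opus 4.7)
The plan is to compute $D_{s=s_0}\op{bar}_{s_0}(\mu(s))$ explicitly via the chain rule from the formulas in \cref{lem:dbary_measure}, then to recognize the target expression $Q_{s_0}^{-1}\sum_i w_i \lVert^{x_0}_{z_i} D_{s} z_i(s_0)$ as the leading term (writing $x_0 := \op{bar}_{s_0}(\mu(s_0))$), and finally to control the remainder by elementary norm estimates. Since the metric is frozen at $g_{s_0}$, the only $s$-dependence comes from the weights $w_i(s)$ and atoms $z_i(s)$, so the chain rule gives
\[
D_{s=s_0}\op{bar}_{s_0}(\mu(s)) = \sum_i D_{w_i}\op{bar}(\mu)\cdot D_s w_i(s_0) + \sum_i D_{z_i}\op{bar}(\mu)\cdot D_s z_i(s_0),
\]
with each summand provided by \cref{lem:dbary_measure}. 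This expression is a sum of two tensors acting on the appropriate spaces, mapping into $T_{x_0}F$.

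To identify the leading term, I apply \cref{lem:geom_lemma_2} with $x_0$ as above and $z_0 = z_i$, and then compose with the parallel transport $\lVert^{x_0}_{z_i}$ on the outputs. Since $\lVert^{x_0}_{z_i}\circ \lVert^{z_i}_{x_0} = \id$, this yields
\[
D_{z_i}d_{s_0}(x_0,z_i)\otimes \nabla^{s_0}_{x=x_0}d_{s_0}(x,z_i) + d_{s_0}(x_0,z_i)\, D_{z_i}\nabla^{s_0}_{x}d_{s_0}(x,z_i) = -\lVert^{x_0}_{z_i}(I + S_i),
\]
with $\|S_i\| \le 2\max\set{a^2,b^2}\,d_{s_0}(x_0,z_i)^2$. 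Substituting into the formula for $D_{z_i}\op{bar}(\mu)$ from \cref{lem:dbary_measure} gives $D_{z_i}\op{bar}(\mu) = Q_{s_0}^{-1}\,w_i\lVert^{x_0}_{z_i}(I+S_i)$. Summing against $D_s z_i(s_0)$ separates out exactly the desired leading term $Q_{s_0}^{-1}\sum_i w_i \lVert^{x_0}_{z_i} D_s z_i(s_0)$, leaving two error contributions: the $S_i$-remainder $Q_{s_0}^{-1}\sum_i w_i\lVert^{x_0}_{z_i}S_i\,D_s z_i(s_0)$ and the full weight-derivative term $-Q_{s_0}^{-1}\sum_i D_s w_i(s_0)\,d_{s_0}(x_0,z_i)\nabla^{s_0}_{x=x_0}d_{s_0}(x,z_i)$.

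To bound these errors I will use three ingredients already established: $\|Q_{s_0}^{-1}\|<2$ from \cref{cor:Q_pos_def} (applicable because $r(s_0)<\tfrac{1}{3\max\set{a,b}}$), the bound $d_{s_0}(x_0,z_i)\le 2r(s_0)$ from \cref{rem:barybound}, and the fact that $L(s_0)$ controls both $\sum_i \|D_s w_i(s_0)\|$ and $\max_i\|D_s z_i(s_0)\|$. The weight term is then bounded by $2\cdot 2r(s_0)\cdot L(s_0) = 4\,L(s_0)\,r(s_0)$, and the $S_i$-term by $2\cdot\sum_i w_i\cdot 8\max\set{a^2,b^2}r(s_0)^2\cdot L(s_0) = 16\max\set{a^2,b^2}\,r(s_0)^2\,L(s_0)$. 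Using $r(s_0)\max\set{a,b}<\tfrac{1}{3}$ (the $S_i$-contribution is trivially zero when $a=b=0$) converts the second estimate to at most $\tfrac{16}{3}\max\set{1,a^2,b^2}\,L(s_0)\,r(s_0)$. The two contributions sum to strictly below $16\max\set{1,a^2,b^2}\,L(s_0)\,r(s_0)$, establishing the claim.

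The only technical subtlety is bookkeeping: correctly interpreting the various tensor products $A\otimes B$ as maps $T_{z_i}F\to T_{x_0}F$, and making sure that applying $\lVert^{x_0}_{z_i}$ to the identity from \cref{lem:geom_lemma_2} produces precisely the tensor structure that appears in the formula for $D_{z_i}\op{bar}(\mu)$ in \cref{lem:dbary_measure}. No new geometric input beyond these two lemmas is required.
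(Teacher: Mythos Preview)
Your proposal is correct and follows essentially the same route as the paper: the chain-rule expansion via \cref{lem:dbary_measure}, the identification $D_{z_i}\op{bar}(\mu)=Q_{s_0}^{-1}w_i\lVert^{x_0}_{z_i}(I+S_i)$ from \cref{lem:geom_lemma_2}, and the bounds $\norm{Q_{s_0}^{-1}}<2$, $d_{s_0}(x_0,z_i)\le 2r(s_0)$ yielding the weight-term estimate $4L(s_0)r(s_0)$ and the $S_i$-remainder estimate $16\max\set{a^2,b^2}r(s_0)^2 L(s_0)$ are all exactly what the paper does. The only cosmetic difference is that the paper absorbs the extra factor of $r(s_0)$ via $r(s_0)\le 1$ rather than via $r(s_0)\max\set{a,b}<\tfrac13$ as you do.
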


{\em Proof of Claim 2:}
We first expand the term $D_{s=s_0}\op{bar}_{s_0}(\mu(s))$ as,
\begin{align}\label{eqn:chain_rule_Ds_w_and_z}
D_{s=s_0}\op{bar}_{s_0}(\mu(s))=\sum_{i=1}^{n}  D_{w_i}\op{bar}_{s_0}(\mu)\of D_{s=s_0} w_i(s) + \sum_{i=1}^n D_{z_i}\op{bar}_{s_0}(\mu)\of D_{s=s_0} z_i(s).
\end{align}

We will now prove this claim by first showing that
\begin{align}
\label[ineq]{eqn:claim_for_wi}
   \norm{\sum_{i=1}^n D_{w_i}\op{bar}_{s_0}(\mu)\of  D_{s=s_0} w_i(s)}  \leq 4 L(s_0) r(s_0)
\end{align}
and then showing that
\begin{align}
\label[ineq]{eqn:claim_for_zi}
    \norm{ \sum_{i=1}^n \left( D_{z_i}\op{bar}_{s_0}(\mu)-Q_{s_0}^{-1}  \of w_i \lVert^{\op{bar}_{s_0}(\mu)}_{z_i} \right) \of D_{s=s_0} z_i(s)  } \leq 16 \max \set{a^2,b^2} L(s_0) r(s_0).
\end{align}

We first prove the claimed \cref{eqn:claim_for_wi}. The first part of Lemma \ref{lem:dbary_measure} implies that 
\[
\norm{D_{w_i}\op{bar}_{s_0}(\mu)} \leq \norm{Q_{s_0}^{-1}} \abs{d_{s_0}(\op{bar}_{s_0}(\mu),z_i)}\norm{\nabla_{x=\op{bar}_{s_0}(\mu)}d_{s_0}(x,z_i)}.
\]
Note that $\nabla_{x=\op{bar}_{s_0}(\mu)}d_{s_0}(x,z_i)$ has norm at most 1. Since $r(s_0)<\frac{1}{3\max\set{a,b}}$, \cref{cor:Q_pos_def} implies that $\norm{Q_{s_0}^{-1}}\leq 2$. Then
\[
\norm{D_{w_i}\op{bar}_{s_0}(\mu)} \leq 2 d_{s_0}(\op{bar}_{s_0}(\mu),z_i) \leq 4 r(s_0).
\]
The claimed \cref{eqn:claim_for_wi} then follows as 
\begin{align*}
   \norm{\sum_{i=1}^n D_{w_i}\op{bar}_{s_0}(\mu)\of  D_{s=s_0} w_i(s)} \leq 4 r(s_0) \sum_{i=1}^n \norm{D_{s=s_0}w_i(s)} \leq 4 L(s_0) r(s_0). 
\end{align*}

In order to prove claimed \cref{eqn:claim_for_zi}, first fix $i \in \set{1,\dots,n}$. Recalling that $x_0=\op{bar}_{s_0}(\mu)$, we get 
\begin{align*}
    \norm{\left(D_{z_i}\op{bar}_{s_0}(\mu)-Q_{s_0}^{-1} \of w_i \lVert^{x_0}_{z_i}\right) \of D_{s=s_0} z_i(s)} \leq L(s_0) \norm{D_{z_i}\op{bar}_{s_0}(\mu)-Q_{s_0}^{-1}\of w_i \lVert^{x_0}_{z_i}}.
\end{align*}
We will now show that 
\begin{align*}
    \norm{D_{z_i}\op{bar}_{s_0}(\mu)-Q_{s_0}^{-1} \of w_i \lVert^{x_0}_{z_i}} \leq 16 w_i\max \set{a^2,b^2} r(s_0)^2.
\end{align*}
Indeed, the second part of Lemma \ref{lem:dbary_measure} gives the formula for $D_{z_i}\op{bar}_{s_0}(\mu)$ and we have:
\begin{align*}
\norm{\underbrace{-Q_{s_0}^{-1} \of w_i  \left(D_{z_i}d_{s_0}(x_0,z_i)\tensor\nabla_{x=x_0}d_{s_0}(x,z_i)
+d_{s_0}(x_0,z_i) D_{z_i}\nabla_{x=x_0} d_{s_0}(x,z_i)\right)}_\text{from \cref{lem:dbary_measure}} -Q_{s_0}^{-1} \of w_i (\lVert^{x_0}_{z_i})}.
\end{align*}
 Since $r(s_0) <\frac{1}{3\max\set{a,b}}$, \cref{cor:Q_pos_def} implies that $\norm{Q_{s_0}^{-1}} \leq  2$. Then the above expression is at most
\begin{align*}
    2 w_i \norm{\underbrace{D_{z_i}d_{s_0}(x_0,z_i)\tensor\nabla_{x=x_0}d_{s_0}(x,z_i)
+d_{s_0}(x_0,z_i) D_{z_i}\nabla_{x=x_0} d_{s_0}(x,z_i)}+(\lVert^{x_0}_{z_i})}.
\end{align*}
As $d_{s_0}(x_0,z_i)<2r(s_0)<\pi/2b$, \cref{lem:geom_lemma_2} implies that the term over the underbrace is equal to $-\lVert^{x_0}_{z_i}(I+S_i)$ where $S_i$ is a $(1,1)$-tensor such that 
\[\norm{S_i} \leq 2 \max\set{a^2,b^2} d_{s_0}(x_0,z_i)^2 \leq 8 \max \set{a^2,b^2} r(s_0)^2.
\]
Hence
\begin{align*}
    \norm{D_{z_i}\op{bar}_{s_0}(\mu)-Q_{s_0}^{-1} \of w_i \lVert^{x_0}_{z_i}}&\leq 2w_i \norm{\left(-\lVert^{x_0}_{z_i}(I+S_i)\right)+\lVert^{x_0}_{z_i}} \leq 16w_i \max\set{a^2,b^2}r(s_0)^2. 
\end{align*}
As $\sum_{i=1}^n w_i=1$ and $r(s_0) \leq 1$, 
\begin{align*}
    \norm{\sum_{i=1}^n\left(D_{z_i}\op{bar}_{s_0}(\mu)-Q_{s_0}^{-1} \of w_i \lVert^{x_0}_{z_i}\right) \of D_{s=s_0} z_i(s)} \leq  16 \max \set{a^2,b^2} r(s_0) L(s_0). 
\end{align*}
This establishes \cref{eqn:claim_for_zi} and finishes the proof of Claim \ref{claim2}.   \qed

Now Claims 1 and 2, taken together with \cref{eqn:ds_bary_mu_s}, finish the proof of  \cref{prop:dbary_limit}.
\end{proof}

Let $d^{\op{Gr}}$ be the Grassmannian distance. The following is straightforward from the definition of $d^{\op{Gr}}$.
\begin{lemma}
\label{lem:dist_gr_1}
Suppose $A,C: \Rb^m \to \Rb^p$ and $B: \Rb^p \to \Rb^p$ are linear maps and $W=\im(\id_{\Rb^m},A)$ and $W'=\im(\id_{\Rb^m},C+ B \circ A)$. Then $d^{\op{Gr}}(W,W') \leq \norm{C}+ \norm{B-I} \norm{A}$.
\end{lemma}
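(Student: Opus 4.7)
The plan is to exploit the fact that both $W$ and $W'$ are graphs of linear maps on the same domain $\Rb^m$, which provides a canonical bijection between them via the first coordinate. This reduces the estimate to controlling the ``vertical'' displacement between corresponding graph points. First I would recall the standard characterization of the Grassmannian distance as a one-sided Hausdorff distance between unit spheres,
\[
d^{\op{Gr}}(W,W') = \max\Bigl\{ \sup_{v \in W,\ \|v\|=1} d(v, W'),\ \sup_{v' \in W',\ \|v'\|=1} d(v', W) \Bigr\},
\]
so that it suffices to bound $d(v,W')$ for an arbitrary unit vector $v\in W$, and symmetrically $d(v',W)$ for a unit $v'\in W'$.

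For the first direction, given a unit vector $v = (x, Ax) \in W$ with $x \in \Rb^m$, I would take as candidate in $W'$ the point with the same first coordinate, $w' := (x, (C+BA)x) \in W'$. Then
\[
v - w' = \bigl(0,\ Ax - (C+BA)x\bigr) = \bigl(0,\ -Cx - (B-I)Ax\bigr),
\]
so by the triangle inequality and submultiplicativity,
\[
\|v - w'\| \;=\; \|Cx + (B-I)Ax\| \;\leq\; \bigl(\|C\| + \|B-I\|\,\|A\|\bigr)\|x\|.
\]
Since $\|v\|^2 = \|x\|^2 + \|Ax\|^2 \geq \|x\|^2$, the unit norm condition gives $\|x\| \leq 1$, and therefore $d(v, W') \leq \|v - w'\| \leq \|C\| + \|B-I\|\|A\|$.

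For the reverse direction, given a unit $v' = (x, (C+BA)x) \in W'$, I would compare it to $w := (x, Ax) \in W$, obtaining the same expression $v' - w = (0, Cx + (B-I)Ax)$ and the same estimate, using $\|x\| \leq \|v'\| = 1$. Combining the two directions yields the claimed bound. The proof is essentially a direct linear-algebra computation, so no real obstacle is expected; the only care needed is to ensure the definition of $d^{\op{Gr}}$ used matches the one from \cref{prop:parallel_close} and the applications in the main text, for which this one-sided-sup formulation is standard (and equivalent up to constants to the operator-norm-of-projection-difference formulation).
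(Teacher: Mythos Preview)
Your proposal is correct and is precisely the natural way to fill in the details; the paper itself gives no argument beyond stating that the lemma ``is straightforward from the definition of $d^{\op{Gr}}$,'' and your computation via matching first coordinates on the two graphs is the obvious interpretation of that remark. Your caveat about the precise normalization of $d^{\op{Gr}}$ is appropriate, since the paper never pins down a single definition, but the gap/Hausdorff formulation you use yields the stated inequality on the nose and is consistent with how the lemma is applied in \cref{prop:parallel_close_in_appendix}.
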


We now prove \cref{prop:parallel_close}.

\begin{corollary}
\label{prop:parallel_close_in_appendix}
Assume that \hyperref[link:setup-b]{\bf Assumptions B} hold. 
Suppose that there is a continuous foliation $\Hc$ on  $B(0,\epsilon_0)\times F$ such that:
\begin{enumerate}[label=(\alph*)]
    \item $\Hc$ has $C^1$ leaves which transversely vary continuously in the leafwise $C^1$-topology,
    \item for each $\xi \in F$, there is a $C^1$ function $\varphi_\xi:B(0,\epsilon_0)\to F$ satisfying $\varphi_\xi(0)=\xi$ and the foliation $\Hc$ is given by
    \[
    \Hc=\set{(s,\varphi_{\xi}(s)) : s \in B(0,\eps_0)}_{\xi \in F},
    \]
    \item the tangent distribution of $\Hc$ is denoted by $\Dc$.
\end{enumerate}

Fix $\xi_1,\dots,\xi_n \in F$, smooth weight functions $w_i: B(0,\eps_0) \to [0,1]$ and let $G$ denote the graph of the map 
\[ 
B(0,\epsilon_0) \ni s\mapsto \op{bar}_s(\mu(s))=\op{bar}_s \left( \sum_{i=1}^nw_i(s)\delta_{\varphi_{\xi_i}(s)} \right).
\]
Then for every $\epsilon>0$ and every $c\in [0,1)$, there exists $\delta = \delta(\eps,c)>0$ such that for every $s_0\in B(0,c\epsilon_0)$, if $r(s_o)<\delta$ then 
\[d^{Gr}(T_{(s_0,\op{bar}_{s_0}\mu(s_0))}G,\mc{D}(s_0,\op{bar}_{s_0}\mu(s_0)))<\epsilon.\]
Moreover, the derivatives  $D_{s=s_0}\op{bar}_s(\mu(s))$ and distributions $T_{(s_0,\op{bar}_{s_0}\mu(s_0))}G$ vary continuously in $s_0, \xi_1,\dots,\xi_n$.
\end{corollary}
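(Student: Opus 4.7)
The plan is to deduce the proposition from the derivative estimate of Proposition \ref{prop:dbary_limit}, together with the continuity of the foliation $\mathcal{H}$ in the leafwise $C^1$-topology. The central idea is that when $r(s_0)\to 0$, the support points $z_i(s_0)=\varphi_{\xi_i}(s_0)$ collapse near $y_0:=\op{bar}_{s_0}(\mu(s_0))$, so by continuity of the foliation the derivatives $D_{s=s_0}\varphi_{\xi_i}(s)$ are forced to nearly agree, and their common value coincides with the derivative of the barycenter map.

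First I would fix $s_0\in B(0,c\eps_0)$ and set $y_0:=\op{bar}_{s_0}(\mu(s_0))$. Since $\mathcal{H}$ foliates $B(0,\eps_0)\times F$, there is locally a unique $C^1$ graph $\varphi_{\xi^*}$ in the leaf through $(s_0,y_0)$ with $\varphi_{\xi^*}(s_0)=y_0$, so $\mathcal{D}(s_0,y_0)$ is the graph of $D_{s=s_0}\varphi_{\xi^*}(s)$ and $T_{(s_0,y_0)}G$ is the graph of $D_{s=s_0}\op{bar}_s(\mu(s))$. By \cref{lem:dist_gr_1} it suffices to make the operator norm of
$$D_{s=s_0}\op{bar}_s(\mu(s))-D_{s=s_0}\varphi_{\xi^*}(s)$$
smaller than $\eps$. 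Proposition \ref{prop:dbary_limit} gives the identity
$$D_{s=s_0}\op{bar}_s(\mu(s))=Q_{s_0}^{-1}\Bigl(\sum_{i=1}^n w_i(s_0)\,\lVert^{y_0}_{z_i(s_0)} D_{s=s_0}\varphi_{\xi_i}(s)\Bigr)+E,$$
with $\lVert E\rVert\le 32\max\{a^2,b^2\}L(s_0)r(s_0)$. I then exploit three ingredients as $r(s_0)\to 0$: (i) $\lVert Q_{s_0}-I\rVert=O(r(s_0)^2)$ by \cref{lem:Q_id}, so $Q_{s_0}^{-1}$ tends to the identity; (ii) each $z_i(s_0)\to y_0$ (using \cref{rem:barybound}, $d_{s_0}(z_i(s_0),y_0)\le 2r(s_0)$), and hence the parallel transports $\lVert^{y_0}_{z_i(s_0)}$ for the metric $g_{s_0}$ converge to the identity on $T_{y_0}F$; (iii) hypothesis (a) that leaves of $\mathcal{H}$ vary continuously in the leafwise $C^1$-topology implies that $D_{s=s_0}\varphi_{\xi}(s)$ is continuous in $\xi$, so the derivatives $D_{s=s_0}\varphi_{\xi_i}(s)$ uniformly approach $D_{s=s_0}\varphi_{\xi^*}(s)$ once $\varphi_{\xi_i}(s_0)$ is sufficiently close to $y_0$. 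Since $\sum_i w_i(s_0)=1$, the weighted sum collapses to $D_{s=s_0}\varphi_{\xi^*}(s)$ up to an error that is small with $r(s_0)$.

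The main obstacle is making the smallness estimates in (i)--(iii) uniform over $s_0\in B(0,c\eps_0)$. For (i) and (ii), this is handled by the uniform curvature bounds $[-a^2,b^2]$ and the uniform $C^{1,2}$-regularity of $g_s$ through \cref{lem:reg_of_dist}, which provides control on the exponential map. For (iii), I would use that the continuous foliation restricted to the precompact region $\overline{B(0,c\eps_0)}\times F$ has a uniform modulus of continuity in the leafwise $C^1$-topology: given $\eps>0$, there exists $\delta>0$ so that if $\varphi_{\xi_i}(s_0)$ and $\varphi_{\xi^*}(s_0)$ are within $2\delta$ in $F$, then $\lVert D_{s=s_0}\varphi_{\xi_i}(s)-D_{s=s_0}\varphi_{\xi^*}(s)\rVert<\eps/10$. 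Choosing $r(s_0)<\delta$ then yields the desired bound through a triangle inequality combining the error bound $E$ from Proposition \ref{prop:dbary_limit}, the bound $\lVert Q_{s_0}^{-1}-I\rVert\le O(r(s_0)^2)$, the bound on parallel transports, and the leafwise $C^1$-continuity.

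Finally, the moreover statement about continuous dependence of $D_{s=s_0}\op{bar}_s(\mu(s))$ and $T_{(s_0,y_0)}G$ in $(s_0,\xi_1,\dots,\xi_n)$ follows by inspecting the explicit formula provided by Proposition \ref{prop:dbary_limit}: the tensor $Q_{s_0}$ depends continuously on $(s_0,\xi_1,\dots,\xi_n)$ via \cref{lem:reg_of_dist} and \cref{cor:reg_of_dist}, the parallel transports $\lVert^{y_0}_{z_i(s_0)}$ depend continuously by the same regularity, and $D_{s=s_0}\varphi_{\xi_i}(s)$ is continuous in $(s_0,\xi_i)$ by assumption (a). Since $Q_{s_0}^{-1}$ remains uniformly invertible (as $\lVert Q_{s_0}-I\rVert\le 1/4$), inversion preserves continuity, and the whole expression varies continuously.
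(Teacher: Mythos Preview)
Your proposal is correct and follows essentially the same approach as the paper: both arguments pivot on the estimate from Proposition~\ref{prop:dbary_limit}, the bound $\lVert Q_{s_0}^{-1}-I\rVert = O(r(s_0)^2)$ from Lemma~\ref{lem:Q_id}, and uniform continuity of the distribution $\mathcal{D}$ and of parallel transport on the compact set $\overline{B(0,c\eps_0)}\times F$. The only cosmetic difference is that the paper interposes an auxiliary subspace $W(s_0)=\im\bigl(\id,\sum_i w_i(s_0)\lVert^{x_0}_{z_i(s_0)}D_{s=s_0}\varphi_{\xi_i}\bigr)$ and auxiliary distributions $\mathcal{D}_i$, performing the Grassmannian comparison in two steps ($T_{(s_0,x_0)}G$ to $W(s_0)$, then $W(s_0)$ to $\mathcal{D}(s_0,x_0)$), whereas you work directly with the leaf $\varphi_{\xi^*}$ through the barycenter and bound the operator difference in one step; for the ``moreover'' clause the paper invokes the exact chain-rule expansion via Lemmas~\ref{lem:dbary_measure} and~\ref{lem:dbary_metric} rather than the estimate form of Proposition~\ref{prop:dbary_limit}, but your appeal to \cref{lem:reg_of_dist} and \cref{cor:reg_of_dist} for continuity of $Q_{s_0}$ captures the same content.
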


\begin{proof}

Let $z_i(s):=\varphi_{\xi_i}(s)$ and $x_0:=\op{bar}_{s_0}(\mu(s_0))$. Now $$T_{(s_0,x_0)}G=\im (\id, D_{s=s_0}\op{bar}_{s}\mu(s)).$$

 We will first explain why the last moreover statement in the corollary follows directly from our previous results. Note that by the assumption on the foliation $\Hc$, $\varphi_{\xi_i}$ is $C^1$ in $s$ and $D_sz_i(s)=D_s \varphi_{\xi_i}(s)$ is continuous in both $\xi_i$ and $s$ for each $i=1,\dots, n$. 

By the formula for $D_{s=s_0} \op{bar}_{s}(\mu(s))$ given in \cref{eqn:ds_bary_mu_s} and \cref{eqn:chain_rule_Ds_w_and_z} above,
\[
D_{s=s_0} \op{bar}_s(\mu(s))  =D_{s=s_0}\op{bar}_s(\mu(s_0))+ \sum_{i=1}^{n}  D_{w_i}\op{bar}_{s_0}(\mu(s_0))\of D_{s=s_0} w_i(s) + \sum_{i=1}^n D_{z_i}\op{bar}_{s_0}(\mu(s_0))\of D_{s=s_0} z_i(s).
\]
The first term is continuous in $s_0$ and $\xi_i$ by \Cref{lem:dbary_metric} and \Cref{lem:reg_of_dist} (for the continuity of $Q_s$, see \Cref{eqn:Qs} and \Cref{lem:reg_of_dist}). The continuity of the second and the third term in $s_0$ and $\xi_i$ follows from \Cref{lem:dbary_measure} and the above mentioned regularity of $D_s \phi_{\xi_i}$. 

Thus $T_{(s_0,x_0)} G$ varies continuously in  $s_0, \xi_1,\dots,\xi_n$.

We now prove the remaining statements. For that, we can fix $\xi_1,\dots, \xi_n \in F$ and let $s$ vary in $B(0,\eps_0)$. 
By \cref{prop:dbary_limit}, 
\[
D_{s=s_0} \op{bar}_s(\mu(s))=R_{s_0}+ Q_{s_0}^{-1}  \left(\sum_{i=1}^n w_i(s_0)\lVert^{\op{bar}_{s_0}(\mu(s_0))}_{z_i(s_0)} D_{s=s_0} z_i(s) \right),
\] where $\norm{R_{s_0}}_{s_0} \leq 32 \max \set{a^2,b^2} L(s_0) r(s_0)$ for any $s_0 \in B(0,\eps_0)$.

Let 
\[
W(s_0):=\im(\id, \sum_{i=1}^n w_i(s_0)\lVert^{x_0}_{z_i(s_0)} D_{s=s_0} \varphi_{\xi_i}(s)).
\]
Note that $W(s)$ is continuous in $s$. Then $M':=\sup_{s_0 \in B(0,\eps_0)} \max_{1 \leq i \leq n} \norm{D_{s=s_0}\varphi_{\xi_i}(s)}$ exists. Then the above \cref{lem:dist_gr_1} implies that 
\begin{align}
\label{ineq:G_close_to_W}
    d^{Gr}(T_{(s_0,x_0)}G,W(s_0)) &\leq \norm{R_{s_0}}+ \norm{Q^{-1}_{s_0}-I} \norm{\sum_{i=1}^n w_i(s_0) D_{s=s_0}\varphi_{\xi_i}(s)}  \nonumber \\
    & \leq \norm{R_{s_0}}+ \norm{Q^{-1}_{s_0}-I} \left( \max_{1 \leq i \leq n} \norm{D_{s=s_0}\varphi_{\xi_i}(s)} \right) \nonumber \\
    & \leq 32 \max \set{a^2,b^2} L(s_0) r(s_0) + 2 \max \set{a^2,b^2} M' r(s_0)^2 \nonumber  \\
    & < M r(s_0),
\end{align}
for a constant $M$ that is independent of $s_0$, when we assume that $r(s_0)<1$.

We will now show that $d^{Gr}(W(s_0),\Dc(s_0,x_0))< \eps$ whenever $r(s_0)$ is sufficiently small, depending on $\eps$. Define
\begin{align*}
    \Dc_i(s,x)=\im(\id,\lVert^{x}_{z_i(s)}D_s\phi_{\xi_i}(s))
\end{align*}
where $\lVert^{x}_{z_i(s)}$ is the parallel transports along $\{\{s\}\times F:s\in B(0,\epsilon_0)\}$, with respect to the metrics $g_s$.

First note that $W(s_0)=\sum w_i(s_0) \Dc_i(s_0,x_0)$. Fix any $\eps>0$ and $c\in[0,1)$. We now claim that there exists $\delta'=\delta'(\eps,c)>0$ such that if $s_0 \in B(0,c\eps_0)$ and $r(s_0)<\delta'$, then 
\[
d^{Gr}(\Dc_i(s_0,x_0), \Dc(s_0,x_0)) <\eps/2. 
\]
Indeed, this follows from the uniform continuity of the distribution $\Dc$ and uniform continuity of the parallel translation maps $\lVert^{(s,x)}_{(s,y)}$ on $\overline{B(0,c\eps_0)} \times F$. More precisely, uniform continuity of $\Dc$ and the parallel translation implies the existence of $\delta'=\delta'(\eps,c)$ such that: if $s \in B(0,c\eps_0)$, $d_s(x,y)<\delta'$ and $v\in T_yF$ then 
\[
d^{Gr}(\Dc(s,x),\Dc(s,y))<\eps/2 \text{ and  } d^{TF}_s(v,\lVert^{(s,x)}_{(s,y)}v)< \eps/2
\] (here $d^{TF}$ is the distance in the tangent bundle of $F$ induced by $g_s$). Now observe that $(u,u') \in \Dc_i(s_0,x_0)$ if and only if $u'=\lVert^{x_0}_{z_i(s_0)} v'$ where $v'\in \im(D_{s=s_0}\varphi_{\xi_i}(s))$.  It follows that $(u,v') \in \Dc(s_0,z_i(s_0))$. Indeed we have 
\[\mc{D}(s_0,z_i(s_0))=\op{Span}\{(v,D_{s=s_0}\varphi_{\xi}(s)(v)):v\in \mathbb R^m\}=\im (\id, D_{s=s_0}\phi_{\xi_i}(s)).\]
This finishes the proof of the claim that if $r(s_0)<\delta'$, then $d(\Dc_i(s_0,x_0),\Dc(s_0,x_0))<\eps$.  

Now suppose $\delta' =\delta'(\eps,c)$ as above. We will now show that: if $r(s_0)<\delta'$, then 
\begin{align}
\label{ineq:distros_close}
 d^{Gr}(\sum_{i=1}^n w_i(s_0) \Dc_i(s_0,x_0), \Dc(s_0,x_0))<\eps.   
\end{align}

Indeed, if $v \in \Dc(s_0,x_0)$, then there exists $v_i \in \Dc_i$ such that $\norm{v-v_i} < \eps.$ Then $\sum w_i(s_0)v_i \in W(s_0)$ and $\norm{v- \sum w_i(s_0) v_i} \leq \sum w_i(s_0) \norm{v-v_i} < \eps$. Conversely, if $v_i' \in \Dc_i$, then there exist $y_i' \in \Dc$ such that $\norm{y_i-v_i'}<\eps$. Then $\sum w_iy_i \in \Dc$ and $\norm{\sum v_i' -\sum y_i} < \eps$. 

By \cref{ineq:G_close_to_W} and \cref{ineq:distros_close}, for any $\eps>0$ and $c\in[0,1)$, there exists $\delta=\min\set{\frac{\eps}{M},1, \delta'(\eps,c)}$ such that: if $s_0\in B(0,c\eps_0)$ and $r(s_0) <\delta$, then 
\[
d^{Gr}(T_{(s_0,x_0)}G, \Dc(s_0,x_0))< 2\eps.
\] \end{proof}

\section{Some Lemmas about Foliations}
\label{sec:foliation_lemmas}

A manifold $V$ is said to be {\em uniformly transverse}
to a foliation $\mathcal H$ of a Riemannian manifold if there exists $0<\theta\le \frac \pi 2$ such that at every  point  $p \in V$, the angle of $V$ with the leaf of $\mathcal H$ through $p$ is at least $\theta$.

\begin{lemma}
\label{lem:foliation_connected}
For $0\leq k\leq n$, consider the product foliation $\mathcal H=\{\mathbb R^k\times \{y\}:y\in \mathbb R^{n-k}\}$ of $\mathbb R^n$. Let $V$ be a complete connected $C^1$-submanifold of $\mathbb R^n$ such that $V$ is uniformly transverse to the foliation $\mathcal H$. Then the intersection of $V$ and any leaf of $\mathcal H$ is either empty or path connected.
\end{lemma}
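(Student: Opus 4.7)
My plan is a homotopy lifting argument for the submersion $\pi|_V \colon V \to \pi(V) \subseteq \mathbb{R}^{n-k}$, where $\pi\colon \mathbb{R}^n \to \mathbb{R}^{n-k}$ is the canonical projection along the $\mathbb{R}^k$ factor (so the leaves of $\mathcal{H}$ are exactly the fibers of $\pi$). Writing $m = n-k$ and $d = \dim V$, uniform transversality of angle $\theta > 0$ says that $d\pi|_{T_pV}\colon T_pV \to \mathbb{R}^m$ is surjective at every $p \in V$ with a right inverse of operator norm at most $1/\sin\theta$; in particular $d \geq m$. By the Implicit Function Theorem, around each $p \in V$ the manifold $V$ is a $C^1$-graph of a function $f_p\colon A_p \times B_p \to W_p^\perp$ over an open product in $W_p \times \mathbb{R}^m$, where $W_p \subset \mathbb{R}^k$ is a $(d-m)$-dimensional complement to $\ker d\pi|_{T_pV}$; in this chart the leaves $V \cap L_y$ are the horizontal slices $A_p \times \{y\}$, and any continuous $y$-path in $B_p$ lifts to $V$ continuously by freezing the $W_p$-coordinate.

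The first substantial step is path and homotopy lifting. Given continuous $\sigma\colon [0,1] \to \pi(V)$ and $p_0 \in V$ with $\pi(p_0) = \sigma(0)$, I claim that a continuous lift $\tilde\sigma\colon [0,1] \to V$ with $\tilde\sigma(0) = p_0$ exists by an open-and-closed argument on the set $I \subseteq [0,1]$ of $t$ for which a lift extends to $[0,t]$: openness uses the chart extension above, and closedness uses the $1/\sin\theta$ Lipschitz bound on chart lifts together with completeness of $V$ to produce a convergent subsequence. The same construction, applied to a $C^1$ homotopy $H\colon [0,1]^2 \to \pi(V)$ after subdividing $[0,1]^2$ by a Lebesgue-number argument into small rectangles each mapping into a single chart, produces a continuous lift $\tilde H\colon [0,1]^2 \to V$ extending any prescribed lift on $[0,1] \times \{0\}$: within each rectangle processed left-to-right and bottom-to-top, a prescribed continuous lift of the bottom and left edges (given by the prior rectangles) extends continuously to the interior in the chart by the additive rule $w(t,s) := w_b(t) + w_l(s) - w_l(0)$, and this automatically matches the neighbor on each shared edge.

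With homotopy lifting, the conclusion is immediate. Let $p, q \in V \cap L_{y_0}$; since $V$ is connected and $C^1$, hence path-connected, pick a continuous curve $\gamma\colon [0,1] \to V$ from $p$ to $q$ and set $\sigma = \pi \circ \gamma$, a loop in $\mathbb{R}^m$ based at $y_0$. The straight-line contraction $H(t,s) := (1-s)\sigma(t) + s\,y_0$ satisfies $H(\cdot, 0) = \sigma$, $H(\cdot, 1) \equiv y_0$, and $H(0,s) = H(1,s) \equiv y_0$. Lift $H$ to $\tilde H$ with $\tilde H(\cdot, 0) = \gamma$. Then $s \mapsto \tilde H(0, s)$ and $s \mapsto \tilde H(1, s)$ lift the constant path at $y_0$, starting at $p$ and $q$ respectively, so they lie entirely in $V \cap L_{y_0}$; likewise $t \mapsto \tilde H(t, 1)$ lifts the constant loop at $y_0$ and lies in $V \cap L_{y_0}$. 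Their concatenation gives the desired path in $V \cap L_{y_0}$ from $p$ to $q$.

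The main obstacle is the lifting step under only $C^1$ regularity of $V$: the horizontal distribution $(\ker d\pi|_{T_pV})^\perp$ is merely $C^0$, so it cannot be integrated as a unique ODE flow. The point is that \emph{uniqueness} of lifts is not required—only existence of continuous lifts—and this is furnished chart-by-chart by the $C^1$-graph representation, where freezing the slice coordinate gives a canonical continuous local section of $\pi|_V$. Completeness of $V$ is precisely what prevents the chart-by-chart construction from terminating prematurely in the parameter.
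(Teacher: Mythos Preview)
Your strategy (homotopy lifting for the submersion $\pi|_V$) is genuinely different from the paper's. The paper runs a gradient descent: with $f(x)=d(x,H)^2$ for a leaf $H$, it projects $-\nabla f$ onto $TV$ to get a vector field $\mathcal F$ on $V$, shows any integral curve decreases $d(\cdot,H)$ exponentially (from the uniform angle $\theta$), and then flows a path in $V$ joining two points of $V\cap H$ down to $H$, arguing that the limit is a continuous path in $V\cap H$. That argument uses one globally defined vector field with estimates uniform in $\theta$, so no chart-patching ever arises.

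There is a real gap in your homotopy-lifting step. Your Lebesgue-number subdivision of $[0,1]^2$ is taken over the base $\mathbb{R}^{n-k}$, but the graph chart you must use at a given rectangle is the one centered at the \emph{lifted} corner in $V$, and its size $A_p\times B_p$ depends on how fast $T_qV$ turns near that point---a second-order quantity nowhere controlled by the hypotheses (only $C^1$, only the angle $\theta$). Since which chart is needed depends on the lift you are constructing, the subdivision cannot be chosen in advance. Moreover, the additive rule $w(t,s)=w_b(t)+w_l(s)-w_l(0)$ is written in a fixed chart's $W_p$-coordinate; passing to the next row changes $W_p$, and re-expressing the boundary data in the new coordinates can inflate the $t$-Lipschitz constant by a chart-dependent factor at each transition. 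Without a uniform lower bound on chart size along the (unknown) lift, neither a single subdivision nor an open--closed continuation in $s$ can be completed: you lack the equicontinuity in $t$ needed to pass to $s_0=\sup\{s:\text{a lift exists on }[0,1]\times[0,s]\}$. (A smaller point you should also make explicit: your path-lifting argument, via the $1/\sin\theta$ Lipschitz bound and completeness, already forces $\pi(V)=\mathbb{R}^{n-k}$, which is needed for the straight-line contraction to stay in $\pi(V)$.)
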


We note that the  uniform transversality  assumption is required  as simple counterexamples show (e.g. the surface of revolution for the function  $e^x$). However, if $V$ has dimension $n-k$, the lemma holds under  just the transversality assumption. 

\begin{proof}
We first set up some preliminaries before starting the proof. Let $H$ be a leaf of the foliation $\Hc$ and define $f:\mathbb R^n\to [0,\infty)$ as $f(x):=d(x, H)^2$. Note that $\grad_xf$ is well-defined whenever $f(x)>0$. Then, if $x \in V$ and $f(x)>0$

Since $V$ is uniformly transverse to $\mathcal H$, there is $\theta>0$ such that for every $x\in V$ there is $v\in T_xV$ making  angle at least $\theta$ with distribution associated to $\mathcal H$. Thus, whenever $f>0$, the vector field $-\nabla f$ makes angles at most $\frac \pi 2 - \theta$ with tangent spaces of $V$. Let $\mathcal F$ be the vector field along $V$ obtained by projecting $-\nabla f$ to the tangent spaces of $V$ (whenever $f>0$). Then for every $x\in V$, $\norm{\mathcal F(x)}\ge \cos (\frac{\pi}{2}-\theta)\norm{-\nabla f(x)}=2\cos (\frac{\pi}{2}-\theta)d(x,H)$. Since $V$ is a $C^1$ manifold and $\nabla f$ is smooth (whenever $\grad f$ is well-defined, i.e. $f>0$), the vector field $\mathcal F$ is $C^1$ along $V$.

We let $F_t$ be the flow defined by $\mathcal F$. The flow $F_t$ is defined for all non-negative time and for every point $x\in V$ since $\norm{\mc{F}(x)}<2 d(x,H)$ and $d(x,H)$ decreases under the flow. For any $x\in V$, we let $\phi(t,x)=d(F_t(x),H)$. Then $\phi(0,x)=d(x,H)$. Whenever $\phi(t,x)>0$, $\phi(t,x)$ is differentiable  and 
\[
\frac{\partial}{\partial t}\phi(t,x)\le -2\cos^2(\frac \pi 2 -\theta) \phi(t,x).
\] 
Indeed, let $\theta_{y}$ be the angle between $\mc{F}(y)\in T_yV$ and $-\grad_y d(y,H)$. Then $\inner{\nabla_yd(y,H),\Fc(y)}= - \cos(\theta_y)\norm{\Fc(y)} \norm{\grad_yd(y,H)}=-2d(y,H) \cos^2 \theta_y $. Thus
\begin{align*}
\frac{\partial}{\partial t}\phi(t,x) & =D_{y=F_t(x)}d(y,H) \left( \frac{\partial}{\partial t}F_t(x)\right) = \langle \grad_{y}d(y,H), \Fc(y) \rangle |_{y=F_t(x)}= -2\cos^2(\theta_{F_t(x)}) d(F_t(x),H) \\
& \leq -2 \cos^2(\frac{\pi}{2}-\theta) \phi(t,x).
\end{align*}
It follows that $\phi(t,x)\le e^{-\epsilon_0t}d(x,H)$, where $\epsilon_0=2\cos^2(\frac \pi 2-\theta)$.

Now we begin the proof. Suppose, for the sake of contradiction, that $V \cap H$ is not connected. Let $x_1$ and $x_2$ be two points on two different connected components of $V\cap H$. We let $$r:=\inf_{\{\sigma :[0,1]\to V, \sigma (0)=x_1, \sigma (1)=x_2\}}\max_{t\in [0,1]}d(\sigma (t),H).$$

Now we consider two cases. 

\emph{ Case 1}: Suppose that $r>0$. Let $\sigma$ be a path such that the furthest point to $H$ of $\sigma$ is at distance at most $re^{\epsilon_0}$. Then consider the path $F_2(\sigma)$, obtained by flowing the path $\sigma$ for time 2 by the flow $F_t$. This path in $V$ is continuous and connects $x_1$ and $x_2$. The furthest distance of points on this path to $H$ is at most $re^{\epsilon_0}e^{-2\epsilon_0}<r$. This contradicts with the definition of $r$.

\emph{Case 2}:  Suppose $r=0$. We first show that for every $x\in V$, the limit $\lim_{t\to +\infty}F_t(x)$ exists. Indeed, for every $x\in V$, we let $\mathcal G(x)$ be the projection of $\mathcal F(x)$ to  the distribution associated to $\mathcal H$. Then $\mathcal G$ is a $C^1$ vector field, and $\norm{\mathcal G(x)}\le \frac 1 2 \norm{-\nabla f(x)}=d(x,H)$. As $F_t(x)$ gets closer to $H$ in forward time, the set $\{F_t(x):t\ge 0\}$ is contained in the closed $d(x,H)$-neighborhood of $H$. Furthermore, let $proj:\Rb^n\to H$ be the orthogonal projection. Then we get the estimate
\begin{align*}d(proj(F_{t_1}(x)), proj(F_{t_2}(x)))&\le \int_{t_1}^{t_2}\norm{\mathcal G(F_s(x))}ds\le \int_{t_1}^{t_2} \phi(s,x)ds\\ &\le \int_{t_1}^{t_2} d(x,H)e^{-\epsilon_0s}ds=\epsilon_0^{-1}d(x,H)(e^{-\epsilon_0t_1}-e^{-\epsilon_0t_2}).\end{align*}
Hence the set $\{F_t(x):t\ge 0\}$ is contained in the ball radius $2\epsilon_0^{-1}d(x,H)$ around $x$. It follows that  flowing the path $\sigma$ in forward time is trapped in a compact set. Furthermore, the inequality shows that the forward trajectory $\{F_t(x):t\ge 0\}$ is a Cauchy net. Therefore, $\lim_{t\to +\infty}F_t(x)$ exists.

 We also have the following estimate: for every $x\in V$ and $t \geq 0$, $$d(x,F_t(x))\le (1 +\epsilon_0^{-1})d(x,H).$$ Indeed $d(x,F_t(x)) \leq (\phi(0,x)-\phi(t,x)) + d(proj(x),proj(F_t(x))) \leq d(x,H)+\eps_0^{-1} d(x,H).$

Let $\sigma$ be a continuous path connecting $x_1$ and $x_2$. We claim that $\bar{\sigma}:[0,1]\to V$, defined by $\bar{\sigma}(s)=\lim_{t\to +\infty}F_t(\sigma(s))$ for every $s\in[0,1]$, is continuous. The map is well-defined because of the Cauchy property remarked above and since  $V$ is complete. Indeed, first we note that $\bar{\sigma}(s)\in H$ for every $s$. Let $s_0\in [0,1]$ and let $\epsilon>0$. Let $T_0 = T_0(s_0,\eps)>0$ be a time such that 
$$d(F_{T_0}(\sigma(s_0)),\bar{\sigma}(s_0))\le  \alpha \epsilon,$$
where $\alpha=\frac{1}{4+2\epsilon_0^{-1}}$. Since the flow $F_t$ is continuous, the path $F_{T_0}(\sigma)$ is continuous. Thus there exists $\delta>0$ such that for $s\in [s_0-\delta,s_0+\delta]$, 
\[d(F_{T_0}(\sigma(s_0)),F_{T_0}(\sigma(s)))\le \alpha \epsilon.\]

It follows that $d(F_{T_0}(\sigma(s)), H)\le d(F_{T_0}(\sigma(s_0)),F_{T_0}(\sigma(s)))+d(F_{T_0}(\sigma(s_0)),\bar{\sigma}(s_0))\le  2\alpha \epsilon$. 
Hence, by the triangle inequality,

\begin{align*}d(\bar{\sigma}(s_0),\bar{\sigma}(s))&\le d(\bar{\sigma}(s_0), F_{T_0}(\sigma(s_0)))+ d(F_{T_0}(\sigma(s_0)),F_{T_0}(\sigma(s)))+ d(F_{T_0}(\sigma(s)),\bar{\sigma}(s))\\ &\le \alpha \epsilon + \alpha\epsilon+2\alpha\epsilon(1+\epsilon_0^{-1})\le\epsilon.\end{align*}
Hence, $\bar{\sigma}$ is continuous at $s_0$. This proves $\bar{\sigma}$ is a continuous path.

On the other hand, $\bar{\sigma}(s)\in H$ since $d(\bar{\sigma}(s),H)=0$, for every $s\in [0,1]$. Hence $\bar{\sigma}$ is a path in $V\cap H$, connecting $x_1$ and $x_2$. This is a contradiction.

Therefore, $V\cap H$ is connected.
\end{proof}

For two arbitrary vector subspaces $V,W$ of a common vector space $Z$ we define
\[
\angle(V,W)=\min\set{\sup_{w\in W}\inf_{v\in V}\angle(v,w), \sup_{v\in V}\inf_{w\in W}\angle(v,w)}
\]
In particular, the angle is $0$ precisely when either $V\subset W$ or $W\subset V$. When $\dim V=\dim W$ then $\angle(V,W)$ is bilipschitz with $d^{Gr}(V,W)$.  Indeed if $g\in O(\dim Z)$ is an  element of $O(\dim Z)$ closest to the identity that moves $V$ to $W$, then $d(e,g)$ is at least $\angle(V,W)$ and bounded by $\dim Z$ times this angle. 
\begin{lemma}\label{lem:pi_surj}
Let $M$ be an $m$-dimensional complete connected $C^1$ sub-manifold without boundary of $\mathbb R^n=\R^{n-k}\times \R^k$. We denote by $\mathcal V$ the foliation of $\mathbb R^n$ by vertical $\R^k$ spaces. Suppose that for every $p\in M$, we have $\angle (T_p M, T_p(\mathcal V(p))\le \epsilon$ for a fixed $\eps<\frac{\pi}{2}$. Let $\pi:\R^n\to \R^k$ be the projection of $\R^n=\R^{n-k}\times \R^k$ to the $\R^k$ factor. If $m\geq k$, then $\pi|_{M}$ is surjective onto $\Rb^k$, and in particular is a diffeomorphism when $m=k$.
\end{lemma}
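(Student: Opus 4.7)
\textbf{Proof plan for Lemma~\ref{lem:pi_surj}.}

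The plan is to reduce everything to a single linear-algebraic fact: at every $p\in M$, the restricted projection $d\pi|_{T_pM}\colon T_pM\to \R^k$ admits a right inverse $\sigma_p$ whose operator norm is bounded by a constant $C(\eps)$ independent of $p$. Once this is established, surjectivity of $\pi|_M$ will follow from a path–lifting argument that uses only completeness of $M$ and the uniform bound on $\|\sigma_p\|$, and the case $m=k$ will follow because $\pi|_M$ is then a local diffeomorphism with the path-lifting property on a simply connected base.

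First I would unpack the hypothesis. Since $m\geq k$, the only way the min-of-sup-inf angle $\angle(T_pM,\R^k)$ can be $\leq \eps<\pi/2$ is via $\sup_{w\in \R^k}\inf_{v\in T_pM}\angle(v,w)\leq \eps$. This translates into the statement that for every unit $w\in \R^k$, $\|P_{T_pM}(w)\|\geq \cos\eps$, equivalently $\|P_{(T_pM)^\perp}(w)\|\leq \sin\eps$, where $P_{T_pM}$ and $P_{(T_pM)^\perp}$ are the orthogonal projections in $\R^n$. Consider $\tau_p\colon \R^k\to T_pM$ defined by $\tau_p(w):=P_{T_pM}(w)$. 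Writing $w=P_{T_pM}(w)+P_{(T_pM)^\perp}(w)$ and applying $\pi$, one gets $\pi\circ\tau_p=I_{\R^k}-E_p$ where $E_p:=\pi\circ P_{(T_pM)^\perp}|_{\R^k}$ has norm $\leq \sin\eps<1$. By the Neumann series, $\pi\circ\tau_p$ is invertible with $\|(\pi\circ\tau_p)^{-1}\|\leq 1/(1-\sin\eps)$, and so $\sigma_p:=\tau_p\circ(\pi\circ\tau_p)^{-1}\colon \R^k\to T_pM$ satisfies $d\pi\circ\sigma_p=I_{\R^k}$ and $\|\sigma_p\|\leq 1/(1-\sin\eps)=:C$. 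Because $M$ is a $C^1$ submanifold, the map $p\mapsto T_pM$ is continuous, so $p\mapsto \sigma_p$ is continuous as well.

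Next I would prove surjectivity. Fix $p_0\in M$ and an arbitrary $q\in \R^k$; pick any smooth path $\gamma\colon [0,1]\to \R^k$ with $\gamma(0)=\pi(p_0)$ and $\gamma(1)=q$. Consider the time-dependent vector field on $M$ given by $X_t(p):=\sigma_p(\gamma'(t))$, which is continuous in $(t,p)$ and satisfies $\|X_t(p)\|\leq C\sup_{s}\|\gamma'(s)\|$. By Peano's theorem a local lift $\tilde\gamma$ of $\gamma$ exists, and since $X_t$ is uniformly bounded and $M$ is complete, $\tilde\gamma$ extends to all of $[0,1]$; by construction $\pi\circ\tilde\gamma=\gamma$, so $q\in \pi(M)$. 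Hence $\pi(M)=\R^k$.

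For the case $m=k$, the map $d\pi|_{T_pM}$ is also injective (so bijective with $\|(d\pi|_{T_pM})^{-1}\|\leq 1/\cos\eps$), hence $\pi|_M$ is a $C^1$ local diffeomorphism. In this case $\sigma_p=(d\pi|_{T_pM})^{-1}$ is uniquely determined, so the ODE above has unique solutions; combined with global existence this yields the path-lifting property for smooth curves starting at any chosen preimage. A local diffeomorphism between $C^1$ manifolds with the path-lifting property is a covering map, and since $\R^k$ is simply connected and $M$ is connected the covering must be trivial; so $\pi|_M$ is a homeomorphism and, being a local diffeomorphism, a global $C^1$ diffeomorphism. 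The only mild subtlety I anticipate is that $X_t$ is only continuous (not Lipschitz), which means I must invoke Peano plus uniform boundedness for global existence, and in the $m=k$ case the uniqueness needed for the covering argument comes instead from the local invertibility of $\pi|_M$ rather than from Lipschitz regularity of $X_t$.
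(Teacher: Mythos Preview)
Your proof is correct. The surjectivity argument is essentially the same as the paper's: both construct a lift of a path in $\R^k$ by integrating a vector field on $M$ whose $d\pi$-image is the given velocity, with a uniform bound $\|v\|\le \frac{1}{\cos\eps}\|d\pi(v)\|$ (the paper uses the orthogonal complement of $\ker d\pi|_{T_qM}$ to pick the lift, you use the Neumann-series right inverse $\sigma_p$; the resulting bounds are equivalent). Completeness then forces the lift to reach the endpoint.

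For the case $m=k$ the two arguments diverge. The paper invokes the preceding Lemma~\ref{lem:foliation_connected} (proved by a gradient-flow argument) to conclude that each horizontal fiber $\R^{n-k}\times\{v\}$ meets $M$ in a connected $0$-manifold, hence in a single point, giving injectivity directly. You instead observe that $\pi|_M$ is a local diffeomorphism with uniformly bounded inverse derivative, deduce unique global path lifting, and conclude that $\pi|_M$ is a covering of the simply connected $\R^k$, hence a diffeomorphism. Your route is more self-contained (it does not need Lemma~\ref{lem:foliation_connected}) and fits naturally into the Ehresmann/complete-local-isometry circle of ideas; the paper's route has the advantage of yielding the stronger intermediate statement about connectedness of $V\cap H$ in the general $m\ge k$ setting, which they use elsewhere. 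One small point worth tightening in your write-up: the implication ``local diffeomorphism with unique path lifting $\Rightarrow$ covering map'' is cleanest if you phrase it via the pulled-back metric---since $\|(d\pi|_{T_pM})^{-1}\|\le 1/\cos\eps$, the pullback of the Euclidean metric is uniformly comparable to the induced metric on $M$, hence complete, and a local isometry from a complete Riemannian manifold onto a connected one is a Riemannian covering.
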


\begin{proof}
Transversality implies that $\pi|_{M}$ is a submersion, and hence is an open map.

Next, we show that if $m\geq k$ then $\pi\rest{M}$ is surjective by showing that the image of $\pi\rest{M}$ is closed. Suppose, for the sake of contradiction, that the image is not closed. We let $z\in \pi(M)$ and let $r:[0,1)\to \pi(M)$ be a finite line segment contained in $\pi(M)$ from $z$ to some $w\in \partial \pi(M)$. Since $\pi\rest{M}$ is a submersion onto its image, $\pi\rest{M}^{-1}(r)$ is a $C^1$ submanifold of $M\subset\R^n$. For each $t\in [0,1)$, and each point $q\in \pi\rest{M}^{-1}(r(t))$, 
note that $\op{ker}d_q(\pi\rest{M})=T_q \pi\rest{M}^{-1}(r(t))$, and moreover $d_q(\pi\rest{M})$ is an isomorphism on the orthogonal complement of this subspace in $T_qM$  which we denote by $T_q^\perp \pi\rest{M}^{-1}(r(t))\subset T_qM$ . Hence there is a unique vector $v_q\in T_q^\perp \pi\rest{M}^{-1}(r(t))$  such that $d\pi(v_q)=r'(t)$. This gives a vector field on $\pi\rest{M}^{-1}(r)$.

We observe that for any $v\in T_qM$ we may write it as $v=v_\perp+v_{\op{ker}}$ for $v_\perp\in T_{q}^\perp \pi\rest{M}^{-1}(r(t))$ and $v_{\op{ker}}\in \op{ker}d_q\pi=(T_q\mc{V}(q))^{\perp}$. Let $w\in T_q\mc{V}(q)$. If $v_{\perp}=0$, then $\inner{v,w} = \inner{v_{\perp},w}=0$. So suppose $v_{\perp} \neq 0$. Since $\inner{v,w}=\inner{v_{\perp},w}$ and $\norm{v} \geq \norm{v_{\perp}}$, we have  $\inner{\frac{v}{\norm{v}},w}\leq \inner{\frac{v_\perp}{\norm{v_\perp}},w}$. Thus $\angle(v,w)\geq \angle(w,v_\perp)$ for any $w\in T_q\mc{V}(q)$ and $v \in T_qM$. Hence we have 
\[
\angle (T_{q}^\perp \pi\rest{M}^{-1}(r(t)),T_q\mc{V}(q))\leq\angle(T_qM,T_q\mc{V}(q))\leq \eps.
\]
Therefore $\norm{r'(t)}\leq \norm{v_q}\leq \frac{1}{\cos(\eps)}\norm{r'(t)}$ where $\eps<\frac{\pi}{2}$ was fixed.
Let us fix any point $q_0 \in \pi\rest{M}^{-1}(z)$ and let $\sigma:[0,1)\to M$ be the integral curve of the vector field $q\mapsto v_q$ starting at $q_0$. Then $\sigma$ projects to $r$. Moreover, $\sigma$ has finite length since the norm of $d\pi$ is bounded below on $v_q$ for each $q\in \pi\rest{M}^{-1}(r)$. By completeness of $M$, there is a limit point $\sigma(1)\in M$ which must map to the endpoint $w$ of $r$. This  contradicts the assumption that $w$ did not belong to $\pi(M)$.

Finally, we show that when $m=k$, $\pi|_{M}$ is a diffeomorphism onto its image. By \cref{lem:foliation_connected}, the intersection of $M$ and each horizontal $\mathbb R^{n-k}\times \{v\}$ is path connected for every $v\in\mathbb R^k$. But by transversality, each intersection is a zero dimensional manifold. Thus the intersection of $M$ and each horizontal $\mathbb R^{n-k}\times \{v\}$ consists of at most one point, and hence exactly one point, for every $v\in\mathbb R^k$. Therefore $\pi\rest{M}$ is a diffeomorphism onto its image.
\end{proof}
Finally, we need the following lemma about Hausdorff convergence of a sequence of transverse intersections of submanifolds.

\begin{lemma}\label{lem:continuity-of-transverse-intersection}
    Suppose $\set{M_n: n \geq 0}$ is a sequence of connected embedded $C^1$ submanifolds of a manifold $X_0$ such that $M_n \to M_0$ in the pointed Hausdorff topology and $TM_n \to TM_0$ pointwise. Suppose $N$ is another connected embedded $C^1$ submanifold of $X_0$ such that $M_n \cap N \neq \varnothing$ and the intersection is transverse for all $n \geq 0$. Then $M_n \cap N \to M_0\cap N$ in the pointed Hausdorff topology.
\end{lemma}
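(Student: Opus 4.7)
The plan is to prove both inclusions in pointed Hausdorff convergence on compact sets: every point of $M_0 \cap N$ is approximated by points of $M_n \cap N$ (lower semicontinuity), and every accumulation point of sequences drawn from $M_n \cap N$ lies in $M_0 \cap N$ (upper semicontinuity). The upper semicontinuity direction is immediate from the hypotheses: if $q_{n_k} \in M_{n_k} \cap N$ with $q_{n_k} \to q$ inside a compact set, then $q \in N$ because $N$ is closed, and $q \in M_0$ because pointed Hausdorff convergence forces $d(q_{n_k}, M_0) \to 0$ on the compact set and $M_0$ is locally closed.

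For lower semicontinuity, fix $p \in M_0 \cap N$ and choose a $C^1$ chart at $p$ so that we may work in an open ball $B \subset \mathbb{R}^d$ with $p = 0$. Set $m = \dim M_0$ and $k = \dim N$. Transversality gives $T_p M_0 + T_p N = \mathbb{R}^d$, so choosing a complement $U$ of $T_p(M_0 \cap N)$ inside $T_p N$ yields a linear splitting $\mathbb{R}^d = U \oplus T_p M_0$ with $U \subset T_p N$ and $\dim U = d - m$. Write coordinates $(u,v) \in U \times T_p M_0$; then $M_0 \cap B$ is the graph of a $C^1$ function $u = \phi(v)$ with $\phi(0) = 0$ and $D\phi(0) = 0$. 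Describe $N \cap B$ as the zero set of a $C^1$ submersion $\pi \colon B \to \mathbb{R}^{d-k}$ with $\ker D\pi(0) = T_p N$. Defining $F(v) := \pi(\phi(v), v)$, the chain rule together with the choice of $U$ shows that $DF(0)$ is surjective onto $\mathbb{R}^{d-k}$.

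Next I transfer this picture to $M_n$ for large $n$. Pointed Hausdorff convergence produces points $q_n \in M_n$ with $q_n \to p$, and pointwise convergence of tangent spaces gives $T_{q_n} M_n \to T_p M_0$. A standard covering argument (combined with the pointed Hausdorff statement applied to small neighborhoods) shows that after possibly shrinking $B$, the piece $M_n \cap B$ is, for $n$ large, a graph $u = \phi_n(v)$ over the same $v$-plane, with $\phi_n \to \phi$ uniformly and $D\phi_n \to D\phi$ pointwise; in particular $\|D\phi_n\|$ stays uniformly small on a fixed smaller ball. Set $F_n(v) := \pi(\phi_n(v), v)$. Then $F_n \to F$ in $C^1$ on a common domain around $0$, so by openness of the set of surjective linear maps $DF_n$ is surjective throughout a fixed small ball for $n$ large. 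The implicit function theorem, applied uniformly via the quantitative inverse function theorem (controlled by the uniform bound on $\|D\phi_n\|$ and on $\|D\pi\|^{-1}$ restricted to its image), yields a solution $v_n$ of $F_n(v_n) = 0$ with $v_n \to 0$; the corresponding point $(\phi_n(v_n), v_n) \in M_n \cap N$ converges to $p$. Covering any compact piece of $M_0 \cap N$ by finitely many such charts and combining the local approximations with upper semicontinuity gives the desired pointed Hausdorff convergence.

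\textbf{Main obstacle.} The key technical issue is to leverage the two hypotheses (pointed Hausdorff convergence of $M_n$ plus pointwise convergence of tangent spaces) to conclude that $M_n$ is, near $p$, a graph over $T_p M_0$ on a ball of size independent of $n$, with $C^1$ norm going to that of $M_0$. Once this graph representation with uniform control is in hand, the implicit function theorem argument producing the intersection point $v_n$ is routine. Formally this uniform graph representation follows from interpreting ``$TM_n \to TM_0$ pointwise'' as saying that for any $p \in M_0$ and any $p_n \in M_n$ with $p_n \to p$, one has $T_{p_n} M_n \to T_p M_0$; combined with pointed Hausdorff convergence this gives local $C^1$-closeness on compact pieces, which is exactly what is needed.
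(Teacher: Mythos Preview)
Your argument is correct and follows essentially the same strategy as the paper: work in a chart around a point of $M_0\cap N$, represent $M_n$ as a graph over $T_pM_0$ using the pointed Hausdorff and tangent-space convergence hypotheses, and then read off the intersection with $N$. The only difference in execution is that the paper straightens both $M_0$ and $N$ simultaneously into coordinate slices, so that $M_n\cap N$ in the chart is just the restriction of the graph $\theta_n$ to the $N$-slice, giving a direct Hausdorff estimate $d^{\mathrm{Haus}}\le d_n$; you instead split into upper semicontinuity (accumulation points lie in $M_0\cap N$) and lower semicontinuity (implicit function theorem applied to $F_n=\pi\circ(\phi_n,\mathrm{id})$ to produce a nearby intersection point). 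Both routes hinge on the same graph-representation step that you correctly identify as the main obstacle, and your interpretation of ``$TM_n\to TM_0$ pointwise'' matches the one the paper uses.
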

\begin{proof}
   Consider any point of $x\in M_0\cap N$.  Note that it is enough to show that for a pre-compact open subset $U_x \subset X_0$ containing $x$, $M_n \cap N \cap U_x \to M_0 \cap N \cap U_x$ in the Hausdorff topology. Let $m=\dim(M_0)$, $m+k=\dim(X_0)$, $r=\dim(N)$. Then $\dim(M_0 \cap N)=r-k \geq 0$. 
   
   Choose a coordinate chart $\phi:U_x\to (-1,1)^{m+k}\subset \R^{m+k}$ where $U_x\subset X_0$ is a pre-compact open set,  $\phi(M_0\cap U_x)=(-1,1)^m\times\set{0}^k$ and $\phi(N\cap U_x) = \set{0}^{m+k-r}\times (-1,1)^r$. We have to show that $\phi(M_n \cap U_x)\cap (\set{0}^{m+k-r}\times (-1,1)^r)$ converges to $\phi(M_0 \cap U_x)\cap (\set{0}^{m+k-r}\times (-1,1)^r)=\set{0}^{m+k-r}\times (-1,1)^{r-k}\times \set{0}^k$ in the Hausdorff topology. Note that since $\phi(M_n \cap U_x)$ converges to $\phi(M_0\cap U_x)$ in the pointed Hausdorff topology and $U_x$ is pre-compact. So, for any $0<\epsilon<1$ there is an $n$ large enough such that $\phi(M_n \cap U_x)$ is contained in $(-1,1)^{m}\times (-\epsilon,\epsilon)^k$.

Since $\phi$ is $C^1$ and the distributions $TM_n$ converge to $TM_0$ pointwise, $M_n \cap U_x$ is transverse to $\set{0}^{m+k-r}\times\R^r$ for $n$ large enough. Indeed, if $\set{y_n}$ is a sequence such that $y_n \in \phi(M_n \cap U_x)$ and $y_n \to y_0$, then $T_{y_n}\phi(M_n\cap U_x)$ converges to $T_y\phi(M_0\cap U_x)=\R^m\times\set{0}^k$. Thus for $n$ sufficiently large, $M_n \cap U_x$ is transverse to $\set{0}^{m+k-r}\times\R^r$. 

Now suppose that $n$ is large enough. As the image of $\phi(M_n\cap U_x)$ has tangent space nowhere tangent to ${0}\times \R^r$, each connected component of $\phi(M_n\cap U_x)$ is a graph of a function whose domain is a subset of $(-1,1)^m\times\set{0}^k$. We now claim that for sufficiently large $n$, each connected component of $\phi(M_n\cap U_x)$ is a graph of a function whose domain is $(-1,1)^m\times\set{0}^k$. Since $M_n$ is embedded in $X_F$, $\phi(M_n\cap U_x)$ is an embedded submanifold of $\phi(U_x)$. If the claim is false, then there is a  connected component $C_n$ of $\phi(M_n\cap U_x)$ and $y_n \in (-1,1)^m $ such that $(y_n,z_n) \in \partial C_n$. As $C_n$ is properly embedded in $\phi(U_x)$ and $\eps<1$, $(y_n,z_n)$ is an interior point of $C_n$ and $T_{(y_n,z_n)}C_n$ is transverse to $\set{0} \times \Rb^r$. This contradicts that $\phi(M_n \cap U_x) \subset (-1,1)^m \times \set{0}^k$.  Consequently, we have $\partial(\phi(M_n\cap U_x))\subset\partial((-1,1)^{m})\times [-\eps,\eps]^k$ and $\partial \pi_m(\phi(M_n) \cap U_x) \subset \partial ((-1,1)^m)$ (where $\pi_m$ is projection onto the first $m$ coordinates in $\Rb^{m+k}$). Therefore, each connected component of $\phi(M_n\cap U_x)$ is a graph of a function whose domain is entire $(-1,1)^m\times\set{0}^k$.

 We let $d_n=d^{\Haus}(\phi(M_n \cap N \cap U_x), \phi(M_0 \cap N \cap U_x))$. We then have $d_n\to 0$ when $n\to \infty$. For each connected component of $\phi(M_n\cap U_x)$, there is a function $\theta_n:(-1,1)^m\to (-1,1)^k$ such that the connected component is precisely the graph $\{(x,\theta_n(x)):x\in (-1,1)^m\}$. It follows that the intersection of this connected component with $N$ is the set $S=\{(x,\theta_n(x)):x\in \set{0}^{m+k-r}\times (-1,1)^{r-k}\}$. It is clear that 
\[d^{\Haus}(S,\set{0}^{m+k-r}\times (-1,1)^{r-k}\times \set{0}^k)\le d_n,\]
which tends to $0$ when $n\to \infty$. This shows that $\phi(M_n\cap N\cap U_x)$ converges to $\phi(M_0\cap N\cap U_x)$ in the Hausdorff topology.
\end{proof}

\providecommand{\bysame}{\leavevmode\hbox to3em{\hrulefill}\thinspace}
\providecommand{\MR}{\relax\ifhmode\unskip\space\fi MR }
\providecommand{\MRhref}[2]{%
  \href{http://www.ams.org/mathscinet-getitem?mr=#1}{#2}
}
\providecommand{\href}[2]{#2}

\bibliography{hyprankloc.bib}{}

\begin{thebibliography}{BRHW22}

\bibitem[Ben00]{Benveniste00}
E.~J. Benveniste.
\newblock Rigidity of isometric lattice actions on compact {R}iemannian
  manifolds.
\newblock {\em Geom. Funct. Anal.}, 10(3):516--542, 2000.

\bibitem[BF04]{BonattiFranks04}
Christian Bonatti and John Franks.
\newblock A {H}\"{o}lder continuous vector field tangent to many foliations.
\newblock In {\em Modern dynamical systems and applications}, pages 299--306.
  Cambridge Univ. Press, Cambridge, 2004.

\bibitem[BF05]{Benveniste-Fisher}
E.~Jerome Benveniste and David Fisher.
\newblock Nonexistence of invariant rigid structures and invariant almost rigid
  structures.
\newblock {\em Comm. Anal. Geom.}, 13(1):89--111, 2005.

\bibitem[BFH20]{BrownFisherHurtado20}
Aaron Brown, David Fisher, and Sebastian Hurtado.
\newblock Zimmer's conjecture for actions of {${\rm SL}(m, \Bbb Z)$}.
\newblock {\em Invent. Math.}, 221(3):1001--1060, 2020.

\bibitem[BFH21]{BrownFisherHurtado21}
Aaron Brown, David Fisher, and Sebastian Hurtado.
\newblock Zimmer's conjecture for non-uniform lattices and escape of mass,
  2021.

\bibitem[BFH22]{BrownFisherHurtado22}
Aaron Brown, David Fisher, and Sebastian Hurtado.
\newblock Zimmer's conjecture: subexponential growth, measure rigidity, and
  strong property ({T}).
\newblock {\em Ann. of Math. (2)}, 196(3):891--940, 2022.

\bibitem[BM99]{BurgerMonod99}
M.~Burger and N.~Monod.
\newblock Bounded cohomology of lattices in higher rank {L}ie groups.
\newblock {\em J. Eur. Math. Soc. (JEMS)}, 1(2):199--235, 1999.

\bibitem[BM22]{BowdenMann19}
Jonathan Bowden and Kathryn Mann.
\newblock {$C^0$} stability of boundary actions and inequivalent {A}nosov
  flows.
\newblock {\em Ann. Sci. \'{E}c. Norm. Sup\'{e}r. (4)}, 55(4):1003--1046, 2022.

\bibitem[Bre72]{Bredon72}
Glen~E. Bredon.
\newblock {\em Introduction to compact transformation groups}.
\newblock Pure and Applied Mathematics, Vol. 46. Academic Press, New
  York-London, 1972.

\bibitem[BRHW]{AFZ}
Aaron Brown, Federico Rodriguez~Hertz, and Zhiren Wang.
\newblock Personal communication.

\bibitem[BRHW17]{BrownHertzWang17}
Aaron Brown, Federico Rodriguez~Hertz, and Zhiren Wang.
\newblock Global smooth and topological rigidity of hyperbolic lattice actions.
\newblock {\em Ann. of Math. (2)}, 186(3):913--972, 2017.

\bibitem[BRHW22]{AFZ22}
Aaron Brown, Federico Rodriguez~Hertz, and Zhiren Wang.
\newblock Invariant measures and measurable projective factors for actions of
  higher-rank lattices on manifolds.
\newblock {\em Ann. of Math. (2)}, 196(3):941--981, 2022.

\bibitem[Cha06]{Chavel06}
Isaac Chavel.
\newblock {\em Riemannian geometry}, volume~98 of {\em Cambridge Studies in
  Advanced Mathematics}.
\newblock Cambridge University Press, Cambridge, second edition, 2006.
\newblock A modern introduction.

\bibitem[CNS22]{ConnellNguyenSpatzier22}
Chris Connell, Thang Nguyen, and Ralf Spatzier.
\newblock Carnot metrics, dynamics and local rigidity.
\newblock {\em Ergodic Theory Dynam. Systems}, 42(2):614--664, 2022.

\bibitem[dC92]{doCarmo92}
Manfredo Perdig\~{a}o do~Carmo.
\newblock {\em Riemannian geometry}.
\newblock Mathematics: Theory \& Applications. Birkh\"{a}user Boston, Inc.,
  Boston, MA, 1992.
\newblock Translated from the second Portuguese edition by Francis Flaherty.

\bibitem[Den32]{Denjoy}
A.~Denjoy.
\newblock On the curves defined by the differential equations on the surface of
  the torus.
\newblock {\em Journal of Pure and Applied Mathematics}, 11:333--376, 1932.

\bibitem[DH20]{DeroinHurtado20}
Bertrand Deroin and Sebastian Hurtado.
\newblock Non left-orderability of lattices in higher rank semi-simple lie
  groups, 2020.

\bibitem[dS22]{DeSouza}
Lucas~HR de~Souza.
\newblock Characterization of the $n$-dimensional {S}ierpi{\'n}ski carpet as an
  inverse limit of closed balls ($n\neq 3$).
\newblock {\em arXiv preprint arXiv:2203.11282}, 2022.

\bibitem[DSVX22]{DamjanovicSpatzierVinhageXu22}
Danijela Damjanovic, Ralf Spatzier, Kurt Vinhage, and Disheng Xu.
\newblock Anosov actions: classification and the zimmer program, 2022.

\bibitem[Ebe96]{Eberlein96}
Patrick~B. Eberlein.
\newblock {\em Geometry of nonpositively curved manifolds}.
\newblock Chicago Lectures in Mathematics. University of Chicago Press,
  Chicago, IL, 1996.

\bibitem[EF97]{Eskin-Farb}
Alex Eskin and Benson Farb.
\newblock Quasi-flats and rigidity in higher rank symmetric spaces.
\newblock {\em J. Amer. Math. Soc.}, 10(3):653--692, 1997.

\bibitem[Esk98]{Eskin98}
Alex Eskin.
\newblock Quasi-isometric rigidity of nonuniform lattices in higher rank
  symmetric spaces.
\newblock {\em J. Amer. Math. Soc.}, 11(2):321--361, 1998.

\bibitem[FH03]{FranksHandel03}
John Franks and Michael Handel.
\newblock Area preserving group actions on surfaces.
\newblock {\em Geom. Topol.}, 7:757--771, 2003.

\bibitem[Fis07]{Fisher-survey2007}
David Fisher.
\newblock Local rigidity of group actions: past, present, future.
\newblock In {\em Dynamics, ergodic theory, and geometry}, volume~54 of {\em
  Math. Sci. Res. Inst. Publ.}, pages 45--97. Cambridge Univ. Press, Cambridge,
  2007.

\bibitem[Fis20]{Fisher20}
David Fisher.
\newblock Groups acting on manifolds: around the {Z}immer program.
\newblock In {\em Group actions in ergodic theory, geometry, and
  topology---selected papers}, pages 609--683. Univ. Chicago Press, Chicago,
  IL, 2020.

\bibitem[FM03]{FM1}
David Fisher and G.~A. Margulis.
\newblock Local rigidity for cocycles.
\newblock In {\em Surveys in differential geometry, {V}ol.\ {VIII} ({B}oston,
  {MA}, 2002)}, volume~8 of {\em Surv. Differ. Geom.}, pages 191--234. Int.
  Press, Somerville, MA, 2003.

\bibitem[FM05]{FM2}
David Fisher and Gregory Margulis.
\newblock Almost isometric actions, property ({T}), and local rigidity.
\newblock {\em Invent. Math.}, 162(1):19--80, 2005.

\bibitem[FM09]{FM3}
David Fisher and Gregory Margulis.
\newblock Local rigidity of affine actions of higher rank groups and lattices.
\newblock {\em Ann. of Math. (2)}, 170(1):67--122, 2009.

\bibitem[FM22]{FM2022}
David Fisher and Karin Melnick.
\newblock Smooth and analytic actions of {$SL(n,{\bf R})$} and {$SL(n,{\bf
  Z})$} on closed $n$-dimensional manifolds.
\newblock 2022.
\newblock to appear Kyoto J. of. Math.

\bibitem[FS99]{FarbShalen99}
Benson Farb and Peter Shalen.
\newblock Real-analytic actions of lattices.
\newblock {\em Invent. Math.}, 135(2):273--296, 1999.

\bibitem[FS00]{FarbShalen00}
Benson Farb and Peter Shalen.
\newblock Lattice actions, 3-manifolds and homology.
\newblock {\em Topology}, 39(3):573--587, 2000.

\bibitem[Ghy93]{Ghys93}
{\'E}tienne Ghys.
\newblock Rigidit{\'e} diff{\'e}rentiable des groupes fuchsiens.
\newblock {\em Publications Math{\'e}matiques de l'Institut des Hautes
  {\'E}tudes Scientifiques}, 78(1):163--185, 1993.

\bibitem[Ghy99]{Ghys99}
\'{E}tienne Ghys.
\newblock Actions de r\'{e}seaux sur le cercle.
\newblock {\em Invent. Math.}, 137(1):199--231, 1999.

\bibitem[GS18]{GorodnikSpatzier18}
Alexander Gorodnik and Ralf Spatzier.
\newblock Smooth factors of projective actions of higher-rank lattices and
  rigidity.
\newblock {\em Geom. Topol.}, 22(2):1227--1266, 2018.

\bibitem[HIH77]{HeintzeImHof77}
Ernst Heintze and Hans-Christoph Im~Hof.
\newblock Geometry of horospheres.
\newblock {\em J. Differential Geometry}, 12(4):481--491 (1978), 1977.

\bibitem[Kan96]{Kanai}
M.~Kanai.
\newblock A new approach to the rigidity of discrete group actions.
\newblock {\em Geom. Funct. Anal.}, 6(6):943--1056, 1996.

\bibitem[KKL22]{KapovichKimLee19}
Michael Kapovich, Sungwoon Kim, and Jaejeong Lee.
\newblock Structural stability of meandering-hyperbolic group actions.
\newblock {\em Journal of the Institute of Mathematics of Jussieu}, page
  1–58, 2022.

\bibitem[KL96]{KatokLewis96}
A.~Katok and J.~Lewis.
\newblock Global rigidity results for lattice actions on tori and new examples
  of volume-preserving actions.
\newblock {\em Israel J. Math.}, 93:253--280, 1996.

\bibitem[KL97]{KleinerLeeb97}
Bruce Kleiner and Bernhard Leeb.
\newblock Rigidity of quasi-isometries for symmetric spaces and {E}uclidean
  buildings.
\newblock {\em C. R. Acad. Sci. Paris S{\'e}r. I Math.}, 324(6):639--643, 1997.

\bibitem[KS97]{KatokSpatzier97}
A.~Katok and R.~J. Spatzier.
\newblock Differential rigidity of {A}nosov actions of higher rank abelian
  groups and algebraic lattice actions.
\newblock {\em Tr. Mat. Inst. Steklova}, 216(Din. Sist. i Smezhnye
  Vopr.):292--319, 1997.

\bibitem[Kur05]{Kuroki05}
S.~Kuroki.
\newblock On {${\rm SL}(3,\Bbb R)$}-actions on 4-spheres.
\newblock {\em Fundam. Prikl. Mat.}, 11(5):99--105, 2005.

\bibitem[Lab98]{Labourie98}
Fran\c{c}ois Labourie.
\newblock Large groups actions on manifolds.
\newblock In {\em Proceedings of the {I}nternational {C}ongress of
  {M}athematicians, {V}ol. {II} ({B}erlin, 1998)}, number Extra Vol. II, pages
  371--380, 1998.

\bibitem[Mar00]{Margulis-problems}
Gregory Margulis.
\newblock Problems and conjectures in rigidity theory.
\newblock In {\em Mathematics: frontiers and perspectives}, pages 161--174.
  Amer. Math. Soc., Providence, RI, 2000.

\bibitem[MM21]{MannManning21}
Kathryn Mann and Jason~Fox Manning.
\newblock Stability for hyperbolic groups acting on boundary spheres, 2021.

\bibitem[MMW22]{Mann-Manning-Weisman}
Kathrynn Mann, Jason~Fox Manning, and Theodore Weisman.
\newblock Stability of hyperbolic groups acting on their boundaries.
\newblock {\em arXiv preprint arXiv:2206.14914}, 2022.

\bibitem[MQ01]{Margulis-Qian}
Gregory~A. Margulis and Nantian Qian.
\newblock Rigidity of weakly hyperbolic actions of higher real rank semisimple
  {L}ie groups and their lattices.
\newblock {\em Ergodic Theory Dynam. Systems}, 21(1):121--164, 2001.

\bibitem[Nav02]{Navas02}
Andr\'{e}s Navas.
\newblock Actions de groupes de {K}azhdan sur le cercle.
\newblock {\em Ann. Sci. \'{E}cole Norm. Sup. (4)}, 35(5):749--758, 2002.

\bibitem[Pet06]{Petersen16}
Peter Petersen.
\newblock {\em Riemannian geometry}, volume 171 of {\em Graduate Texts in
  Mathematics}.
\newblock Springer, New York, second edition, 2006.

\bibitem[Pol02]{Polterovich02}
Leonid Polterovich.
\newblock Growth of maps, distortion in groups and symplectic geometry.
\newblock {\em Invent. Math.}, 150(3):655--686, 2002.

\bibitem[PR72]{Prasad-Raghunathan1972}
Gopal Prasad and M.~S. Raghunathan.
\newblock Cartan subgroups and lattices in semi-simple groups.
\newblock {\em Ann. of Math. (2)}, 96:296--317, 1972.

\bibitem[Rud76]{Rudin-book}
Walter Rudin.
\newblock {\em Principles of mathematical analysis}.
\newblock International Series in Pure and Applied Mathematics. McGraw-Hill
  Book Co., New York-Auckland-D\"{u}sseldorf, 1976.

\bibitem[Sak84]{Sakai84}
Takashi Sakai.
\newblock Comparison and finiteness theorems in {R}iemannian geometry.
\newblock In {\em Geometry of geodesics and related topics ({T}okyo, 1982)},
  volume~3 of {\em Adv. Stud. Pure Math.}, pages 125--181. North-Holland,
  Amsterdam, 1984.

\bibitem[Sul85]{Sullivan85}
Dennis Sullivan.
\newblock Quasiconformal homeomorphisms and dynamics ii: Structural stability
  implies hyperbolicity for {K}leinian groups.
\newblock {\em Acta Mathematica}, 155(1):243--260, 1985.

\bibitem[Uch85]{Uchida85}
Fuichi Uchida.
\newblock Construction of a continuous {${\rm SL}(3,{\bf R})$} action on
  {$4$}-sphere.
\newblock {\em Publ. Res. Inst. Math. Sci.}, 21(2):425--431, 1985.

\bibitem[Wei11]{Weinberger11}
Shmuel Weinberger.
\newblock Some remarks inspired by the {${\rm C}^0$} {Z}immer program.
\newblock In {\em Geometry, rigidity, and group actions}, Chicago Lectures in
  Math., pages 262--282. Univ. Chicago Press, Chicago, IL, 2011.

\bibitem[Wit94]{WitteMorris94}
Dave Witte.
\newblock Arithmetic groups of higher {${\bf Q}$}-rank cannot act on
  {$1$}-manifolds.
\newblock {\em Proc. Amer. Math. Soc.}, 122(2):333--340, 1994.

\bibitem[Zim83]{Zimmer83}
Robert~J. Zimmer.
\newblock Arithmetic groups acting on compact manifolds.
\newblock {\em Bull. Amer. Math. Soc. (N.S.)}, 8(1):90--92, 1983.

\bibitem[Zim84]{Zimmer84}
Robert~J. Zimmer.
\newblock Volume preserving actions of lattices in semisimple groups on compact
  manifolds.
\newblock {\em Inst. Hautes \'{E}tudes Sci. Publ. Math.}, (59):5--33, 1984.

\bibitem[Zim87]{Zimmer-ICM}
Robert~J. Zimmer.
\newblock Actions of semisimple groups and discrete subgroups.
\newblock In {\em Proceedings of the {I}nternational {C}ongress of
  {M}athematicians, {V}ol. 1, 2 ({B}erkeley, {C}alif., 1986)}, pages
  1247--1258. Amer. Math. Soc., Providence, RI, 1987.

\end{thebibliography}
\bibliographystyle{alpha}

\end{document}